\newtheorem{defi}{Definition}[section]
\newtheorem{prop}[defi]{Proposition} 
\newtheorem{lemma}[defi]{Lemma} 
\newtheorem{teo}[defi]{Theorem}
\newtheorem{cor}[defi]{Corollary}
\theoremstyle{definition} 
\newtheorem{rmk}[defi]{Remark}
\newtheorem*{teo*}{Theorem}
\newcommand{\Z}{\mathbb{Z}}
\newcommand{\Sp}{\mathrm{Spec}}
\newcommand{\g}[1] {\bold{#1}}
\providecommand{\bysame}{\leavevmode\hbox to3em{\hrulefill}\thinspace}
\providecommand{\MR}{\relax\ifhmode\unskip\space\fi MR }
\providecommand{\href}[2]{#2}
\begin{document}
\title{On the homotopy exact sequence for log algebraic fundamental groups}
\author{Valentina Di Proietto\footnote{College of Engineering, Mathematics and Physical Sciences, University of Exeter,
EX4 4RN,
Exeter, United Kingdom.} \, and Atsushi Shiho\footnote{Graduate School of 
Mathematical Sciences, the University of Tokyo, 3-8-1 Komaba, Meguro-ku, Tokyo 153-8914, Japan. Mathematics Subject Classification (2010): 14F35, 14F40.}}
\date{}
\maketitle

\textsc{Abstract -} \begin{small} 
 We construct a log algebraic version of the homotopy sequence for a normal crossing log variety over a log point of characteristic zero and prove some exactness properties of it. Our proofs are purely algebraic.

\end{small}

\date{}
\maketitle
\section{Introduction}
First we recall the classical homotopy exact sequence in the topological setting. Let $p:E\rightarrow B$ be a fibration (a continuous map satisfying the homotopy lifting property) of topological manifolds with path connected base $B$. Let $b_0$ be a point of $B$ and let  $F=p^{-1}(b_0)$. We denote by $i:F\hookrightarrow E$ the canonical inclusion. Also, we choose a point $f_0$ in $F$ and we denote by $e_0$ the point $i(f_0)$. Then we have the exact sequence
\begin{equation}\label{HEStop}
\dots\rightarrow \pi^{\mathrm{top}}_n(F, f_0)\rightarrow \pi^{\mathrm{top}}_n(E, e_0)\rightarrow \pi^{\mathrm{top}}_n(B, b_0)\rightarrow \pi^{\mathrm{top}}_{n-1}(F, f_0)\rightarrow \pi^{\mathrm{top}}_{n-1}(E, e_0)\rightarrow \pi^{\mathrm{top}}_{n-1}(B, b_0)\rightarrow\cdots
\end{equation}
of topological homotopy groups. 

In the \'etale setting an analogous result is proved in \cite[corollaire 1.4, Expos\'e X]{SGA1}. Let $K$ be a field and let $X$ and $S$ be locally Noetherian connected $K$-schemes. Let $f:X\rightarrow S$ be a proper separable morphism with geometrically connected fibers. Also, let $x$ be a geometric point of $X$, $s=f \circ x$ and let $X_{s}$ be the fiber of $X$ at the geometric point $s$. Then we have the exact sequence
$$
\pi_1^{\mathrm{et}}(X_{s}, x)\rightarrow \pi_1^{\mathrm{et}}(X,x)\rightarrow \pi_1^{\mathrm{et}}(S,s)\rightarrow 1
$$   
of \'etale fundamental groups 
and the first map is injective when $S = \Sp(K)$ \cite[th\'eor\`eme 6.1, Expos\'e IX]{SGA1}.  

We are interested in proving an analogous result for the log algebraic fundamental group, which is defined as the Tannaka dual of a certain category of 
modules with integrable connection. 

For a smooth scheme $X$ over a field $K$ of characteristic $0$ and 
a $K$-rational point $x$ of $X$, the algebraic fundamental group $\pi_1(X/\Sp(K), x)$ is defined as 
the Tannaka dual of the category of $\mathcal{O}_X$-coherent $\mathcal{D}_X$-modules 
(which is equivalent to the category of coherent $\mathcal{O}_X$-modules with integrable connection 
because $K$ is of characteristic $0$). 

Then the homotopy exact sequence for algebraic fundamental groups is described as follows. 
Let $X$ and $S$ be smooth connected $K$-schemes of finite type and let $f:X\rightarrow S$ be a proper smooth morphism with geometrically connected fibers. Also, let $x$ be a $K$-rational point of $X$, put $s=f(x)$ and denote by $X_s$ the fiber of $f$ over $s$:
\begin{equation}\label{diagsmooth}
\xymatrix{
X_s\ar[r]\ar[d]&X\ar[d]^f \\
\Sp(K)\ar[r]^-s\ar[dr]&S\ar[d]\\
&\Sp(K). 
}
\end{equation}
Then the following sequence is exact: 
$$
\pi_1(X_{s}/\Sp(K), x)\rightarrow \pi_1(X/\Sp(K),x)\rightarrow \pi_1(S/\Sp(K),s)\rightarrow 1. 
$$

The homotopy exact sequence in this context is proven by Zhang \cite{Zha13}; another proof is given by Dos Santos \cite{Dos15} with a different method which is applicable also to the case where $K$ is of characteristic $p>0$. Lazda \cite{Laz15} proves the exactness of the homotopy sequence for the relatively pro-unipotent quotient of algebraic fundamental 
groups when $f$ admits a section, proving also the injectivity of the first map. Moreover, he studies the $p$-adic version, using the category of relatively unipotent isocrystals as a Tannakian category\footnote{There is also a work of Lazda and P\'al \cite{LP17}, which  appeared after the first version of this article was written.}. 

The first result of this paper is the analogue of the homotopy exact sequence for certain log schemes. 
The geometric situation we are interested in is as follows. Let $K$ be a field of characteristic $0$ and 
let $X$ be a 
normal crossing log variety over $K$, which means that $X$ is \'etale locally a finite union of smooth varieties which meet transversally, equipped with a semistable log structure in the sense of \cite[definition 11.6]{KatoF96}.  We denote by $X^{\times}$ such a log scheme.  We denote by $\Sp(K)$ the 
spectrum of $K$ endowed with the trivial log structure and by $\Sp(K)^{\times}$ the 
spectrum of $K$ endowed with the log structure associated to the monoid homomorphism $\mathbb{N} \to K; \,\, 1 \mapsto 0$. 
Then the map of log schemes $f: X^{\times}\rightarrow \Sp(K)^{\times}$ induced by the structural morphism is log smooth. On the other hand, the morphisms  
$g: \Sp(K)^{\times}\rightarrow \Sp(K)$, $h := g \circ f: X^{\times}\rightarrow \Sp(K)$ 
are not log smooth, but they are still considered to be good morphisms. 
If we are given a $K$-rational point $x$ of $X$, 
the diagram we obtain as an analogue of the diagram \eqref{diagsmooth} 
is the following: 
$$
\xymatrix{
X^{\times}\ar[r]\ar[d]&X^{\times}\ar[d]^-{f}\\
\Sp(K)^{\times}\ar[r]^{\mathrm{id}}\ar[dr]&\Sp(K)^{\times}\ar[d]^{g}\\
&\Sp(K). 
}
$$
As in the smooth case described above, we would like to define the algebraic fundamental group as the 
Tannaka dual of the category of coherent $\mathcal{O}_X$-modules with integrable log connection. However, in this case, it turns out that the category of coherent $\mathcal{O}_X$-modules with integrable log connection is not Tannakian, because, unlike the case 
of smooth schemes over a field in characteristic $0$, it is not true that every coherent module with integrable connection is locally free in the log case. Neither can we consider the category of locally free modules with integrable connection on $X^{\times}/\Sp(K)^{\times}$ (or on $X^{\times}/\Sp(K)$) because it is not abelian. The category we consider is the category of locally free modules with integrable connection having nilpotent residues on $X^{\times}/\Sp(K)^{\times}$, $X^{\times}/\Sp(K)$ and 
 $\Sp(K)^{\times}/\Sp(K)$, and we define the log algebraic fundamental group as its Tannaka dual, which will be denoted by $\pi_1(X^{\times}/\Sp(K)^{\times}, x)$,  $\pi_1(X^{\times}/\Sp(K), x)$ and $\pi_1(\Sp(K)^{\times}/\Sp(K), \nu)$ respectively 
(for a suitable choice of the fiber functor $\nu$). We remark that $\pi_1(\Sp(K)^{\times}/\Sp(K), \nu)$ is indeed isomorphic to $\mathbb{G}_a$.

The first result of the paper is the following.
\begin{teo} Let the notation be as above. 
 Then the following sequence is exact: 
\begin{equation}\label{HES}
 \pi_1(X^{\times}/\Sp(K)^{\times}, x)\rightarrow \pi_1(X^{\times}/\Sp(K), x)\rightarrow \pi_1(\Sp(K)^{\times}/\Sp(K),\nu)\rightarrow 1.
 \end{equation}
\end{teo}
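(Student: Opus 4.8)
The plan is to argue entirely on the Tannakian side. Denote by $\mathcal{C}''$, $\mathcal{C}$, $\mathcal{C}'$ the categories of locally free modules with integrable connection and nilpotent residues on $\Sp(K)^{\times}/\Sp(K)$, $X^{\times}/\Sp(K)$, $X^{\times}/\Sp(K)^{\times}$, so that \eqref{HES} is the sequence dual (via the fiber functors $x^{*}$ on $\mathcal{C}$ and $\mathcal{C}'$ and $\nu=(f\circ x)^{*}$ on $\mathcal{C}''$) to the sequence of $K$-linear tensor functors
$$
\mathcal{C}''\xrightarrow{\phi}\mathcal{C}\xrightarrow{\psi}\mathcal{C}',
$$
where $\phi$ is pullback along $f$ and $\psi$ is composition of the connection with the canonical surjection $\Omega^{1}_{X^{\times}/\Sp(K)}\to\Omega^{1}_{X^{\times}/\Sp(K)^{\times}}$. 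Since injectivity of the first map in \eqref{HES} is not being claimed, by the usual Tannakian criterion for exactness of a sequence of affine group schemes it suffices to verify: (a) $\phi$ is fully faithful and its essential image is stable under subobjects in $\mathcal{C}$; (b) $\psi(\phi(V))$ is trivial (a finite direct sum of copies of the unit object) in $\mathcal{C}'$ for every $V$ in $\mathcal{C}''$; (c) an object $W$ of $\mathcal{C}$ has $\psi(W)$ trivial in $\mathcal{C}'$ if and only if it is a subobject of $\phi(V)$ for some $V$ in $\mathcal{C}''$.

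Condition (b) is immediate, since the connection of $\phi(V)=f^{*}V$ takes values in $f^{*}\Omega^{1}_{\Sp(K)^{\times}/\Sp(K)}$, which maps to zero in $\Omega^{1}_{X^{\times}/\Sp(K)^{\times}}$, so $\psi(\phi(V))$ carries the zero connection. The technical heart of the argument is the descent statement: \emph{if $W$ is an object of $\mathcal{C}$ such that $\psi(W)$ is trivial, then $W$ lies in the essential image of $\phi$}. Granting it, both (a) and the ``only if'' direction of (c) follow: a subobject $W_{0}\hookrightarrow\phi(V)$ in $\mathcal{C}$ is carried by the exact faithful functor $\psi$ to a subobject of the trivial object $\psi(\phi(V))$, so $\psi(W_{0})$ is trivial and $W_{0}$ is then in the essential image of $\phi$; and the ``only if'' direction of (c) is exactly the descent statement. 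The ``if'' direction of (c) follows from (b) and the fact that a subobject of a trivial object is trivial. Full faithfulness of $\phi$ I would obtain from the identity $f_{*}\mathcal{O}_{X^{\times}}=\mathcal{O}_{\Sp(K)^{\times}}$ together with the projection formula, which give $H^{0}_{\mathrm{dR}}(X^{\times}/\Sp(K),\phi(W'))\cong H^{0}_{\mathrm{dR}}(\Sp(K)^{\times}/\Sp(K),W')$ for $W'$ in $\mathcal{C}''$; applied to internal Homs this yields $\mathrm{Hom}_{\mathcal{C}}(\phi(V_{1}),\phi(V_{2}))=\mathrm{Hom}_{\mathcal{C}''}(V_{1},V_{2})$.

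To establish the descent statement, assume $\psi(W)$ is trivial, say of rank $n$. Then the sheaf $W^{\nabla_{\mathrm{rel}}}$ of horizontal sections of $W$ relative to $\Sp(K)^{\times}$ is locally constant of rank $n$ and $W\cong\mathcal{O}_{X}\otimes W^{\nabla_{\mathrm{rel}}}$; putting $M:=f_{*}(W^{\nabla_{\mathrm{rel}}})$, which is locally free of rank $n$ over $\mathcal{O}_{\Sp(K)^{\times}}$, one obtains $W\cong f^{*}M$. Because the relative connection vanishes, the full connection $\nabla$ of $W$ takes values in $W\otimes f^{*}\Omega^{1}_{\Sp(K)^{\times}/\Sp(K)}$, and a computation with the integrability of $\nabla$ (again using $f_{*}\mathcal{O}_{X^{\times}}=\mathcal{O}_{\Sp(K)^{\times}}$) shows that $\nabla$ descends to an integrable connection $\nabla_{M}$ on $M$ over $\Sp(K)^{\times}/\Sp(K)$, so that $W\cong\phi(M,\nabla_{M})$ compatibly with connections. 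Finally $(M,\nabla_{M})$ belongs to $\mathcal{C}''$: it is locally free with integrable connection, and its residue $N\in\mathrm{End}(M)$ is nilpotent because, by the form of the log structure of a semistable log variety, the residue of $f^{*}M$ along each nonempty component of the boundary of $X^{\times}$ is a pullback of $N$ and is nilpotent by the hypothesis on $W$.

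The main obstacle is the descent statement, and within it two genuinely geometric inputs. The first is the connectedness input $f_{*}\mathcal{O}_{X^{\times}}=\mathcal{O}_{\Sp(K)^{\times}}$ — more precisely, that $R^{0}$ of the relative log de Rham complex of $\mathcal{O}_{X^{\times}}$ is $\mathcal{O}_{\Sp(K)^{\times}}$ — which needs properness and geometric connectedness of $X^{\times}$ over $\Sp(K)^{\times}$. The second is the verification that every sheaf produced along the way — the relative horizontal sections, $M=f_{*}(W^{\nabla_{\mathrm{rel}}})$, and the pullbacks $f^{*}M$ — is again \emph{locally free} with nilpotent residues; unlike in the smooth case over a field of characteristic zero, this cannot be taken for granted, since a coherent module with integrable log connection need not be locally free. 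A further point requiring care is that $g\colon\Sp(K)^{\times}\to\Sp(K)$ is not log smooth, so the standard base-change formalism has to be replaced by the analysis available for good morphisms.
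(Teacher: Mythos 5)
Your overall strategy (reduce to a Tannakian criterion, then verify the conditions by de Rham/descent arguments) is the same as the paper's, and the ingredients you do supply — full faithfulness of $f^{*}_{\mathrm{dR}}$ via $H^{0}_{\mathrm{dR}}$, stability of its essential image under subobjects, and the descent statement ``$\psi(W)$ trivial $\Rightarrow$ $W\cong f^{*}_{\mathrm{dR}}V$'' — correspond to the paper's propositions \ref{pifsurjective} and \ref{(a)and(b)}, including the correct worry about local freeness and nilpotent residues of $f_{*\mathrm{dR}}$ (proposition \ref{derhamcohomologywithconnection}). However, there is a genuine gap: the list of conditions you claim suffices for exactness is not a valid criterion. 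Your (a), (b), (c) amount to: $\pi(f^{*}_{\mathrm{dR}})$ is faithfully flat, the composite $L\to G\to A$ is trivial, and a representation of $G$ restricts trivially to $L$ iff it is pulled back from $A$. These only identify $A$ with $G/N$, where $N$ is the \emph{normal closure} in $G=\pi_1(X^{\times}/\Sp(K),x)$ of the image $H$ of $\pi_1(X^{\times}/\Sp(K)^{\times},x)$; they do not show $H=N$, i.e.\ that $H$ is all of the kernel. (For constant group schemes the analogous failure is visible already for $\langle(12)\rangle\subset S_3\to 1$: the composite is trivial and a representation of $S_3$ trivial on $(12)$ is trivial, yet the sequence is not exact.) The theorem of Esnault--Hai--Sun quoted in the paper (theorem \ref{criterion}(iii)) requires two further conditions that you never address.

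Concretely, you are missing: (b$'$) for every $(E,\nabla_E)$ in $MIC(X^{\times}/\Sp(K))^{\mathrm{nr}}$, the \emph{maximal trivial subobject} of $r((E,\nabla_E))$ as a representation of $H$ is of the form $r((F,\nabla_F))$ for a subobject $(F,\nabla_F)\subset(E,\nabla_E)$ over $\Sp(K)$ — in the paper this is proposition \ref{b}, with $F=f^{*}_{\mathrm{dR}}f_{*\mathrm{dR}}(E,\nabla_E)$; and (c$'$) every representation of $H$ (i.e.\ every subquotient of some $r((E,\nabla_E))$) is a \emph{quotient} of some $r((F,\nabla_F))$. Condition (c$'$) is the hardest part of the paper's proof: theorem \ref{deligne} constructs, for a rank-one subobject $(G,\nabla_G)\subset r((E,\nabla_E))$, its isotypical component $F\subset E$ and shows via an integrability argument that $\nabla_E$ stabilizes $F$, then reduces the higher-rank case to rank one using determinants; nothing in your proposal plays this role. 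Your ``descent statement,'' while correct and needed, only yields the student-version of condition (a) and cannot substitute for (b$'$) and (c$'$).
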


Moreover, we study the injectivity of the first arrow in \eqref{HES}. We expect that it is injective, by analogy with the topological case. Indeed, the log scheme $\Sp(K)^{\times}$ over $\Sp(K)$ should be thought as the analogue of the $1$-dimensional sphere (note that $\pi_1(\Sp(K)^{\times}/\Sp(K), \nu)\cong \mathbb{G}_a$ as remarked above). Hence, comparing with \eqref{HEStop}, we expect that there should be ``$\pi_2(\Sp(K)^{\times}/\Sp(K), x)$'', which should be $0$ whatever it means, at the left of $\pi_1(X^{\times}/\Sp(K)^{\times}, x)$ in the sequence \eqref{HES}. 

The study of the injectivity of the first arrow is the hardest part of the work. The result is as follows. 
 \begin{teo} Let the notations be as above. For a group scheme $G$ we denote by $G^{\mathrm{tri}}$ its maximal geometrically protrigonalizable quotient (for definition, see definition \ref{trig}). Then the following sequence is exact: 
\begin{equation}\label{HESdRsolv}
 1\rightarrow\pi_1(X^{\times}/\Sp(K)^{\times}, x)^{\mathrm{tri}}\rightarrow \pi_1(X^{\times}/\Sp(K), x)^{\mathrm{tri}}\rightarrow \pi_1(\Sp(K)^{\times}/\Sp(K),\nu)\rightarrow 1.
\end{equation}
\end{teo}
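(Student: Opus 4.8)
The plan is to reduce the theorem to an injectivity statement and then prove that injectivity by a dévissage on trigonalizable objects. Write $\mathcal C(Y)$ for the category of locally free modules with integrable connection and nilpotent residues defining the relevant fundamental group, for $Y$ any of $X^\times/\Sp(K)^\times$, $X^\times/\Sp(K)$, $\Sp(K)^\times/\Sp(K)$, and let $u^*\colon\mathcal C(X^\times/\Sp(K))\to\mathcal C(X^\times/\Sp(K)^\times)$ be restriction of the log connection along $X^\times/\Sp(K)^\times\to X^\times/\Sp(K)$. First, the surjectivity of the last map and the exactness at the middle of \eqref{HESdRsolv} are formal consequences of \eqref{HES}: since $\pi_1(\Sp(K)^\times/\Sp(K),\nu)\cong\mathbb G_a$ is already protrigonalizable it equals its own maximal geometrically protrigonalizable quotient, and given a short exact sequence $1\to N\to B\to C\to 1$ with $C$ protrigonalizable together with a homomorphism $A\to B$ of image $N$, one checks that $\operatorname{im}(A^{\mathrm{tri}}\to B^{\mathrm{tri}})$ and $\ker(B^{\mathrm{tri}}\to C)$ both equal $NR_B/R_B$ with $R_B=\ker(B\to B^{\mathrm{tri}})$. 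So the content of the theorem is the injectivity of $\pi_1(X^\times/\Sp(K)^\times,x)^{\mathrm{tri}}\to\pi_1(X^\times/\Sp(K),x)^{\mathrm{tri}}$, which by Tannakian duality is the assertion that every (geometrically) trigonalizable object of $\mathcal C(X^\times/\Sp(K)^\times)$ is a subquotient of $u^*M$ for some (geometrically) trigonalizable $M\in\mathcal C(X^\times/\Sp(K))$. One may work after base change to $\bar K$ and descend at the end, all the constructions below being compatible with base change.

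Trigonalizable objects being successive extensions of rank one objects, I would argue by induction on the length of such a filtration, in the form: every trigonalizable $P\in\mathcal C(X^\times/\Sp(K)^\times)$ is a \emph{quotient} of $u^*M$ for some trigonalizable $M$ (enough, by duality). The base case is a separate lemma: a rank one object $\mathcal L$ of $\mathcal C(X^\times/\Sp(K)^\times)$ has vanishing residues (a nilpotent residue, being multiplication by a function on a reduced divisor, is zero), and then $\mathcal L$ lifts to a rank one object $\tilde{\mathcal L}$ of $\mathcal C(X^\times/\Sp(K))$. The extra datum is the action of the ``log Euler field along the boundary'', the generator of $T_{X^\times/\Sp(K)}/T_{X^\times/\Sp(K)^\times}$ dual to the canonical sub-line-bundle $\mathcal O_X\,d\log t\subset\Omega^1_{X^\times/\Sp(K)}$ pulled back from $\Omega^1_{\Sp(K)^\times/\Sp(K)}$; the vanishing of the boundary residues is precisely what makes this action exist, be integrable with $\bar\nabla$, and have nilpotent residue.

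For the inductive step take $0\to\mathcal L\to P\to P''\to 0$ in $\mathcal C(X^\times/\Sp(K)^\times)$ with $\mathcal L$ of rank one and $P''$ trigonalizable of smaller rank, and assume $P''$ is a quotient of $u^*\tilde P''$ with $\tilde P''$ trigonalizable; pulling the extension back along $u^*\tilde P''\twoheadrightarrow P''$ reduces us to realizing, up to quotient, an extension $0\to\mathcal L\to\hat P\to u^*\tilde P''\to 0$ with class $\hat\xi\in\operatorname{Ext}^1(u^*\tilde P'',\mathcal L)\cong H^1_{\mathrm{dR}}(X^\times/\Sp(K)^\times,\mathcal{H}om(u^*\tilde P'',u^*\tilde{\mathcal L}))$ by restriction from $X^\times/\Sp(K)$, $\tilde{\mathcal L}$ being the lift from the base case. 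Decomposing the log de Rham complex of $X^\times/\Sp(K)$ along $\mathcal O_X\,d\log t$ yields, for any $\mathcal E\in\mathcal C(X^\times/\Sp(K))$, a long exact sequence
\[ \cdots\to H^i_{\mathrm{dR}}(X^\times/\Sp(K),\mathcal E)\to H^i_{\mathrm{dR}}(X^\times/\Sp(K)^\times,u^*\mathcal E)\xrightarrow{\ \mathcal N\ }H^i_{\mathrm{dR}}(X^\times/\Sp(K)^\times,u^*\mathcal E)\to\cdots, \]
$\mathcal N$ being the log monodromy operator, so the image of the first map in degree $i$ is $\ker\mathcal N$. Now replace $\tilde{\mathcal L}$ by $\tilde{\mathcal L}\otimes f^*J_k$, where $J_k$ is the $k$-dimensional indecomposable representation of $\mathbb G_a=\pi_1(\Sp(K)^\times/\Sp(K),\nu)$: it is again trigonalizable in $\mathcal C(X^\times/\Sp(K))$, $u^*(\tilde{\mathcal L}\otimes f^*J_k)\cong\mathcal L^{\oplus k}$ with split connection, and the monodromy operator on $\operatorname{Ext}^1(u^*\tilde P'',\mathcal L^{\oplus k})\cong H^1_{\mathrm{dR}}(X^\times/\Sp(K)^\times,\mathcal{H}om(u^*\tilde P'',u^*\tilde{\mathcal L}))\otimes_K J_k$ is $\mathcal N\otimes\operatorname{id}+\operatorname{id}\otimes N_0$ with $N_0$ the nilpotent of $J_k$. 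An elementary computation shows $\ker(\mathcal N\otimes\operatorname{id}+\operatorname{id}\otimes N_0)$ surjects onto $\ker(\mathcal N^k)$ via the first tensor factor; hence, \emph{provided $\mathcal N$ acts nilpotently} on $H^1_{\mathrm{dR}}(X^\times/\Sp(K)^\times,\mathcal{H}om(u^*\tilde P'',u^*\tilde{\mathcal L}))$ and $k$ is at least its nilpotency index, the class $(\hat\xi,-\mathcal N\hat\xi,\dots,(-\mathcal N)^{k-1}\hat\xi)$ lies in the image of $\operatorname{Ext}^1_{X^\times/\Sp(K)}$, producing a trigonalizable $M\in\mathcal C(X^\times/\Sp(K))$ with $\hat P$ a quotient of $u^*M$ (divide by a suitable $\mathcal L^{\oplus(k-1)}$). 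Then $P$, a quotient of $\hat P$, is a quotient of $u^*M$, closing the induction.

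The main obstacle is exactly the nilpotence of $\mathcal N$ on $H^1_{\mathrm{dR}}(X^\times/\Sp(K)^\times,\mathcal E)$ for $\mathcal E$ an object of our category — equivalently, that the Gauss--Manin $R^1f_*\mathcal E$ is again an object of $\mathcal C(\Sp(K)^\times/\Sp(K))$, i.e. that the ``goodness'' of $f$ and $h$ is preserved under pushforward. This, together with the identification of $\operatorname{Ext}^1$ in these categories with log de Rham cohomology (valid since the categories are closed under extensions, and an extension of modules with nilpotent residues again has nilpotent residues), and the bookkeeping for the adjective ``geometrically'', are the technical points to be secured; I expect the nilpotence of $\mathcal N$ — where the non-log-smoothness of $h$ and $g$ genuinely enters — to be the crux.
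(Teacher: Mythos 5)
Your overall architecture is sound and runs parallel to the paper's: reduce to a closed-immersion statement on the $\mathrm{tri}$-quotients, lift rank-one objects (your base case is the paper's theorem \ref{lifting}, whose proof is itself several pages of stratification and formal-local analysis, so your one-line justification is a sketch rather than a proof), and then climb a trigonalizable filtration by comparing $\mathrm{Ext}^1$-groups through the Wang-type long exact sequence attached to $0\to f^*\omega^1_{\Sp(K)^{\times}/\Sp(K)}\to\omega^1_{X^{\times}/\Sp(K)}\to\omega^1_{X^{\times}/\Sp(K)^{\times}}\to 0$. The formal reduction of exactness at the middle and surjectivity on the right to the injectivity of the first map is correct (the paper organizes this slightly differently, via the section of $f_L$ after a finite extension and the lemma that a split extension of $\mathbb{G}_a$ by a geometrically trigonalizable group is geometrically trigonalizable, but your bookkeeping with $R_B=\ker(B\to B^{\mathrm{tri}})$ also works given that subgroups and quotients of trigonalizable groups are trigonalizable).

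The genuine gap is exactly the point you flag as the crux and then assume: the nilpotence of the monodromy operator $\mathcal{N}$ on $H^1_{\mathrm{dR}}(X^{\times}/\Sp(K)^{\times},-)$ for coefficients in $MIC(X^{\times}/\Sp(K)^{\times})^{\mathrm{nr}}$ of the form $\mathcal{H}om(u^*\tilde P'',u^*\tilde{\mathcal{L}})$. Without $\mathcal{N}^k\hat\xi=0$ for some $k$, the vector $(\hat\xi,-\mathcal{N}\hat\xi,\dots,(-\mathcal{N})^{k-1}\hat\xi)$ never lands in $\ker(\mathcal{N}\otimes\mathrm{id}+\mathrm{id}\otimes N_0)$ and the Jordan-block twist produces nothing. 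This nilpotence is a monodromy-theorem-type statement with nonconstant coefficients on a possibly singular (normal crossing) $X$; it is not proved anywhere in the paper, no purely algebraic proof of it is routine, and the paper's construction is designed precisely to avoid it: instead of twisting by a finite $J_k$ and needing $\mathcal{N}$ to be eventually zero, the paper twists by the Hirsch extension $\mathcal{O}_X[u]$ with $du=\mathrm{dlog}1$ (i.e.\ by $K[u]=\varinjlim_k J_k$), and proposition \ref{quasiisoHirsch} shows by a direct bicomplex computation that $\mathrm{DR}((E,\nabla_E)[u])\to\mathrm{DR}(p_u((E,\nabla_E)[u]))$ is a quasi-isomorphism \emph{unconditionally}, which gives the isomorphism of $\mathrm{Ext}^1$-groups needed for the induction (proposition \ref{pi(p_u)iso}) with no nilpotence hypothesis. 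The price the paper pays is having to set up the auxiliary Tannakian category $MIC(X^{\times}/\Sp(K)[u])^{\mathrm{nr}}$ of $\mathcal{O}_X[u]$-modules and verify its abelianness and closure under extensions (sections 4 and theorem \ref{extensionsoflattices}); if you want to keep your finite-dimensional twists, you must supply a proof that $\mathcal{N}$ acts nilpotently on these $H^1$'s, and that is the missing theorem, not a technical point to be secured.
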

Here we note that $\pi_1(\Sp(K)^{\times}/\Sp(K),\nu) = \pi_1(\Sp(K)^{\times}/\Sp(K),\nu)^{\mathrm{tri}} = 
\mathbb{G}_a$. 
If $x$ comes from a section of $f: X^{\times} \rightarrow \Sp(K)^{\times}$, 
it induces a splitting $\pi(x^*): \pi_1(\Sp(K)^{\times}/\Sp(K),\nu)\rightarrow \pi_1(X^{\times}/\Sp(K), x)^{\mathrm{tri}}$ of the exact sequence \eqref{HESdRsolv}:  
\begin{equation*}
\xymatrix{
1\ar[r]& \pi_1(X^{\times}/\Sp(K)^{\times}, x)^{\mathrm{tri}}\ar[r]&\pi_1(X^{\times}/\Sp(K), x)^{\mathrm{tri}}\ar[r]& \pi_1(\Sp(K)^{\times}/\Sp(K),\nu)\ar[r] \ar @(dr,dl)[l]^{\pi(x^{*})}&1}. 
\end{equation*}
Hence we can define the monodromy action $N$ of $ \pi_1(\Sp(K)^{\times}/\Sp(K),\nu) = \mathbb{G}_a$ on $\pi_1(X^{\times}/\Sp(K), x)^{\mathrm{tri}}$ as follows: if we denote 
$\gamma \in \pi_1(X^{\times}/\Sp(K), x)^{\mathrm{tri}}$ 
the image of $1 \in \mathbb{G}_a(K) = K$ by $\pi(x^*)$, $N$ is given by 
$$N:\pi_1(X^{\times}/\Sp(K)^{\times}, x)^{\mathrm{tri}}\rightarrow \pi_1(X^{\times}/\Sp(K)^{\times}, x)^{\mathrm{tri}}; \,\,\,\, \tau\mapsto 
\gamma^{-1}\tau\gamma.$$
We will study (the prounipotent quotient of) this action in a forthcoming work in purely algebraic way, in which we will try to give an algebraic proof for the transcendental part of the proof of a $p$-adic anabelian criterion of good reduction for curves by Andreatta, Iovita and Kim \cite{AndIovKim15}.

The content of each section is as follows. In the second section, we describe our geometric setting and prove that various categories of modules with integrable connection are Tannakian. Then we define the log algebraic fundamental groups. In the third section, we prove the exactness of the homotopy sequence \eqref{HES} with four terms. The fourth section is devoted to the study of the kernel of the second map of \eqref{HES}: we describe it as the Tannaka dual of another category of modules with integrable connection. In the last section, we prove that the kernel of the maximal geometrically protrigonalizable quotient of the second map in \eqref{HES} is isomorphic to the maximal geometrically protrigonalizable quotient of the first term of \eqref{HES}, proving the exactness of the homotopy sequence \eqref{HESdRsolv} for the maximal geometrically protrigonalizable quotients.
\section*{Acknowledgements}
The main part of this work was done when the first author was at the Graduate School of Mathematical Sciences of the University of Tokyo supported by a postdoctoral fellowship and kaken-hi (grant-in-aid) of the Japanese Society for the Promotion of Science (JSPS). She would like to thank Niels Borne for his interest in this work and for having suggested to translate the notion of a $K(\pi, 1)$ space for the log point. She would also like to thank Olivier Benoist, Carlo Gasbarri and Adriano Marmora for useful discussions. 
The authors would like to thank the referee for the comments which greatly improved the article. The second author is partly supported by JSPS Grant-in-Aid for Scientific Research  (B)23340001, (C)25400008, (A)15H02048 and (C)17K05162.

\section{The categories of representations}\label{The categories of representations}

Throughout this paper, let $K$ be a field of characteristic $0$. 
Sheaves on schemes are considered on the \'{e}tale site. 
Also, throughout this paper, we freely use the notion of log structures developed in 
\cite{Kato89}, \cite{KatoF96}. Let $N$ be the log structure on $\Sp(K)$ associated to the pre log structure given by the monoid homomorphism $\mathbb{N}\rightarrow K; \,1 \mapsto 0$. By \cite[(1.3)]{Kato89}, $N$ is isomorphic to  
$\mathbb{N} \times K^{\times}$ through which the structure morphism 
$N \cong \mathbb{N} \times K^{\times} \to K$ of the log structure is written as 
$(0,a) \mapsto a, (n,a) \mapsto 0 \,(n \not= 0)$.
We consider $\Sp(K)$ as a log scheme in two ways: either endowed with the trivial log structure, in which case we denote it by $\Sp(K)$, or endowed with the log structure $N$, in which case we denote it by $\Sp(K)^{\times}$. The identity map $\Sp(K)\rightarrow \Sp(K)$ induces a morphism of log schemes $g:\Sp(K)^{\times}\rightarrow \Sp(K)$ which is not log smooth. 
By the definition given in \cite[(1.7)]{Kato89}, we see that 
the sheaf of $1$-differentials $\omega^1_{\Sp(K)^{\times}/\Sp(K)}$ is a $1$-dimensional $K$-vector space generated by the element $\mathrm{dlog} (1,1)$ with  
$(1,1) \in \mathbb{N} \times K^{\times} \cong N$. 

\begin{defi}\label{NC} 
Let $X^{\times} := (X,M)$ be a log scheme such that 
$X$ is connected. 
We say that $X^{\times}$ is a normal crossing log variety 
if $X$ is a normal crossing variety in the sense of \cite[pp.342--343]{KatoF96} and $M$ is a log structure of semistable type in the sense of \cite[definition 11.6]{KatoF96}. 
\end{defi}

As explained in \cite[Section 11]{KatoF96}, if 
$X^{\times} := (X,M)$ is a normal crossing log variety, 
the structure morphism $X \to \Sp(K)$ is amplified to a morphism of log schemes 
$f:X^{\times} \to \Sp(K)^{\times}$. Also, \'etale locally on $X$, 
the structure morphism $X \to \Sp(K)$ 
factors as $X \to \Sp(K[x_1, \dots, x_n]/(x_1\dotsm x_r)) 
\to \Sp(K)$ for some $1 \leq r \leq n$ such that the first map is \'etale  and that 
$f$ admits a chart of the form 
\begin{equation}\label{chartoff}
\begin{array}{lll}
\mathbb{N}^r\rightarrow \mathcal{O}_X,  
& \mathbb{N}\rightarrow K,  & \mathbb{N}\rightarrow \mathbb{N}^r, \\
e_i \mapsto x_i
& 1\mapsto 0 & 
1 \mapsto (1, \dots, 1) 
\end{array}
\end{equation} 
where $e_i := (0, \dots, 1, \dots, 0)$ (with $1$ at the $i$-th place) and we denoted the image of $x_i 
\in K[x_1, \dots, x_n]/(x_1\dotsm x_r)$ in $\mathcal{O}_X$ also by $x_i$. 

The sheaf of $1$-differentials   
$\omega^1_{X^{\times}/\Sp(K)^{\times}}$ is a locally free $\mathcal{O}_X$-module which 
is described \'etale locally as 
\begin{equation}\label{formofomega1}
\omega^1_{X^{\times}/\Sp(K)^{\times}}\cong\dfrac{\bigoplus_{i=1}^r\mathcal{O}_{X}\mathrm{dlog}x_i}{\mathcal{O}_X \!\cdot\!\sum^r_{i=1}\mathrm{dlog}x_i} \oplus \bigoplus_{i =r+1}^n \mathcal{O}_X dx_{i}. 
\end{equation}

The morphism of log schemes $h:=g\circ f:X^{\times}\rightarrow \Sp(K)$ is not log smooth, but the sheaf of $1$-differentials is still locally free: \'etale locally it is given by 
$$\omega^1_{X^{\times}/\Sp(K)}\cong\bigoplus_{i=1}^r\mathcal{O}_{X}\mathrm{dlog}x_i \oplus \bigoplus_{ i=r+1}^n\mathcal{O}_X dx_{i}.$$

Throughout this paper, $X^{\times} := (X,M)$ will be a normal crossing log variety unless otherwise stated, and assume that the morphism $f:X^{\times} \to \Sp(K)^{\times}$ as above is given and fixed. (Thus $h = g \circ f$ is also given.) 

We are going to study the relation among the algebraic fundamental groups associated to $f:X^{\times}\rightarrow \Sp(K)^{\times}$, $h:X^{\times}\rightarrow \Sp(K)$ and $g:\Sp(K)^{\times}\rightarrow \Sp(K),$ defined as the Tannaka duals of the categories of certain coherent modules with integrable connection. 

\begin{rmk}
When we define the algebraic fundamental groups associated to $f$ and $h$, 
we will assume the existence of a $K$-rational point of $X$. 
This assumption implies geometric connectedness of $X$. 
\end{rmk}

We recall the definition of a module with integrable connection in the context of log geometry.

\begin{defi}\label{connections}

Let $(Y,L)\rightarrow(S,L')$ be a map of fine log schemes and let $E$ be an $\mathcal{O}_{Y}$-module. A connection on $E$ is an $\mathcal{O}_S$-linear map
$$\nabla_E: E \rightarrow E\otimes \omega^1_{(Y,L)/(S,L')}$$ 
which satisfies the Leibniz rule 
$$\nabla_E(ae)=a\nabla_E(e)+e\otimes da \quad (a \in \mathcal{O}_Y, e\in E).$$
We can extend $\nabla_E$ to a map 
$$ \nabla_{E,i}: E\otimes \omega^i_{(Y,L)/(S,L')}\xrightarrow{\nabla_{E,i}}E\otimes \omega^{i+1}_{(Y,L)/(S,L')}$$ 
for $i\geq1$ by $\nabla_{E,i}(e\otimes \omega)=e \otimes d\omega+\nabla_E(e)\wedge \omega$. We say that $\nabla_{E}$ is integrable if $\nabla_{E,1} \circ \nabla_{E}=0.$

Every $\mathcal{O}_Y$-module $E$ endowed with a connection $ \nabla_E$ induces an $\mathcal{O}_Y$-linear map from the sheaf of log derivations 
$\mathcal{D}er((Y,L)/(S,L')) = \mathcal{H}om_{\mathcal{O}_Y}(\omega^1_{(Y,L)/(S,L')}, \mathcal{O}_Y)$ \cite[definition 5.1, proposition 5.3]{KatoF96} to the sheaf of $\mathcal{O}_S$-endomorphisms of $E$
$$\nabla_E:\mathcal{D}er((Y,L)/(S,L'))\longrightarrow \mathcal{E}nd_{\mathcal{O}_S}(E)$$
defined by the composite
$$
\nabla_E(D):E\xrightarrow{\nabla_E} E\otimes \omega^1_{(Y,L)/(S,L')}\xrightarrow{\mathrm{id}\otimes D}E
\quad (D\in \mathcal{D}er((Y,L)/(S,L'))). $$ 
$\nabla_E(D)$ satisfies the Leibniz rule  
\begin{equation}\label{LeibnitzD}
\nabla_E(D)(ae)=a\nabla_E(D)(e)+D(a)e \quad 
(D\in \mathcal{D}er((Y,L)/(S,L')), a\in \mathcal{O}_Y, e\in E).
\end{equation}
Conversely, if $\omega^1_{(Y,L)/(S,L')}$ is locally free, every $\mathcal{O}_Y$-linear map
$$\mathcal{D}er((Y,L)/(S,L'))\longrightarrow \mathcal{E}nd_{\mathcal{O}_S}(E)$$
satisfying \eqref{LeibnitzD} comes from a unique connection. 

We denote by $MIC((Y,L)/(S,L'))$ the category whose objects are pairs $(E,\nabla_E)$ consisting of a coherent $\mathcal{O}_Y$-module $E$ and an integrable connection $\nabla_E$ and whose morphisms are horizontal morphisms: 
here, for coherent modules $(E, \nabla_E)$, $(F, \nabla_F)$ with integrable 
connection, an $\mathcal{O}_Y$-linear morphism $\gamma: E \to F$ is called horizontal 
if it satisfies the equality $ (\gamma \otimes {\mathrm{id}}) \circ \nabla_E = \nabla_F \circ \gamma $. If $\omega^1_{(Y,L)/(S,L')}$ is locally free, this condition is equivalent to the 
condition $\gamma(\nabla_E(D)(e))=\nabla_F(D)(\gamma(e))$ for every $e\in E$ and $D\in \mathcal{D}er((Y,L)/(S,L'))$.
\end{defi}
\begin{rmk}\label{rmk:formal}
When $Y=\Sp(B)$ where $B$ is a Noetherian $I$-adically complete ring for $I$ an ideal of $B$ and $S=\Sp(A)$, we also consider the $I$-adic completion $\hat{\omega}^1_{(Y,L)/(S,L')}$ of  $\omega^1_{(Y,L)/(S,L')}$ (regarded as $B$-module). 
Assume that $\hat{\omega}^1_{(Y,L)/(S,L')}$ is finitely generated as $B$-module. 
Then a formal connection on a $B$-module $E$ is an $A$-linear map
$$\nabla_E: E \rightarrow E\otimes \hat{\omega}^1_{(Y,L)/(S,L')}$$ 
which satisfies the Leibniz rule
$$\nabla_E(be)=b\nabla_E(e)+e\otimes db \quad (b \in B, e\in E).$$
The notion of integrability is defined in the same way as before. 
We denote by $\widehat{MIC}((Y,L)/(S,L'))$ the category whose objects are pairs $(E,\nabla_E)$ consisting of a $B$-module $E$ and a formal integrable connection $\nabla_E$ and whose morphisms are horizontal, with the same meaning as before,  morphisms of $B$-modules. The module of formal log derivations 
$\widehat{\mathcal{D}er}((Y,L)/(S,L'))$ is defined by 
$\widehat{\mathcal{D}er}((Y,L)/(S,L')) := Hom_{B}(\hat{\omega}^1_{(Y,L)/(S,L')}, B)$. 
(A more intrinsic definition of $\widehat{\mathcal{D}er}((Y,L)/(S,L'))$ is possible but 
we do not need it.) As before, we have a description of a formal integrable connection 
in terms of the module of formal log derivations when $\hat{\omega}^1_{(Y,L)/(S,L')}$ is projective 
as $B$-module. 

Note that the category $\widehat{MIC}((Y,L)/(S,L'))$ depends not only on 
the morphism $(Y,L) \rightarrow (S,L')$ but also on the ideal $I$. However, we would like to 
denote it without mentioning $I$ explicitly in the symbol, because the ideal $I$ will be clear 
in the cases we treat in this paper. 
\end{rmk}
\begin{rmk}\label{rmk:formal-open}
Let $Y=\Sp(B)$, $I \subset B$, $S=\Sp(A)$, 
$\hat{\omega}^1_{(Y,L)/(S,L')}$ be as in remark \ref{rmk:formal} and 
let $Y' = \Sp(B') \subset \Sp(B) = Y$ be an affine open subscheme of $Y$. 
Then we put $\hat{\omega}^1_{(Y',L)/(S,L')} := B' \otimes_B 
\hat{\omega}^1_{(Y,L)/(S,L')}$, and using this, we 
define the notion of 
formal connection on a $B'$-module $E$ 
as an $A$-linear map
$$\nabla_E: E \rightarrow E\otimes \hat{\omega}^1_{(Y',L)/(S,L')}$$ 
satisfying the Leibniz rule. 
The notion of integrability is also defined.
We denote by $\widehat{MIC}((Y',L)/(S,L'))$ the category whose objects are pairs $(E,\nabla_E)$ consisting of a $B'$-module $E$ and a formal integrable connection $\nabla_E$ and whose morphisms are horizontal 
morphisms of $B'$-modules. 
\end{rmk}
\begin{rmk}\label{tensorstructure}
When $\omega^1_{(Y,L)/(S,L')}$ is locally free, 
the category $MIC((Y,L)/(S,L'))$ is an abelian tensor category. Given $(E, \nabla_E), (F, \nabla_F) \in MIC((Y,L)/(S,L'))$, the tensor product of them is defined as  the pair $(E\otimes F, \nabla)$, where $\nabla=\mathrm{id}\otimes \nabla_F+\nabla_E\otimes \mathrm{id}$. The object $(\mathcal{O}_Y, d)$, where $d$ is the composition of the differential $d:\mathcal{O}_Y\rightarrow \Omega^1_{Y/S}$ with the map $\Omega^1_{Y/S}\rightarrow \omega^1_{(Y,L)/(S,L')}$, is the unit object of  $MIC((Y,L)/(S,L'))$ and is called the trivial connection.
\end{rmk}

\begin{rmk}\label{rmk2.5}
The category $MIC(\Sp(K)^{\times}/\Sp(K))$ is equivalent to the category of finite dimensional $K$-vector spaces endowed with a $K$-linear endomorphism. 
\end{rmk}

We will use the theory of Tannakian categories over $K$ to define log algebraic fundamental groups. We recall the definition and the main theorem which relates the fundamental group and the category of its representations.
\begin{defi}\label{Tannakiancategory}
Let $\mathcal{C}$ be a rigid abelian tensor category with unit object $\underline{1}$. We say that $\mathcal{C}$ is a neutral Tannakian category over $K$ if $\mathrm{End}(\underline{1})=K$ and if there exists a $K$-linear faithful exact tensor functor (fiber functor) $\mathcal{C}\rightarrow \mathrm{Vec}_K$, where  $\mathrm{Vec}_K$ is the category of finite dimensional $K$-vector spaces. \end{defi}
\begin{teo}[{\cite[Theorem 2.11]{DelMil82}}]\label{Saavedra}
Let $\mathcal{C}$ be a neutral Tannakian category over $K$ endowed with a fiber functor $\omega: \mathcal{C} \to \mathrm{Vec}_K$. Then the functor 
$$(K\text{-}\mathrm{algebras})\longrightarrow \mathrm{(Groups)}$$
which sends a $K$-algebra $R$ to the group of tensor automorphisms $\mathrm{Aut}^{\otimes}(\mathcal{C}\longrightarrow \mathrm{Vec}_K\longrightarrow \mathrm{Mod}_R)$ (where $\mathrm{Mod}_R$ is the category of $R$-modules)
is representable by a pro-algebraic group $G(\mathcal{C}, \omega)$ over $K$.
Moreover, $\omega$ induces an equivalence of categories 
$$\mathcal{C}\cong \mathrm{Rep}_K(G(\mathcal{C}, \omega)),$$
where $\mathrm{Rep}_K(G(\mathcal{C}, \omega))$ denotes the category of finite dimensional $K$-representations of $G(\mathcal{C}, \omega)$. 

\end{teo}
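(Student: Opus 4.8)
The final statement is the Tannakian reconstruction theorem, and the plan is to realise $\mathcal{C}$, through $\omega$, as the category of finite-dimensional representations of the affine group scheme $\underline{\mathrm{Aut}}^{\otimes}(\omega)$ of tensor automorphisms of $\omega$, and then to observe that this group scheme is pro-algebraic. The first move is a reduction to the case in which $\mathcal{C}$ is generated, as a rigid abelian tensor subcategory, by a single object $X$: in general $\mathcal{C}$ is the filtered union (2-colimit) of the subcategories $\langle X\rangle$ so generated, and if each such subcategory is equivalent to $\mathrm{Rep}_K(G_X)$ for an algebraic group $G_X$, with the inclusions $\langle X\rangle\hookrightarrow\langle X'\rangle$ inducing faithfully flat surjections $G_{X'}\twoheadrightarrow G_X$, then setting $G(\mathcal{C},\omega):=\varprojlim_X G_X$ and passing to the colimit of the equivalences yields $\mathcal{C}\simeq\mathrm{Rep}_K(G(\mathcal{C},\omega))$ with $G(\mathcal{C},\omega)$ pro-algebraic. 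Representability of the functor $R\mapsto\mathrm{Aut}^{\otimes}(\omega_R)$, where $\omega_R:=(-\otimes_K R)\circ\omega$, likewise follows from the single-generator case by the same inverse-limit argument.

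For the single-generator case I put $P:=\omega(X)$, a finite-dimensional $K$-vector space, and use that every object of $\mathcal{C}$ is then a subquotient of a finite direct sum of objects of the form $X^{\otimes a}\otimes(X^{\vee})^{\otimes b}$. The device I would use is the coend: the $K$-coalgebra $A:=\int^{Y\in\mathcal{C}}\omega(Y)^{\vee}\otimes_K\omega(Y)$, which is the colimit of the finite-dimensional coalgebras $\mathrm{End}_K(\omega(Y))^{\vee}$ along the maps induced by the tensor constructions, classifies $\omega$ in the sense that $\omega$ lifts canonically to an equivalence of $\mathcal{C}$ with the category $\mathrm{Comod}_A^{\mathrm{fd}}$ of finite-dimensional $A$-comodules. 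The tensor structure on $\mathcal{C}$ equips $A$ with a commutative algebra structure compatible with its coproduct, and rigidity of $\mathcal{C}$ (existence of duals) produces an antipode, so $A$ is a commutative Hopf algebra; one takes $G_X:=\mathrm{Spec}\,A$, which is of finite type precisely because $\mathrm{Rep}_K(G_X)\simeq\mathcal{C}$ has the tensor generator $X$ (equivalently $G_X$ is a closed subgroup of $\mathrm{GL}(P)$, cut out as the stabiliser of all the subobjects occurring in the tensor constructions on $P$). Finally one checks directly that the $R$-points of $\mathrm{Spec}\,A$ coincide with the tensor automorphisms of $\omega_R$, and that $\omega$ followed by the forgetful functor $\mathrm{Comod}_A^{\mathrm{fd}}\to\mathrm{Vec}_K$ recovers $\omega$, so that $\mathcal{C}\simeq\mathrm{Rep}_K(G(\mathcal{C},\omega))$ as claimed.

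The main obstacle is the reconstruction statement underlying the previous paragraph: that a $K$-linear abelian category equipped with an exact faithful $K$-linear functor $\omega$ to $\mathrm{Vec}_K$ is, through $\omega$, equivalent to $\mathrm{Comod}_A^{\mathrm{fd}}$ (equivalently to $\mathrm{Rep}_K(G_X)$). This is where the hypotheses on $\omega$ are genuinely used: faithfulness and exactness make $\omega$ detect monomorphisms and epimorphisms and, by a dévissage over the subobject lattices, all subobjects, which is exactly what is needed to force the comparison functor $\mathcal{C}\to\mathrm{Comod}_A^{\mathrm{fd}}$ to be both fully faithful and essentially surjective — in the single-generator case this is a Chevalley-type argument identifying a subgroup of $\mathrm{GL}(P)$ with the common stabiliser of the tensor subobjects it fixes. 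Promoting $A$ to a Hopf algebra and verifying that $\mathrm{Spec}\,A$ represents $\mathrm{Aut}^{\otimes}(\omega)$, while formal, also requires care, since $\omega$ respects tensor products and duals only up to the coherence isomorphisms of the two tensor structures, so the bialgebra identities and the antipode axioms must be traced through those isomorphisms rather than asserted on the nose. Granting these two points, the pro-algebraicity of $G(\mathcal{C},\omega)$ is automatic from the reduction in the first step, each $G_X$ being of finite type.
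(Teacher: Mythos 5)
The paper does not prove this statement: it is quoted verbatim from Deligne--Milne \cite[Theorem 2.11]{DelMil82} as background, so there is no in-paper argument to compare against. Your proposal is a faithful outline of the standard proof in that reference -- reduction to a tensor-generated subcategory $\langle X\rangle$, reconstruction of $\langle X\rangle$ as finite-dimensional comodules over the coend coalgebra $A$ via the faithful exact fiber functor, promotion of $A$ to a commutative Hopf algebra using the tensor structure and rigidity, identification of $\Sp(A)(R)$ with $\mathrm{Aut}^{\otimes}(\omega_R)$ (where rigidity upgrades tensor endomorphisms to automorphisms), and passage to the inverse limit over faithfully flat transition maps -- and you correctly isolate the genuinely hard step, namely the essential surjectivity of $\mathcal{C}\to\mathrm{Comod}_A^{\mathrm{fd}}$. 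The one hypothesis you never visibly use is $\mathrm{End}(\underline{1})=K$, which is needed (via the structure theory of rigid abelian tensor categories) to guarantee that $\omega(X)$ is finite dimensional of the correct rank and that the whole dévissage behaves over $K$ rather than over a larger endomorphism field; apart from that omission the sketch is sound.
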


It is also known that, 
for a pro-algebraic group $G$ over $K$, 
$\mathrm{Rep}_K(G)$ is a neutral Tannakian category 
with forgetful functor $\omega: \mathrm{Rep}_K(G) \to \mathrm{Vec}_K$ as fiber functor  
and that 
there is a canonical isomorphism $G \overset{\cong}{\to} G(\mathrm{Rep}_K(G),\omega)$ 
(\cite[Proposition 2.8]{DelMil82}).

\begin{rmk}
We cannot consider the whole categories $ MIC(X^{\times}/\Sp(K)^{\times})$ and $MIC(X^{\times}/\Sp(K))$ to define log algebraic fundamental groups, 
because they are not Tannakian. 
We explain this for the category $MIC(X^{\times}/\Sp(K)^{\times}),$ but analogous examples exist for $MIC(X^{\times}/\Sp(K)).$ Unlike the case of trivial log structures, it is no longer true that every coherent module with integrable connection is locally free. To see this, we consider as $X^{\times}/\Sp(K)^{\times}$ the log schemes 
$(\Sp(K[x,y]/(xy)),M)/\Sp(K)^{\times}$, where $M$ is the log structure 
associated to $\mathbb{N}^2 \to K[x,y]/(xy); \, e_1 \mapsto x, e_2 \mapsto y$ 
and the morphism $(\Sp(K[x,y]/(xy)),M) \to \Sp(K)^{\times}$ over $\Sp(K)$ 
is defined by $\mathbb{N} \to \mathbb{N}^2; \, 1 \mapsto e_1 + e_2$. 
In this case, the module of $1$-differentials is 
$$\omega^1_{(\Sp(K[x,y]/(xy)), M)/\Sp(K)^{\times}}\cong \frac{(K[x,y]/(xy))\mathrm{dlog}x\oplus (K[x,y]/(xy))\mathrm{dlog}y}{(K[x,y]/(xy))(\mathrm{dlog}x+\mathrm{dlog}y)},$$
which is free of rank $1$. 
Then, $(xK[x,y]/(xy), d)$, where $d$ is the connection induced by the trivial connection on $K[x,y]/(xy)$, is an object of $MIC(X^{\times}/\Sp(K)^{\times})$; indeed, $d(xf)=fdx+xdf=x(f\mathrm{dlog}x+df)$. But $xK[x,y]/(xy)$ is a coherent $K[x,y]/(xy)$-module which is not free. 
This object is a non-trivial subobject of the unit object 
$(K[x,y]/(xy), d)$, which is impossible for a Tannakian category over $K$. 

Even if we consider the full subcategory of $MIC(X^{\times}/\Sp(K)^{\times})$ of locally free modules with integrable connection, this is not a Tannakian category because it is not abelian. To see this, we consider the affine $1$-dimensional case as before and we look at the map of free modules with connections
$$\varphi:(K[x,y]/(xy), \nabla)\longrightarrow (K[x,y]/(xy), d); \quad f\mapsto xf$$
where $\nabla(f):=df+f\mathrm{dlog}x.$ One can check that the map $\varphi$ is horizontal and that $\mathrm{Coker}(\varphi)$  is given by $(K[y], d')$ (where $d'$ is the induced connection), which is not locally free as $K[x,y]/(xy)$-module. 
\end{rmk}
If $H^0_{\mathrm{dR}}(X^{\times}/\Sp(K))$ (resp. $H^0_{\mathrm{dR}}(X^{\times}/\Sp(K)^{\times})$) is a field, the full subcategory of $MIC(X^{\times}/\Sp(K))$ (resp. of $MIC(X^{\times}/\Sp(K)^{\times})$) consisting of the unipotent objects 
(iterated extensions of the unit object) 
is a Tannakian category (\cite[proposition 3.1.2]{Shi00}).
We want to construct a subcategory of $MIC(X^{\times}/\Sp(K))$ (resp. $MIC(X^{\times}/\Sp(K)^{\times})$) which is Tannakian and properly contains the subcategory of all the unipotent objects. To do this we introduce a notion of nilpotent residues, which will be a punctual notion as in \cite[definition 2.1.1]{Ogu03}.

The definitions which follow are written for a morphism of fine log schemes of the form   $(Y,L)\rightarrow(\Sp(K),L')$. This will be mainly applied to $f:X^{\times}\rightarrow \Sp(K)^{\times}$, $h:X^{\times}\rightarrow \Sp(K)$ and $g:\Sp(K)^{\times}\rightarrow \Sp(K)$.

Let $a: (Y,L)\rightarrow(\Sp(K),L')$ be as above. Let $y$ be a geometric point 
of $Y$, $\mathfrak{m}_{Y,y}$ the unique maximal ideal of $\mathcal{O}_{Y,y}$ 
($:=$ the strict localization of $\mathcal{O}_Y$ at $y$) 
and $K(y)=\mathcal{O}_{Y,y}/\mathfrak{m}_{Y,y}$ the residue field. 
As observed in \cite[lemma 1.3.1]{Ogu94}, there exists a surjective morphism 
$$\omega^1_{(Y,L)/(\Sp(K),L')}(y)\rightarrow K(y)\otimes \overline{L}^{\mathrm{gp}}_{y}/\overline{L'}^{\mathrm{gp}}_{y},$$
where $\omega^1_{(Y,L)/(\Sp(K),L')}(y)=\omega^1_{(Y,L)/(\Sp(K),L'),y}\otimes K(y)$, $\overline{L}^{\mathrm{gp}}_{y}=L^{\mathrm{gp}}_{y}/\mathcal{O}^*_{Y,y}$,  $\overline{L'}^{\mathrm{gp}}_{y}=(a^*L')^{\mathrm{gp}}_{y}/\mathcal{O}^*_{Y,y}$ and  $L^{\mathrm{gp}}$, $(a^*L')^{\mathrm{gp}}$ are the sheaves of groups associated to $L$ and $a^*L'$, respectively.
The fineness of the log structures $L, L'$ implies that 
$K(y)\otimes \overline{L}^{\mathrm{gp}}_{y}/\overline{L'}^{\mathrm{gp}}_{y}$ is 
a finite-dimensional $K$-vector space. 

Let $(E, \nabla_E)$ be an object of  $MIC((Y, L)/(S,L'))$ and 
we denote by $E(y)=E_y/\mathfrak{m}_{Y,y} E_y\cong E_y\otimes K(y)$ the fiber of $E$ at $y$, which is 
a finite dimensional vector space over $K(y)$. Then, $\nabla_E$ induces a unique linear morphism called the residue $\rho_{y}:E(y)\rightarrow E(y)\otimes \overline{L}^{\mathrm{gp}}_{y}/\overline{L'}^{\mathrm{gp}}_{y}$; indeed, if $e\in E(y)$ and $b\in K(y)$, $\rho_{y}(be)=b\nabla_E(e)+e \otimes d(b)$ and $d(b)=0$ in $\overline{L}^{\mathrm{gp}}_{y}/\overline{L'}^{\mathrm{gp}}_{y}$. 

\begin{defi}\label{residuenilpMN}
Let $(E, \nabla_E)$ be an object of  $MIC((Y, L)/(\Sp(K),L'))$. 
For a geometric point $y$ of $Y$, 
we say that $(E, \nabla_E)$ has nilpotent residues at $y$ if, for every $K(y)$-linear map $ t_{y}:K(y)\otimes \overline{L}^{\mathrm{gp}}_{y}/\overline{L'}^{\mathrm{gp}}_{y}\rightarrow K(y)$, the composite map $({\rm id}\otimes t_y)\circ \rho_y$ is nilpotent:
\begin{equation*}
\xymatrix{
E(y)\ar[dr]\ar[r]^(.33){\rho_{y}}&E(y)\otimes \overline{L}^{\mathrm{gp}}_{y}/\overline{L'}^{\mathrm{gp}}_{y}\ar[d]^{{\rm id} \otimes t_{y}}\\
                             &E(y). 
}
\end{equation*}
We say that $(E, \nabla_E)$ has nilpotent residues if it has nilpotent residues at any 
geometric point over a closed point of $Y$. 

We denote by $MIC((Y,L)/(\Sp(K),L'))^{\mathrm{nr}}$ the category of locally free $\mathcal{O}_{Y}$-modules of finite rank on $(Y,L)/(\Sp(K),L')$ with integrable connection having nilpotent residues.

\end{defi}

\begin{rmk}
It is an interesting problem if any object $(E,\nabla)$ in $MIC((Y,L)/(\Sp(K),L'))^{\mathrm{nr}}$ 
has nilpotent residues at any geometric points of $Y$ which is not necessarily over a closed point, 
namely, the nilpotence of residues at closed points implies that over any points. 
Unfortunately, we do not have an answer in general case. Nevertheless,  
we remark here that the above question has the affirmative answer in the case 
where $(Y,L)/(\Sp(K),L')$ is $X^{\times}/\Sp(K)$, $X^{\times}/\Sp(K)^{\times}$ or 
$\Sp(K)^{\times}/\Sp(K)$, namely, the case of our main interest. 

We give a proof only in the case $X^{\times}/\Sp(K)$, because the proof in 
the other cases is similar. Since the assertion is local, we may assume that 
there exists an strict etale morphism 
$$X^{\times} \to (\Sp(K[x_1, \dots, x_n]/(x_1 \cdots x_r), M), $$ 
where $M$ is the log structure associated to 
$\mathbb{N}^r \to K[x_1, \dots, x_n]/(x_1 \cdots x_r); e_i \mapsto x_i$. 
For $i \in \{1, \dots, r\}$, let $X_i$ be the zero locus of $x_i$ and 
for $I \subseteq \{1, \dots, r\}$, put $X_I := \bigcap_{i \in I} X_i, 
X_I^{\circ} := X_I \setminus \bigcup_{J \supsetneq I} X_J$. 
Then $X = \sqcup_{I \subseteq \{1, \dots, r\}} X_I^{\circ}$ set-theoretically. 
Also, we have the equality $\overline{M}^{\rm gp}_{|X_I^{\circ}} = \Z^{|I|}_{X^{\circ}_I}$, 
where $\overline{M}^{\rm gp} := M^{\rm gp}/\mathcal{O}_X^*$. 

Now take any object $(E,\nabla)$ in $MIC(X^{\times}/\Sp(K))^{\mathrm{nr}}$, 
a geometric point $x$ of $X$ which is not necessarily over a closed point. Let 
$I$ be the subset of $\{1, \dots, r\}$ such that the image of $x$ in $X$ belongs to 
$X_I^{\circ}$. Then $\nabla$ induces a linear map 
$$\rho_I: E_{|X^{\circ}_I} \rightarrow E_{|X^{\circ}_I} \otimes 
\overline{M}^{\rm gp}_{|X_I^{\circ}} = E_{|X^{\circ}_I} \otimes 
\Z^{|I|}_{X^{\circ}_I}.$$
For a geometric point $y$ of $X^{\circ}_I$, 
we denote the specialization of $\rho_I$ to $y$ by 
$$ \rho_y: E(y) \rightarrow E(y) \otimes 
\overline{M}^{\rm gp}_{y} = E(y) \otimes 
\Z^{|I|}. $$
To prove the assertion, we should prove that, for any $K(x)$-linear map $t_x: K(x) \otimes \overline{M}^{\rm gp}_{x} = K(x) \otimes \Z^{|I|} \rightarrow K(x)$, the map $({\rm id} \otimes t_x) \circ \rho_x: E(x) \rightarrow E(x)$ is nilpotent. 
By replacing $X$ by a small affine open subscheme containing the image of $x$ in $X$, 
we may assume that $E_{|X^{\circ}_I}$ is free of rank $r$. 

For $1 \leq i \leq |I|$, 
we denote the $i$-th projection 
$\overline{M}^{\rm gp}_{|X_I^{\circ}} = \Z^{|I|}_{X_I^{\circ}} \rightarrow \Z_{X_I^{\circ}}$
by $\pi_i$ and denote its fiber 
$\overline{M}^{\rm gp}_{x} = \Z^{|I|} \rightarrow \Z$ at $x$ by 
$\pi_{i,x}$. Then, by integrability of $\nabla$, it suffices to prove the nilpotence of 
$\rho_{x,i} := ({\rm id} \otimes \pi_{i,x}) \circ \rho_x: E(x) \rightarrow E(x)$ for any $i$. 
Since this map is the specialization of the map 
$$ \rho_{I,i}:= ({\rm id} \otimes \pi_i) \circ \rho_I: E_{|X^{\circ}_I} \rightarrow 
E_{|X^{\circ}_I}, $$
we are reduced to proving the nilpotence of the map $\rho_{I,i}$. 
Choose for any closed point of $X^{\circ}_I$ a geometric point over it and 
let $S$ be the set of such geometric points. Then, by assumption, 
for any $y \in S$ and for any $1 \leq i \leq |I|$, the map $\rho_{y,i}$ is nilpotent. 
Hence, as an element in ${\rm End}(E_{|X^{\circ}_I}(y))$, $\rho_{y,i}^{r+1}$ is equal to $0$. 
On the other hand, since $X^{\circ}_I$ is reduced and Jacobson and $E_{|X^{\circ}_I}$ is free, 
the natural map 
${\rm End}(E_{|X^{\circ}_I}) \to \prod_{y \in S} {\rm End}(E_{|X^{\circ}_I}(y))$ 
is injective. Hence $\rho_{I,i}^{r+1}$ is equal to $0$ in ${\rm End}(E_{|X^{\circ}_I})$. 
So the proof of the assertion is finished. 
\end{rmk}

\begin{rmk}\label{rmk:formalnilpotentres}
When we are in the situation of remark \ref{rmk:formal} (resp.  
remark \ref{rmk:formal-open}) with $A = K$, 
we can define the notion of having nilpotent residues at a geometric point $y$ of $Y$ analogously to definition \ref{residuenilpMN} for formal integrable connections. 
We denote the category of finitely generated projective $B$-modules 
(resp. $B'$-modules) 
with formal integrable connection having nilpotent residues at any geometric point over a closed point of $Y$ by $\widehat{MIC}((Y, L)/(\Sp(K),L'))^{\mathrm{nr}}$ (resp. $\widehat{MIC}((Y', L)/(\Sp(K),L'))^{\mathrm{nr}}$). 
\end{rmk}
\begin{rmk} \label{vecspatannakian}The category $MIC(\Sp(K)^{\times}/\Sp(K))^{\mathrm{nr}}$ is equivalent to the category of finite dimensional $K$-vector spaces endowed with a nilpotent endomorphism. 
It is a neutral Tannakian category and 
the functor
$$\nu:MIC(\Sp(K)^{\times}/\Sp(K))^{\mathrm{nr}}\longrightarrow \mathrm{Vec}_K; 
\quad (V,\nabla_V) \mapsto V $$
gives a fiber functor.
\end{rmk}
\begin{defi}\label{pi1KN/K} 
We define the log algebraic fundamental group of $\Sp(K)^{\times}$ over $\Sp(K)$ with base point $\nu$ as the Tannaka dual of $MIC(\Sp(K)^{\times}/\Sp(K))^{\mathrm{nr}}$, \emph{i.e.} 
$$\pi_{1}(\Sp(K)^{\times}/\Sp(K), \nu):=G(MIC(\Sp(K)^{\times}/\Sp(K))^{\mathrm{nr}}, \nu)$$
with the notation as in theorem \ref{Saavedra}.
\end{defi}

The group $\pi_{1}(\Sp(K)^{\times}/\Sp(K), \nu)$ is isomorphic to $\mathbb{G}_a$: indeed, in 
\cite[example VIII 2.1, example VIII 6.4]{Mil12}, 
it is shown that any object $\rho: \mathbb{G}_a \to GL(V)$ in 
$\mathrm{Rep}_K(\mathbb{G}_a)$ has the form 
$\rho(t) = \exp(Nt)$ for a unique nilpotent endomorphism $N$ on $V$, and so 
the category $\mathrm{Rep}_K(\mathbb{G}_a)$ is equivalent to the category of 
finite dimensional $K$-vector spaces endowed with a nilpotent endomorphism, which is nothing but $MIC(\Sp(K)^{\times}/\Sp(K))^{\mathrm{nr}}$. 
Thus we see that 
$$\mathbb{G}_a \cong G(MIC(\Sp(K)^{\times}/\Sp(K))^{\mathrm{nr}}, \nu) =: 
\pi_{1}(\Sp(K)^{\times}/\Sp(K), \nu)$$ 
by the fact given after theorem \ref{Saavedra}.

We are going to prove now that $MIC(X^{\times}/\Sp(K)^{\times})^{\mathrm{nr}}$ is a neutral Tannakian category over $K$ under suitable assumptions. 
\begin{prop}\label{relativeabelian}
The category  $MIC(X^{\times}/\Sp(K)^{\times})^{\mathrm{nr}}$ is abelian. 
\end{prop}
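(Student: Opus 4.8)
The plan is to show that $MIC(X^{\times}/\Sp(K)^{\times})^{\mathrm{nr}}$ is a full abelian subcategory of $MIC(X^{\times}/\Sp(K)^{\times})$ by checking it is closed under kernels and cokernels. The category $MIC(X^{\times}/\Sp(K)^{\times})$ itself is abelian (remark \ref{tensorstructure}, since $\omega^1_{X^{\times}/\Sp(K)^{\times}}$ is locally free), so given a horizontal morphism $\varphi: (E,\nabla_E) \to (F,\nabla_F)$ between objects of $MIC(X^{\times}/\Sp(K)^{\times})^{\mathrm{nr}}$, the kernel and cokernel exist in $MIC(X^{\times}/\Sp(K)^{\times})$; I must verify that both are again \emph{locally free} of finite rank and have \emph{nilpotent residues}. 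Local freeness is the crucial point: the counterexamples in the remarks preceding the proposition show that in general a coherent subobject or quotient of a locally free object with connection need not be locally free, so the nilpotent-residue hypothesis must be exactly what rules this out.

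First I would reduce to a local computation. Working \'etale locally, pick the chart \eqref{chartoff} so that $\omega^1_{X^{\times}/\Sp(K)^{\times}}$ has the explicit shape \eqref{formofomega1}; after localizing further we may assume $E, F$ are free. The idea is that nilpotence of the residues forces the connection to behave, near each stratum $x_i = 0$, like a connection with a suitably nilpotent ``Euler operator'' $x_i \partial_{x_i}$, and for such connections the classical Turrittin/Deligne-type analysis shows that horizontal submodules and quotient modules are again free (one can diagonalize or put the residue endomorphism in a form compatible with a free decomposition, using that a nilpotent endomorphism over the residue field lifts to an automorphism-stable flag of free submodules). Concretely, I would show: a coherent subobject $(E', \nabla')$ of $(E,\nabla_E)$ in $MIC(X^{\times}/\Sp(K)^{\times})$ with $(E,\nabla_E) \in MIC(X^{\times}/\Sp(K)^{\times})^{\mathrm{nr}}$ is automatically locally free, because its fiber $E'(y)$ at each point injects $\nabla$-equivariantly into $E(y)$, the residue $\rho_y$ preserves $E'(y)$, nilpotence is inherited, and then a local argument (e.g. reduction to the case of a DVR times a polynomial ring, where finitely generated + stable under a nilpotent connection $\Rightarrow$ free) gives freeness. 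Dually for the cokernel. Then nilpotence of residues for $\ker\varphi$ and $\operatorname{coker}\varphi$ is immediate: the residue of the subquotient is the restriction/quotient of $\rho_y$, and restrictions and quotients of nilpotent endomorphisms (after composing with any $t_y$) are nilpotent.

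Having established that kernels and cokernels computed in $MIC(X^{\times}/\Sp(K)^{\times})$ stay inside the subcategory, it follows formally that $MIC(X^{\times}/\Sp(K)^{\times})^{\mathrm{nr}}$ is abelian and that the inclusion is exact: the image of $\varphi$ is both $\ker(\operatorname{coker}\varphi)$ and $\operatorname{coker}(\ker\varphi)$, and the canonical map from coimage to image is an isomorphism because it already is in the ambient abelian category. The main obstacle I anticipate is precisely the local-freeness step for kernels and cokernels — translating ``nilpotent residues'' into a statement strong enough to exclude the pathological examples like $xK[x,y]/(xy)$. I expect the key technical lemma to be of the form: over an \'etale-local model with chart \eqref{chartoff}, a coherent $\mathcal{O}_X$-module with integrable connection having nilpotent residues is locally free; this should be proved by dévissage along the strata $X_I$, using at each step that the relevant residue operator is nilpotent to produce a free complement, perhaps citing or adapting Ogus's punctual analysis in \cite{Ogu03}.
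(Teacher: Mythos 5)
Your overall framing coincides with the paper's: pass to the ambient abelian category $MIC(X^{\times}/\Sp(K)^{\times})$, show that kernels and cokernels of morphisms between objects of the nilpotent-residue subcategory are again locally free with nilpotent residues, and check local freeness on (completed) local rings. However, the key lemma you propose to carry the argument --- ``a coherent $\mathcal{O}_X$-module with integrable connection having nilpotent residues is locally free'' --- is false, and so are the two sub-arguments you offer for it. For $X=\Sp(K[x,y]/(xy))$ with its standard semistable log structure over $\Sp(K)^{\times}$, take $E=\mathcal{O}_{X_1}=K[x,y]/(xy,y)$, the structure sheaf of one branch, with the connection induced by $d$ (which descends because $d(ya)=y(da+a\,\mathrm{dlog}y)$ lies in $(y)\otimes\omega^1$). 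This is coherent with an integrable connection; its residue at the origin is zero, and at every other closed point the condition of definition \ref{residuenilpMN} is vacuous because $\overline{M}^{\mathrm{gp}}_y/\overline{N}^{\mathrm{gp}}_y=0$ there. Yet $E$ is not locally free at the origin. (The pushforward of $\mathcal{O}$ from the normalization, with the componentwise connection, gives a torsion-free counterexample of the same kind.) Your claim that the fiber $E'(y)$ of a coherent subobject injects into $E(y)$ also fails, since the fiber functor is only right exact: for $xK[x,y]/(xy)\subset K[x,y]/(xy)$ the induced map on fibers at the origin is zero from a one-dimensional source. And ``finitely generated plus stable under a connection with nilpotent residue implies free'' already fails over the log point of a DVR: the skyscraper $K[[t]]/(t)$ with the zero operator satisfies the Leibniz rule for $t\,d/dt$ and has zero residue. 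So punctual nilpotence of residues alone does not rule out the pathologies; what does is the extra information that the module is a kernel or cokernel of a map between objects that are \emph{already} locally free with nilpotent residues, and your proposal never uses that information.

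The paper's actual mechanism is different and is exactly what supplies this missing input. After reducing to $\widehat{\mathcal{O}}_{X,x}=K[[x_1,\dots,x_n]]/(x_1\cdots x_r)$ and then to the case $r=n$ (lemma \ref{RS}, via a Katz-style horizontal-section projector), it proves by induction on $n$ (propositions \ref{lacrocesuN} and \ref{inductivepass}) that the category of \emph{free} formal modules with integrable connection and nilpotent residues is equivalent to a purely linear-algebraic category (finite-dimensional vector spaces with a nilpotent endomorphism, iterated). The equivalence comes from a Kedlaya-type change of basis putting the connection matrix in constant strictly upper triangular form; the nilpotent-residue hypothesis enters precisely through the invertibility of $Y\mapsto H_0Y-YH_0+iY$ for $i\neq 0$. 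Kernels and cokernels are then computed on the linear-algebra side and transported back, which is what forces them to be free. To repair your proof you would need some version of this normal-form/equivalence statement for the ambient free modules, rather than a d\'evissage resting on nilpotence of residues of the coherent sub- or quotient module itself.
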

\begin{proof} Since $MIC(X^{\times}/\Sp(K)^{\times})^{\mathrm{nr}}$ is a full subcategory of the abelian category $MIC(X^{\times}/\Sp(K)^{\times})$, it is sufficient to prove that the kernel and the cokernel of any morphism in $MIC(X^{\times}/\Sp(K)^{\times})^{\mathrm{nr}}$ are locally free $\mathcal{O}_X$-modules with nilpotent residues.

To prove that a coherent $\mathcal{O}_X$-module $E$ is locally free it is enough to prove that, for every geometric point $x$ over a closed point in $X$, the stalk $E_x$ is a free $\mathcal{O}_{X, x}$-module.  
It is moreover enough to prove this at the level of completed local ring 
 $\widehat{\mathcal{O}}_{X, x}$, which has the form 
$K[[x_1, \dots, x_n]]/(x_1\cdots x_r)$ with $K$ algebraically closed. 
Also, to prove the nilpotence of residues, it suffices to check it at the closed point 
of $\Sp(\widehat{\mathcal{O}}_{X, x})$. 
Thanks to lemma \ref{RS} below, we can suppose that 
$\widehat{\mathcal{O}}_{X, x} \cong K[[x_1, \dots, x_n]]/(x_1\cdots x_n)$ with $n \geq 2$.
Then we proceed by induction on $n$: in proposition \ref{lacrocesuN} below 
we prove the result for $n=2$, and 
in proposition \ref{inductivepass} below we prove the induction step.

\end{proof}

\begin{rmk}\label{notationlocalderivations} In what follows we consider on 
the spectrum of  $S=K[[x_1, \dots, x_n]]/(x_1\cdots x_r)$ 
the log structure $M$ associated to 
the monoid homomorphism 
$$\mathbb{N}^r \rightarrow 
K[[x_1, \dots, x_n]]/(x_1\cdots x_r); \,\,\,\, e_i \mapsto x_i. $$ 
Then 
$$
\hat{\omega}^1_{(\Sp(S), M)/\Sp(K)^{\times}}\cong\frac{\bigoplus_{i=1}^rS\mathrm{dlog}x_i}{S \cdot \sum^r_{i}\mathrm{dlog}x_i} \oplus \bigoplus_{i = r+1}^n S dx_{i}$$ 
and a basis of it is given by 
$\{\mathrm{dlog}x_1, 
\sum_{i=1}^{2}\mathrm{dlog}x_{i}, \dots, 
\sum_{i=1}^{r-1}\mathrm{dlog}x_{i},dx_{r+1}, \dots, dx_{n}\}. $  
We will use several times in the remaining part of this section its dual basis as a basis
of $$\widehat{\mathcal{D}er}((\Sp(S), M)/\Sp(K)^{\times}) = Hom(\hat{\omega}^1_{(\Sp(S), M)/\Sp(K)^{\times}}, S),$$ which we denote by 
$\{\partial_1, \dots, \partial_{r-1}, D_{r+1}, \dots, D_n\}$; namely, 
for $i=1, \dots, r-1$ we denote by $\partial_i$ the derivation that sends $\mathrm{dlog}x_i$ to $1$, $\mathrm{dlog}x_{i+1}$ to $-1$, $\mathrm{dlog}{x_j}$ to $0$ for every $j\neq i, i+1$, and $dx_j$ to $0$ for every $j=r+1, \dots, n$, while we denote by $D_i$ for $i=r+1, \dots, n$ the derivation which sends $d{x_{i}}$ to $1$, $d{x_{j}}$ to $0$ for $j=r+1, \dots, n$, $j\neq i$ and $\mathrm{dlog}{x_j}$ to $0$ for every $j=1, \dots, r-1$. 
For a formal connection $\nabla_E$ on a coherent $S$-module $E$, 
the integrability of $\nabla_E$ is equivalent to the commutativity of 
the operators $\nabla_E(\partial_1), \dots, \nabla(\partial_{r-1}), 
\nabla_E(D_{r+1}), \dots, \nabla_E(D_n)$. 
\end{rmk}

\begin{lemma}\label{RS}
Let $R=K[[x_1, \dots, x_r]]/(x_1\cdots x_r)$ and $S=K[[x_1, \dots, x_n]]/(x_1\cdots x_r)$ be as above. 
If the category $\widehat{MIC}((\Sp(R), M)/\Sp(K)^{\times})^{\mathrm{nr}}$ is stable by kernel and cokernel of any morphism, then the category $\widehat{MIC}((\Sp(S), M)/\Sp(K)^{\times})^{\mathrm{nr}}$ is stable by kernel and cokernel of any morphism.
\end{lemma}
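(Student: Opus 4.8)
The plan is to reduce the statement about $S = K[[x_1,\dots,x_n]]/(x_1\cdots x_r)$ to the statement about $R = K[[x_1,\dots,x_r]]/(x_1\cdots x_r)$ by treating the extra variables $x_{r+1},\dots,x_n$ as "smooth" directions, for which the usual theory of connections over a field of characteristic $0$ applies. Concretely, observe that $S = R[[x_{r+1},\dots,x_n]]$, so an object of $\widehat{MIC}((\Sp S, M)/\Sp(K)^{\times})^{\mathrm{nr}}$ is a finitely generated projective $S$-module $E$ with commuting operators $\nabla_E(\partial_1),\dots,\nabla_E(\partial_{r-1})$ (the "log part", coming from $R$) together with $\nabla_E(D_{r+1}),\dots,\nabla_E(D_n)$ (the "smooth part"), in the notation of remark \ref{notationlocalderivations}, and the nilpotence of residues condition only involves the $\partial_i$ (the residue map factors through $\overline{M}^{\mathrm{gp}}$, which does not see the $dx_j$). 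So we are looking at $\widehat{MIC}((\Sp R, M)/\Sp(K)^{\times})^{\mathrm{nr}}$-objects equipped with an extra integrable connection along the formally smooth fibration $\Sp S \to \Sp R$.

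The key steps: First I would make precise the equivalence that lets us "peel off" the smooth variables. Given a morphism $\varphi\colon (E,\nabla_E) \to (F,\nabla_F)$ in $\widehat{MIC}((\Sp S, M)/\Sp(K)^{\times})^{\mathrm{nr}}$, let $G$ (resp. $G'$) be its kernel (resp. cokernel) as an $S$-module with its induced formal integrable connection — these exist in the abelian category $\widehat{MIC}((\Sp S, M)/\Sp(K)^{\times})$. I must show $G$ and $G'$ are finitely generated projective over $S$ and have nilpotent residues. For projectivity, the idea is that the formally smooth connection along $x_{r+1},\dots,x_n$ forces local freeness: by a standard Cauchy–Kovalevskaya / formal-exponential argument over a characteristic-$0$ field, a finitely generated $S = R[[x_{r+1},\dots,x_n]]$-module with an integrable connection relative to $R$ is isomorphic, compatibly with the $R$-connection operators, to $E_0 \widehat{\otimes}_R S$ where $E_0 = E/(x_{r+1},\dots,x_n)E$ is the restriction to the "special fiber" $\Sp R$, and kernels/cokernels are computed fiberwise. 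Thus $G \cong (\ker \varphi_0)\widehat{\otimes}_R S$ and $G' \cong (\mathrm{coker}\,\varphi_0)\widehat{\otimes}_R S$, where $\varphi_0\colon (E_0,\nabla_{E_0}) \to (F_0,\nabla_{F_0})$ is the induced morphism in $\widehat{MIC}((\Sp R, M)/\Sp(K)^{\times})^{\mathrm{nr}}$. By hypothesis $\ker\varphi_0$ and $\mathrm{coker}\,\varphi_0$ lie in $\widehat{MIC}((\Sp R, M)/\Sp(K)^{\times})^{\mathrm{nr}}$, hence are finitely generated projective over $R$; base-changing along the faithfully flat (indeed formally smooth) $R \to S$ keeps them finitely generated projective over $S$.

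Finally I would check the residue condition descends: the residue of $G = (\ker\varphi_0)\widehat{\otimes}_R S$ at the closed point of $\Sp S$ is identified with the residue of $\ker\varphi_0$ at the closed point of $\Sp R$ (again because the residue only involves $\overline{M}^{\mathrm{gp}}$, and $\overline{M}^{\mathrm{gp}}$ together with the relevant operators $\rho_y$ are unchanged under the base change $R \to S$), and the latter is nilpotent by hypothesis; likewise for $G'$. Since it suffices to check nilpotence of residues at the closed point (as noted in the proof of proposition \ref{relativeabelian}), this completes the argument. The main obstacle is the first step — rigorously establishing that a finitely generated module over $R[[x_{r+1},\dots,x_n]]$ with integrable connection relative to $R$ trivializes as $E_0\widehat{\otimes}_R S$ and that kernels and cokernels are computed on the special fiber; this requires a formal-power-series convergence argument for the flat sections along the $x_j$-directions, using characteristic $0$ in an essential way, and some care because $R$ itself is not regular (it has the node-type singularity), so one cannot simply invoke a formal smoothness statement over a regular base and must instead argue directly with the relative differentials $\hat\omega^1_{(\Sp S,M)/(\Sp R,M)} \cong \bigoplus_{j=r+1}^n S\,dx_j$, which is genuinely free and makes the relative connection well-behaved regardless of the singularity of $R$.
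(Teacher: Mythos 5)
Your proposal is correct and follows essentially the same route as the paper: the paper also peels off the smooth variables $x_{r+1},\dots,x_n$ by identifying $E$ with $\overline{E}\otimes_R S$, where $\overline{E}$ is the module of horizontal sections for the $\nabla_E(D_i)$, via the Katz-style projector $P(e)=\sum_{\bold k}(-1)^{|\bold k|}x^{\bold k}D^{\bold k}(e)/{\bold k}!$ (which realizes the ``formal-exponential'' trivialization you describe and gives $\overline{E}\cong E/IE$), and then computes kernels and cokernels through this equivalence. The point you flag as the main obstacle is exactly the content of the paper's argument, handled there by checking that the natural map $\overline{E}\otimes_R S\to E$ is bijective.
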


\begin{proof}

For an object $(E,\nabla_E)$ in 
$\widehat{MIC}((\Sp(S), M)/\Sp(K)^{\times})^{\mathrm{nr}}$, put 
$$ \overline{E} := \{ e \in E \,|\, \nabla_E(D_i)(e) = 0 \,(r+1 \leq i \leq n)\}. $$ 
Then $\overline{E}$ is stable by the action of $\nabla_E(\partial_i) \, (1 \leq i \leq r-1)$. 
Then, it suffices to prove that $\overline{E}$ with the above action defines 
an object in $\widehat{MIC}((\Sp(R), M)/\Sp(K)^{\times})^{\mathrm{nr}}$ and that 
the functor 
$$ \widehat{MIC}((\Sp(S), M)/\Sp(K)^{\times})^{\mathrm{nr}} \to 
\widehat{MIC}((\Sp(R), M)/\Sp(K)^{\times})^{\mathrm{nr}}; \quad (E,\nabla_E) \mapsto \overline{E} $$ 
is an equivalence of categories whose quasi-inverse is given by 
$(\overline{E},\nabla_{\overline{E}}) \mapsto (E := \overline{E} \otimes_R S, \nabla_E)$ 
with the action $\nabla_E(\partial_i) \, (1 \leq i \leq r-1), \nabla_E(D_i)\,(r+1 \leq i \leq n)$ defined by 
$\nabla_E(\partial_i) := {\rm id} \otimes d(\partial_i) + \nabla_{\overline{E}}(\partial_i) \otimes {\rm id}, 
\nabla_E(D_i) := {\rm id} \otimes d(D_i)$: 
indeed, if this claim is proved and if we are given a morphism $\varphi: E \to F$ in 
$\widehat{MIC}((\Sp(S), M)/\Sp(K)^{\times})^{\mathrm{nr}}$, it induces a morphism 
$\overline{\varphi}: \overline{E} \to \overline{F}$ in 
$\widehat{MIC}((\Sp(R), M)/\Sp(K)^{\times})^{\mathrm{nr}}$, and 
${\rm Ker}(\overline{\varphi}), {\rm Coker}(\overline{\varphi})$ are defined as objects in 
$\widehat{MIC}((\Sp(R), M)/\Sp(K)^{\times})^{\mathrm{nr}}$. Then we have 
${\rm Ker}(\varphi) = {\rm Ker}(\overline{\varphi}) \otimes_R S, 
{\rm Coker}(\varphi) = {\rm Coker}(\overline{\varphi}) \otimes_R S$ and they are 
objects in $\widehat{MIC}((\Sp(S), M)/\Sp(K)^{\times})^{\mathrm{nr}}$, as required. 

We prove the claim in the previous paragraph, following  
\cite[proposition 8.9]{Kat70}. Define the map $P: E \to E$ by 
$$ P(e) := \sum_{{\bold k} \in \mathbb{N}^{n-r}} (-1)^{|\bold k|} x^{\bold k} D^{\bold k}(e)/{\bold k}!, $$
where, for ${\bold k} = (k_1, \dots, k_{n-r}) \in \mathbb{N}^{n-r}$, 
$|{\bold k}| = \sum_{i=1}^{n-r} k_i$, $x^{\bold k} = x_{r+1}^{k_1} \cdots x_{n}^{k_{n-r}}$, $D^{\bold k} = D_{r+1}^{k_1} \cdots D_n^{k_{n-r}}$ and ${\bold k}! = k_1! \cdots k_{n-r}!$. 
Then we see that the image of $P$ is contained in $\overline{E}$ and  that 
$P|_{\overline{E}} = {\rm id}_{\overline{E}}$. Moreover, 
if we denote the kernel of the surjection $\pi: S \to R$ defined by 
$x_i \mapsto x_i \, (1 \leq i \leq r), x_i \mapsto 0 \, (r+1 \leq i \leq n)$ by $I$, we have 
$P(e) \equiv e \, ({\rm mod}\,IE)$. Thus we see that $P$ induces the isomorphism 
\begin{equation}\label{eq:eie}
E/IE \xrightarrow{\cong} \overline{E}. 
\end{equation}
In particular, $\overline{E}$ is a free $R$-module 
and it has nilpotent residues. 
So $\overline{E}$ (endowed with the action of $\nabla_E(\partial_i) \, (1 \leq i \leq r-1)$) 
defines an object in  
$\widehat{MIC}((\Sp(R), M)/\Sp(K)^{\times})^{\mathrm{nr}}$. 
To prove the rest of the claim, it suffices to prove that the natural map 
$\iota: \overline{E} \otimes_R S \to E$ is an isomorphism. 
Surjectivity follows from the isomorphism \eqref{eq:eie}. 
To prove injectivity, let $e_1, ..., e_s$ be a basis of $\overline{E}$ and 
assume $\sum_{i=1}^s f_ie_i = 0$ in $E$ with $f_i \in S$. 
If $f_i$ is non-zero for some $i$, 
we have $\pi(D^{\bold k}(f_i)) \not= 0$ for some $i$ and 
some ${\bold k} \in \mathbb{N}^{n-r}$. 
Then we have 
$$ 0 = P(D^{\bold k}(\sum_{i=1}^s f_ie_i)))= \sum_{i=1}^s \pi(D^{\bold k}(f_i)) e_i, $$
and this contradicts the linear independence of $e_i$'s over $R$. Thus 
$f_i = 0$ for all $i$ and this shows the injectivity of the map $\iota$. 
So we are done. 
\end{proof}

\begin{prop}\label{lacrocesuN} 
The category $\widehat{MIC}((\Sp(K[[x,y]]/(xy)), M)/\Sp(K)^{\times})^{\mathrm{nr}}$ is abelian. 
\end{prop}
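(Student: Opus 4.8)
\emph{Proof plan.} Set $S := K[[x,y]]/(xy)$. Since $\widehat{MIC}((\Sp(S), M)/\Sp(K)^{\times})^{\mathrm{nr}}$ is a full subcategory of the abelian category $\widehat{MIC}((\Sp(S), M)/\Sp(K)^{\times})$, it suffices to show that the kernel and the cokernel (formed in the ambient category) of any morphism $\varphi \colon (E,\nabla_E) \to (F,\nabla_F)$ of the category belong again to it. Now $S$ is local with maximal ideal $\mathfrak{m} = (x,y)$, so ``finitely generated projective'' $=$ ``free'' over $S$, and $\Sp(S)$ has a unique closed point, the origin; hence the task is to prove that $\mathrm{Ker}(\varphi)$ and $\mathrm{Coker}(\varphi)$ are free $S$-modules whose induced connection has nilpotent residue at the origin. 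By remark \ref{notationlocalderivations} (case $r=n=2$) the module $\widehat{\mathcal{D}er}((\Sp(S),M)/\Sp(K)^{\times})$ is generated by the single ``degree'' derivation $\partial$ with $\partial(x)=x$, $\partial(y)=-y$, integrability is automatic, an object is just a free $S$-module $E$ with a $K$-linear operator $\theta := \nabla_E(\partial)$ satisfying $\theta(ae) = a\theta(e) + \partial(a)e$, and nilpotence of residues means exactly that $\overline{\theta} := \theta \bmod \mathfrak{m} \in \mathrm{End}_K(E/\mathfrak{m}E)$ is nilpotent.

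\emph{Key step: a constant normal form for $\theta$.} I would exploit the $K$-linear splitting $S = K \oplus xK[[x]] \oplus yK[[y]]$ (reflecting that $\Sp(S)$ is the node $\Sp(K[[x]]) \cup_{\Sp(K)} \Sp(K[[y]])$) to gauge $\theta$ into a constant shape. Fixing a basis of $E$, write $\theta = \nabla_0 + N$, where $\nabla_0$ applies $\partial$ coordinatewise and $N \in \mathrm{End}_S(E) = \mathrm{Mat}(S)$; the splitting of $S$ then gives $N = N_0 + N_x + N_y$ with $N_0 \in \mathrm{Mat}(K)$ (note $N_0 = \overline{\theta}$, so $N_0$ is nilpotent), $N_x \in x\mathrm{Mat}(K[[x]])$ and $N_y \in y\mathrm{Mat}(K[[y]])$. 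The plan is to solve for $g = \mathrm{id} + h_x + h_y \in GL(E)$ (with $h_x \in x\mathrm{Mat}(K[[x]])$, $h_y \in y\mathrm{Mat}(K[[y]])$) such that $g^{-1}\theta g = \nabla_0 + N_0$. Since $\nabla_0 g - g\nabla_0$ is multiplication by the matrix $\partial(g)$ of entrywise derivatives, this becomes $\partial(h) = -(N_x + N_y)(\mathrm{id} + h) - \mathrm{ad}_{N_0}(h)$; because $xy = 0$ in $S$ one has $N_x h_y = N_y h_x = 0$, so the equation decouples into an $x$-part $\partial(h_x) = -N_x - \mathrm{ad}_{N_0}(h_x) - N_x h_x$ and a $y$-part $\partial(h_y) = -N_y - \mathrm{ad}_{N_0}(h_y) - N_y h_y$, each a one-variable recursion whose coefficient in degree $k \geq 1$ is governed by the operator $k\cdot\mathrm{id} + \mathrm{ad}_{N_0}$. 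This operator is invertible since $N_0$ is nilpotent (hence $\mathrm{ad}_{N_0}$ is nilpotent) and $k \neq 0$ in $K$ — here characteristic zero is essential — so the recursions are uniquely solvable and the resulting series converge $\mathfrak{m}$-adically. After this change of basis we may thus assume $\theta_E = \nabla_0^E + N_0^E$ and $\theta_F = \nabla_0^F + N_0^F$ with $N_0^E, N_0^F$ constant nilpotent matrices.

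\emph{Conclusion.} In these bases let $A \in \mathrm{Mat}(S)$ be the matrix of $\varphi$. Horizontality $\varphi \circ \theta_E = \theta_F \circ \varphi$ translates into $\partial(A) = -\mathrm{ad}(A)$, where $\mathrm{ad}(A) := N_0^F A - A N_0^E$; as $\mathrm{ad}$ is the difference of commuting nilpotent operators (left and right multiplication by nilpotent matrices), it is a nilpotent operator in $A$. Splitting $A = A_0 + A_x + A_y$ and projecting onto the three summands of $S$, the $x$- and $y$-parts read $\partial(A_x) = -\mathrm{ad}(A_x)$ and $\partial(A_y) = -\mathrm{ad}(A_y)$, and the same invertibility of $k\cdot\mathrm{id} + \mathrm{ad}$ for $k \geq 1$ forces $A_x = A_y = 0$, while the $K$-part gives $\mathrm{ad}(A_0) = 0$. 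Hence $\varphi$ is given by a constant matrix $A_0$ over $K$ with $N_0^F A_0 = A_0 N_0^E$. It follows that $\mathrm{Ker}(\varphi) = \mathrm{Ker}(A_0) \otimes_K S$ and $\mathrm{Coker}(\varphi) = \mathrm{Coker}(A_0) \otimes_K S$ are free $S$-modules; moreover $\theta_E$ restricts to $\mathrm{Ker}(\varphi)$ and $\theta_F$ descends to $\mathrm{Coker}(\varphi)$ (because $N_0^E$ and $N_0^F$ commute with $A_0$), with residues at the origin equal to $N_0^E|_{\mathrm{Ker}(A_0)}$ and to the endomorphism induced by $N_0^F$ on $\mathrm{Coker}(A_0)$, both nilpotent. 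Therefore $\mathrm{Ker}(\varphi), \mathrm{Coker}(\varphi) \in \widehat{MIC}((\Sp(S), M)/\Sp(K)^{\times})^{\mathrm{nr}}$, and the category is abelian.

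\emph{Main obstacle.} The technical heart is the normal-form step: setting up the decoupled recursions correctly using the relation $xy = 0$, and verifying the invertibility of $k\cdot\mathrm{id} + \mathrm{ad}_{N_0}$ from the nilpotence of the residue together with $\mathrm{char}\,K = 0$. Once $\theta$ is brought to the constant form $\nabla_0 + N_0$, the remaining arguments are elementary linear algebra over $K$.
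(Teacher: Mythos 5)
Your proposal is correct, and its technical heart --- gauging $\nabla_E(\partial_1)$ into the constant form $d(\partial_1)+N_0$ by solving, separately in the $x$- and $y$-variables, a recursion whose degree-$k$ step is governed by the invertible operator $k\cdot\mathrm{id}+\mathrm{ad}_{N_0}$ (nilpotent plus invertible, using $\mathrm{char}\,K=0$) --- is exactly the paper's change-of-basis equation $HU+d(\partial_1)U=UH_0$ solved with $U=U_0+\sum_{i\geq 1}U_ix^i+\sum_{j\geq 1}U'_jy^j$. The only divergence is in the endgame: the paper characterizes the $K$-span of the normalized basis intrinsically as $\overline{E}=\{e\in E \mid \nabla_E(\partial_1)^N(e)=0 \text{ for some } N\}$ and deduces an equivalence with the category of finite-dimensional $K$-vector spaces with a nilpotent endomorphism, whereas you rerun the same recursion on the matrix of a horizontal morphism to conclude it is constant; both routes give $\varphi=A_0\otimes\mathrm{id}$ with $A_0$ over $K$, hence freeness and nilpotence of residues for kernel and cokernel.
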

\begin{proof}
Let $\partial_1$ be as in the notation introduced in remark \ref{notationlocalderivations}, 
which is a basis of $$\widehat{\mathcal{D}er}((\Sp(K[[x,y]]/(xy)), M)/\Sp(K)^{\times}) =  
\mathcal{H}om(\hat{\omega}^1_{(\Sp(K[[x,y]]/(xy)), M)/\Sp(K)^{\times}}, K[[x,y]]/(xy)).$$ 
For an object $(E,\nabla_E)$ in 
$\widehat{MIC}((\Sp(K[[x,y]]/(xy)), M)/\Sp(K)^{\times})^{\mathrm{nr}}$, put 
$$\overline{E} := \{ e \in E \,|\, \exists N \in \mathbb{N}, \nabla_E^N(\partial_1)(e) = 0\}. $$ 
Then $\overline{E}$ is stable by the action of 
$\nabla_E(\partial_1)$ and this action is locally nilpotent. 
Then, it suffices to prove that the correspondence 
$(E,\nabla_E) \mapsto (\overline{E}, \nabla_E(\partial_1))$ defines the 
functor 
$$ \widehat{MIC}((\Sp(K[[x,y]]/(xy)), M)/\Sp(K)^{\times})^{\mathrm{nr}} \to 
\left\{ 
(V,N) \,\left|\, 
\begin{aligned} 
& \text{$V$: a finite-dimensional $K$-vector space} \\
& \text{$N: V \to V$: a nilpotent endomorphism}
\end{aligned}
\right. 
\right\} $$ 
and that it 
is an equivalence whose quasi-inverse is given by 
$(V,N) \mapsto (E := V \otimes_K K[[x,y]]/(xy), \nabla_E)$ 
with the action $\nabla_E(\partial_1)$ defined by 
$\nabla_E(\partial_1) :=  {\rm id} \otimes d(\partial_1) + N \otimes {\rm id}.$  
(The reason is the same as that in lemma \ref{RS}.) 

To prove the above claim, it suffices to construct a basis of $\overline{E}$ over 
$K$ which is a basis of $E$ over $K[[x,y]]/(xy)$. 
To do so, first we prove that there exists a basis of $E$ on which $\nabla_E(\partial_1)$ 
acts as a strictly upper triangular matrix with entries in $K$. If $E$ has rank $n,$ 
by hypothesis of nilpotent residues, we can write 
$$\nabla_E(\partial_1)= d(\partial_1) + H $$
with respect to some basis, where 
$H=(a_{i,j}(x,y))_{i,j}$ is an $n\times n$ matrix such that $H_0=(a_{i,j}(0,0))_{i,j}$ is a 
strictly upper triangular matrix. We want to prove that there exists a change of basis given by a matrix $U$ such that
\begin{equation}\label{cambiodibase} 
HU+d(\partial_1) U=UH_0.
\end{equation}
 We proceed as in \cite[lemma 3.2.8]{Ked07}. We  write $U$ as the sum $U=U_0+\sum_{i=1}^{\infty}U_{i}x^i+\sum_{j=1}^{\infty}U'_{j}y^j$ with $U_0$, $U_{i}$ and $U'_{j}$ matrices with entries in $K$. If $U_0$ is invertible, $U$ is invertible.  

If we write $H=H_0+\sum_{i=1}^{\infty}H_{i}x^i+\sum_{j=1}^{\infty}H'_{j}y^j$, then the equation \eqref{cambiodibase} is equivalent to the condition that for every $i>0$ 
\begin{equation}\label{cambiodibasex}
H_0U_{i}-U_{i}H_0+iU_{i}=-\sum_{k=0}^{i-1}H_{i-k}U_{k}
\end{equation}
and that for every $j> 0$ 
\begin{equation}\label{cambiodibasey}
H_0U'_{j}-U'_{j}H_0-jU'_{j}=-\sum_{k=0}^{j-1}H'_{j-k}U'_{k}.
\end{equation}
(Here we put $U'_0 := U_0$.) 
For $i=0,$ the equation (\ref{cambiodibase}) is nothing but $H_0U_0=U_0H_0$, hence we can choose $U_0$ to be the identity matrix. 

Next, note that the linear map $\phi:X\mapsto H_0X-XH_0$ is nilpotent, since so is $H_0$. Thus the linear map $X \mapsto H_0X-XH_0+iX$ (for $i\neq 0$) is the sum of an invertible map 
$\psi: X \mapsto iX$ and a nilpotent map $\phi$. Hence it is invertible, because 
$$\frac{\psi}{\psi+\phi}=\sum_{i=0}^{\infty}\left(- \frac{\phi}{\psi}\right)^i. $$
Hence we can construct $U_i$'s $(i > 0)$ and $U'_j$'s $(j > 0)$ uniquely so that \eqref{cambiodibasex} and 
\eqref{cambiodibasey} are satisfied. 

So we have a basis $e_1,\dots ,e_s$ of $E$ on which $\nabla_E(\partial_1)$ 
acts as a strictly upper triangular matrix with entries in $K$. 
We prove the equality $Ke_1+\cdots +Ke_s = \overline{E}$, which implies the claim we want. 
The inclusion $Ke_1+\cdots +Ke_s \subset \overline{E}$ follows from the definition of the basis. 
We prove the opposite inclusion. Let $e=b_1e_1+\dots +b_se_s$ be an element of $ E$ with $b_i\in K[[x,y]]/(xy)$ for $i=1, \dots ,s$ such that there exists an integer $N$ with $\nabla_E(\partial_1)^{N}(e)=0$. We have to prove that $b_1, \dots, b_s$ are in fact elements of $K$.  
Since $\nabla_E(\partial_1)$ acts as a strictly upper triangular matrix with respect to the basis $e_1, \dots, e_s,$ we can see in the expression of $\nabla_E(\partial_1)^{N}(e)$ as linear combination of $e_1, \dots, e_s$  that the coefficient of $e_s$ is $d(\partial_1)^{N}(b_s)$. Since $\nabla_E(\partial_1)^{N}(e)=0$,  $d(\partial_1)^{N}(b_s)=0.$ Writing  $b_s=\sum_{i=0}^{\infty}\alpha_i x^i+\sum_{j=1}^{\infty}\beta_jy^j $ with $\alpha_i, \beta_j$ $\in$ $K$, we see that, if $d(\partial_1)^N(b_s)=0$, then $b_s=\alpha_0 \in K$. 
  We look now at the coefficient of $e_{s-1}$ of  $\nabla^{N}(\partial_1)(e)$; using the strict upper triangularity again ($e_{s-1}$ is sent by $\nabla_E(\partial_1)$ to a $K$-linear combination of $e_1, \dots, e_{s-2}$) and the fact that  $b_s\in K,$ we can prove that $b_{s-1}\in K$. Hence we go on and we prove the inclusion $\overline{E} \subset 
Ke_1+\cdots +Ke_s$.
So the proof of the proposition is finished. 
\end{proof}
\begin{prop}\label{inductivepass} If the category of free modules with formal integrable connection having nilpotent residues on $(\Sp(K[[x_1,\dots, x_{n-1}]]/(x_1\cdots x_{n-1})), M)/\Sp(K)^{\times}$  is abelian, then the category of free modules with formal integrable connections having nilpotent residues on $(\Sp(K[[x_1,\dots, x_{n}]]/(x_1\cdots x_{n})), M)/\Sp(K)^{\times}$  is also abelian. 

\end{prop}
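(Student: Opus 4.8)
The plan is to reduce the $n$-variable case to the $(n-1)$-variable case by a restriction-of-scalars argument in the spirit of Lemma~\ref{RS} and of the base case (Proposition~\ref{lacrocesuN}). Write $S=K[[x_1,\dots,x_n]]/(x_1\cdots x_n)$, and let $\partial_1,\dots,\partial_{n-1}$ be the basis of $\widehat{\mathcal{D}er}((\Sp(S),M)/\Sp(K)^{\times})$ from Remark~\ref{notationlocalderivations}; recall that $\partial_{n-1}$ acts on functions by $\partial_{n-1}(x_{n-1})=x_{n-1}$, $\partial_{n-1}(x_n)=-x_n$ and $\partial_{n-1}(x_j)=0$ for $j<n-1$. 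The key observation is that the subring $R:=\ker(\partial_{n-1}\colon S\to S)$ equals $K[[x_1,\dots,x_{n-2},x_{n-1}x_n]]/(x_1\cdots x_{n-2}\cdot x_{n-1}x_n)$, hence is again a normal crossing formal power series ring, now in $n-1$ variables, to which the hypothesis of the proposition applies; moreover the log structure $M$ restricts to the corresponding semistable one on $\Sp(R)$, the ring $S$ is $\mathbb{Z}$-graded by the $\partial_{n-1}$-eigenvalue (the degree of $x^a$ being $a_{n-1}-a_n$) with $S_0=R$ and $S_\lambda\cong R$ as $R$-modules for all $\lambda$ (so that $S$ is flat over $R$), and $\partial_1,\dots,\partial_{n-2}$ restrict to the derivation basis of $\widehat{\mathcal{D}er}((\Sp(R),M)/\Sp(K)^{\times})$. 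For an object $(E,\nabla_E)$ of $\widehat{MIC}((\Sp(S),M)/\Sp(K)^{\times})^{\mathrm{nr}}$ I set $\overline{E}:=\{e\in E\mid \exists N,\ \nabla_E(\partial_{n-1})^N(e)=0\}$; by integrability (commutativity of the $\nabla_E(\partial_i)$) this is an $R$-submodule of $E$ stable under $\nabla_E(\partial_1),\dots,\nabla_E(\partial_{n-2})$. I claim that $(\overline{E},(\nabla_E(\partial_i))_{i\le n-2})$ lies in $\widehat{MIC}((\Sp(R),M)/\Sp(K)^{\times})^{\mathrm{nr}}$ and that the functor $(E,\nabla_E)\mapsto(\overline{E},\dots)$ is an equivalence of categories with quasi-inverse $(\overline{E},\nabla_{\overline{E}})\mapsto(\overline{E}\otimes_RS,\nabla)$, where $\nabla(\partial_i)=\nabla_{\overline{E}}(\partial_i)\otimes\mathrm{id}+\mathrm{id}\otimes d(\partial_i)$ for $i\le n-2$ and $\nabla(\partial_{n-1})=\mathrm{id}\otimes d(\partial_{n-1})$; granting this, the reduction of kernels and cokernels proceeds exactly as in Lemma~\ref{RS}.

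The heart of the matter is to show that $\overline{E}$ is free over $R$ of rank $\mathrm{rk}_S E$ and that the natural map $\overline{E}\otimes_RS\to E$ is an isomorphism. First, imitating \cite[proposition 8.9]{Kat70} and \cite[lemma 3.2.8]{Ked07} as in Proposition~\ref{lacrocesuN}, but now using the $\mathbb{Z}$-grading of $S$ and successive approximation with respect to the $\mathfrak{m}_S$-adic filtration, I would produce an $S$-basis of $E$ with respect to which $\nabla_E(\partial_{n-1})=d(\partial_{n-1})+H$, where the matrix $H$ has entries in $R=S_0$ and $H(0)$ is strictly upper triangular; note that, unlike in Proposition~\ref{lacrocesuN} where $S_0=K$, one can only bring $H$ into $M_s(R)$, not into constant form. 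The crucial step is then that integrability forces $H$ to be nilpotent as a matrix over $R$. Indeed, writing $\nabla_E(\partial_i)=d(\partial_i)+H_i$ for $i\le n-2$, the degree-$\lambda$ part ($\lambda\ne 0$) of the integrability relation $d(\partial_{n-1})(H_i)-d(\partial_i)(H)+[H,H_i]=0$ reads $(\lambda\,\mathrm{id}+\mathrm{ad}_H)((H_i)_\lambda)=0$, and since $\mathrm{ad}_H$ reduces modulo $\mathfrak{m}_R$ to a nilpotent operator and $\mathrm{char}\,K=0$, the operator $\lambda\,\mathrm{id}+\mathrm{ad}_H$ is invertible over $R$, forcing $H_i\in M_s(R)$; the degree-$0$ part then gives $d(\partial_i)(H)=[H,H_i]$, so that $d(\partial_i)(\mathrm{tr}(H^k))=k\,\mathrm{tr}(H^{k-1}[H,H_i])=0$ for every $k\ge1$ and every $i\le n-2$. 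As the common kernel of $\partial_1|_R,\dots,\partial_{n-2}|_R$ acting on $R$ is just $K$ (again by a grading computation, using the defining relation of $R$), and $\mathrm{tr}(H^k)(0)=\mathrm{tr}(H(0)^k)=0$, we conclude $\mathrm{tr}(H^k)=0$ for all $k\ge1$, whence $H$ is nilpotent by Newton's identities (char $0$ again). With $H$ nilpotent, decomposing $E$ along the grading of $S$ shows that $\nabla_E(\partial_{n-1})$ preserves the degree-$\lambda$ part and acts there as $\lambda\,\mathrm{id}+H$, which is invertible for $\lambda\ne0$ and nilpotent for $\lambda=0$; hence $\overline{E}$ is exactly the $R$-span of the chosen basis, which yields both the freeness of $\overline{E}$ and the isomorphism $\overline{E}\otimes_RS\xrightarrow{\ \sim\ }E$. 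The remaining verifications — that $\overline{E}$ has nilpotent residues (its residue with respect to $\partial_i$ coincides with that of $E$ via the canonical identification $\overline{E}(0)=E(0)$), that the two functors are mutually quasi-inverse, and that $\overline{\overline{E}\otimes_RS}\cong\overline{E}$ — are routine.

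Finally, given a morphism $\varphi\colon E\to F$ in $\widehat{MIC}((\Sp(S),M)/\Sp(K)^{\times})^{\mathrm{nr}}$, it restricts to a morphism $\overline{\varphi}\colon\overline{E}\to\overline{F}$ in $\widehat{MIC}((\Sp(R),M)/\Sp(K)^{\times})^{\mathrm{nr}}$ and $\varphi=\overline{\varphi}\otimes_R\mathrm{id}_S$ under the identifications above. By the hypothesis of the proposition, $\mathrm{Ker}(\overline{\varphi})$ and $\mathrm{Coker}(\overline{\varphi})$ belong to $\widehat{MIC}((\Sp(R),M)/\Sp(K)^{\times})^{\mathrm{nr}}$; since $S$ is flat over $R$ we get $\mathrm{Ker}(\varphi)=\mathrm{Ker}(\overline{\varphi})\otimes_RS$ and $\mathrm{Coker}(\varphi)=\mathrm{Coker}(\overline{\varphi})\otimes_RS$, and applying the quasi-inverse functor shows these are free $S$-modules with nilpotent residues. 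Therefore $\widehat{MIC}((\Sp(S),M)/\Sp(K)^{\times})^{\mathrm{nr}}$ is stable under kernels and cokernels of morphisms, that is, abelian. The main obstacle is exactly the nilpotence of the reduced connection matrix $H$: without it the generalized $0$-eigenspace $\overline{E}$ would be too small to recover $E$ by base change to $S$, and its proof is where both integrability and the assumption $\mathrm{char}\,K=0$ enter essentially, beyond what was needed in Proposition~\ref{lacrocesuN}.
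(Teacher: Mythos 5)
Your proof is correct, and it reaches the key identification $\overline{E}=Re_1+\cdots+Re_s$ by a genuinely different route from the paper. The first half coincides: both you and the authors produce, by successive approximation on the $\mathfrak{m}_S$-adic filtration following Katz and Kedlaya, a basis in which $\nabla_E(\partial_{n-1})=d(\partial_{n-1})+H$ with $H\in M_s(R)$ (your $R$ is the paper's $A$) and $H(0)$ strictly upper triangular; this is exactly the paper's CLAIM 1. From there the paper defines $\overline{E}$ by \emph{topological} nilpotence of $\nabla_E(\partial_{n-1})$ and proves $\overline{E}=Re_1+\cdots+Re_s$ by a direct valuation estimate on $\nabla_E(\partial_{n-1})^N(\sum_i a_ie_i)$ (their CLAIM 2), never showing that $H$ itself is nilpotent. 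You instead prove that $H$ \emph{is} nilpotent over $R$, by extracting from the integrability relations with $\partial_1,\dots,\partial_{n-2}$ the identities $d(\partial_i)(\mathrm{tr}(H^k))=k\,\mathrm{tr}(H^{k-1}[H,(H_i)_0])=0$, observing that the common kernel of $\partial_1|_R,\dots,\partial_{n-2}|_R$ is $K$, and concluding via Newton's identities and Cayley--Hamilton; the eigenspace decomposition of $E\cong\widehat{\bigoplus}_\lambda S_\lambda^{\oplus s}$ under $d(\partial_{n-1})$ then yields the identification at once, with $\overline{E}$ the honest generalized kernel. I checked the computation: the degree-$\lambda$ part of the integrability relation forces $(H_i)_\lambda=0$ for $\lambda\neq 0$ because $\lambda\,\mathrm{id}+\mathrm{ad}_H$ is invertible over the complete local ring $R$, the degree-zero part gives $d(\partial_i)(H)=[H,(H_i)_0]$ independently of that, and the common kernel of the $\partial_i$ on $R$ is indeed $K$ (a monomial $y^a$ with some $a_l=0$ and $a_1=\cdots=a_{n-1}$ must be constant). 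Your argument spends more of the hypotheses (it uses all the derivations, not just $\partial_{n-1}$, and characteristic zero enters twice, through the invertibility of $\lambda+\mathrm{ad}_H$ and through Newton's identities), but it buys a cleaner endgame and, notably, the genuine nilpotence of the induced endomorphism $N=\nabla_E(\partial_{n-1})|_{\overline{E}}$ --- which is what the target category of the equivalence (pairs with a \emph{nilpotent} endomorphism) actually demands, and which the paper's valuation argument only addresses through the weaker ``locally topologically nilpotent'' phrasing. The concluding reduction of kernels and cokernels via $\varphi=\overline{\varphi}\otimes_R\mathrm{id}_S$ is the same in both treatments.
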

\begin{proof}
To simplify the notation we will denote by $B$ the ring $K[[x_1,\dots, x_{n}]]/(x_1\cdots x_{n})$ and by $A$ the ring $K[[x_1,\dots, x_{n-1}]]/(x_1\cdots x_{n-1})$. 
The map $h: A\rightarrow B$ defined by $x_i\mapsto x_i \, (i=1, \dots n-2)$, $x_{n-1}\mapsto x_{n-1}x_n$ induces a map of log schemes $(\Sp(B), M_B)\rightarrow (\Sp(A), M_A)$, where the log structure $M_A, M_B$ are the log structure 
$M$ in remark \ref{notationlocalderivations} for $\Sp(A), \Sp(B)$, respectively. (Note that the above map is not strict.) In the following, $A$ is regarded as a subring of $B$ via the map 
$h$. 

Let $\Gamma$ be the set $\{\g{k} = (k_1, ..., k_n) \in \mathbb{N}^n \,|\, 
\text{$k_l=0$ for some $1 \leq l \leq n$}\}$ and let $\Gamma'$ be the subset of 
$\Gamma$ consisting of $\g{k} = (k_1, ..., k_n)$ with $k_{n-1} = k_n$. 
Then any element in $B$ (resp.~$A$) is uniquely written as 
$\sum_{\g{k} \in \Gamma}b_{\g{k}} x^{\g{k}} \, (b_{\g{k}} \in K)$ (resp. 
$\sum_{\g{k} \in \Gamma'}b_{\g{k}} x^{\g{k}} \, (b_{\g{k}} \in K)$), 
where $x^{\g{k}} = x_1^{k_1} \cdots x_n^{k_n}$. 
Also, let $\partial_1, ..., \partial_{n-1} \in 
\mathcal{H}om(\hat{\omega}^1_{(\Sp(B),M_B)/\Sp(K)^{\times}}, \mathcal{O}_{\Sp(B)})$ be as in  
remark \ref{notationlocalderivations}, and put $\partial := \partial_{n-1}$. 

For an object $(E,\nabla_E)$ in 
$\widehat{MIC}((\Sp(B), M_B)/\Sp(K)^{\times})^{\mathrm{nr}}$, put 
$$\overline{E} := \{ e \in E \,|\, \nabla_E^N(\partial)(e) \to 0 \, (N \to \infty)\}, $$
where, on the right hand side, $E$ is endowed with $(x_1, ..., x_n)$-adic topology. 
Then $\overline{E}$ is stable by the action of 
$\nabla_E(\partial_i) \, (1 \leq i \leq n-2)$ and the action of $\nabla_E(\partial)$ which is 
locally topologically nilpotent.  
Then, it suffices to prove the following: firstly, $\overline{E}$ and the actions 
$\nabla_E(\partial_i) \, (1 \leq i \leq n-2)$ on it define an object (which we denote by 
$(\overline{E}, \nabla_{\overline{E}})$) in 
$\widehat{MIC}((\Sp(A), M_A)/\Sp(K)^{\times})^{\mathrm{nr}}$. Secondly, 
the correspondence 
$(E,\nabla_E) \mapsto ((\overline{E}, \nabla_{\overline{E}}), \nabla_E(\partial_1))$ defines the 
functor 
$$ \widehat{MIC}((\Sp(B), M_B)/\Sp(K)^{\times})^{\mathrm{nr}} \to 
\left\{ 
((F,\nabla_F),N) \,\left|\, 
\begin{aligned} 
& \text{$(F,\nabla_F) \in \widehat{MIC}((\Sp(A), M_A)/\Sp(K)^{\times})^{\mathrm{nr}}$} \\
& \text{$N: (F,\nabla_F) \to (F,\nabla_F)$: a nilpotent endomorphism}
\end{aligned}
\right. 
\right\} $$ 
and it 
is an equivalence whose quasi-inverse is given by 
$((F,\nabla_F),N) \mapsto (E := F \otimes_A B, \nabla_E)$ 
with the action $\nabla_E(\partial_i) \, (1 \leq i \leq n-2)$ and $\nabla_E(\partial)$ defined by 
$\nabla_E(\partial_i) :=  {\rm id} \otimes d(\partial_i) + \nabla_F(\partial_i) \otimes {\rm id},$ 
$\nabla_E(\partial) :=  {\rm id} \otimes d(\partial) + N \otimes {\rm id}.$ 
(The reason is the same as that in lemma \ref{RS}.) 

To prove the above claim, it suffices to construct a basis of $\overline{E}$ over 
$A$ which is a basis of $E$ over $B$. To do so, first we prove the following claim. \\

CLAIM 1: There exists a basis of $E$ as $B$-module on which $\nabla_E(\partial)$ acts as a matrix $M$ with entries in $A$. Moreover, if we write $M=\sum_{\g{k} \in \Gamma'}M_{\g{k}}\g{x}^{\g{k}}$ with entries of $M_{\g{k}}$ in $K,$ 
$M_0$ is strictly upper triangular. 

\begin{proof} 
Let us fix a basis $\g{e}=(e_1, \dots, e_s)$ of $E$ and let us denote by $H$ the matrix such that 
$$\nabla_E(\partial) = d(\partial) + H$$
with respect to the basis $\g{e}$. 
We write $H = \sum_{\g{k} \in \Gamma}H_{\g{k}} x^{\g{k}}$ with entries of $H_{\g{k}}$ in $K$. 
We may assume that $H_{0}$ is strictly upper triangular 
because we assumed that $(E, \nabla_E)$ has nilpotent residues. 
Hence it suffices to prove the existence of matrices 
$U = \sum_{\g{k} \in \Gamma}U_{\g{k}} x^{\g{k}}, X = \sum_{\g{k} \in \Gamma}X_{\g{k}} x^{\g{k}}$ 
satisfying the following conditions: \\

\noindent 
(a) \, $U_0$ is the identity matrix and $U_{\g{k}} = 0$ if $\g{k} \in \Gamma' \setminus \{0\}$. \\ 
(b) \, $X_0 = 0$ and $X_{\g{k}} = 0$ if $\g{k} \in \Gamma \setminus \Gamma'$. \\ 
(c) \, $HU + d(\partial)U = U(H_0 + X)$. \\

\noindent 
In fact, $U$ is invertible by the condition (a) and then the conditions 
(b) and (c) imply that, after some change of basis, 
the action of $\nabla_E(\partial)$ on the new basis is described by the matrix $M := H_0 + X$, whose entries belong to $A$ with its constant term $H_0$ strict upper triangular. 

The equality (c) is equivalent to the condition that, for any $\g{k} \in \Gamma$, 
\begin{equation}\label{eq:knkn-1}
H_0U_{\g{k}} - U_{\g{k}}H_0 + (k_{n-1} - k_{n}) U_{\g{k}} = 
- \sum_{\g{i}<\g{k}} H_{\g{k}-\g{i}}U_{\g{i}} + \sum_{\g{i} < \g{k}} U_{\g{i}} X_{\g{k}-\g{i}}. 
\end{equation}
(Here, for $\g{i} = (i_1, ..., i_n)$ and $\g{k} = (k_1, ..., k_n)$, 
we write $\g{i} < \g{k}$ if $i_j \leq k_j \, (1 \leq j \leq n)$ and $\g{i} \not= \g{k}$.) 
We construct (uniquely) the matrices $U_{\g{k}}, X_{\g{k}}$ by induction on $|\g{k}| = \sum_i k_i$
so that the conditions (a), (b) and the equality \eqref{eq:knkn-1} are satisfied. If $\g{k} = 0$, 
$U_0$ should be the identity matrix and $X_0$ should be zero by (a) and (b). 
In this case, the equality \eqref{eq:knkn-1} for $\g{k}=0$ is obviously satisfied. 
For general $\g{k}$ with $k_{n-1} = k_n$, $U_{\g{k}}$ should be zero by (a) and 
so the equality \eqref{eq:knkn-1} is written as 
$$ 0 = - \sum_{\g{i}<\g{k}} H_{\g{k}-\g{i}}U_{\g{i}} + \sum_{0 < \g{i} < \g{k}} U_{\g{i}} X_{\g{k}-\g{i}}
+ X_{\g{k}}. $$
Then we can take a unique matrix $X_{\g{k}}$ satisfying this equality. 
For general $\g{k}$ with $k_{n-1} \not= k_n$, $X_{\g{k}}$ should be $0$ by (b) 
and so the equality \eqref{eq:knkn-1} is written as 
$$ 
H_0U_{\g{k}} - U_{\g{k}}H_0 + (k_{n-1} - k_{n}) U_{\g{k}} = 
- \sum_{\g{i}<\g{k}} H_{\g{k}-\g{i}}U_{\g{i}}. 
$$
We can take a unique matrix $U_{\g{k}}$ satisfying this equality 
because the map $Y \mapsto H_0Y - YH_0 + (k_{n-1} - k_{n}) Y$, being the sum of 
a nilpotent map $Y \mapsto H_0Y - YH_0$ and an invertible map 
$Y \mapsto (k_{n-1} - k_{n}) Y$, is invertible. So we proved the claim. 
\end{proof}

Let $e_1, \dots, e_s$ be a $B$-basis of $E$ which satisfies the condition in the statement of 
CLAIM 1. We prove the following claim, which implies the claim we want: \\

CLAIM 2:\,\, $Ae_1 + \cdots + Ae_s = \overline{E}$. 

\begin{proof}
Let us prove first the inclusion $Ae_1+\dots+Ae_s \subset \overline{E}$. 
For an element $e := a_1 e_1+\dots+ a_s e_s$ with $a_i\in A$, we calculate $\nabla_E(\partial)^N(a_1 e_1+\dots+a_s e_s)=a_1\nabla_E(\partial)^N(e_1)+\dots+ a_s\nabla_E(\partial)^N(e_s).$ 
Let us denote by $v$ the $(x_1, \dots, x_n)$-adic valuation on $B$. 
Then this induces a valuation $v$ on $E$ defined by $v(\sum_{i=1}^sb_ie_i):=\textrm{min}_iv(b_i).$ 
To prove that $e$ is in $\overline{E}$, 
we need to prove that for every $m\in \mathbb{N}$ there exists an $N\in \mathbb{N}$ such that  $v(\nabla_E(\partial)^N(e))\geq m.$ 
Since $v$ is non-archimedean $v(\nabla_E(\partial)^N(e))\geq \textrm{min}_iv(a_i\nabla_E(\partial)^N(e_i))= \textrm{min}_i\{v(a_i)+v(\nabla_E(\partial)^N(e_i))\},$ hence it is enough to prove that for every $m\in \mathbb{N}$ there exists an $N\in \mathbb{N}$ such that $v(\nabla_E(\partial)^N(e_i))\geq m$. The action of 
$\nabla_E(\partial)$ on $Ae_1 + \cdots + Ae_s$ is written by some matrix $\sum_{\g{k}}M_{\g{k}}\g{x}^{k}$ with $M_0$ nilpotent. Hence each entry of the matrix $\nabla_E(\partial)^{s}$ has strictly positive $(x_1, \dots, x_n)$-adic valuation. 
Then we see that 
$v(\nabla_E(\partial)^{ms}e_i)\geq m$. Hence, if we choose $N\geq ms$, we are done.

Next we prove the inclusion $\overline{E} \subset Ae_1 + \cdots + Ae_s$. 
Let $e=a_1e_1+\dots +a_se_s$ $\in E$ with $a_j\in B$ for every $j$, such that 
for all $m\in \mathbb{N}$ there exists $N\in \mathbb{N}$ such that $\nabla_E(\partial)^N(e)=b_1e_1+\dots +b_se_s$ with  $b_j=\sum_{\g{k}, |\g{k}|\geq m}\beta_{\g{k}}\g{x}^{\g{k}}$, namely, $v(\nabla_E(\partial)^N(e)) \geq m$. 
We want to prove that $a_j\in A$ for every $j$. Let us calculate 
\begin{equation}\label{Npower}
\nabla_E(\partial)^N(a_1e_1+\dots +a_se_s)=\sum_{l=0}^{N}{N\choose l}(d(\partial)^{N-l}(a_1)\nabla_E(\partial)^l(e_1)+\dots +d(\partial)^{N-l}(a_s)\nabla_E(\partial)^l(e_s)).
\end{equation}
Let us fix $i$ and write $a_i=\sum_{\g{k}}\alpha_{\g{k}}\g{x}^{\g{k}}$; we are interested in how $d(\partial)$ acts on $a_i$. It acts on monomials of the form  $\g{x}^{\g{k}}=x_1^{k_1}\cdots x_{n-1}^{k_{n-1}}x_n^{k_n}$ by $d(\partial)(\g{x}^{\g{k}})=(k_{n-1}-k_{n})x^{\g{k}}$. We define a function $\delta:B\rightarrow \mathbb{N}\cup \{\infty\}$ as follows; if  $a_i=\sum_{\g{k}}\alpha_{\g{k}}\g{x}^{\g{k}}\in B$, then $\delta(a_i):=\textrm{min}\{|\g{k}| \,|\,\alpha_{\g{k}}\neq 0 , k_{n-1}-k_{n}\neq 0\}$. (We define that $\delta(a_i) = \infty$ if the set on the right hand side is 
empty.) Then $\delta(a_i)=\infty$ if and only if $a_i\in A$. 

Using the function $\delta$ we prove the claim that, for every $l\geq 1$, the element $d(\partial)^l(a_i)$ has $(x_1, \dots, x_n)$-adic valuation equal to $\delta(a_i)$. 
If $a_i \in A,$ the claim is true because $d(\partial)^l (a_i)=0$ and $\delta(a_i) = \infty$. 
If $a_i\in B$ but not in $A$, then there exists $\overline{\g{k}} = (\overline{k}_1, \dots, \overline{k}_n)$ such that 
$\overline{k}_{n-1} \not= \overline{k}_n$, 
$|\overline{\g{k}}|=\delta(a_i)<\infty$. We can write $a_i$ as 
$$a_i=\sum_{\g{k}, |\g{k}|<|\overline{\g{k}}|}\alpha_{\g{k}}\g{x}^{\g{k}}+\alpha_{\overline{\g{k}}}\g{x}^{\overline{\g{k}}}+\sum_{\g{k}\neq \overline{\g{k}}, |\g{k}|\geq|\overline{\g{k}}|, }\alpha_{\g{k}}\g{x}^{\g{k}}.$$
By definition of $\overline{\g{k}}$, we have the equality 
$$d(\partial)^{l}(a_i)=\alpha_{\overline{\g{k}}}\g{x}^{\overline{\g{k}}}(\overline{k}_{n-1}-\overline{k}_{n})^{l}+
\sum_{\g{k}\neq \overline{\g{k}}, |\g{k}|\geq|\overline{\g{k}}|, }\alpha_{\g{k}}\g{x}^{\g{k}}
(k_{n-1}-k_{n})^l. $$ 
So we conclude that $d(\partial)^l(a_i)$ has $(x_1, \dots, x_n)$-adic valuation equal to $\delta(a_i)$, as we wanted.

We come back to $e=a_1e_1+\dots +a_se_s$. We know by hypothesis that for $m\in \mathbb{N}$, there exists $N$ such that  $v(\nabla_E(\partial)^N(e))\geq m$. We have to show that $a_i$ is in $A$ for $i=1, \dots, s$. 
Let us suppose by absurd that there exists some $j$ such that $\delta(a_j)\neq \infty$. We can suppose that $\delta(a_j)\leq \delta(a_i)$ for every $i=1,\dots ,s$ and that $\delta(a_j)<\delta(a_i)$ for all $i>j$. We look at the $e_j$-component of $\nabla_E(\partial)^N(e)$ for $N \geq 1$, using the expression of \eqref{Npower}. Thanks to what we have shown $v(d(\partial)^N(a_j)e_j)=\delta(a_j)$;  for $1 \leq l \leq N-1$, the $e_j$-component of $d(\partial)^{N-l}(a_i)\nabla_E(\partial)^l(e_i)$, denoted by $[d(\partial)^{N-l}(a_i)\nabla_E(\partial)^l(e_i)]_j$, has the following $(x_1, \dots, x_n)$-adic valuation for $i>j$ :
$$v([d(\partial)^{N-l}(a_i)\nabla_E(\partial)^l(e_i)]_j)\geq \delta(a_i)>\delta(a_j),$$ 
while for $i\leq j$
$$v([d(\partial)^{N-l}(a_i)\nabla_E(\partial)^l(e_i)]_j)\geq \delta(a_i)+1>\delta(a_j)$$
 because $e_i$ is sent via the matrix $M_0$ to a $K$-linear combination of $e_1, \dots, e_{i-1}$ due to the assumption that $M_0$ is strictly upper triangular. Also, 
the $e_j$-component $[a_i\nabla_E(\partial)^N(e_i)]_j$ of $a_i\nabla_E(\partial)^N(e_i)$ 
has the following $(x_1, \dots, x_n)$-adic valuation, by the argument in the first paragraph of the proof: 
$$ v([a_i\nabla_E(\partial)^N(e_i)]_j) \geq \lfloor N/s \rfloor. $$
Therefore, if $N \geq s(\delta(a_j)+1)$, the $(x_1, \dots, x_n)$-adic valuation of the $e_j$-component 
$[\nabla_E(\partial)^{N}(a)]_j$ 
of $\nabla_E(\partial)^{N}(a) $ is given as 
$$v([\nabla_E(\partial)^{N}(a)]_j)=\delta(a_j),$$
and this does not go to infinity as $N \to \infty$. This is a contradiction. 
Hence $a_i\in A$ for all $i$, as we wanted.
\end{proof}

Since CLAIM 2 is proved, the proof of the proposition is finished. 
\end{proof}

\begin{prop}\label{relativeTannakian}
Let us suppose that there exists a $K$-rational point $x\in X$. Then  $MIC(X^{\times}/\Sp(K)^{\times})^{\mathrm{nr}}$ is a neutral Tannakian category over $K$.
\end{prop}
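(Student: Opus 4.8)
The plan is to verify the conditions in Definition~\ref{Tannakiancategory}, i.e.~that $MIC(X^{\times}/\Sp(K)^{\times})^{\mathrm{nr}}$ is a rigid abelian tensor category with $\mathrm{End}(\underline{1}) = K$ admitting a fiber functor. Abelianness is already established in Proposition~\ref{relativeabelian}. For the tensor structure, first I would note that since $\omega^1_{X^{\times}/\Sp(K)^{\times}}$ is locally free (formula \eqref{formofomega1}), Remark~\ref{tensorstructure} gives $MIC(X^{\times}/\Sp(K)^{\times})$ the structure of an abelian tensor category with unit $(\mathcal{O}_X, d)$; then I would check that $MIC(X^{\times}/\Sp(K)^{\times})^{\mathrm{nr}}$ is closed under tensor product (residues of a tensor product are built from $\rho_y \otimes \mathrm{id} + \mathrm{id} \otimes \rho_y$, and the sum of two commuting nilpotents is nilpotent) and under internal $\mathcal{H}om$ and duals (the dual of a locally free module is locally free, and the residue on the dual is $-\rho_y^{\vee}$, which is again nilpotent); the evaluation and coevaluation maps are horizontal, giving rigidity. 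For $\mathrm{End}(\underline{1}) = K$: a horizontal endomorphism of $(\mathcal{O}_X,d)$ is a global section $a \in \mathcal{O}_X(X)$ with $da = 0$, hence $a \in H^0_{\mathrm{dR}}(X^{\times}/\Sp(K)^{\times})$; since $X$ is a normal crossing log variety admitting a $K$-rational point, $X$ is geometrically connected and proper-like enough that this de Rham $H^0$ equals $K$ (one reduces to the connectedness of $X$: a function killed by $d$ is killed by all the derivations $D_i$ in local charts, hence locally constant on each stratum, hence constant by connectedness; the $K$-rational point ensures it is a genuine constant in $K$ and not a larger field).

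The remaining point, and the one requiring actual work, is the construction of a fiber functor. The natural candidate is the fiber at the $K$-rational point $x$, namely
$$
\omega_x: MIC(X^{\times}/\Sp(K)^{\times})^{\mathrm{nr}} \longrightarrow \mathrm{Vec}_K; \quad (E, \nabla_E) \longmapsto E \otimes_{\mathcal{O}_X} K(x) = x^*E.
$$
This is visibly $K$-linear and tensor-compatible. Faithfulness and exactness are the substantive claims. Exactness on the right and left would follow once we know that all objects are locally free (so that $x^*$ is exact on the relevant short exact sequences) together with the fact, proved in Proposition~\ref{relativeabelian}, that kernels and cokernels in $MIC(X^{\times}/\Sp(K)^{\times})^{\mathrm{nr}}$ agree with those in coherent sheaves; concretely, for a morphism $\varphi$ in the category, $\mathrm{Ker}\,\varphi$ and $\mathrm{Coker}\,\varphi$ are locally free and the sequence $0 \to \mathrm{Ker}\,\varphi \to E \to F \to \mathrm{Coker}\,\varphi \to 0$ is exact and stays exact after applying $x^* = (-)\otimes_{\mathcal{O}_X} K(x)$ because the image is also locally free (hence the sequence is locally split). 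Faithfulness: if $x^*\varphi = 0$, I would argue that the locally free subobject $\mathrm{Im}\,\varphi \subseteq F$ has zero fiber at $x$, hence is zero — here one uses that a nonzero locally free sheaf with integrable connection on a connected normal crossing log variety cannot have a point where its fiber vanishes, which again reduces to connectedness of $X$ plus the fact that the rank is locally constant and the fiber at $x$ being $0$ forces rank $0$ on the connected component of $x$, which is all of $X$.

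The main obstacle I anticipate is precisely the interplay between local freeness and the non-reducedness/singularity of $X$: one must be careful that $x$ is a $K$-rational point lying on $X$ (which may be a closed point in a lower-dimensional stratum, e.g.~the origin in $\Sp(K[x,y]/(xy))$), and that the fiber functor behaves well there. I would handle this by reducing, via Proposition~\ref{relativeabelian}, to the statement that every object is locally free, so that $E_x$ is a free $\widehat{\mathcal{O}}_{X,x}$-module of rank equal to the generic rank on the component through $x$, making $x^*$ exact and faithful by the connectedness argument above. A secondary point is checking that $\omega_x$ really lands in \emph{finite-dimensional} $K$-vector spaces, which is immediate since objects are coherent. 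Once the fiber functor is in place, Theorem~\ref{Saavedra} applies and the proposition follows; this also licenses the subsequent definition of $\pi_1(X^{\times}/\Sp(K)^{\times}, x) := G(MIC(X^{\times}/\Sp(K)^{\times})^{\mathrm{nr}}, \omega_x)$.
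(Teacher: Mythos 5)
Your proposal is correct and follows essentially the same route as the paper: abelianness from Proposition~\ref{relativeabelian}, rigidity from the local freeness of all objects, $\mathrm{End}(\underline{1})=K$ via evaluation at the $K$-rational point, and the fiber functor $\omega_x$ given by the fiber at $x$, exact because objects are locally free. The only (harmless) differences are that the paper cites \cite[Proposition 3.1.6]{Shi00} for the fact that $H^0_{\mathrm{dR}}(X^{\times}/\Sp(K)^{\times})$ is a field and \cite[Corollaire 2.10]{Del90} for the automatic faithfulness of an exact tensor functor, whereas you argue both points directly by connectedness and local constancy of ranks.
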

\begin{proof}
As we saw in proposition \ref{relativeabelian} the category $MIC(X^{\times}/\Sp(K)^{\times})^{\mathrm{nr}}$ is abelian; moreover with the tensor structure defined in $MIC(X^{\times}/\Sp(K)^{\times})$ and unit object given by $(\mathcal{O}_X, d)$ (see remark \ref{tensorstructure}) it is a rigid abelian tensor category, thanks to the fact that, for every object $(E, \nabla_E)$ in $MIC(X^{\times}/\Sp(K)^{\times})^{\mathrm{nr}}$, $E$ is a locally free $\mathcal{O}_X$-module. 

Taking the fiber at $x$ gives a map $\mathrm{End}((\mathcal{O}_X,d))\rightarrow K$, which is injective because, thanks to \cite[Proposition 3.1.6]{Shi00}, we know that $\mathrm{End}((\mathcal{O}_X,d))\cong H^0_{\mathrm{dR}}(X^{\times}/\Sp(K)^{\times})$ is a field. But since $K\subset \mathrm{End}((\mathcal{O}_X,d))$, we have indeed that $\mathrm{End}((\mathcal{O}_X,d))\cong K$.

We define a functor 
$$\omega_x:MIC(X^{\times}/\Sp(K)^{\times})^{\mathrm{nr}}\rightarrow \mathrm{Vec}_K$$ 
which sends every pair $(E, \nabla_E)$ to the $K$-vector space given by the fiber $E(x)$ of $E$ at $x$. Since $E$ is locally free for every object $(E, \nabla_E)$ in  $MIC(X^{\times}/\Sp(K)^{\times})^{\mathrm{nr}}$, $\omega_x$ is an exact tensor functor and by \cite[Corollaire 2.10]{Del90} $\omega_x$ is faithful. So it is a fiber functor. 
\end{proof}

\begin{defi} \label{pi1XM/KN}
Let $x$ be a $K$-rational point of $X$ and let 
$$\omega_x:MIC(X^{\times}/\Sp(K)^{\times})^{\mathrm{nr}}\rightarrow \mathrm{Vec}_K$$ 
be the fiber functor introduced in the above proposition. 
We define the log algebraic fundamental group of $X^{\times}$ over $\Sp(K)^{\times}$ with base point $x$ as the Tannaka dual of $MIC(X^{\times}/\Sp(K)^{\times})^{\mathrm{nr}}$, \emph{i.e.} 
$$\pi_{1}(X^{\times}/\Sp(K)^{\times}, x):=G(MIC(X^{\times}/\Sp(K)^{\times})^{\mathrm{nr}}, \omega_x)$$
with the same notation as in theorem \ref{Saavedra}.
\end{defi}

Next we prove that $MIC(X^{\times}/\Sp(K))^{\mathrm{nr}}$ is also a neutral Tannakian category over $K$ under some suitable assumptions.
\begin{prop}\label{globalfreenesstrivbasis}
The category  $MIC(X^{\times}/\Sp(K))^{\mathrm{nr}}$ is abelian. 
\end{prop}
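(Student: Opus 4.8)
The plan is to mirror the strategy already used for $MIC(X^{\times}/\Sp(K)^{\times})^{\mathrm{nr}}$ in Proposition~\ref{relativeabelian}, reducing the assertion to a purely local computation on completed local rings. Since $MIC(X^{\times}/\Sp(K))^{\mathrm{nr}}$ is a full subcategory of the abelian category $MIC(X^{\times}/\Sp(K))$ (note $\omega^1_{X^{\times}/\Sp(K)}$ is locally free), it suffices to show that for any morphism $\varphi$ in $MIC(X^{\times}/\Sp(K))^{\mathrm{nr}}$ the kernel and cokernel are again locally free $\mathcal{O}_X$-modules with nilpotent residues. Local freeness can be checked on the completed strict local rings $\widehat{\mathcal{O}}_{X,x} \cong K[[x_1,\dots,x_n]]/(x_1\cdots x_r)$ with $K$ algebraically closed, and nilpotence of residues at the closed point of $\Sp(\widehat{\mathcal{O}}_{X,x})$. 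So I would state and prove a formal analogue: the category $\widehat{MIC}((\Sp(S),M)/\Sp(K))^{\mathrm{nr}}$ with $S = K[[x_1,\dots,x_n]]/(x_1\cdots x_r)$ is abelian.

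The key observation is that $\omega^1_{X^{\times}/\Sp(K)}$ differs from $\omega^1_{X^{\times}/\Sp(K)^{\times}}$ only by the extra rank-one factor generated by $\mathrm{dlog}$ of the pullback of the generator of $\omega^1_{\Sp(K)^{\times}/\Sp(K)}$; concretely, in the chart above one has $\omega^1_{X^{\times}/\Sp(K)} \cong \bigoplus_{i=1}^r S\,\mathrm{dlog}x_i \oplus \bigoplus_{i=r+1}^n S\,dx_i$, so a basis of $\widehat{\mathcal{D}er}((\Sp(S),M)/\Sp(K))$ is given by $\{\partial'_1,\dots,\partial'_r, D_{r+1},\dots,D_n\}$ where $\partial'_i$ is dual to $\mathrm{dlog}x_i$ (rather than the differences $\sum_{j\le i}\mathrm{dlog}x_j$ used in Remark~\ref{notationlocalderivations}). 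Thus an object of $\widehat{MIC}((\Sp(S),M)/\Sp(K))^{\mathrm{nr}}$ is an object of $\widehat{MIC}((\Sp(S),M)/\Sp(K)^{\times})^{\mathrm{nr}}$ equipped with one additional commuting operator $\nabla_E(\partial'_r)$ (the ``monodromy'' direction coming from $\Sp(K)^{\times}/\Sp(K)$), with a nilpotent-residue condition on it as well. Since $\widehat{MIC}((\Sp(S),M)/\Sp(K)^{\times})^{\mathrm{nr}}$ is already known to be abelian by Proposition~\ref{relativeabelian}, the reduction I would carry out is formally parallel to the passage in Proposition~\ref{lacrocesuN} from the relative-to-$\Sp(K)^{\times}$ situation to ``$(V,N)$ with $N$ nilpotent'': here the role of $(V,N)$ is played by $((F,\nabla_F),N)$ with $(F,\nabla_F) \in \widehat{MIC}((\Sp(S),M)/\Sp(K)^{\times})^{\mathrm{nr}}$ and $N$ a nilpotent endomorphism of it induced by $\nabla_E(\partial'_r)$. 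Concretely I would set $\overline{E} := \{e \in E \mid \exists N,\ \nabla_E(\partial'_r)^N(e) = 0\}$ (or its topological-nilpotence variant if needed), show via a strictly-upper-triangularization argument — following \cite[lemma 3.2.8]{Ked07} exactly as in Proposition~\ref{lacrocesuN} — that there is a basis of $E$ over $S$ on which $\nabla_E(\partial'_r)$ acts by a strictly upper triangular matrix with entries in $K$, deduce $\overline{E} = Ke_1 + \cdots + Ke_s$ is an object of $\widehat{MIC}((\Sp(S),M)/\Sp(K)^{\times})^{\mathrm{nr}}$ with a nilpotent endomorphism, and that $E \cong \overline{E} \otimes_K S$ recovers $E$ with all its structure. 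Kernels and cokernels of morphisms are then computed inside the category of such pairs, where both $\widehat{MIC}((\Sp(S),M)/\Sp(K)^{\times})^{\mathrm{nr}}$-part and the nilpotent-operator part behave well.

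The main obstacle I anticipate is the interaction of the new monodromy operator $\nabla_E(\partial'_r)$ with the already-present operators, i.e.\ checking that the strictly-upper-triangular basis for $\nabla_E(\partial'_r)$ can be chosen compatibly so that the full structure descends — essentially re-running the fixed-point/triangularization argument with coefficients in $K$ while keeping track of the other $\nabla_E(\partial'_i)$ and $\nabla_E(D_j)$. This is the same type of difficulty already confronted in Proposition~\ref{lacrocesuN}, where one must pass from coefficients in $K[[x,y]]/(xy)$ to coefficients in $K$; I would handle it by first using an analogue of Lemma~\ref{RS} to reduce to the case $r = n$ (stripping the smooth variables $x_{r+1},\dots,x_n$ via horizontal sections for the $D_j$), and then applying the descent along $\partial'_r$. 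After that the argument is essentially bookkeeping, and the proof concludes as in Proposition~\ref{relativeabelian} by assembling ${\rm Ker}(\varphi) = {\rm Ker}(\overline{\varphi}) \otimes_K S$ and ${\rm Coker}(\varphi) = {\rm Coker}(\overline{\varphi}) \otimes_K S$, both of which are free with nilpotent residues.
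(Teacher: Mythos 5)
Your global reductions (full subcategory of an abelian category, check that kernels and cokernels are locally free with nilpotent residues, pass to completed strict local rings and strip the smooth variables by an analogue of lemma \ref{RS}) are fine, but the local descent step at the heart of your argument fails as written. You set $\overline{E} := \{e \in E \mid \exists N,\ \nabla_E(\partial'_r)^N(e)=0\}$ for the single extra derivation $\partial'_r$ dual to $\mathrm{dlog}x_r$, and claim that the recursion of proposition \ref{lacrocesuN} produces a basis on which $\nabla_E(\partial'_r)$ acts by a constant strictly upper triangular matrix, whence $\overline{E}=Ke_1+\cdots+Ke_s$ and $E\cong\overline{E}\otimes_K S$. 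Both claims break down because $d(\partial'_r)=x_r\partial/\partial x_r$ acts on a monomial $x^{\mathbf{k}}$ with eigenvalue $k_r$, which vanishes on every monomial not involving $x_r$. Already for the unit object $(S,d)$ one finds $\overline{S}=K[[x_1,\dots,x_{r-1},x_{r+1},\dots,x_n]]\neq K$, so $\overline{E}$ is not a finite-dimensional $K$-vector space; and the recursion determining the coefficient $U_{\mathbf{k}}$ of the gauge transformation requires inverting $X\mapsto H_0X-XH_0+k_rX$, which for $k_r=0$ is the nilpotent operator $\mathrm{ad}(H_0)$ and is not invertible. Proposition \ref{lacrocesuN} works precisely because $d(\partial_1)=x\partial_x-y\partial_y$ has nonzero eigenvalue on every nonconstant monomial of $K[[x,y]]/(xy)$. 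To salvage your route you would have to descend one branch variable at a time, landing after each step in a module over a smaller complete ring rather than over $K$, and only make the matrix constant in the direction the derivation actually sees (this is exactly the shape of CLAIM 1 in proposition \ref{inductivepass} and of the chain $C_k\to C_{k-1}\to\cdots\to C_1$ in lemma \ref{descentrk1}); that is a substantial amount of extra work, not bookkeeping.

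The paper avoids all local analysis here. A morphism $\varphi$ in $MIC(X^{\times}/\Sp(K))^{\mathrm{nr}}$ is in particular horizontal for the restricted connection, so it has the same underlying $\mathcal{O}_X$-linear map as $r(\varphi)$, where $r$ is the restriction functor to $MIC(X^{\times}/\Sp(K)^{\times})^{\mathrm{nr}}$ (one checks in a line that $r$ preserves nilpotence of residues, since the restricted residue composed with $t_y$ is a special case of the absolute one). Hence ${\rm Ker}(\varphi)$ and ${\rm Coker}(\varphi)$ have the same underlying $\mathcal{O}_X$-modules as ${\rm Ker}(r(\varphi))$ and ${\rm Coker}(r(\varphi))$, which are locally free by proposition \ref{relativeabelian}; and the nilpotence of the absolute residues on the kernel and cokernel follows by functoriality, because the fiber of the kernel injects into $E(y)$ and the fiber of the cokernel is a quotient of $F(y)$ compatibly with residues, and restrictions and quotients of nilpotent operators are nilpotent. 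The insight you missed is that all the hard local work was already absorbed into proposition \ref{relativeabelian}, and only the residue condition in the one extra direction needs a (soft) argument.
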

\begin{proof} 
Note first that, by restricting the derivations, a functor
$$r:MIC(X^{\times}/\Sp(K))^{\mathrm{nr}}\rightarrow MIC(X^{\times}/\Sp(K)^{\times})^{\mathrm{nr}}$$
is defined. To see this, it suffices to check that, for an object $(E, \nabla_E)$ in  $MIC(X^{\times}/\Sp(K))^{\mathrm{nr}}$, the module with integrable connection $r((E, \nabla_E)):=(E, \nabla_{E|\Sp(K)^{\times}})$ has nilpotent residues. 
This is true because, for every geometric point $y$ over a closed point of $X$ and 
every $K(y)$-linear map $t_{y}:K(y)\otimes \overline{M}^{\mathrm{gp}}_{y}/\overline{N}^{\mathrm{gp}}_y\rightarrow K(y)$, 
the composite map 
$$ E(y) \overset{\rho_{y}}{\longrightarrow}  E(y)\otimes \overline{M}^{\mathrm{gp}}_{y}/\overline{N}^{\mathrm{gp}}_y
\overset{{\rm id} \otimes t_y}{\longrightarrow} E(y) $$ 
(where the notation is as in definition \ref{residuenilpMN} for $r((E, \nabla_E))$) 
coincides with the nilpotent map  
$$ E(y) \overset{\rho'_{y}}{\longrightarrow}  E(y)\otimes \overline{M}^{\mathrm{gp}}_{y} 
\overset{{\rm id} \otimes t'_y}{\longrightarrow} E(y), $$ 
where $\rho'_y$ is the map induced by $(E, \nabla_E)$ and 
$t'_y$ is the composite of the projection 
$K(y) \otimes {M}^{\mathrm{gp}}_{y} \longrightarrow K(y) \otimes 
\overline{M}^{\mathrm{gp}}_{y}/\overline{N}^{\mathrm{gp}}_y$ and $t_y$. 

For any morphism $\varphi: E \to F$ in $MIC(X^{\times}/\Sp(K))^{\mathrm{nr}}$, 
the underlying $\mathcal{O}_X$-module of ${\rm Ker}(\varphi)$
(resp. ${\rm Coker}(\varphi)$) is the same as that of ${\rm Ker}(r(\varphi))$
(resp. ${\rm Coker}(r(\varphi))$), and the latter is locally free by 
proposition \ref{relativeabelian}. Also, since the same is true 
for the image and the coimage of $\varphi$, we have the natural injection 
${\rm Ker}(r(\varphi))(y) \hookrightarrow E(y)$ and the natural surjection 
$F(y) \twoheadrightarrow {\rm Coker}(\varphi)(y)$ of fibers at any 
geometric point $y$ of $X$. Since these maps are compatible with residues, 
we conclude that ${\rm Ker}(\varphi)$ and ${\rm Coker}(\varphi)$ have 
nilpotent residues. So the proof is finished. 
\end{proof}

\begin{prop}
Suppose that there exists a $K$-rational point $x$ of $X$. Then the category $MIC(X^{\times}/\Sp(K))^{\mathrm{nr}}$ is a neutral Tannakian category over $K$.
\end{prop}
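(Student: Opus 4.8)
The plan is to follow verbatim the strategy of Proposition~\ref{relativeTannakian}, replacing the base $\Sp(K)^{\times}$ by $\Sp(K)$ and manufacturing the fiber functor out of the restriction functor constructed in the proof of Proposition~\ref{globalfreenesstrivbasis}. First, Proposition~\ref{globalfreenesstrivbasis} tells us that $MIC(X^{\times}/\Sp(K))^{\mathrm{nr}}$ is abelian. Since $\omega^1_{X^{\times}/\Sp(K)}$ is locally free, remark~\ref{tensorstructure} equips $MIC(X^{\times}/\Sp(K))$, and hence its full subcategory $MIC(X^{\times}/\Sp(K))^{\mathrm{nr}}$, with the structure of an abelian tensor category with unit object $(\mathcal{O}_X,d)$; one has to observe that the tensor product and the dual of a locally free module with integrable connection having nilpotent residues again have this property (the residue of a tensor product is assembled from those of the factors, and the residue of a dual is minus the transpose, so nilpotence is preserved), so that the category is moreover rigid.

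Next I would construct the fiber functor. Recall from the proof of Proposition~\ref{globalfreenesstrivbasis} the restriction-of-derivations functor
$$r:MIC(X^{\times}/\Sp(K))^{\mathrm{nr}}\longrightarrow MIC(X^{\times}/\Sp(K)^{\times})^{\mathrm{nr}},\qquad (E,\nabla_E)\mapsto (E,\nabla_{E|\Sp(K)^{\times}}).$$
This functor is $K$-linear; it is the identity on underlying $\mathcal{O}_X$-modules and on morphisms, hence faithful; it carries the tensor structure and the unit object to those of $MIC(X^{\times}/\Sp(K)^{\times})^{\mathrm{nr}}$ (immediate from the formula for the tensor-product connection and from the surjection $\omega^1_{X^{\times}/\Sp(K)}\twoheadrightarrow\omega^1_{X^{\times}/\Sp(K)^{\times}}$); and it is exact, because, as observed in the proof of Proposition~\ref{globalfreenesstrivbasis}, kernels and cokernels in the source and in the target have the same underlying $\mathcal{O}_X$-module. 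Composing with the fiber functor $\omega_x$ of Proposition~\ref{relativeTannakian} we obtain
$$\widetilde{\omega}_x:=\omega_x\circ r:MIC(X^{\times}/\Sp(K))^{\mathrm{nr}}\longrightarrow \mathrm{Vec}_K,\qquad (E,\nabla_E)\mapsto E(x),$$
which is again $K$-linear, exact, faithful and a tensor functor; equivalently, $\widetilde{\omega}_x$ is simply ``take the fiber at $x$'', whose exactness follows from local freeness of all objects and whose faithfulness also follows directly from \cite[Corollaire 2.10]{Del90}, exactly as in Proposition~\ref{relativeTannakian}.

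It then remains to check $\mathrm{End}((\mathcal{O}_X,d))=K$. Since $r$ sends the unit object to the unit object, every endomorphism of $(\mathcal{O}_X,d)$ in $MIC(X^{\times}/\Sp(K))^{\mathrm{nr}}$ is in particular an endomorphism of $(\mathcal{O}_X,d)$ in $MIC(X^{\times}/\Sp(K)^{\times})^{\mathrm{nr}}$, so the faithfulness of $r$ yields an injection of $K$-algebras $\mathrm{End}((\mathcal{O}_X,d))\hookrightarrow \mathrm{End}_{MIC(X^{\times}/\Sp(K)^{\times})^{\mathrm{nr}}}((\mathcal{O}_X,d))$, and the target equals $K$ by Proposition~\ref{relativeTannakian}. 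Combined with the structural inclusion $K\subseteq \mathrm{End}((\mathcal{O}_X,d))$ this forces $\mathrm{End}((\mathcal{O}_X,d))=K$. (One may equally apply the faithful fiber functor $\widetilde{\omega}_x$ to get $\mathrm{End}((\mathcal{O}_X,d))\hookrightarrow \mathrm{End}_K(K)=K$.) By Definition~\ref{Tannakiancategory}, $MIC(X^{\times}/\Sp(K))^{\mathrm{nr}}$ is therefore a neutral Tannakian category over $K$, with fiber functor $\widetilde{\omega}_x$.

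I do not expect a genuine obstacle here, since every ingredient is already available: the real work lay in the abelianness of the category (Proposition~\ref{globalfreenesstrivbasis}) and in the construction of $r$. The only points that genuinely need a line of verification are that $r$ is a tensor functor compatible with units and that rigidity descends to $MIC(X^{\times}/\Sp(K))^{\mathrm{nr}}$, i.e. that the dual of a locally free module with integrable connection having nilpotent residues again has nilpotent residues; both are routine.
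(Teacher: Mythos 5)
Your proposal is correct and follows essentially the same route as the paper: abelianness from proposition \ref{globalfreenesstrivbasis}, rigidity from local freeness of the underlying modules, $\mathrm{End}((\mathcal{O}_X,d))\cong K$ by viewing endomorphisms inside $MIC(X^{\times}/\Sp(K)^{\times})^{\mathrm{nr}}$ via restriction, and the fiber-at-$x$ functor made exact by local freeness and faithful by \cite[Corollaire 2.10]{Del90}. Your extra remarks (that $r$ is a tensor functor, and that nilpotence of residues passes to tensor products and duals) only make explicit checks the paper leaves implicit.
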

\begin{proof}
The proof is analogous to that of proposition \ref{relativeTannakian}. We saw in proposition \ref{globalfreenesstrivbasis} that $MIC(X^{\times}/\Sp(K))^{\mathrm{nr}}$ is an abelian category. Moreover, with the tensor structure defined in $MIC(X^{\times}/\Sp(K))$ and the unit object given by $(\mathcal{O}_X, d)$ (see remark \ref{tensorstructure}) it is a rigid abelian tensor category, thanks to the fact that, for every object $(E, \nabla_E)$ in $MIC(X^{\times}/\Sp(K))^{\mathrm{nr}}$, $E$ is a locally free $\mathcal{O}_X$-module. 

To prove that it is a neutral Tannakian category over $K$ we have to check that $\mathrm{End }((\mathcal{O}_X, d))\cong K$ and to construct a fiber functor.  Every $\alpha \in \mathrm{End }((\mathcal{O}_X, d))$ is indeed an element of $\mathrm{End }_{MIC(X^{\times}/\Sp(K)^{\times})}((\mathcal{O}_X, d_{|\Sp(K)^{\times}}))$ which is isomorphic to $K$ as we saw in proposition \ref{relativeTannakian}. Since $K\subset \mathrm{End }((\mathcal{O}_X, d)),$ $\mathrm{End}((\mathcal{O}_X, d))\cong K$ as required. 
We define a functor
 $$\eta_x:MIC(X^{\times}/\Sp(K))^{\mathrm{nr}}\rightarrow \mathrm{Vec}_K$$
 which sends a pair $(E, \nabla_E)\in MIC(X^{\times}/\Sp(K))^{\mathrm{nr}}$ to the fiber $E(x)$ of $E$ at $x$.
 Since the underlying $\mathcal{O}_X$-module $E$ is locally free for every object 
$(E, \nabla_E)$ in $MIC(X^{\times}/\Sp(K))^{\mathrm{nr}}$, $\eta_x$ is an exact tensor functor and by \cite[Corollaire 2.10]{Del90} it is faithful. Hence it is a fiber functor. 
\end{proof}

\begin{defi} \label{pi1XM/K}
Let $x$ be a $K$-rational point of $X$ and let 
$$\eta_x:MIC(X^{\times}/\Sp(K))^{\mathrm{nr}}\rightarrow \mathrm{Vec}_K$$
be the fiber functor introduced in the above proposition. 
We define the log algebraic fundamental group of $X^{\times}$ over $\Sp(K)$ with base point $x$ as the Tannaka dual of $MIC(X^{\times}/\Sp(K))^{\mathrm{nr}}$, \emph{i.e.} 
$$\pi_{1}(X^{\times}/\Sp(K), x):=G(MIC(X^{\times}/\Sp(K))^{\mathrm{nr}}, \eta_x)$$
with the same notation as in theorem \ref{Saavedra}.
\end{defi}

\section{The exact sequence with four terms}

In this section we suppose that there exists a $K$-rational point  $x$ of $X$. 

The morphism of log schemes $f:X^{\times}\rightarrow \Sp(K)^{\times}$ induces a functor 
$$f^*_{\mathrm{dR}}:MIC(\Sp(K)^{\times}/\Sp(K))^{\mathrm{nr}}\rightarrow MIC(X^{\times}/\Sp(K))^{\mathrm{nr}}:$$
indeed, if $(E, \nabla_E)$ is an object of $MIC(\Sp(K)^{\times}/\Sp(K))^{\mathrm{nr}},$ the module with integrable connection $f^*_{\mathrm{dR}}((E, \nabla_E))$ is defined as the pair $(f^*(E),f^*(\nabla_E))$, where $f^*(E)$ is the $\mathcal{O}_X$-module 
$\mathcal{O}_X \otimes_{f^{-1}K} f^{-1}(E)$ and $f^*(\nabla_E)$ is the unique connection which extends 
$f^{-1}(E) \xrightarrow{f^{-1}(\nabla_E)} f^{-1}(E) \otimes_{f^{-1}K} f^{-1}\omega^1_{\Sp(K)^{\times}/\Sp(K)} 
\rightarrow  f^*(E) \otimes_{\mathcal{O}_X} \omega^1_{X^{\times}/\Sp(K)}.$ 
The module with integrable connection $f^*_{\mathrm{dR}}((E, \nabla_E))$ has nilpotent residues: indeed, 
for every geometric point $y$ over a closed point of $X$ and every $K(y)$-linear map $t_{y}:K(y)\otimes \overline{M}^{\mathrm{gp}}_{y}\rightarrow K(y)$, the composite map 
$$ f^*(E)(y) \overset{\rho_y}{\longrightarrow} 
f^*(E)(y)\otimes \overline{M}^{\mathrm{gp}}_{y} 
\overset{{\rm id} \otimes t_y}{\longrightarrow} f^*(E)(y) $$
(where the notation is as in definition \ref{residuenilpMN}) 
is nilpotent because it coincides with the nilpotent map 
$\nabla_E:E\rightarrow E\otimes \overline{N}^{\mathrm{gp}} \cong E$ 
up to scalar after we tensor it with $K(y)$. 

Also, as we have seen in the proof of proposition \ref{globalfreenesstrivbasis}, 
the restriction functor
$$r:MIC(X^{\times}/\Sp(K))^{\mathrm{nr}}\rightarrow MIC(X^{\times}/\Sp(K)^{\times})^{\mathrm{nr}}$$
is defined. 

Let $\pi_1(\Sp(K)^{\times}/\Sp(K), \nu)$ be as in definition \ref{pi1KN/K}, let $\pi_1(X^{\times}/\Sp(K)^{\times}, x)$ be as in definition \ref{pi1XM/KN} and let $\pi_1(X^{\times}/\Sp(K)^{\times}, x)$ be as in definition \ref{pi1XM/K}. Then the functors $f^*_{\mathrm{dR}}$ and $r$ induce respectively the homomorphisms $\pi(f^*_{\mathrm{dR}})$ and $\pi(r)$ in the sequence
\begin{equation}\label{shortwithoutu}
\pi_1(X^{\times}/\Sp(K)^{\times}, x)\xrightarrow{\pi(r)} \pi_1(X^{\times}/\Sp(K), x)\xrightarrow{\pi(f^*_{\mathrm{dR}})} \pi_1(\Sp(K)^{\times}/\Sp(K), \nu).
\end{equation}
In the remaining part of the section we prove that the sequence \eqref{shortwithoutu} is exact and that $\pi(f^*_{\mathrm{dR}})$ is faithfully flat, using a theorem by H. Esnault, P. H. Hai and X. Sun (\cite[Theorem A1]{EsnHaiSun08}) which translates the exactness of the sequence of group schemes in terms of the corresponding categories of representations. We recall the theorem here for the convenience of the reader.
\begin{teo}\label{criterion}
Let $L\xrightarrow{\pi(q)} G\xrightarrow{\pi(p)} A$ be a sequence of homomorphisms of affine group schemes over a field $K$. It induces a sequence of functors
$$\mathrm{Rep}(A)\xrightarrow{p}\mathrm{Rep}(G)\xrightarrow{q}\mathrm{Rep}(L),$$
where $\mathrm{Rep}$ is the category of finite dimensional representations over $K$.

\begin{itemize}
\item[(i)] The map $\pi(p)$ is faithfully flat if and only if $p$ is fully faithful and the subcategory of $\mathrm{Rep}(G)$ given by $p(\mathrm{Rep}(A))$ is closed by subobjects. \item[(ii)] The map $\pi(q)$ is a closed immersion if and only if any object of $\mathrm{Rep}(L)$ is a subquotient of an object of the form $q(V)$ for $V\in \mathrm{Rep}(G).$

\item[(iii)] Let us assume that $\pi(p)$ is faithfully flat and that $\pi(q)$ is a closed immersion.  The sequence $L\xrightarrow{\pi(q)} G\xrightarrow{\pi(p)} A$ is exact if and only if the following conditions are satisfied: 
\begin{itemize}
\item[(a)] For every $E\in \mathrm{Rep}(G)$, $q(E)$ in $\mathrm{Rep}(L)$ is trivial if and only if there exists $V$ in $\mathrm{Rep}(A)$ such that $p(V)\cong E$.
\item[(b)] Let $E$ be an object in $\mathrm{Rep}(G)$ and let $W_0$ be the maximal trivial subobject of $q(E)$ in $\mathrm{Rep}(L)$. Then there exists $F\subset E$  in $\mathrm{Rep}(G)$ such that $q(F)\cong W_0$.
\item[(c)] Any $W$ in  $\mathrm{Rep}(L)$ is a quotient of (hence, by taking duals, embeddable in) $q(E)$ for some $E\in \mathrm{Rep}(G).$
\end{itemize} 
\end{itemize}
\end{teo}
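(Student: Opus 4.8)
\emph{Overview.} The statement is \cite[Theorem A1]{EsnHaiSun08}; the plan is to translate everything through the anti-equivalence between affine $K$-group schemes and neutral Tannakian categories over $K$ (theorem \ref{Saavedra} and \cite[\S 2]{DelMil82}). Part (i) is the standard correspondence between faithfully flat homomorphisms $\pi(p)\colon G\to A$ (equivalently, injections $\mathcal{O}(A)\hookrightarrow\mathcal{O}(G)$) and full tensor subcategories stable under subobjects; one only has to check that $p(\mathrm{Rep}(A))$ is precisely the subcategory of representations of $G$ that factor through $A$. Part (ii) is the dual correspondence: $\pi(q)\colon L\to G$ is a closed immersion iff $\mathcal{O}(G)\twoheadrightarrow\mathcal{O}(L)$, and since $\mathcal{O}(L)$ is the filtered union of its finite-dimensional subcomodules, each of which is a quotient of a finite sum of restrictions $q(V)$, this surjectivity is equivalent to the assertion that every object of $\mathrm{Rep}(L)$ is a subquotient of some $q(V)$. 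So parts (i) and (ii) reduce to bookkeeping with \cite[\S 2]{DelMil82}.

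\emph{Part (iii), necessity.} Set $N:=\ker\pi(p)$, a normal closed subgroup of $G$ with $A\cong G/N$; since $\pi(p)$ is faithfully flat and $\pi(q)$ a closed immersion by hypothesis, exactness of the sequence is exactly the assertion that $\pi(q)$ identifies $L$ with $N$. Granting this, necessity is immediate: $p(\mathrm{Rep}(A))$ is exactly the full subcategory of $G$-representations on which $N$ acts trivially, which gives (a); for any $E\in\mathrm{Rep}(G)$ the invariant subspace $E^{N}$ is $G$-stable because $N$ is normal, and it is the maximal $L$-trivial subobject of $q(E)$, which gives (b); and a normal closed subgroup of an affine $K$-group scheme is observable (the quotient $G/N$ being affine), so every finite-dimensional $N$-representation embeds into --- hence, dually, is a quotient of --- the restriction of some $E\in\mathrm{Rep}(G)$, which gives (c).

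\emph{Part (iii), sufficiency.} The content is in the converse, so assume (a), (b), (c). One implication in (a) shows that $L$ acts trivially on every $G$-representation pulled back from $A$, hence $L\subseteq N$ (as $N$ is the common kernel of those representations, $\mathcal{O}(G/N)$ being spanned by their matrix coefficients); the other implication in (a) then yields that the $G$-representations trivial on $L$ coincide with those trivial on $N$, so the normal closure of $L$ in $G$ equals $N$. It therefore suffices to prove that $L$ is normal in $G$, for then $L$ equals its own normal closure, namely $N$. Now (c), upgraded by duality as in part (ii), says $L$ is observable in $G$, so $G/L$ is quasi-affine and $\mathcal{O}(G/L)=\mathcal{O}(G)^{L}$ (invariants for right translation) separates the points of $G/L$; and (b), applied to the finite-dimensional $G$-subrepresentations $E$ of the right regular representation $\mathcal{O}(G)$, says that each $E^{L}=E\cap\mathcal{O}(G/L)$ is $G$-stable, so passing to the union shows that $\mathcal{O}(G/L)$ is itself stable under right translation by $G$. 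Writing this stability out for $g\in G(R)$, $\ell\in L(R)$ and evaluating functions at the identity coset forces $g^{-1}\ell g$ and the identity to lie in the same coset, i.e. $g^{-1}\ell g\in L$; thus $L$ is normal in $G$ and $L=N=\ker\pi(p)$, so the sequence is exact.

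\emph{Main obstacle.} The crux is precisely this last step of sufficiency, where (b) and (c) have to be combined: (b) alone only forces $G$-stability of $\mathcal{O}(G/L)$, which does not imply normality when that ring of functions is too small to detect cosets (think of $L$ a parabolic subgroup), and (c) is exactly the input that excludes this. Making the argument rigorous --- and in particular verifying that the three conditions force $L=N$ rather than merely $L$ a proper ``dense'' subgroup of $N$ --- is most cleanly done through the structure theory of observable subgroups together with faithfully flat descent for the Hopf algebras $\mathcal{O}(L)$, $\mathcal{O}(G)$, $\mathcal{O}(A)$, which is the machinery set up in \cite{EsnHaiSun08}; I expect essentially all of the work to lie in that bookkeeping.
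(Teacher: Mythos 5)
The paper does not prove this theorem: it is quoted from Esnault--Hai--Sun \cite[Theorem A1]{EsnHaiSun08} ``for the convenience of the reader'' and used only as a black box, so there is no in-paper argument to compare yours against. Judged on its own, your reconstruction is essentially correct and close to the original. Parts (i) and (ii) are \cite[Proposition 2.21]{DelMil82} as you say, and necessity in (iii) is fine once one grants that a normal closed subgroup $N$ is observable (true, since $G/N$ is affine). Your sufficiency scheme also works: (a) gives $L\subseteq N:=\mathrm{Ker}(\pi(p))$ with normal closure equal to $N$; (b), applied to the finite-dimensional $G$-subcomodules of $\mathcal{O}(G)$ under right translation, shows that $\mathcal{O}(G)^{L}$ is stable under right translation by all of $G$, hence invariant under each conjugate $gLg^{-1}$; and (c) supplies the separation property of $\mathcal{O}(G)^{L}$ needed to conclude $gLg^{-1}\subseteq L$, i.e. that $L$ is normal and therefore equals $N$. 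The only real gap is the one you flag yourself: the theorem is stated for arbitrary affine group schemes, whereas the observability dictionary you invoke (quasi-affineness of $G/L$, separation of cosets by $\mathcal{O}(G)^{L}$) is standard only for algebraic groups; for the pro-algebraic case one must either rerun the stabilizer argument directly from ``every $L$-representation embeds in a restriction of a $G$-representation'' or pass to a limit over algebraic quotients. That is routine but does need to be written out.
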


\begin{prop}\label{derhamcohomologywithconnection}
Let $(E, \nabla_E)$ be an object in $MIC(X^{\times}/\Sp(K))^{\mathrm{nr}}$. Then  $H^0_{\mathrm{dR}}(X^{\times}/\Sp(K)^{\times}, r((E, \nabla_E)))$ comes equipped with a nilpotent endomorphism $\nabla$, in such a way that $(H^0_{\mathrm{dR}}(X^{\times}/\Sp(K)^{\times}, r((E, \nabla_E))), \nabla)$ is an object of $MIC(\Sp(K)^{\times}/\Sp(K))^{\mathrm{nr}}$. 

\end{prop}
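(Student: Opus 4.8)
The plan is to produce the nilpotent endomorphism $\nabla$ as the ``residue of $\nabla_E$ in the base direction''. I would start from the exact sequence of locally free $\mathcal{O}_X$-modules
\[
0 \to f^*\omega^1_{\Sp(K)^{\times}/\Sp(K)} \to \omega^1_{X^{\times}/\Sp(K)} \to \omega^1_{X^{\times}/\Sp(K)^{\times}} \to 0,
\]
which is exact also on the left because, by \eqref{formofomega1} and the chart \eqref{chartoff}, the first map sends a generator to $\sum_{i=1}^r\mathrm{dlog}\,x_i\neq 0$ \'etale locally. The crucial point is that $\omega^1_{\Sp(K)^{\times}/\Sp(K)}$ is a one–dimensional $K$-vector space with the distinguished generator $\mathrm{dlog}(1,1)$, so $f^*\omega^1_{\Sp(K)^{\times}/\Sp(K)}=\mathcal{O}_X\cdot f^*\mathrm{dlog}(1,1)$ is globally free of rank one and $E\otimes f^*\omega^1_{\Sp(K)^{\times}/\Sp(K)}\cong E$ canonically. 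Put $W:=H^0_{\mathrm{dR}}(X^{\times}/\Sp(K)^{\times},r((E,\nabla_E)))=\{s\in\Gamma(X,E)\mid\nabla_{E|\Sp(K)^{\times}}(s)=0\}$. For $s\in W$, the image of $\nabla_E(s)$ in $\Gamma(X,E\otimes\omega^1_{X^{\times}/\Sp(K)^{\times}})$ equals $\nabla_{E|\Sp(K)^{\times}}(s)=0$, so by the displayed sequence (and local freeness of $E$) we have $\nabla_E(s)\in\Gamma(X,E\otimes f^*\omega^1_{\Sp(K)^{\times}/\Sp(K)})\cong\Gamma(X,E)$; I define $\nabla(s)\in\Gamma(X,E)$ by $\nabla_E(s)=\nabla(s)\otimes f^*\mathrm{dlog}(1,1)$, which is visibly $K$-linear in $s$.

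Next I would check that $\nabla$ preserves $W$, using integrability of $\nabla_E$. Applying $\nabla_{E,1}$ to $\nabla_E(s)=\nabla(s)\otimes f^*\mathrm{dlog}(1,1)$ gives $0=\nabla(s)\otimes d(f^*\mathrm{dlog}(1,1))+\nabla_E(\nabla(s))\wedge f^*\mathrm{dlog}(1,1)$; the first term vanishes since $d\,\mathrm{dlog}(1,1)\in\omega^2_{\Sp(K)^{\times}/\Sp(K)}=0$, and a local computation (complete $f^*\mathrm{dlog}(1,1)$ to an $\mathcal{O}_X$-basis of $\omega^1_{X^{\times}/\Sp(K)}$ and read off $\omega^2_{X^{\times}/\Sp(K)}$) shows that $\nabla_E(\nabla(s))\wedge f^*\mathrm{dlog}(1,1)=0$ forces $\nabla_E(\nabla(s))\in E\otimes f^*\omega^1_{\Sp(K)^{\times}/\Sp(K)}$, i.e.\ $\nabla_{E|\Sp(K)^{\times}}(\nabla(s))=0$, i.e.\ $\nabla(s)\in W$. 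Finite-dimensionality of $W$ over $K$ follows from the faithfulness of the fiber functor $\omega_x$ (proposition \ref{relativeTannakian}): since $W=\mathrm{Hom}_{MIC(X^{\times}/\Sp(K)^{\times})^{\mathrm{nr}}}((\mathcal{O}_X,d),r((E,\nabla_E)))$, evaluation at $x$ embeds $W$ into $E(x)$.

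The heart of the argument is the nilpotence of $\nabla$, which I would derive from the hypothesis that $(E,\nabla_E)$ has nilpotent residues, applied at the $K$-rational point $x$. Let $\rho_x\colon E(x)\to E(x)\otimes\overline{M}^{\mathrm{gp}}_x$ be the residue of $(E,\nabla_E)$ (the base $\Sp(K)$ has trivial log structure), and let $\eta_x\in\overline{M}^{\mathrm{gp}}_x$ be the image of $f^*\mathrm{dlog}(1,1)$ under $\omega^1_{X^{\times}/\Sp(K)}(x)\twoheadrightarrow K\otimes\overline{M}^{\mathrm{gp}}_x$; since $\eta_x$ is the class of $(1,\dots,1)$ in $\overline{M}^{\mathrm{gp}}_x$ it is nonzero, so there is a $K$-linear map $t_x\colon K\otimes\overline{M}^{\mathrm{gp}}_x\to K$ with $t_x(\eta_x)=1$. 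Reducing the identity $\nabla_E(s)=\nabla(s)\otimes f^*\mathrm{dlog}(1,1)$ modulo $\mathfrak m_{X,x}$ and comparing with the definition of the residue yields $\rho_x(s(x))=\nabla(s)(x)\otimes\eta_x$, hence $(\mathrm{id}\otimes t_x)(\rho_x(s(x)))=\nabla(s)(x)$ for every $s\in W$. Since $\nabla$ stabilizes $W$, iteration gives $\big((\mathrm{id}\otimes t_x)\circ\rho_x\big)^{k}(s(x))=\nabla^{k}(s)(x)$ for all $k$; by the nilpotent-residue hypothesis the operator $(\mathrm{id}\otimes t_x)\circ\rho_x$ is nilpotent, so $\nabla^{k}(s)(x)=0$ for $k>\dim_K E(x)$ and all $s\in W$, and by injectivity of $W\hookrightarrow E(x)$ we conclude $\nabla^{k}=0$ on $W$. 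Finally, $(W,\nabla)$ is an object of $MIC(\Sp(K)^{\times}/\Sp(K))^{\mathrm{nr}}$ by the identification in remark \ref{vecspatannakian}.

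I expect the main obstacle to be the bookkeeping in the last paragraph: pinning down precisely the compatibility between the globally defined operator $\nabla$ and the pointwise residue $\rho_x$ (including the passage between the $K$-rational point $x$ and a geometric point above it), and checking that the chosen functional $t_x$ places us exactly in the situation covered by the definition of nilpotent residues. The first two paragraphs are routine once the locally free structure and the distinguished generator $\mathrm{dlog}(1,1)$ are in hand.
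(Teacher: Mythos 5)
Your proof is correct and follows essentially the same route as the paper: both define $\nabla$ on horizontal sections by splitting off the base direction of $\nabla_E$ (you via the exact sequence of differentials and the generator $\mathrm{dlog}(1,1)$, the paper via the dual exact sequence of derivations), both get finite-dimensionality from the Tannakian injectivity of $W\hookrightarrow E(x)$, and both deduce nilpotence of $\nabla$ from the nilpotent-residue hypothesis on $(E,\nabla_E)$ at a single closed point. The only cosmetic difference is that the paper evaluates at a smooth closed point, where the residue is canonically a single endomorphism, while you work at the given $K$-rational point $x$ with an explicit functional $t_x$ — both work since $f^*\mathrm{dlog}(1,1)$ has nonzero image in $\overline{M}^{\mathrm{gp}}_x$ everywhere.
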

\begin{proof}

 We need to prove that $H^0_{\mathrm{dR}}(X^{\times}/\Sp(K)^{\times}, r((E, \nabla_E)))$ is a finite dimensional $K$-vector space endowed with a nilpotent endomorphism. If we denote by $E^{\nabla_{E|\Sp(K)^{\times}}}$ the sheaf of horizontal sections of $r((E, \nabla_E))$, then by definition $H^0_{\mathrm{dR}}(X^{\times}/\Sp(K)^{\times}, r((E, \nabla_E)))=f_{*}(E^{\nabla_{E|\Sp(K)^{\times}}}).$ The sheaf $E^{\nabla_{E|\Sp(K)^{\times}}}$ is equipped with an action of $\mathcal{D}er(\Sp(K)^{\times}/K)$: indeed by \cite[proposition 3.12]{Kato89} we have an exact sequence of sheaves 
\begin{equation}\label{exactder}
0\rightarrow \mathcal{D}er(X^{\times}/\Sp(K)^{\times}) \rightarrow \mathcal{D}er(X^{\times}/\Sp(K))\rightarrow \mathcal{O}_X\otimes\mathcal{D}er(\Sp(K)^{\times}/\Sp(K))\rightarrow0
\end{equation}
which locally splits. We denote by $\tau$ a splitting $\mathcal{O}_X\otimes\mathcal{D}er(\Sp(K)^{\times}/\Sp(K)) \rightarrow \mathcal{D}er(X^{\times}/\Sp(K))$. For every $D\in \mathcal{D}er(\Sp(K)^{\times}/\Sp(K)) $ we define the action of $\nabla(D)$ on $E^{\nabla_{E|\Sp(K)^{\times}}}$ by 
$$\nabla(D)(e):=\nabla_E(\tau(1 \otimes D))(e) \quad (e\in E^{\nabla_{E|\Sp(K)^{\times}}}).$$
The action is well-defined and it does not depend on the splitting because two different splittings differ by an element of $\mathcal{D}er(X^{\times}/\Sp(K)^{\times})$. Thus we have an action $\nabla$ of $\mathcal{D}er((\Sp(K)^{\times})/\Sp(K))$ on $E^{\nabla_{E|\Sp(K)^{\times}}}$ and hence on $f_{*}(E^{\nabla_{E|\Sp(K)^{\times}}})$, which defines 
the structure of a connection (see \cite[remark 3.1]{Kat70} and \cite[lemma 2.1]{Hai13} for the analogous action in the smooth case).  

We denote by $f_{*\mathrm{dR}}((E, \nabla_E))$ the $K$-vector space $f_{*}(E^{\nabla_{E|\Sp(K)^{\times}}})$ endowed with the action $\nabla$ described above. Then there is a morphism of $\mathcal{O}_X$-modules with integrable connection
\begin{equation}\label{f^*f_*}
f^*_{\mathrm{dR}}(f_{*\mathrm{dR}}((E, \nabla_E)))\longrightarrow (E, \nabla_E),
\end{equation}
because the natural map between the sheaves $f^*(f_{*}(E^{\nabla_{E|\Sp(K)^{\times}}}))\longrightarrow E^{\nabla_{E|\Sp(K)^{\times}}}$ composed with the inclusion $E^{\nabla_{E|\Sp(K)^{\times}}}\hookrightarrow E $ is indeed horizontal. 
(Here $f^*_{\mathrm{dR}}$ denotes 
the pullback of quasi-coherent modules with integrable connection. 
Note that we have not yet proved that $f_{*\mathrm{dR}}((E, \nabla_E))$  
belongs to the category $MIC(\Sp(K)^{\times}/\Sp(K))^{\mathrm{nr}}$, which is the purpose 
of this proposition.) 
To prove that $f_{*\mathrm{dR}}((E, \nabla_E))$ belongs to 
$MIC(\Sp(K)^{\times}/\Sp(K))^{\mathrm{nr}}$, it suffices to prove that the morphism 
\eqref{f^*f_*} is injective and that it has nilpotent residues. 
Let us denote the neutral Tannakian category 
$MIC(X^{\times}/\Sp(K)^{\times})^{\rm nr}$ by ${\cal T}$ and denote the unit object 
$({\cal O}_X,d)$ of ${\cal T}$ simply by ${\bold 1}$. Also, put $V := r((E,\nabla_E))$. 
Then the morphism we obtain by applying $r$ to \eqref{f^*f_*} is the canonical map 
\begin{equation}\label{tannakamap}
{\rm Hom}_{\cal T}({\bold 1}, V) \otimes_K {\bold 1} \to V, 
\end{equation}
and this map is injective in any neutral Tannakian category. 
So we have shown the required injectivity. 
Also, since the cokernel of the map 
\eqref{tannakamap} is also locally free, 
if we pull back the injection \eqref{f^*f_*} to a smooth closed point $s$ of $X$, 
we obtain an injection 
$$ 
(f_{*\mathrm{dR}}((E, \nabla_E))) \otimes_K K(s) 
\longrightarrow s^*(E, \nabla_E), $$
and by functoriality of the nilpotence of residues, we see that 
$f_{*\mathrm{dR}}((E, \nabla_E))$ has nilpotent residues. So we are done.  
\end{proof}
\begin{defi}
Let $(E, \nabla_E)$ be an object in $MIC(X^{\times}/\Sp(K))^{\mathrm{nr}}$. The object  $$(H^0_{\mathrm{dR}}(X^{\times}/\Sp(K)^{\times}, r((E, \nabla_E))), \nabla)$$ of $MIC(\Sp(K)^{\times}/\Sp(K))^{\mathrm{nr}}$ will be denoted by $f_{*\mathrm{dR}}((E, \nabla_E))$.

\end{defi}
We denote by $H$ the group scheme $\pi(r)(\pi_1(X^{\times}/\Sp(K)^{\times}, x))$.

\begin{prop}\label{descriptionH}
The category ${\rm Rep}(H)$ can be described as the full subcategory of $MIC(X^{\times}/\Sp(K)^{\times})^{\mathrm{nr}}$ whose objects are subquotients of objects of the type $r((E,\nabla_E))$ with $(E,\nabla_E)\in MIC(X^{\times}/\Sp(K))^{\mathrm{nr}}$.
\end{prop}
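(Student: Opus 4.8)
The plan is to run the standard Tannakian dictionary relating the schematic image of a homomorphism of affine group schemes to subquotients of restricted representations, using Theorem \ref{criterion} as the black box. Write $G_2 := \pi_1(X^{\times}/\Sp(K)^{\times}, x)$ and $G_1 := \pi_1(X^{\times}/\Sp(K), x)$, so that $\mathrm{Rep}(G_2) = MIC(X^{\times}/\Sp(K)^{\times})^{\mathrm{nr}}$ and $\mathrm{Rep}(G_1) = MIC(X^{\times}/\Sp(K))^{\mathrm{nr}}$, and under these identifications the functor on representation categories induced by $\pi(r)\colon G_2 \to G_1$ is precisely the restriction functor $r$. Since $H$ is by definition the image $\pi(r)(G_2)$ and every homomorphism of affine group schemes over a field factors as a faithfully flat morphism followed by a closed immersion, I would start by factoring $\pi(r)$ as a composite $G_2 \xrightarrow{q} H \xrightarrow{\iota} G_1$ with $q$ faithfully flat and $\iota$ a closed immersion.

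Next I would apply Theorem \ref{criterion}(i) to $q$ (in the notation of that theorem, take $G = G_2$, $A = H$, $\pi(p) = q$): since $q$ is faithfully flat, the functor $\mathrm{Rep}(H) \to \mathrm{Rep}(G_2) = MIC(X^{\times}/\Sp(K)^{\times})^{\mathrm{nr}}$ induced by $q$ is fully faithful, and its essential image, which I denote by $\mathcal{D}$, is a (rigid tensor) subcategory of $MIC(X^{\times}/\Sp(K)^{\times})^{\mathrm{nr}}$ closed under subobjects. Thus $\mathrm{Rep}(H) \cong \mathcal{D}$, and it remains to identify $\mathcal{D}$ with the full subcategory of $MIC(X^{\times}/\Sp(K)^{\times})^{\mathrm{nr}}$ of subquotients of objects of the form $r((E,\nabla_E))$ with $(E,\nabla_E) \in MIC(X^{\times}/\Sp(K))^{\mathrm{nr}}$.

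For the inclusion $\mathcal{D} \subseteq \{\text{such subquotients}\}$, I would apply Theorem \ref{criterion}(ii) to $\iota$ (take $L = H$, $G = G_1$, $\pi(q) = \iota$): since $\iota$ is a closed immersion, every object of $\mathrm{Rep}(H)$ is a subquotient of the restriction along $\iota$ of some $W \in \mathrm{Rep}(G_1) = MIC(X^{\times}/\Sp(K))^{\mathrm{nr}}$; restricting further along $q$ and using $\iota q = \pi(r)$ identifies this restriction with $r(W)$ inside $MIC(X^{\times}/\Sp(K)^{\times})^{\mathrm{nr}}$, so every object of $\mathcal{D}$ is a subquotient of some $r(W)$. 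For the reverse inclusion, observe that each $r(W)$ lies in $\mathcal{D}$, because its $G_2$-action is obtained by restriction along $\pi(r)$ and hence factors through $H = \pi(r)(G_2)$; and $\mathcal{D}$, being a rigid tensor subcategory closed under subobjects, is also closed under quotients (pass to duals) and therefore under subquotients. Hence every subquotient of an $r(W)$ belongs to $\mathcal{D}$, and combining the two inclusions with $\mathrm{Rep}(H) \cong \mathcal{D}$ completes the proof.

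The argument is essentially formal. The only points needing a little care are checking that the Tannakian formalism genuinely identifies the composite of the restriction functors along $G_2 \to H \to G_1$ with the given functor $r$ (which is exactly the statement that $r$ corresponds to $\pi(r)$), and the passage from ``closed under subobjects'' to ``closed under subquotients'' via rigidity of $MIC(X^{\times}/\Sp(K)^{\times})^{\mathrm{nr}}$. I do not expect any substantial obstacle here: the real content is already packaged in Theorem \ref{criterion} and in the existence of the image factorization.
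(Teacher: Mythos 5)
Your proposal is correct and takes essentially the same approach as the paper: both factor $\pi(r)$ through its image as a faithfully flat morphism followed by a closed immersion and then invoke theorem \ref{criterion}(i) and (ii) to match $\mathrm{Rep}(H)$ with the subquotient category. The paper merely runs the identification in the opposite direction, starting from the full subcategory $C$ of subquotients and recognizing its Tannaka dual as $H$, whereas you start from $H$ and identify its representation category with $C$; the content is identical.
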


\begin{proof}
The proof is analogous to \cite[proposition 3.1]{EsnHai06}. Let $C$ be the full subcategory of $MIC(X^{\times}/\Sp(K)^{\times})^{\mathrm{nr}}$ whose objects are subquotients of objects of the form $r((E,\nabla_E))$ with $(E,\nabla_E)\in MIC(X^{\times}/\Sp(K))^{\mathrm{nr}}$. Let $i$ be the inclusion functor 
$C \rightarrow MIC(X^{\times}/\Sp(K)^{\times})^{\mathrm{nr}}$ and 
let $G(C, x)$ be the Tannaka dual of $C$ as in theorem \ref{Saavedra}. 
The morphism $\pi(r):\pi_1(X^{\times}/\Sp(K)^{\times}, x)\rightarrow \pi_1(X^{\times}/\Sp(K), x)$ factors through $G(C, x)$ with a 
faithfully flat morphism $\pi(i): \pi_1(X^{\times}/\Sp(K)^{\times}, x) 
\rightarrow G(C,x)$ followed by a closed immersion $G(C,x) \rightarrow 
\pi_1(X^{\times}/\Sp(K), x)$: 
indeed, by theorem \ref{criterion} (i), $\pi(i)$ is faithfully flat because $i$ is clearly fully faithful and $i(C)$ is closed by subobjects. Also, the map $G(C, x)\rightarrow \pi_1(X^{\times}/\Sp(K), x)^{\mathrm{nr}}$ is a closed immersion because by definition every object of $C$ is a subquotient of an object of $r(MIC(X^{\times}/\Sp(K))^{\mathrm{nr}})$, hence the condition in theorem \ref{criterion} (ii) is satisfied.
\end{proof}

\begin{prop}\label{pifsurjective}
The map of fundamental groups 
$$\pi(f^*_{\mathrm{dR}}): \pi_1(X^{\times}/\Sp(K), x)\longrightarrow \pi_1(\Sp(K)^{\times}/\Sp(K), x)$$
is faithfully flat.
\end{prop}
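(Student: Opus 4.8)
The plan is to deduce this from Theorem \ref{criterion}(i). Applied to the sequence \eqref{shortwithoutu}, that criterion says that $\pi(f^*_{\mathrm{dR}})$ is faithfully flat if and only if the functor
$$f^*_{\mathrm{dR}}\colon MIC(\Sp(K)^{\times}/\Sp(K))^{\mathrm{nr}} \longrightarrow MIC(X^{\times}/\Sp(K))^{\mathrm{nr}}$$
is fully faithful and its essential image is closed under subobjects. Both properties I would reduce to one computation: for an object $(W,N)$ of $MIC(\Sp(K)^{\times}/\Sp(K))^{\mathrm{nr}}$ ($W$ a finite-dimensional $K$-vector space, $N$ a nilpotent endomorphism), the restriction $r(f^*_{\mathrm{dR}}(W,N))$ is the trivial object $(W\otimes_K\mathcal{O}_X,\ \mathrm{id}_W\otimes d)\cong(\mathcal{O}_X,d)^{\oplus\dim_K W}$ of $MIC(X^{\times}/\Sp(K)^{\times})^{\mathrm{nr}}$. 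This follows from the explicit construction of $f^*_{\mathrm{dR}}$ together with the observation that the image of the generator $\mathrm{dlog}(1,1)$ of $\omega^1_{\Sp(K)^{\times}/\Sp(K)}$ in $\omega^1_{X^{\times}/\Sp(K)}$ --- \'etale locally $\sum_{i=1}^{r}\mathrm{dlog}x_i$ --- dies in $\omega^1_{X^{\times}/\Sp(K)^{\times}}$, which is immediate from the local descriptions of these modules recalled in Section \ref{The categories of representations}.

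For full faithfulness I would use that $f^*_{\mathrm{dR}}$ is a $K$-linear tensor functor between rigid abelian tensor categories preserving the unit and internal $\mathcal{H}om$, so that it is fully faithful as soon as $\mathrm{Hom}(\mathbf 1,W)\to\mathrm{Hom}(\mathbf 1,f^*_{\mathrm{dR}}W)$ is bijective for every $W$ (with $\mathbf 1$ the respective unit objects). The source is $\mathrm{Hom}_{MIC(\Sp(K)^{\times}/\Sp(K))^{\mathrm{nr}}}(\mathbf 1,(W,N))=\ker N$. For the target, using the locally split exact sequence \eqref{exactder} and a local splitting $\tau$, a global section of $f^*_{\mathrm{dR}}(W,N)$ horizontal over $\Sp(K)$ is the same as a global section horizontal over $\Sp(K)^{\times}$ that is in addition killed by $\nabla_{f^*_{\mathrm{dR}}(W,N)}(\tau(1\otimes D))$ for $D$ a generator of $\mathcal{D}er(\Sp(K)^{\times}/\Sp(K))$; by the computation above the first condition identifies these sections with $W\otimes_K H^0_{\mathrm{dR}}(X^{\times}/\Sp(K)^{\times})=W\otimes_K K=W$ (here I use $H^0_{\mathrm{dR}}(X^{\times}/\Sp(K)^{\times})=K$, which holds by \cite[Proposition 3.1.6]{Shi00} and the existence of the $K$-rational point $x$, as in the proof of Proposition \ref{relativeTannakian}), and on $W$ the operator $\nabla_{f^*_{\mathrm{dR}}(W,N)}(\tau(1\otimes D))$ acts as $N$, so the second condition cuts the answer down to $\ker N$; hence the map is a bijection.

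For closedness under subobjects, given a monomorphism $(E,\nabla_E)\hookrightarrow f^*_{\mathrm{dR}}(W,N)$ in $MIC(X^{\times}/\Sp(K))^{\mathrm{nr}}$, I would apply $r$ and use the computation above to see that $r(E,\nabla_E)$ is a subobject of the trivial object $(\mathcal{O}_X,d)^{\oplus\dim W}$ in the neutral Tannakian category $MIC(X^{\times}/\Sp(K)^{\times})^{\mathrm{nr}}$. Since a subobject of a trivial object in a neutral Tannakian category over $K$ is again trivial, the canonical map $H^0_{\mathrm{dR}}(X^{\times}/\Sp(K)^{\times}, r(E,\nabla_E))\otimes_K(\mathcal{O}_X,d)\to r(E,\nabla_E)$ is an isomorphism of $\mathcal{O}_X$-modules. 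Setting $(W',N'):=f_{*\mathrm{dR}}(E,\nabla_E)$, an object of $MIC(\Sp(K)^{\times}/\Sp(K))^{\mathrm{nr}}$ by Proposition \ref{derhamcohomologywithconnection} with underlying vector space $H^0_{\mathrm{dR}}(X^{\times}/\Sp(K)^{\times}, r(E,\nabla_E))$, I would then check that the adjunction morphism \eqref{f^*f_*}, namely $f^*_{\mathrm{dR}}(W',N')\to(E,\nabla_E)$, coincides on underlying $\mathcal{O}_X$-modules with the isomorphism just produced; being moreover horizontal, it is then an isomorphism in $MIC(X^{\times}/\Sp(K))^{\mathrm{nr}}$, so $(E,\nabla_E)$ lies in the essential image of $f^*_{\mathrm{dR}}$.

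The hard part is the closedness under subobjects, concretely the verification that \eqref{f^*f_*} is an isomorphism: this rests on the trivialization $r\circ f^*_{\mathrm{dR}}=(\text{trivial object})$ to pin down the underlying $\mathcal{O}_X$-module, on the Tannakian principle that a subobject of a trivial object is trivial, and on carefully matching the full $\Sp(K)$-relative connection on both sides through a splitting of \eqref{exactder}.
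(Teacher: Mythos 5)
Your proposal is correct and follows the same overall strategy as the paper: reduce to theorem \ref{criterion}(i), prove full faithfulness by computing that $f^*_{\mathrm{dR}}$ induces an isomorphism on $\mathrm{Hom}(\mathbf{1},-)$ (equivalently on $H^0_{\mathrm{dR}}$, using the projection-formula identity $f_{*\mathrm{dR}}\circ f^*_{\mathrm{dR}}\cong\mathrm{id}$ and $H^0_{\mathrm{dR}}(X^{\times}/\Sp(K)^{\times})=K$), and prove closedness under subobjects by showing the counit \eqref{f^*f_*} applied to the subobject is an isomorphism. The only genuine divergence is in the last step: the paper forms the quotient $(F,\nabla_F)$, applies $f^*_{\mathrm{dR}}\circ f_{*\mathrm{dR}}$ to the defining exact sequence, and runs a snake-lemma diagram chase using the injectivity of the counit from proposition \ref{derhamcohomologywithconnection}, whereas you apply $r$, invoke the Tannakian fact that a subobject of a trivial object is trivial to see that the counit is already an isomorphism of underlying $\mathcal{O}_X$-modules, and conclude by horizontality. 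Both arguments rest on the same inputs (proposition \ref{derhamcohomologywithconnection} and the triviality of $r\circ f^*_{\mathrm{dR}}$); yours is slightly more direct in that it avoids introducing the quotient, while the paper's diagram chase is the version that generalizes to the arguments used later for conditions (a) and (b) of theorem \ref{criterion}(iii).
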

\begin{proof}
By theorem \ref{criterion} (i), to prove the proposition we are asked to prove that $f^*_{\mathrm{dR}}$ is fully faithful and that $f^*_{\mathrm{dR}}(MIC(\Sp(K)^{\times}/\Sp(K))^{\mathrm{nr}})$ is closed by taking subobjects. 
Let $(V, \nabla_V)\in MIC(\Sp(K)^{\times}/\Sp(K))^{\mathrm{nr}}$. To prove that $f^*_{\mathrm{dR}}$ is fully faithful, it suffices to prove that $f^*_{\mathrm{dR}}$ induces an isomorphism
\begin{equation}\label{eq:ffisom}
H^0_{\mathrm{dR}}(\Sp(K)^{\times}/\Sp(K), (V, \nabla_V))\cong H^0_{\mathrm{dR}}(X^{\times}/\Sp(K), f^*_{\mathrm{dR}}((V, \nabla_V))).
\end{equation}
Note that we have the isomorphism 
\begin{equation}\label{eq:fff}
f_{*\mathrm{dR}}(f^{*}_{\mathrm{dR}}((V, \nabla_V)))\cong (V, \nabla_V); 
\end{equation}
indeed,  $f_{*\mathrm{dR}}(f^{*}_{\mathrm{dR}}((V, \nabla_V)))$ is by definition the $K$-vector space $f_*(f^*(V)^{f^*(\nabla_V)_{|\Sp(K)^{\times}}})$ endowed with the nilpotent endomorphism introduced in the proof of proposition \ref{derhamcohomologywithconnection}. 
By definition of the connection $f^*(\nabla_V)$ and the isomorphism 
$f_{*\mathrm{dR}}(\mathcal{O}_X, d) = f_*(\mathcal{O}_X^{d_{|\Sp(K)^{\times}}}) 
\cong K$, we have the isomporphism 
$$ f_*(f^*(V)^{f^*(\nabla_V)_{|\Sp(K)^{\times}}})\cong 
f_*((V\otimes \mathcal{O}_X, \mathrm{id}\otimes d)^{(\mathrm{id}\otimes d)_{|\Sp(K)^{\times}}})\cong V$$ 
as vector spaces. Moreover, the action of the nilpotent operator on $f_*((V\otimes \mathcal{O}_X, \mathrm{id}\otimes d)^{(\mathrm{id}\otimes d)_{|\Sp(K)^{\times}}})$ coincides with the action of $\nabla_V$ on $V$ by the description 
of the former one given in proposition \ref{derhamcohomologywithconnection}. 
By applying $H^0_{\rm dR}(\Sp(K)^{\times}/\Sp(K), -)$ to the isomorphism \eqref{eq:fff}, 
we obtain the isomorphism \eqref{eq:ffisom}. Thus the proof of 
the full faithfulness of $f_{\rm dR}^*$ is finished. 

Next we prove that $f^*_{\mathrm{dR}}(MIC(\Sp(K)^{\times}/\Sp(K))^{\mathrm{nr}})$ is closed by taking subobjects. 
Let $(V,\nabla_V)$ be an object of $MIC(\Sp(K)^{\times}/\Sp(K))^{\mathrm{nr}}$, and let $(E, \nabla_E)\subset f^*_{\mathrm{dR}}((V, \nabla_V))$ and $(F, \nabla_F):=f^*_{\mathrm{dR}}((V, \nabla_V))/(E, \nabla_E)$. 
Then we have the exact sequence in $MIC(X^{\times}/\Sp(K))^{\mathrm{nr}}$ defining $(F, \nabla_F)$
\begin{equation}\label{definingF}
0\rightarrow (E, \nabla_E)\rightarrow f^*_{\mathrm{dR}}((V, \nabla_V))\rightarrow (F, \nabla_F)\rightarrow 0. 
\end{equation}
Applying the functor $f^*_{\mathrm{dR}}\circ f_{*\mathrm{dR}}$ to the sequence \eqref{definingF}, we obtain the following commutative diagram with exact rows
\begin{equation}
\xymatrix{
0\ar[r]& (E, \nabla_E)\ar[r]                                                                  & f^*_{\mathrm{dR}}((V, \nabla_V))\ar[r]                                                 &(F, \nabla_F)\ar[r]& 0\\
0\ar[r]&f^*_{\mathrm{dR}} (f_{*\mathrm{dR}}(E, \nabla_E))\ar[r])\ar[u]^{\alpha}&f^*_{\mathrm{dR}}(f_{*\mathrm{dR}}(f^*_{\mathrm{dR}}((V, \nabla_V))))\ar[u]^{\beta}\ar[r]^-{\delta} &f^*_{\mathrm{dR}} (f_{*\mathrm{dR}}(F, \nabla_F)) \ar[u]^{\gamma}.  
}
\end{equation}
The map $\beta$ is an isomorphism for what we have seen in the last paragraph and $\alpha$, $\gamma$ are injective as shown in the proof of proposition \ref{derhamcohomologywithconnection}. 
Then we see easily the surjectivity of $\delta$, and by snake lemma $\alpha$ is an isomorphism, \emph{i.e.} $(E, \nabla_E)\cong f^*_{\mathrm{dR}} (f_{*\mathrm{dR}}(E, \nabla_E))$. So the proof is finished.
\end{proof}

\begin{prop}\label{b}
Let $(E, \nabla_E)$ be an object in $MIC(X^{\times}/\Sp(K))^{\mathrm{nr}}$ and let $(W_0, \nabla_{W_0})$ be the maximal trivial subobject of $r((E, \nabla_E))$ in $\mathrm{Rep}(H)$. Then there exists $(F, \nabla_F)\subset (E, \nabla_E)$  in $MIC(X^{\times}/\Sp(K))^{\mathrm{nr}}$ such that $r((F, \nabla_F))\cong (W_0, \nabla_{W_0})$.
\end{prop}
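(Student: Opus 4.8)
The plan is to take $(F,\nabla_F)$ to be the image of the canonical horizontal morphism \eqref{f^*f_*} constructed in the proof of proposition \ref{derhamcohomologywithconnection}; in other words, $(F,\nabla_F) := f^{*}_{\mathrm{dR}}(f_{*\mathrm{dR}}((E,\nabla_E)))$ viewed as a subobject of $(E,\nabla_E)$. Recall that by proposition \ref{derhamcohomologywithconnection} the object $f_{*\mathrm{dR}}((E,\nabla_E)) = (H^0_{\mathrm{dR}}(X^{\times}/\Sp(K)^{\times}, r((E,\nabla_E))),\nabla)$ lies in $MIC(\Sp(K)^{\times}/\Sp(K))^{\mathrm{nr}}$ and that the morphism \eqref{f^*f_*} is injective and horizontal. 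Since $f^{*}_{\mathrm{dR}}$ carries $MIC(\Sp(K)^{\times}/\Sp(K))^{\mathrm{nr}}$ into $MIC(X^{\times}/\Sp(K))^{\mathrm{nr}}$ (as recorded at the beginning of this section) and the latter category is abelian, this does exhibit $(F,\nabla_F)$ as a subobject of $(E,\nabla_E)$ in $MIC(X^{\times}/\Sp(K))^{\mathrm{nr}}$.

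Next I would compute $r((F,\nabla_F))$. By the definition of $f^{*}_{\mathrm{dR}}$ the underlying $\mathcal{O}_X$-module of $(F,\nabla_F)$ is $\mathcal{O}_X\otimes_K H^0_{\mathrm{dR}}(X^{\times}/\Sp(K)^{\times}, r((E,\nabla_E)))$, and since $\omega^1_{X^{\times}/\Sp(K)^{\times}}$ is the quotient of $\omega^1_{X^{\times}/\Sp(K)}$ by the image of $f^{*}\omega^1_{\Sp(K)^{\times}/\Sp(K)}$, the functor $r$ annihilates the part of the connection coming from $\Sp(K)^{\times}/\Sp(K)$: for every $(V,\nabla_V)\in MIC(\Sp(K)^{\times}/\Sp(K))^{\mathrm{nr}}$ one has $r(f^{*}_{\mathrm{dR}}((V,\nabla_V))) = (V\otimes_K \mathcal{O}_X, \mathrm{id}_V\otimes d)$, a trivial object. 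Hence $r((F,\nabla_F))$ is a trivial object, and applying $r$ to \eqref{f^*f_*} gives an injective horizontal map $r((F,\nabla_F))\hookrightarrow r((E,\nabla_E))$ which, as observed in the proof of proposition \ref{derhamcohomologywithconnection}, is precisely the canonical evaluation map \eqref{tannakamap}, namely $\mathrm{Hom}_{\mathcal{T}}(\mathbf{1}, r((E,\nabla_E)))\otimes_K \mathbf{1}\to r((E,\nabla_E))$, where I write $\mathcal{T} := MIC(X^{\times}/\Sp(K)^{\times})^{\mathrm{nr}}$ and $\mathbf{1} := (\mathcal{O}_X,d)$.

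Finally I would identify $r((F,\nabla_F))$ with $W_0$. By proposition \ref{descriptionH}, $\mathrm{Rep}(H)$ is a full subcategory of $\mathcal{T}$, closed under subobjects and containing $\mathbf{1}$; consequently the maximal trivial subobject of $r((E,\nabla_E))$ formed in $\mathrm{Rep}(H)$ coincides with the one formed in $\mathcal{T}$, and the usual Tannakian argument identifies it with the image of the evaluation map \eqref{tannakamap} (that image is a subobject of $r((E,\nabla_E))$ and a quotient of $\mathbf{1}^{\oplus n}$, hence trivial, and it manifestly contains every trivial subobject). Since \eqref{tannakamap} is injective, $r((F,\nabla_F))$ is carried isomorphically onto $W_0$, so $r((F,\nabla_F))\cong (W_0,\nabla_{W_0})$, which is exactly what we want. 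The only real content here is the last identification — recognising $r$ of \eqref{f^*f_*} as the canonical inclusion of the maximal trivial subobject — and this reduces to the fullness of $\mathrm{Rep}(H)$ in $\mathcal{T}$ from proposition \ref{descriptionH} together with the injectivity of \eqref{f^*f_*} already established in proposition \ref{derhamcohomologywithconnection}.
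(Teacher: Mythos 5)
Your proposal is correct and follows essentially the same route as the paper: both take $(F,\nabla_F) := f_{\mathrm{dR}}^*(f_{*\mathrm{dR}}((E,\nabla_E)))$, use the injectivity of the canonical map established in proposition \ref{derhamcohomologywithconnection}, and identify $r((F,\nabla_F))$ with the maximal trivial subobject via the observation that any trivial subobject is generated by horizontal sections. Your explicit remark that the maximal trivial subobject computed in $\mathrm{Rep}(H)$ agrees with the one computed in $MIC(X^{\times}/\Sp(K)^{\times})^{\mathrm{nr}}$ (since $\mathrm{Rep}(H)$ is full and closed under subobjects) is a point the paper leaves implicit, but it is the same argument.
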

\begin{proof}
The maximal trivial subobject of $r((E, \nabla_E))$ in $\mathrm{Rep}(H)$ is $(H^0_{\mathrm{dR}}(X^{\times}/\Sp(K)^{\times}, r((E, \nabla_E)))\otimes_K \mathcal{O}_X ,1\otimes d )$ because, as we saw in the proof of proposition of 
\ref{derhamcohomologywithconnection}, the map $\gamma: H^0_{\mathrm{dR}}(X^{\times}/\Sp(K)^{\times},r((E, \nabla_E)))\otimes_K \mathcal{O}_X\rightarrow E$ is injective, $(H^0_{\mathrm{dR}}(X^{\times}/\Sp(K)^{\times},r((E, \nabla_E)))\otimes_K \mathcal{O}_X ,1\otimes d )$ is trivial as subobject of $r((E, \nabla_E))$, and any trivial subobject of $r((E, \nabla_E))$ must be generated by horizontal sections. Since $$(H^0_{\mathrm{dR}}(X^{\times}/\Sp(K)^{\times},r((E, \nabla_E)))\otimes_K \mathcal{O}_X ,1\otimes d )\cong r(f_{\mathrm{dR}}^*((H^0_{\mathrm{dR}}(X^{\times}/\Sp(K)^{\times}, r((E, \nabla_E))), \nabla))),$$ one can prove the proposition by defining 
$(F, \nabla_F) := f_{\mathrm{dR}}^*((H^0_{\mathrm{dR}}(X^{\times}/\Sp(K)^{\times}, r((E, \nabla_E))), \nabla))$.

\end{proof}

\begin{prop}\label{(a)and(b)}
Conditions (a) and (b) of theorem \ref{criterion}(iii) are satisfied for 
the sequence $$H \rightarrow \pi_1(X^{\times}/\Sp(K), x) \rightarrow 
\pi_1(\Sp(K)^{\times}/\Sp(K), \nu).$$  
\end{prop}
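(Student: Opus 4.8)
The plan is to read conditions (a) and (b) of theorem~\ref{criterion}(iii) through the Tannakian dictionary and then invoke the propositions already established. First I would check that the hypotheses of theorem~\ref{criterion}(iii) hold for the sequence under consideration: $\pi(f^*_{\mathrm{dR}})$ is faithfully flat by proposition~\ref{pifsurjective}, and $H \hookrightarrow \pi_1(X^{\times}/\Sp(K),x)$ is a closed immersion by the very definition of $H$ as the image of $\pi(r)$. Under the equivalences of theorem~\ref{Saavedra} and proposition~\ref{descriptionH}, the functor $p$ induced by $\pi(f^*_{\mathrm{dR}})$ is identified with $f^*_{\mathrm{dR}}$, and the functor $q$ induced by the closed immersion $H \hookrightarrow \pi_1(X^{\times}/\Sp(K),x)$ is identified with $(E,\nabla_E) \mapsto r((E,\nabla_E))$, viewed in the full subcategory $\mathrm{Rep}(H) \subseteq MIC(X^{\times}/\Sp(K)^{\times})^{\mathrm{nr}}$ of proposition~\ref{descriptionH}.

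With these identifications, condition (b) is the assertion that for every object $(E,\nabla_E)$ of $MIC(X^{\times}/\Sp(K))^{\mathrm{nr}}$ the maximal trivial subobject $(W_0,\nabla_{W_0})$ of $r((E,\nabla_E))$ in $\mathrm{Rep}(H)$ is of the form $r((F,\nabla_F))$ for some subobject $(F,\nabla_F) \subseteq (E,\nabla_E)$. This is exactly proposition~\ref{b}, so (b) requires nothing new.

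For condition (a) I must show, for $(E,\nabla_E)$ in $MIC(X^{\times}/\Sp(K))^{\mathrm{nr}}$, that $r((E,\nabla_E))$ is trivial in $\mathrm{Rep}(H)$ if and only if $(E,\nabla_E) \cong f^*_{\mathrm{dR}}((V,\nabla_V))$ for some $(V,\nabla_V)$ in $MIC(\Sp(K)^{\times}/\Sp(K))^{\mathrm{nr}}$. The ``if'' direction is immediate: restricting the connection of $f^*_{\mathrm{dR}}((V,\nabla_V)) = (V\otimes_K\mathcal{O}_X,\dots)$ along the projection $\omega^1_{X^{\times}/\Sp(K)} \twoheadrightarrow \omega^1_{X^{\times}/\Sp(K)^{\times}}$ kills the $\omega^1_{\Sp(K)^{\times}/\Sp(K)}$-component, so $r(f^*_{\mathrm{dR}}((V,\nabla_V))) \cong (V\otimes_K\mathcal{O}_X,1\otimes d)$ is a direct sum of copies of the unit object, hence trivial in $\mathrm{Rep}(H)$. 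For the ``only if'' direction, assume $r((E,\nabla_E))$ is trivial; I would use the injective horizontal evaluation map
\[
\gamma\colon H^0_{\mathrm{dR}}(X^{\times}/\Sp(K)^{\times}, r((E,\nabla_E))) \otimes_K \mathcal{O}_X \longrightarrow E
\]
of proposition~\ref{derhamcohomologywithconnection} (the map \eqref{tannakamap}), which underlies the horizontal morphism $f^*_{\mathrm{dR}}(f_{*\mathrm{dR}}((E,\nabla_E))) \to (E,\nabla_E)$ and whose cokernel is locally free because $MIC(X^{\times}/\Sp(K)^{\times})^{\mathrm{nr}}$ is abelian (proposition~\ref{relativeabelian}). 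If $r((E,\nabla_E)) \cong (\mathcal{O}_X,d)^{\oplus n}$ with $n = \mathrm{rk}(E)$, then, since $H^0_{\mathrm{dR}}(X^{\times}/\Sp(K)^{\times}) \cong K$ (proposition~\ref{relativeTannakian}), the source of $\gamma$ is free of rank $n$; thus $\gamma$ is an injection of locally free $\mathcal{O}_X$-modules of equal rank with locally free cokernel, so $\mathrm{Coker}(\gamma) = 0$ and $\gamma$ is an isomorphism of $\mathcal{O}_X$-modules. Being horizontal it is an isomorphism in $MIC(X^{\times}/\Sp(K))^{\mathrm{nr}}$, so $(E,\nabla_E) \cong f^*_{\mathrm{dR}}((V,\nabla_V))$ with $(V,\nabla_V) := f_{*\mathrm{dR}}((E,\nabla_E))$, which lies in $MIC(\Sp(K)^{\times}/\Sp(K))^{\mathrm{nr}}$ by proposition~\ref{derhamcohomologywithconnection}.

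Essentially everything here is bookkeeping with the Tannakian formalism together with propositions~\ref{b} and \ref{derhamcohomologywithconnection}; the one genuinely substantive point is the surjectivity of $\gamma$ in the ``only if'' direction, which I expect to be the main (and only real) obstacle and which is settled by the local-freeness of $\mathrm{Coker}(\gamma)$ together with the rank count above. One could equally deduce the ``only if'' direction directly from proposition~\ref{b} applied with $W_0 = r((E,\nabla_E))$, using that a locally free submodule, of the same rank as the ambient locally free module and with locally free quotient, must coincide with it.
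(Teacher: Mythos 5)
Your proposal is correct and follows essentially the same route as the paper: condition (b) is exactly proposition \ref{b}, the ``if'' part of (a) is the observation that $r\circ f^*_{\mathrm{dR}}$ lands in sums of the unit object, and the ``only if'' part rests on the injectivity of $\gamma$ from proposition \ref{derhamcohomologywithconnection} together with the identification of its image as the maximal trivial subobject. The paper phrases the final step as ``$r((E,\nabla_E))$ is its own maximal trivial subobject, hence $\gamma$ is onto,'' which is the alternative you note at the end; your explicit rank count with the locally free cokernel is just a slightly more spelled-out version of the same argument.
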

\begin{proof}
Proposition \ref{b} gives (b) of theorem \ref{criterion}. The if part of (a) of theorem \ref{criterion} is clear by the fact that the functor $r\circ f_{\mathrm{dR}}^*$ sends every object to a finite sum of the trivial object. Let $(E, \nabla_E)$ be an object of $MIC(X^{\times}/\Sp(K))^{\mathrm{nr}}$ such that $r((E, \nabla_E))$ is trivial. 
Then, because $r((E, \nabla_E))$ itself is the maximal trivial subobject of $r((E, \nabla_E))$, 
$r((E, \nabla_E))\cong r(f_{\mathrm{dR}}^*((H^0_{\mathrm{dR}}(X^{\times}/\Sp(K)^{\times}, r((E, \nabla_E))), \nabla)))$ by the proof of proposition \ref{b}. This implies that the inclusion $f_{\mathrm{dR}}^*((H^0_{\mathrm{dR}}(X^{\times}/\Sp(K)^{\times}, r((E, \nabla_E))), \nabla))\hookrightarrow (E, \nabla_E)$ is an isomorphism. \end{proof}

The following theorem is analogous to  \cite[theorem 5.10]{EsnHai06}. 
\begin{teo}\label{deligne}
Let $(G, \nabla_G)$ be an object of $MIC(X^{\times}/\Sp(K)^{\times})^{\mathrm{nr}}$ and suppose that there exists an object $(E, \nabla_E)$ in $MIC(X^{\times}/\Sp(K))^{\mathrm{nr}}$ such that $(G, \nabla_G)\subset r((E, \nabla_E))$.  Then there exists an object $(F, \nabla_F)$ in $MIC(X^{\times}/\Sp(K))^{\mathrm{nr}}$ such that  $r((F, \nabla_F))$ surjects to $(G, \nabla_G)$.
\end{teo}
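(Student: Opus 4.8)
The plan is to use Theorem \ref{deligne} to verify condition (c) of Theorem \ref{criterion}(iii) for the sequence $H \to \pi_1(X^{\times}/\Sp(K),x) \to \pi_1(\Sp(K)^{\times}/\Sp(K),\nu)$. Indeed, by Proposition \ref{descriptionH} any object $W$ of $\mathrm{Rep}(H)$ is a subquotient $B/A$ of some $r((E,\nabla_E))$; applying Theorem \ref{deligne} to $B \subset r((E,\nabla_E))$ produces $(F,\nabla_F)$ with a surjection $r((F,\nabla_F)) \twoheadrightarrow B$, which composed with $B \twoheadrightarrow B/A = W$ gives condition (c). So I concentrate on Theorem \ref{deligne} itself.

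First I would isolate a ``monodromy operator''. The factorisation $X^{\times}\xrightarrow{f}\Sp(K)^{\times}\xrightarrow{g}\Sp(K)$ gives, as in \eqref{exactder}, a locally split exact sequence $0\to\mathcal{D}er(X^{\times}/\Sp(K)^{\times})\to\mathcal{D}er(X^{\times}/\Sp(K))\to\mathcal{O}_X\otimes\mathcal{D}er(\Sp(K)^{\times}/\Sp(K))\to0$ whose right-hand term is free of rank one. For $(E,\nabla_E)\in MIC(X^{\times}/\Sp(K))^{\mathrm{nr}}$ a local splitting $\tau$ (as in the proof of Proposition \ref{derhamcohomologywithconnection}) yields a local operator $N:=\nabla_E(\tau(1\otimes D)):E\to E$, where $D$ is the standard generator; for this local lift one has $[\mathcal{D}er(X^{\times}/\Sp(K)^{\times}),\tau(1\otimes D)]\subseteq\mathcal{D}er(X^{\times}/\Sp(K)^{\times})$, and $N$ is well defined modulo $\nabla_{r((E,\nabla_E))}(\mathcal{D}er(X^{\times}/\Sp(K)^{\times}))$. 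Hence, given a subobject $G$ of $r((E,\nabla_E))$ (automatically stable under $\nabla_{r((E,\nabla_E))}(\mathcal{D}er(X^{\times}/\Sp(K)^{\times}))$), the smallest $\mathcal{O}_X$-submodule $F^{\flat}\subseteq E$ containing $G$ and stable under $\nabla_E(\mathcal{D}er(X^{\times}/\Sp(K)))$ is independent of all choices and, locally, equals $G+N(G)+N^2(G)+\cdots$. Since $E$ is coherent over the Noetherian scheme $X$, $F^{\flat}$ is coherent and the chain $G\subseteq G+N(G)\subseteq\cdots$ stabilises; that $F^{\flat}$ is locally free with nilpotent residues, hence an object $(F^{\flat},\nabla_E|_{F^{\flat}})$ of $MIC(X^{\times}/\Sp(K))^{\mathrm{nr}}$, is checked by the local analysis used for Propositions \ref{relativeabelian} and \ref{globalfreenesstrivbasis} (reduction to the completed local rings $K[[x_1,\dots,x_n]]/(x_1\cdots x_r)$). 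This already yields the auxiliary fact $(\star)$: every subobject of an $r((E,\nabla_E))$ embeds into some $r((F,\nabla_F))$.

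Next, from the construction the filtration $G=G_0\subseteq G_1\subseteq\cdots\subseteq G_m=r((F^{\flat},\nabla_E|_{F^{\flat}}))$, with $G_i:=G+N(G)+\cdots+N^i(G)$, consists of subobjects in $MIC(X^{\times}/\Sp(K)^{\times})^{\mathrm{nr}}$, and $N$ induces \emph{morphisms} $G_i/G_{i-1}\twoheadrightarrow G_{i+1}/G_i$ in this category which are surjective; hence every graded piece is a quotient of $G$. If $G$ is simple, the top graded piece $G_m/G_{m-1}$ is nonzero (by minimality of $m$), so $G_m/G_{m-1}\cong G$, and the quotient map $r((F^{\flat},\nabla_E|_{F^{\flat}}))\twoheadrightarrow G_m/G_{m-1}\cong G$ proves the theorem with $(F,\nabla_F)=(F^{\flat},\nabla_E|_{F^{\flat}})$. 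The general case is then to be reduced to the simple case by d\'evissage on the length of $G$.

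The main obstacle is precisely this last reduction. The statement dual to $(\star)$ — that a quotient of an $r((E,\nabla_E))$ is a subobject of an $r((F,\nabla_F))$ — is not formal, so one cannot simply split off a simple subobject $S\subseteq G$ and work with $G/S$, which is only a subquotient, not a subobject, of an $r(\cdot)$; combining surjections onto $S$ and onto $G/S$ into one onto $G$ requires resolving the extension class of $0\to S\to G\to G/S\to 0$, which is where the difficulty sits. I expect this is handled as in \cite[Theorem 5.10]{EsnHai06}: one combines the saturation construction above with the adjunction between $f^{*}_{\mathrm{dR}}$ and $f_{*\mathrm{dR}}$ and with Proposition \ref{derhamcohomologywithconnection}, exploiting that $r\circ f^{*}_{\mathrm{dR}}$ lands in trivial objects, to build the surjection onto $G$ directly from the objects $\mathcal{H}om_{\mathcal{O}_X}(E,F^{\flat})$ and $f_{*\mathrm{dR}}$ of suitable internal Homs attached to the inclusion $G\subset r((E,\nabla_E))$; verifying that this construction lies in $MIC(X^{\times}/\Sp(K))^{\mathrm{nr}}$ and surjects onto $G$ is the real work.
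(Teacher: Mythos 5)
Your saturation construction $F^{\flat}=G+N(G)+N^{2}(G)+\cdots$, together with the observation that $N$ induces horizontal surjections $G_{i}/G_{i-1}\twoheadrightarrow G_{i+1}/G_{i}$, is a legitimate variant of the mechanism in the paper's proof: there one instead takes the \emph{maximal} subobject $F$ of $r((E,\nabla_E))$ that is an iterated extension of $(G,\nabla_G)$, built from $H^0_{\mathrm{dR}}(X^{\times}/\Sp(K)^{\times}, r((E,\nabla_E))\otimes(G,\nabla_G)^{\vee})$, and proves $\nabla_E$-stability by showing that the induced map $l\colon F\to E/F$ is a morphism in $MIC(X^{\times}/\Sp(K)^{\times})^{\mathrm{nr}}$ which must vanish for lack of common irreducible subquotients. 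So your argument does prove the theorem when $(G,\nabla_G)$ is simple, in particular when it has rank $1$ (modulo the local-freeness verifications you only gesture at). But as a proof of the full statement it is incomplete, and you have located the gap yourself: the reduction from arbitrary $(G,\nabla_G)$ to the simple case is missing, and the d\'evissage on length that you propose does not go through, since for a simple subobject $S\subset G$ the quotient $G/S$ is only a subquotient of $r((E,\nabla_E))$, and even granting surjections onto $S$ and $G/S$ one cannot assemble them into a surjection onto $G$ without resolving the extension class.

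The idea you are missing is not the internal-Hom/adjunction device you speculate about; it is the determinant trick of Esnault--Hai (ultimately Deligne). Writing $(G,\nabla_G)\cong\det(G,\nabla_G)\otimes\wedge^{r-1}(G,\nabla_G)^{\vee}$ for $G$ of rank $r$, one notes that $\det(G,\nabla_G)$ is a rank-$1$ subobject of $r(\wedge^{r}(E,\nabla_E))$, applies the rank-$1$ case to obtain $(F',\nabla_{F'})$ with $r((F',\nabla_{F'}))\twoheadrightarrow\det(G,\nabla_G)$, and sets $(F,\nabla_F):=(F',\nabla_{F'})\otimes\wedge^{r-1}(E,\nabla_E)^{\vee}$; the desired surjection onto $(G,\nabla_G)$ is the tensor product of the surjection onto $\det(G,\nabla_G)$ with the dual of the inclusion $\wedge^{r-1}(G,\nabla_G)\subset\wedge^{r-1}r((E,\nabla_E))$. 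Since rank-$1$ objects are simple, your simple-case argument would plug into this reduction and complete the proof; without it, the proposal stalls exactly where you say it does.
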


\begin{proof}
Suppose first that  $(G, \nabla_G)$ has rank $1$. In this case, we construct $(F, \nabla)$ as the $(G, \nabla_G)$-isotypical component of $(E, \nabla_{E|\Sp(K)^{\times}}):=r((E, \nabla_E))$, in the following way. Let us consider $(E', \nabla'):=(E, \nabla_{E|\Sp(K)^{\times}})\otimes (G, \nabla_G)^{\vee}$: it is an object of $MIC(X^{\times}/\Sp(K)^{\times})^{\mathrm{nr}}$ and the inclusion $(G, \nabla_G)\subset (E, \nabla_{E|\Sp(K)^{\times}})$ corresponds to a non-trivial section  of $H^0_{\mathrm{dR}}(X^{\times}/\Sp(K)^{\times}, (E', \nabla'))$. 
We denote by $(E_0', \nabla_0')$ the object $(H^0_{\mathrm{dR}}(X^{\times}/\Sp(K)^{\times}, (E', \nabla'))\otimes \mathcal{O}_X, {\rm id} \otimes d)$ 
and consider the $\mathcal{O}_X$-module $E_1=E'/E_0'$ with the induced connection $\nabla_1$. If $H^0_{\mathrm{dR}}(X^{\times}/\Sp(K)^{\times}, (E_1, \nabla_1))=0$ we define $(F, \nabla)$  as $(E_0', \nabla_0') \otimes (G, \nabla_G)$. 
Otherwise, we denote by 
$(E_1', \nabla_1')$ the inverse image of $(H^0_{\mathrm{dR}}(X^{\times}/\Sp(K)^{\times}, (E_1, \nabla_1))\otimes \mathcal{O}_X, {\rm id} \otimes d)$ in $E'$ and we consider $E_2=E'/E_1'$ with the induced connection $\nabla_2$ and we proceed as before. If $H^0_{\mathrm{dR}}(X^{\times}/\Sp(K)^{\times}, (E_2, \nabla_2))=0$ we define $(F, \nabla)$ as 
$(E_1', \nabla_1') \otimes (G, \nabla_G)$. Otherwise, we go on with this process until we find the maximal subobject 
$(F,\nabla)$ of $(E, \nabla_{E|\Sp(K)^{\times}})$ which is a successive extension of $(G, \nabla_G)$. In the following, we denote $(F,\nabla)$ also by 
$(F,  (\nabla_{E|\Sp(K)^{\times}})_{|F})$.

We remark that $(F, \nabla)\in MIC(X^{\times}/\Sp(K)^{\times})^{\mathrm{nr}}$ is a subobject of $(E, \nabla_{E|\Sp(K)^{\times}})$.
Thus the $\mathcal{O}_X$-linear map  
\begin{equation}\label{eq:613-1}
F\xrightarrow{\nabla_E} E \otimes \omega^1_{X^{\times}/\Sp(K)} \longrightarrow (E/F) \otimes \omega^1_{X^{\times}/\Sp(K)}
\end{equation}
takes values in $(E/F) \otimes f^*(\omega^1_{\Sp(K)^{\times}/\Sp(K)})$, because the composition with the projection $(E/F) \otimes \omega^1_{X^{\times}/\Sp(K)} \rightarrow (E/F) \otimes \omega^1_{X^{\times}/\Sp(K)^{\times}}$ is the zero map. 
We take an isomorphism $\varphi: f^*(\omega^1_{\Sp(K)^{\times}/\Sp(K)}) 
\xrightarrow{\cong} \mathcal{O}_X$ and let $l: F \to E/F$ be the composition 
$$ F \longrightarrow (E/F) \otimes f^*(\omega^1_{\Sp(K)^{\times}/\Sp(K)}) 
\xrightarrow{{\rm id} \otimes \varphi} E/F, $$
where the first map is the one induced by \eqref{eq:613-1}. 

We have the following local description of $l$ which is also useful. 
Let $D_0 \in \mathcal{H}om(\omega^1_{X^{\times}/\Sp(K)}, \mathcal{O}_X)$ be an element 
with ${D_0}_{|f^*(\omega^1_{\Sp(K)^{\times}/\Sp(K)})} = \varphi$. 
(There exists such an element $D_0$ locally on $X$.) 
Then $l$ is equal to the 
composition 
\begin{equation}\label{eq:613-2}
F \hookrightarrow E \xrightarrow{\nabla_E(D_0)} E \twoheadrightarrow E/F. 
\end{equation}

Now we prove that the diagram 
\begin{equation}\label{morphisml}
\xymatrix{
F\ar[d]_{(\nabla_{E|\Sp(K)^{\times}})_{|F}}\ar[r]^-l& (E/F) 
\ar[d]^{(\nabla_{E|\Sp(K)^{\times}})_{|E/F}}\\
F \otimes \omega^1_{X^{\times}/\Sp(K)^{\times}}\ar[r]^-{l \otimes {\rm id}}& (E/F) \otimes \omega^1_{X^{\times}/\Sp(K)^{\times}}} 
\end{equation}
is commutative. To see this, it suffices to work locally and 
to prove the commutativity of the diagram 
\begin{equation*}
\xymatrix{
F\ar[d]_{(\nabla_{E|\Sp(K)^{\times}})_{|F}(D)}\ar[r]^-l& (E/F) 
\ar[d]^{(\nabla_{E|\Sp(K)^{\times}})_{|E/F}(D)}\\
F \ar[r]^-{l}& (E/F)} 
\end{equation*}
for any $D \in \mathcal{H}om(\omega^1_{X^{\times}/\Sp(K)^{\times}}, \mathcal{O}_X)$.  
By the description of the map $l$ given in \eqref{eq:613-2}, the commutativity of 
the above diagram follows from that of the diagram 
\begin{equation*}
\xymatrix{
E\ar[d]_{\nabla_{E}(D)}\ar[r]^-{\nabla_E(D_0)}& E 
\ar[d]^{\nabla_{E}(D)}\\
E \ar[r]^-{\nabla_E(D_0)}& E} 
\end{equation*}
(where we denoted the composition 
$\omega^1_{X^{\times}/\Sp(K)} \to \omega^1_{X^{\times}/\Sp(K)^{\times}} 
\xrightarrow{D} \mathcal{O}_X$ also by $D$), which follows from the 
integrability of $(E,\nabla_E)$. So the diagram \eqref{morphisml} is commutative, as required. 

Thus we have constructed a morphism 
$$ l: (F, (\nabla_{E|\Sp(K)^{\times}})_{|F}) \longrightarrow (E/F, (\nabla_{E|\Sp(K)^{\times}})_{|E/F})$$
in the category $MIC(X^{\times}/\Sp(K)^{\times})^{\rm nr}$. Since 
all irreducible subquotients of $(F, (\nabla_{E|\Sp(K)^{\times}})_{|F})$ are isomorphic to 
$(G,\nabla_G)$ and $(E/F, (\nabla_{E|\Sp(K)^{\times}})_{|E/F})$ has no irreducible 
subobject isomorphic to $(G,\nabla_G)$, the morphism $l$ must be zero. 
This proves that $\nabla_E$ stabilizes $F$. Hence $(F,\nabla_{E|F})$ is 
a subobject of $(E,\nabla_E)$ such that $r((F,\nabla_{E|F}))$ has a surjection to 
$(G, \nabla_G)$. So the proof is finished in the rank $1$ case. 

If $G$ has rank $r$ we use the isomorphism
$$(G, \nabla_G)\cong \mathrm{det}(G, \nabla_G)\otimes \wedge^{r-1}(G, \nabla_G)^{\vee}. $$
We define $(F, \nabla_F):=(F', \nabla_{F'})\otimes \wedge^{r-1}(E, \nabla_{E})^{\vee},$ where $(F', \nabla_{F'})$ is the object in $MIC(X^{\times}/\Sp(K))^{\mathrm{nr}}$ we constructed above for the rank $1$ object $\mathrm{det}(G, \nabla_G)$. 
Then we see that $r((F, \nabla_{F})) = 
(F', \nabla_{F'|\Sp(K)^{\times}})\otimes \wedge^{r-1}(E, \nabla_{E|\Sp(K)^{\times}})^{\vee}$ surjects to $(G, \nabla_G)$. So the proof is finished. 

\end{proof}
\begin{prop}\label{(c)}
Condition (c) of theorem \ref{criterion}(iii) is satisfied for the sequence $$H \rightarrow \pi_1(X^{\times}/\Sp(K), x) \rightarrow 
\pi_1(\Sp(K)^{\times}/\Sp(K), \nu).$$  
\end{prop}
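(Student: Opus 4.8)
The plan is to verify condition (c) of Theorem \ref{criterion}(iii) directly, i.e.~to show that every object $(W,\nabla_W)$ of $\mathrm{Rep}(H)$ — which, by Proposition \ref{descriptionH}, is a subquotient of some $r((E,\nabla_E))$ with $(E,\nabla_E)\in MIC(X^\times/\Sp(K))^{\mathrm{nr}}$ — is a quotient of $r((F,\nabla_F))$ for some $(F,\nabla_F)\in MIC(X^\times/\Sp(K))^{\mathrm{nr}}$. Dualizing, it is equivalent to show that every $(W,\nabla_W)\in\mathrm{Rep}(H)$ embeds into such an $r((F,\nabla_F))$. First I would reduce from a general subquotient to a genuine subobject: if $(W,\nabla_W)$ is a subquotient of $r((E,\nabla_E))$, then its dual $(W,\nabla_W)^\vee$ is a subquotient of $r((E,\nabla_E))^\vee = r((E,\nabla_E)^\vee)$, and since $MIC(X^\times/\Sp(K))^{\mathrm{nr}}$ is a tensor category closed under duals, it suffices to treat the case where $(W,\nabla_W)^\vee$ (hence, after renaming, our object) is a \emph{subobject} of some $r((E,\nabla_E))$.

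Next I would apply Theorem \ref{deligne}: given a subobject $(G,\nabla_G)\subset r((E,\nabla_E))$ with $(E,\nabla_E)\in MIC(X^\times/\Sp(K))^{\mathrm{nr}}$, that theorem produces an object $(F,\nabla_F)\in MIC(X^\times/\Sp(K))^{\mathrm{nr}}$ together with a surjection $r((F,\nabla_F))\twoheadrightarrow (G,\nabla_G)$. Taking $(G,\nabla_G)$ to be (the dual of) our representation $(W,\nabla_W)$ and then dualizing the surjection gives an embedding of $(W,\nabla_W)$ into $r((F,\nabla_F)^\vee)$, with $(F,\nabla_F)^\vee$ again in $MIC(X^\times/\Sp(K))^{\mathrm{nr}}$ and $r$ being a tensor functor so that $r((F,\nabla_F)^\vee) = r((F,\nabla_F))^\vee$. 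This is precisely the statement that $(W,\nabla_W)$ is embeddable in $q(E')$ for $E' = (F,\nabla_F)^\vee \in \mathrm{Rep}(\pi_1(X^\times/\Sp(K),x))$, i.e.~condition (c).

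The main obstacle is entirely concentrated in Theorem \ref{deligne}, which is already established in the excerpt; once that is invoked, the remaining work here is the purely formal bookkeeping with duals and the compatibility of $r$ with the tensor and duality structures. One small point I would be careful about is that Proposition \ref{descriptionH} only tells us an object of $\mathrm{Rep}(H)$ is a \emph{subquotient}, not a subobject, of some $r((E,\nabla_E))$; the dualization trick above is what converts "subquotient" into the "subobject" hypothesis required by Theorem \ref{deligne}. Thus, modulo these standard manipulations, conditions (a), (b) (from Proposition \ref{(a)and(b)}) and (c) together with the faithful flatness of $\pi(f^*_{\mathrm{dR}})$ (Proposition \ref{pifsurjective}) and the closed immersion property for $H\hookrightarrow \pi_1(X^\times/\Sp(K),x)$ (immediate from Proposition \ref{descriptionH} via Theorem \ref{criterion}(ii)) will yield the exactness of the four-term sequence.
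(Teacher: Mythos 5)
Your overall strategy uses the same two ingredients as the paper (proposition \ref{descriptionH} to present an object of $\mathrm{Rep}(H)$ as a subquotient of some $r((E,\nabla_E))$, and theorem \ref{deligne} to produce the surjecting object), but the reduction step via duality contains a genuine error. You claim that because $(W,\nabla_W)^\vee$ is a subquotient of $r((E,\nabla_E)^\vee)$, one may assume it is a \emph{subobject} of some $r((E,\nabla_E))$. Dualizing does not achieve this: the dual of a subquotient is again only a subquotient (if $W=G/G_1$ with $G_1\subset G\subset r(E)$, then $W^\vee$ is a subobject of $G^\vee$, which is a \emph{quotient} of $r(E)^\vee$, so $W^\vee$ is a sub-of-a-quotient, equivalently a quotient-of-a-sub, but not a subobject of $r(E^\vee)$). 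Since theorem \ref{deligne} requires its input to be an honest subobject of some $r((E,\nabla_E))$, the hypothesis you need to invoke it is not secured, and the argument as written does not go through.

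The repair is immediate and avoids duality altogether; it is exactly what the paper does. By proposition \ref{descriptionH} write $(W,\nabla_W)\cong (G,\nabla_G)/(G_1,\nabla_{G_1})$ with $(G_1,\nabla_{G_1})\subset (G,\nabla_G)\subset r((E,\nabla_E))$. The object $(G,\nabla_G)$ \emph{is} a genuine subobject of $r((E,\nabla_E))$, so theorem \ref{deligne} applies to it directly and yields $(F,\nabla_F)$ with a surjection $r((F,\nabla_F))\twoheadrightarrow (G,\nabla_G)$; composing with the quotient map $(G,\nabla_G)\twoheadrightarrow (W,\nabla_W)$ exhibits $(W,\nabla_W)$ as a quotient of $r((F,\nabla_F))$, which is precisely condition (c). The parenthetical ``hence, by taking duals, embeddable in'' in the statement of condition (c) is a consequence, not a tool for converting subquotients into subobjects.
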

\begin{proof} The proof is analogous to the proof of \cite[theorem 5.11]{EsnHai06}.
Take any object $(W, \nabla_W)$ in $\mathrm{Rep}(H)$. Then, by proposition \ref{descriptionH}, there exist $(E, \nabla_E)\in MIC(X^{\times}/\Sp(K))^{\mathrm{nr}}$ and $(G_1, \nabla_{G_1}), (G, \nabla_G)\in MIC(X^{\times} \allowbreak /\Sp(K)^{\times})^{\mathrm{nr}}$ such that $(G_1, \nabla_{G_1})\subset (G, \nabla_G)\subset r((E, \nabla_E))$ and $(W, \nabla_W)\cong (G, \nabla_G)/(G_1, \nabla_{G_1})$. Thanks to theorem \ref{deligne} there exists an object $(F, \nabla_F)\in MIC(X^{\times}/\Sp(K))^{\mathrm{nr}}$ such that $r((F, \nabla_F))$ surjects to  $(G, \nabla_G)$. Hence (c) of theorem \ref{criterion} is verified.
\end{proof}
Putting together all the results, we have the main theorem of this section
\begin{teo}\label{Esnsequence}
The  sequence 
\begin{equation}\label{exactsequencewithoutu}
\pi_1(X^{\times}/\Sp(K)^{\times}, x)\xrightarrow{\pi(r)} \pi_1(X^{\times}/\Sp(K), x)\xrightarrow{\pi(f^*_{\mathrm{dR}})} \pi_1(\Sp(K)^{\times}/\Sp(K), \nu)\xrightarrow{} 1
\end{equation}
is exact. 
\end{teo}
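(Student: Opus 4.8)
The plan is to assemble the pieces established in Propositions \ref{pifsurjective}, \ref{descriptionH}, \ref{(a)and(b)} and \ref{(c)} by feeding them into the criterion of Theorem \ref{criterion}. First I would unwind what exactness of the four-term sequence \eqref{exactsequencewithoutu} means concretely: that $\pi(f^*_{\mathrm{dR}})$ is faithfully flat, and that the scheme-theoretic image of $\pi(r)$ coincides with $\ker(\pi(f^*_{\mathrm{dR}}))$ as a subgroup scheme of $\pi_1(X^{\times}/\Sp(K),x)$. The first assertion is precisely Proposition \ref{pifsurjective}, so the remaining task is to identify the image of $\pi(r)$ with $\ker(\pi(f^*_{\mathrm{dR}}))$.

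Next I would invoke Proposition \ref{descriptionH}: it shows that $\pi(r)$ factors as
$$\pi_1(X^{\times}/\Sp(K)^{\times}, x)\twoheadrightarrow H\hookrightarrow \pi_1(X^{\times}/\Sp(K), x),$$
where the first arrow is faithfully flat and the second is a closed immersion; in particular $H$ is the scheme-theoretic image of $\pi(r)$. Hence it suffices to prove that $H=\ker(\pi(f^*_{\mathrm{dR}}))$, that is, that the three-term sequence $H\to \pi_1(X^{\times}/\Sp(K), x)\to \pi_1(\Sp(K)^{\times}/\Sp(K),\nu)$ is exact.

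To obtain this I would apply Theorem \ref{criterion}(iii) to the sequence $H\xrightarrow{\pi(q)}\pi_1(X^{\times}/\Sp(K), x)\xrightarrow{\pi(f^*_{\mathrm{dR}})}\pi_1(\Sp(K)^{\times}/\Sp(K),\nu)$, where $\pi(q)$ is the closed immersion furnished by Proposition \ref{descriptionH}. Its standing hypotheses hold: $\pi(f^*_{\mathrm{dR}})$ is faithfully flat by Proposition \ref{pifsurjective} and $\pi(q)$ is a closed immersion by construction. Conditions (a) and (b) of the criterion hold by Proposition \ref{(a)and(b)}, and condition (c) holds by Proposition \ref{(c)}. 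Therefore the three-term sequence is exact, so $H=\ker(\pi(f^*_{\mathrm{dR}}))$; combining this with the surjectivity of $\pi(f^*_{\mathrm{dR}})$ and the identification of $H$ with the image of $\pi(r)$ yields the exactness of \eqref{exactsequencewithoutu}.

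There is essentially no new obstacle at this stage, since all the substantive work — the Tannakian description of $H$, the construction of $f_{*\mathrm{dR}}$, the verification of conditions (a), (b), (c), and the faithful flatness of $\pi(f^*_{\mathrm{dR}})$ — has already been done. The only point requiring care is bookkeeping: checking that the ``kernel'' produced abstractly by Theorem \ref{criterion}(iii) is literally $\ker(\pi(f^*_{\mathrm{dR}}))$ as a subgroup scheme, and that the factorization of Proposition \ref{descriptionH} genuinely exhibits $H$ as the image of $\pi(r)$, so that exactness at $\pi_1(X^{\times}/\Sp(K),x)$ is obtained on the nose rather than merely up to an abstract isomorphism.
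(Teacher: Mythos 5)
Your proposal is correct and follows the paper's own argument: the paper likewise defines $H$ as the image of $\pi(r)$, invokes Proposition \ref{pifsurjective} for faithful flatness of $\pi(f^*_{\mathrm{dR}})$, and cites Propositions \ref{(a)and(b)} and \ref{(c)} to verify the hypotheses of Theorem \ref{criterion}(iii), concluding exactness at the middle term. Your write-up merely makes explicit the bookkeeping (the factorization of $\pi(r)$ through $H$ from Proposition \ref{descriptionH}) that the paper leaves implicit.
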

\begin{proof}
The map $\pi(f^*_{\mathrm{dR}})$ is faithfully flat (proposition \ref{pifsurjective}) and the sequence is exact at $\pi_1(X^{\times}/\Sp(K), x)$ because of proposition \ref{(a)and(b)} and proposition \ref{(c)}. 
\end{proof}

\section{Description of the kernel of the second map} 

In this section, 
we introduce an auxiliary neutral Tannakian category denoted by $MIC(X^{\times}/\Sp(K)[u], x)^{\mathrm{nr}}$ and its corresponding fundamental group denoted by $\pi_1(X^{\times}/\Sp(K)[u], x)$, 
and we will prove (proposition \ref{exactu}) that $\pi_1(X^{\times}/\Sp(K)[u], x)$ is isomorphic to $\mathrm{Ker}(\pi(f^*_{\mathrm{dR}}))$. 
As before, we consider the map of log schemes $g:\Sp(K)^{\times} = (\Sp(K), N) \rightarrow \Sp(K)$ and denote simply by $\mathrm{dlog}1 \in \omega^1_{\Sp(K)^{\times}/\Sp(K)}$ the element $\mathrm{dlog} (1,1)$ 
(in the notation of \cite[(1.7)]{Kato89}) with  
$(1,1) \in \mathbb{N} \times K^{*} \cong N$. 
(We remark that $\mathrm{dlog}1$ is independent of the choice of the isomorphism 
${\mathbb{N}} \times K^{*} \cong N$.) 

\begin{defi}\label{categoriau} 
Let $z:(Y, L)\rightarrow \Sp(K)^{\times}$ be a morphism of fine log schemes and let $u$ be a variable. We define an action of the exterior differential on $u^i$ $(i > 0)$ in such a way that $du$ is the image of $\mathrm{dlog}1$ under the map $z^{-1}(\omega^1_{\Sp(K)^{\times}/\Sp(K)})\rightarrow \omega^1_{(Y,L)/\Sp(K)}$ and that $du^i$ is equal to $iu^{i-1}du$. We denote by $\mathcal{O}_Y[u]$ the sheaf of algebras $\bigoplus_{i=0}^{\infty}\mathcal{O}_Yu^i$ and by $d:\mathcal{O}_Y[u]\rightarrow \mathcal{O}_Y[u] \otimes \omega^1_{(Y,L)/\Sp(K)}$ the $K$-linear extension of 
the derivation on $\mathcal{O}_Y$ 
such that $d(cu^i)=d(c)u^i+ciu^{i-1}du$ for $c\in \mathcal{O}_Y$ and $i\geq1$.

For a coherent $\mathcal{O}_Y[u]$-module $E$, 
a connection $\nabla_E$ on $E$ is a $K$-linear map 
$$\nabla_E:E\longrightarrow E\otimes \omega^1_{(Y,L)/\Sp(K)}$$
which satisfies the Leibniz rule 
$$\nabla_E(ae)=a\nabla_E(e)+e\otimes da \quad (a \in \mathcal{O}_Y[u], e\in E). $$
We can extend $\nabla_E$ to the map 
$$ \nabla_{E,i}: E\otimes \omega^i_{(Y,L)/\Sp(K)} \rightarrow E\otimes \omega^{i+1}_{(Y,L)/\Sp(K)} \quad (i \geq 1) $$ 
by $\nabla_{E,i}(e\otimes \omega)=e \otimes d\omega+\nabla_E(e)\wedge \omega$. We say that $\nabla_{E}$ is integrable if $\nabla_{E,1} \circ \nabla_{E}=0.$

If $(E, \nabla_E)$ and $(F, \nabla_F)$ are two coherent $\mathcal{O}_Y[u]$-modules endowed with integrable connection, and if $\gamma:E\rightarrow F$ is a morphism of $\mathcal{O}_{Y}[u]$-modules, we say that $\gamma$ is horizontal if 
$ (\gamma \otimes {\mathrm{id}}) \circ \nabla_E = \nabla_F \circ \gamma $. 
Given $(E, \nabla_E)$ and $(F, \nabla_F)$, two coherent $\mathcal{O}_Y[u]$-modules endowed with integrable connection, the tensor product is defined as  the pair $(E\otimes F, \nabla)$, where $\nabla=\mathrm{id}\otimes \nabla_F+\nabla_E\otimes \mathrm{id}$.

We denote by $MIC((Y,L)/\Sp(K)[u])$ the category whose objects are pairs $(E, \nabla_E)$ consisting of a coherent $\mathcal{O}_Y[u]$-module $E$ and an integrable connection $\nabla_E$ on $E$. The morphisms between two objects $(E, \nabla_E)$ and $(F, \nabla_F)$  are morphisms of $\mathcal{O}_X[u]$-modules which are horizontal. 
\end{defi}

\begin{rmk}
We can give the definition of a connection on a coherent $\mathcal{O}_Y[u]$-module $E$ 
using the sheaf of log derivations $\mathcal{D}er((Y,L)/\Sp(K))$, as in definition \ref{connections}. 
The details are left to the reader. 
\end{rmk}

\begin{defi}\label{categoriauu}
We denote by $MIC((Y,L)/\Sp(K)[u])^{\mathrm{nr}}$ the full subcategory 
of $MIC((Y,L)/\Sp(K)[u])$ consisting of objects $(E, \nabla_E)$ which satisfy 
the following conditions: 
\begin{itemize}
\item[(i)] for any geometric point $y$ over a closed point of $Y$, $\hat{E_{y}}:=E_y\otimes_{\mathcal{O}_{Y,y}[u]} \mathcal{O}_{Y,y}[u]^{\wedge}$ is free as $\mathcal{O}_{Y,y}[u]^{\wedge}$-module, 
where $\mathcal{O}_{Y,y}[u]^{\wedge}$ is the $\mathfrak{m}_{Y,y}[u]$-adic completion 
of $\mathcal{O}_{Y,y}[u]$ and $\mathfrak{m}_{Y,y}$ is the maximal ideal of $\mathcal{O}_{Y,y}$. 
\item[(ii)] if we define the residue $\rho_{y}:E(y)\rightarrow E(y)\otimes \overline{L}^{\mathrm{gp}}_{y}$ for a geometric point $y$ over a closed point of $Y$ 
in the same way as 
the paragraph before definition \ref{residuenilpMN}, $E(y)$ can be written as a union 
$E(y) = \bigcup_{i \in \mathbb{N}} E(y)_i$ of finite dimensional subspaces $E(y)_i$ such that 
\begin{itemize}
\item[(ii-1)] $\rho_y(E(y)_i) \subset E(y)_i \otimes \overline{L}^{\mathrm{gp}}_{y}$. 
\item[(ii-2)] for every $K(y)$-linear map $t_{y}:K(y)\otimes \overline{L}^{\mathrm{gp}}_{y} \rightarrow K(y)$, the composite map 
$({\rm id} \otimes t_{y}) \circ (\rho_{y|E(y)_i}): E(y)_i \rightarrow E(y)_i$ 
defined as in definition \ref{residuenilpMN} is nilpotent. 
\end{itemize}
\item[(iii)] as a quasi-coherent $\mathcal{O}_Y$-module with integrable connection, 
$(E,\nabla_E)$ is a colimit of objects $(E_i, \nabla_{E_i})$ in 
$MIC((Y,L)/\Sp(K))^{\mathrm{nr}}$ indexed by $\mathbb{N}$. 
\end{itemize}
Also, we denote by $MIC((Y,L)/\Sp(K)[u])'$ the full subcategory 
of $MIC((Y,L)/\Sp(K)[u])$ consisting of objects $(E, \nabla_E)$ which satisfy 
the conditions (i), (ii) above. 
\end{defi}

\begin{rmk}\label{rmk4.4}
The condition (iii) implies the condition (ii) in definition \ref{categoriauu}; indeed, 
for any $y$ as in (ii), if we denote the image of $E_i(y)$ (where $E_i$ is as in (iii)) 
in $E(y)$ by $E(y)_i$, the subspaces $E(y)_i \,(i \in \mathbb{N})$ of $E(y)$ 
satisfy the condition (ii). 
We write the conditions (ii), (iii) separately due to technical reasons in the proofs below. 
\end{rmk}

Next we give a `formal version' of definitions  
\ref{categoriau}, \ref{categoriauu}. 

\begin{defi}\label{categoriau-formal}
Let $B$ be a Noetherian $I$-adically complete ring for an ideal $I$ and let 
$Y=\Sp(B)$. Let $z: (Y,L) \to \Sp(K)^{\times}$ be a morphism of fine log schemes and 
assume that the completed differential module 
$\hat{\omega}^1_{(Y,L)/\Sp(K)}$ associated to the composite $(Y,L) \xrightarrow{z} \Sp(K)^{\times} \to \Sp(K)$ (defined in remark \ref{rmk:formal}) is finitely generated 
as $B$-module. Let $u$ be a variable and we define 
an action of the exterior differential on $u^i$ $(i > 0)$ 
in the same way as in definition \ref{categoriau}. 
Let $B[u]^{\wedge}$ be the 
$I[u]$-adic completion of 
the polynomial ring $B[u]$ and let 
$d: B[u]^{\wedge} \rightarrow B[u]^{\wedge} \otimes \hat{\omega}^1_{(Y,L)/\Sp(K)}$ the $K$-linear continuous extension of the derivation on $B$ 
such that $d(cu^i)=d(c)u^i+ciu^{i-1}du$ for $c\in B$ and $i\geq1$.

For a finitely generated $B[u]^{\wedge}$-module $E$, 
a formal connection $\nabla_E$ on $E$ is a $K$-linear map 
$$\nabla_E:E\longrightarrow E\otimes \hat{\omega}^1_{(Y,L)/\Sp(K)}$$
which satisfies the Leibniz rule as before. We can define the integrability 
of a formal connection and the horizontality of a morphism of 
formal integrable connections as in definition \ref{categoriau}. 

We denote by $\widehat{MIC}((Y,L)/\Sp(K)[u])$ the category whose objects are pairs $(E, \nabla_E)$ consisting of a finitely generated $B[u]^{\wedge}$-module $E$ and an integrable connection $\nabla_E$ on $E$. The morphisms between two objects $(E, \nabla_E)$ and $(F, \nabla_F)$  are morphisms of $B[u]^{\wedge}$-modules which are horizontal. 
\end{defi}

\begin{defi}\label{categoriauu-formal}
We denote by $\widehat{MIC}((Y,L)/\Sp(K)[u])'$ the full subcategory 
of $\widehat{MIC}((Y,L)/\Sp(K)[u])$ consisting of objects $(E, \nabla_E)$ which satisfy 
the following conditions: 
\begin{itemize}
\item[(i)] $E$ is free as $B[u]^{\wedge}$-module. 
\item[(ii)] if we define the residue $\rho_{y}:E(y)\rightarrow E(y)\otimes \overline{L}^{\mathrm{gp}}_{y}$ 
(here $E(y) := E \otimes_{B[u]^{\wedge}} K(y)[u]$) 
for a geometric point $y$ over a closed point of $Y$ 
in the same way as the paragraph before definition \ref{residuenilpMN}, 
$E(y)$ can be written as a union 
$E(y) = \bigcup_i E(y)_i$ of finite dimensional subspaces $E(y)_i$ 
such that the conditions (ii-1), (ii-2) in definition \ref{categoriauu} are 
satisfied. 
\end{itemize}
\end{defi} 

\begin{rmk}
Definitions \ref{categoriau-formal}, \ref{categoriauu-formal} 
are not a naive analogue of definitions \ref{categoriau}, \ref{categoriauu} 
in the sense that $E$ in definitions \ref{categoriau-formal}, \ref{categoriauu-formal}  
is a $B[u]^{\wedge}$-module (not a $B[u]$-module).  
Our definition is designed in order that an induction argument works well later. 
We will not define a formal analogue of the category 
$MIC((Y,L)/\Sp(K)[u])^{\mathrm{nr}}$ because we will not need it. 

Also, we remark that the condition (i) in definition 
\ref{categoriauu-formal} is a global condition, but this will not cause 
any problem because we will use this definition only when $B$ is a complete local ring 
with maximal ideal $I$. 
\end{rmk}

\begin{lemma}\label{K[u]/kabeliana}
The category $MIC(\Sp(K)^{\times}/\Sp(K)[u])'$ is abelian. 
\end{lemma}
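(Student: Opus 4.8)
The plan is to reduce the statement to a concrete linear-algebraic description, following the pattern already used in the proofs of Lemma \ref{RS} and Proposition \ref{lacrocesuN}. First I would unwind the definitions in the case $(Y,L) = \Sp(K)^{\times}$, $B = K$, $I = (0)$. Here $\hat\omega^1_{\Sp(K)^{\times}/\Sp(K)}$ is the one-dimensional $K$-vector space $K\,\mathrm{dlog}1$, and $B[u]^{\wedge} = K[u]$ with $du = \mathrm{dlog}1$ (no completion is needed since $I = 0$). Thus a formal connection on a finitely generated $K[u]$-module $E$ is the data of a $K$-linear operator $\nabla = \nabla_E(\partial)$ (where $\partial$ is dual to $\mathrm{dlog}1$) satisfying $\nabla(fe) = f\nabla(e) + \frac{df}{du}e$ for $f \in K[u]$. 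Integrability is automatic since $\hat\omega^1$ has rank $1$. The only geometric point over a closed point is the unique point $y$, with $\overline L^{\mathrm{gp}}_y = \mathbb{Z}$, and $E(y) = E/uE \cong E \otimes_{K[u]} K$ (note $K(y) = K$ as $K$ is... well, $K(y)$ is the residue field, and we take $E(y) := E \otimes_{K[u]} K(y)[u]$, but over the closed point $u \mapsto 0$). The residue $\rho_y$ is then, up to the canonical identification, the operator induced by $\nabla$ on $E(y)$, and condition (ii) requires that $E(y)$ be an increasing union of finite-dimensional $\rho_y$-stable subspaces on each of which $\rho_y$ acts nilpotently; equivalently, $\rho_y$ is locally nilpotent on $E(y)$.

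Next I would make the identification explicit: I claim $MIC(\Sp(K)^{\times}/\Sp(K)[u])'$ is equivalent to the category of pairs $(V, N)$ where $V$ is a finite-dimensional $K$-vector space (this is $E(y)$) and $N$ is a locally nilpotent — hence, being finite-dimensional, nilpotent — endomorphism of $V$... but wait, condition (i) only forces $E$ free over $K[u]$, so $E(y)$ is finite-dimensional and the ``union'' description of (ii) forces $N$ nilpotent on all of $E(y)$. The equivalence sends $(E,\nabla_E) \mapsto (E/uE, \bar\nabla)$, with quasi-inverse $(V,N) \mapsto (V \otimes_K K[u], \nabla)$ where $\nabla := N \otimes \mathrm{id} + \mathrm{id} \otimes \frac{d}{du}$. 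To show this is an equivalence I would argue, as in Lemma \ref{RS} and Proposition \ref{lacrocesuN}, that for any $(E,\nabla_E)$ with $E$ free over $K[u]$ and nilpotent residue, there is a $K$-basis of $\overline E := \{e \in E : N^n(e) = 0 \text{ for some } n\}$ — where $N = \nabla_E(\partial)$ — which is a $K[u]$-basis of $E$. This is done exactly as before: using the nilpotence of the residue one finds a basis on which $\nabla_E(\partial)$ is $\frac{d}{du} + H$ with $H_0$ strictly upper triangular, then solves for a change of basis $U = \sum_i U_i u^i$ making $\nabla_E(\partial)$ act as the constant strictly-upper-triangular $H_0$; the recursion $H_0 U_i - U_i H_0 + iU_i = -\sum H_{i-k}U_k$ is solvable because $X \mapsto iX + H_0 X - X H_0$ is invertible for $i \neq 0$ (sum of an invertible scalar and a nilpotent commutator map). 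Then $\overline E = \sum K e_j$ as in Proposition \ref{lacrocesuN}.

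Given the equivalence, abelianness is immediate: the category of finite-dimensional $K$-vector spaces with nilpotent endomorphism is abelian (it is the category of finite-dimensional representations of $\mathbb{G}_a$, as recalled after Definition \ref{pi1KN/K}), and kernels/cokernels of horizontal maps in $MIC(\Sp(K)^{\times}/\Sp(K)[u])'$ correspond under the equivalence to the kernels/cokernels of the associated maps $(V,N) \to (V',N')$ of pairs — in particular they again have free underlying $K[u]$-module and nilpotent residue, so they lie in the subcategory. Concretely, given $\varphi : (E,\nabla_E) \to (F,\nabla_F)$, horizontality forces $\varphi$ to carry $\overline E$ into $\overline F$, hence $\varphi = \varphi_0 \otimes \mathrm{id}_{K[u]}$ for $\varphi_0 : \overline E \to \overline F$ a $K$-linear map commuting with the nilpotent operators; then $\mathrm{Ker}\,\varphi = \mathrm{Ker}\,\varphi_0 \otimes_K K[u]$ and $\mathrm{Coker}\,\varphi = \mathrm{Coker}\,\varphi_0 \otimes_K K[u]$ are free over $K[u]$ with the induced nilpotent residue, so they belong to $MIC(\Sp(K)^{\times}/\Sp(K)[u])'$.

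The main obstacle is the same technical point that appears in Proposition \ref{lacrocesuN}: proving the key claim that $\overline E$ admits a $K$-basis which is simultaneously a $K[u]$-basis of $E$, i.e. that $\overline E = \bigoplus_j K e_j$ for the constructed basis. The inclusion $\sum K e_j \subseteq \overline E$ is formal from strict upper triangularity of $H_0$; the reverse inclusion requires the computation, exactly as in Proposition \ref{lacrocesuN}, that if $e = \sum_j b_j(u) e_j$ with $\nabla_E(\partial)^N(e) = 0$ for some $N$, then reading off the coefficient of $e_s$ gives $\frac{d^N}{du^N}(b_s) = 0$, forcing $b_s \in K$, and then descending induction on the index via strict upper triangularity forces each $b_j \in K$. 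This is routine given the earlier proofs, so I expect the whole argument to be short, mostly a matter of carefully transporting the machinery of Lemma \ref{RS} and Proposition \ref{lacrocesuN} to the $K[u]$-coefficient setting and observing that the ``union of finite-dimensional stable subspaces'' condition (ii) becomes plain nilpotence once (i) guarantees finite rank.
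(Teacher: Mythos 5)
Your overall strategy (reduce to a normal form as in lemma \ref{RS} and proposition \ref{lacrocesuN}) is not the paper's, and as executed it has a genuine gap: you treat $u$ as a \emph{logarithmic} coordinate when it is a \emph{smooth} one. Since $d(cu^i)=d(c)u^i+ciu^{i-1}du$, the dual derivation $\partial_u$ acts on $K[u]$ as $d/du$, which lowers degree and is locally nilpotent on $K[u]$ --- unlike the Euler-type operators $d(\partial_i)$ (which multiply monomials by integers) that drive lemma \ref{RS} and proposition \ref{lacrocesuN}. This breaks your argument at several points. (1) The residue at the unique geometric point $y$ is the full operator $\nabla_E(\partial_u)=d/du+H$ acting on $E(y)=E\otimes_KK(y)$, which is infinite-dimensional over $K(y)$, and condition (ii) of definition \ref{categoriauu} says exactly that this operator is locally nilpotent there; hence your $\overline{E}=\{e\mid \nabla_E(\partial_u)^n(e)=0\text{ for some }n\}$ is all of $E$ and cannot be a finite-dimensional $K$-form. (Relatedly, $(E,\nabla_E)\mapsto(E/uE,\bar{\nabla})$ is not well defined, since $\nabla_E(\partial_u)$ does not preserve $uE$.) (2) Your recursion is the one for $\mathrm{dlog}$-coordinates: here the degree-$i$ coefficient of $HU+d(\partial_u)U=UC$ is $(i+1)U_{i+1}+\sum_{j+k=i}H_jU_k=U_iC$, which is \emph{always} solvable --- the invertibility of $X\mapsto iX+H_0X-XH_0$ is irrelevant --- and the real difficulty, that the recursively defined $U$ must terminate because $K[u]$ is not complete, is not addressed. (3) The step ``$d(\partial_u)^N(b_s)=0$ forces $b_s\in K$'' is false: it only bounds $\deg b_s<N$. (4) Even the target of your equivalence is wrong: since $N$ is nilpotent, $e^{-Nu}$ is a polynomial gauge transformation trivializing $(V\otimes_KK[u],\,d/du+N)$, so every object of $MIC(\Sp(K)^{\times}/\Sp(K)[u])'$ is isomorphic to a sum of copies of the unit object and the category is equivalent to $\mathrm{Vec}_K$, not to vector spaces with a nilpotent endomorphism.

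The paper's proof is entirely different and much shorter: following Crew, it shows that \emph{any} finitely generated $K[u]$-module carrying an integrable connection is torsion-free --- if $h$ generates the annihilator of the torsion submodule and $he=0$, differentiating gives $h^2\nabla_E(\partial_u)(e)=0$, hence $d(\partial_u)(h)e=0$, hence $d(\partial_u)(h)\in(h)$ and $(h)=K[u]$ --- so it is free, $K[u]$ being a PID. This settles condition (i) of definition \ref{categoriauu} for kernels and cokernels at once, and condition (ii) is inherited by sub- and quotient objects. If you do want a structural argument in the spirit of section 2, the coordinate $u$ must be handled like the smooth coordinates $x_{r+1},\dots,x_n$ in lemma \ref{RS}, via the Katz-type projector $P(e)=\sum_k(-1)^ku^k\nabla_E(\partial_u)^k(e)/k!$ (a finite sum precisely by condition (ii)), not like the logarithmic ones.
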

\begin{proof}
To prove the assertion, we only need to check that property (i) of definition \ref{categoriau} is stable by kernel and cokernel of any morphism. We will prove that every finitely generated $K[u]$-module endowed with an integral connection is in fact free. Since $K[u]$ is a PID, it is enough to prove 
the torsion-freeness. The proof is analogous to \cite[proposition 6.1]{Cre98}. Let $(E, \nabla_E)$ be a finitely generated $K[u]$-module endowed with an integrable connection. The annihilator of the torsion submodule of $E$ is an ideal of $K[u]$. Assume it is generated by $h\in K[u].$ Then, for $e$ in the torsion submodule of $E,$ $he=0$. If we denote by $\partial_u$  the derivation which sends $du$ to $1$, then $d(\partial_u)(h)e+h\nabla_E(\partial_u)(e)=0,$ and multiplying by $h$ we obtain that $h^2\nabla_E(\partial_u)(e)=0$. Hence $\nabla_E(\partial_u)(e)$ is in the torsion submodule of $E$, so that $h\nabla_E(\partial_u)(e)=0,$ hence $d(\partial_u)(h)e=0.$ This implies that $d(\partial_u)(h)\in (h),$ hence $1\in (h).$ Hence $E$ does not have any nonzero torsion element. 
\end{proof}
\begin{prop}\label{(i)general}
The category $MIC(X^{\times}/\Sp(K)[u])'$ is abelian. 
\end{prop}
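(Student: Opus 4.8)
The plan is to imitate the architecture of the proofs of Propositions~\ref{relativeabelian} and \ref{globalfreenesstrivbasis}: reduce the assertion to a formal--local statement over the completed local rings of $X$ at closed points, and prove that statement by a dévissage on the number of local branches, the base case being Lemma~\ref{K[u]/kabeliana}.

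First, $MIC(X^{\times}/\Sp(K)[u])'$ is a full subcategory of $MIC(X^{\times}/\Sp(K)[u])$, which is abelian since $\omega^1_{X^{\times}/\Sp(K)}$ is locally free and $\mathcal{O}_X[u]$ is Noetherian; so it suffices to show that for a morphism $\varphi$ in $MIC(X^{\times}/\Sp(K)[u])'$ the kernel and cokernel again satisfy conditions~(i) and (ii) of Definition~\ref{categoriauu}. These conditions are punctual at closed points of $X$, the completion $\mathcal{O}_{X,y}[u]\to\mathcal{O}_{X,y}[u]^{\wedge}$ is flat (so kernels and cokernels commute with it), and $\widehat{\mathcal{O}}_{X,y}\cong K[[x_1,\dots,x_n]]/(x_1\cdots x_r)$ with $K$ algebraically closed. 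Hence, exactly as in the first part of the proofs of Propositions~\ref{relativeabelian} and \ref{RS}, the statement reduces to: \emph{for $B=K[[x_1,\dots,x_n]]/(x_1\cdots x_r)$ with $K$ algebraically closed, the category $\widehat{MIC}((\Sp(B),M)/\Sp(K)[u])'$ of Definition~\ref{categoriauu-formal} is abelian} (equivalently, since it is a full subcategory of the abelian category $\widehat{MIC}((\Sp(B),M)/\Sp(K)[u])$, that it is stable under kernels and cokernels).

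To prove this I would run a dévissage, transporting condition~(ii) through every step just as the condition ``$\mathrm{nr}$'' is transported in the proofs of Lemma~\ref{RS} and Proposition~\ref{inductivepass}. The first step is the $\Sp(K)[u]$-analogue of Lemma~\ref{RS}: for $(E,\nabla_E)\in\widehat{MIC}((\Sp(S),M)/\Sp(K)[u])'$ with $S=K[[x_1,\dots,x_n]]/(x_1\cdots x_r)$, putting $\overline{E}=\{e\in E\mid \nabla_E(D_i)(e)=0,\ r+1\le i\le n\}$ and using the projector $P$ of Lemma~\ref{RS} gives an equivalence of $\widehat{MIC}((\Sp(S),M)/\Sp(K)[u])'$ with $\widehat{MIC}((\Sp(R),M)/\Sp(K)[u])'$, where $R=K[[x_1,\dots,x_r]]/(x_1\cdots x_r)$, with quasi-inverse $-\otimes_R S$; the variable $u$ plays no role here because $du=\sum_{i=1}^{r}\mathrm{dlog}x_i$ does not involve $x_{r+1},\dots,x_n$, so $D_i(u)=0$ for $i>r$ and the computation of Lemma~\ref{RS} carries over verbatim. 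This reduces us to the case $r=n$, which I would treat by induction on $r$. For $r=1$ one has $B=K$, $B[u]^{\wedge}=K[u]$ and $\widehat{MIC}((\Sp(K),N)/\Sp(K)[u])'=MIC(\Sp(K)^{\times}/\Sp(K)[u])'$ (the structure morphism being the identity of $\Sp(K)^{\times}$ and $du=\mathrm{dlog}1$), which is abelian by Lemma~\ref{K[u]/kabeliana}. For the step $r-1\to r$ I would establish the $\Sp(K)[u]$-analogue of Proposition~\ref{inductivepass}: with $A=K[[x_1,\dots,x_{r-1}]]/(x_1\cdots x_{r-1})$, the non-strict map $h$ sending the last coordinate of $A$ to a product (as in Proposition~\ref{inductivepass}), and the ``monodromy'' derivation $\partial$ dual to $\mathrm{dlog}x_r$ (so $\partial(u)=1$, $\partial(x_r)=x_r$, $\partial(x_j)=0$ for $j\neq r$), one sets $\overline{E}=\{e\in E\mid \nabla_E(\partial)^N(e)\to0\}$ and shows that passing to $(\overline{E},\nabla_E(\partial))$ realizes an equivalence of $\widehat{MIC}((\Sp(B),M_B)/\Sp(K)[u])'$ with a category built from $\widehat{MIC}((\Sp(A),M_A)/\Sp(K)[u])'$ plus one extra monodromy datum, thereby reducing the computation of kernels and cokernels to that category (where the induction hypothesis applies) and to elementary linear algebra.

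The main obstacle is this inductive step. Over $\Sp(K)$ --- in contrast to the $\Sp(K)^{\times}$ situation of Propositions~\ref{lacrocesuN} and \ref{inductivepass} --- the relation $du=\sum_{i=1}^{r}\mathrm{dlog}x_i$ entangles the $u$-direction with the $x_i$-directions: the monodromy derivation acts by $\partial(x^{\mathbf{k}}u^m)=k_r\,x^{\mathbf{k}}u^m+m\,x^{\mathbf{k}}u^{m-1}$, so $d(\partial)$ is not diagonal; its generalized $k_r$-eigenpart is spanned by the monomials $x^{\mathbf{k}}u^m$ with fixed $x$-exponent $\mathbf{k}$, and on the generalized $0$-eigenpart $d(\partial)$ acts by the non-zero (only locally nilpotent) shift in $u$. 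Consequently the triangularization of $\nabla_E(\partial)$ (the analogue of CLAIM~1 of Proposition~\ref{inductivepass}) must be carried out over $B[u]^{\wedge}$ rather than over $B$, and the valuation estimates (the analogue of CLAIM~2) must be redone with respect to a filtration recording the $x$-adic degree and the $u$-degree at once --- this is the point at which the freeness over the PID $K[u]$ (equivalently, torsion-freeness) exploited in Lemma~\ref{K[u]/kabeliana} must be interwoven with the non-strict descent of Proposition~\ref{inductivepass}. A minor additional verification is that condition~(ii) of Definition~\ref{categoriauu-formal} --- the exhaustion of $E(y)$ by finite-dimensional $\rho_y$-stable subspaces on which the residue is nilpotent --- is inherited by $\overline{E}$ and by $\overline{E}\otimes_R S$ (resp.\ $\overline{E}\otimes_A B$) at each step of the dévissage, which is routine bookkeeping from the corresponding property of $E$.
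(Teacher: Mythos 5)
Your overall architecture --- localize at completed local rings using flatness of $\mathcal{O}_{X,y}[u]\to\mathcal{O}_{X,y}[u]^{\wedge}$, reduce to $r=n$ by a $[u]$-analogue of lemma \ref{RS}, then induct on the number of branches with lemma \ref{K[u]/kabeliana} at the bottom --- is exactly the paper's (proposition \ref{(i)general}, lemma \ref{RS2}, propositions \ref{(i)1dim} and \ref{(i)ndim}). The gap is in the inductive step, and it sits precisely at the point you flag as ``the main obstacle'': you run the d\'evissage along the derivation $\partial_r$ dual to $\mathrm{dlog}x_r$, for which $d(\partial_r)(u)=1$, and this is what creates the entanglement of the $u$-direction with the $x$-directions that you then leave unresolved. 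It also breaks the descent itself: with $h(x_{r-1})=x_{r-1}x_r$ the image of $A$ in $B$ is spanned by the monomials with $k_{r-1}=k_r$, whereas the topological kernel of $d(\partial_r)$ (which acts on $x^{k}u^m$ by $k_r\,x^{k}u^m+m\,x^{k}u^{m-1}$) is spanned by the monomials with $k_r=0$; so your $\overline{E}$ is not a module over the image of $A[u]^{\wedge}$ in the way your claimed equivalence requires.

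The paper avoids both problems by choosing the difference derivation $\partial=\partial_{n-1}$ (sending $\mathrm{dlog}x_{n-1}\mapsto 1$ and $\mathrm{dlog}x_n\mapsto -1$) for the d\'evissage. Since $du=\sum_{i=1}^{r}\mathrm{dlog}x_i$, this derivation annihilates $u$ as well as the image of $A[u]^{\wedge}$, so $d(\partial)(a)=0$ for every $a\in A[u]^{\wedge}$ and the triangularization and valuation estimates of CLAIM 1 and CLAIM 2 of proposition \ref{inductivepass} carry over verbatim with $K$ replaced by $K[u]$; no new filtration mixing $x$-degree and $u$-degree is needed. The one derivation that does not kill $u$, namely $\partial_r$, is never used for descent: it is carried along as auxiliary data through every step of the induction and only materializes at the base case as the connection $\nabla_F(\partial_2)$ of an object of $MIC(\Sp(K)^{\times}/\Sp(K)[u])'$, where the torsion-freeness-over-$K[u]$ argument of lemma \ref{K[u]/kabeliana} disposes of it. Your instinct that the PID argument must be interwoven somewhere is correct, but it enters only at the bottom; replacing $\partial_r$ by $\partial_{r-1}$ in your inductive step makes the ``obstacle'' disappear and completes the proof.
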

\begin{proof}
It is sufficient to prove that the kernel and the cokernel of 
any morphism in $MIC(X^{\times}/\Sp(K)[u])'$ belongs to 
$MIC(X^{\times}/\Sp(K)[u])'$. 
Take any geometric point $x$ over a closed point of $X$ and consider 
 the restriction functor 
$$ MIC(X^{\times}/\Sp(K)[u]) \rightarrow 
\widehat{MIC}(\Sp(\hat{\mathcal{O}}_{X,x}), M)/\Sp(K)[u]). $$
Since the map $\mathcal{O}_{X,x}[u] \to \hat{\mathcal{O}}_{X,x}[u]^{\wedge}$ is 
flat, the functor is exact. Also, it is easy to see that it induces the functor 
\begin{equation}\label{eq:rest'}
MIC(X^{\times}/\Sp(K)[u])' \rightarrow 
\widehat{MIC}(\Sp(\hat{\mathcal{O}}_{X,x}), M)/\Sp(K)[u])' 
\end{equation}
and that an object in $MIC(X^{\times}/\Sp(K)[u])$ belongs to 
$MIC(X^{\times}/\Sp(K)[u])'$ if the restriction of it to 
$\widehat{MIC}(\Sp(\hat{\mathcal{O}}_{X,x}), M)/\Sp(K)[u])$ 
belongs to $\widehat{MIC}(\Sp(\hat{\mathcal{O}}_{X,x}), M)/\Sp(K)[u])'$ 
for any $x$. So it suffices to prove that 
$\widehat{MIC}(\Sp(\hat{\mathcal{O}}_{X,x}), M)/\Sp(K)[u])'$ is an 
abelian category for the completed local ring 
$\hat{\mathcal{O}}_{X,x} = K[[x_1, \dots, x_n]]/(x_1 \cdots x_r)$. 
By lemma \ref{RS2} below, we can reduce to the case $r=n$. 
Then we proceed by induction on $n$. The case $n=2$ is proven in proposition \ref{(i)1dim} below and the induction step is proven in proposition \ref{(i)ndim} below. 
\end{proof}

\begin{rmk}\label{notationlocalderivations2} 
In what follows we consider on 
the spectrum of  $S=K[[x_1, \dots, x_n]]/(x_1\cdots x_r)$ 
the log structure $M$ as in remark \ref{notationlocalderivations}. 
Then we have 
$$
\hat{\omega}^1_{(\Sp(S), M)/\Sp(K)} \cong \bigoplus_{i=1}^rS\mathrm{dlog}x_i 
\oplus \bigoplus_{i = r+1}^n S dx_{i}. $$ 
We will use a basis $\{\partial_1, \dots, \partial_{r-1}, \partial_r, D_{r+1}, \dots, D_n\}$ of its dual 
$$\widehat{\mathcal{D}er}((\Sp(S), M)/\Sp(K)) = Hom(\hat{\omega}^1_{(\Sp(S), M)/\Sp(K)}, S),$$ 
defined in the following way: $\partial_1, \dots, \partial_{r-1}, D_{r+1}, \dots, D_n$ are as in 
remark \ref{notationlocalderivations} and $\partial_r$ is the derivation that sends 
$\mathrm{dlog}x_j$ to $0$ for $j=1, \dots, r-1$, $\mathrm{dlog}x_r$ to $1$ and 
$dx_j$ to $0$ for every $j=r+1, \dots, n$. 
 \end{rmk}

\begin{lemma}\label{RS2}
Let $R=K[[x_1, \dots, x_r]]/(x_1\cdots x_r)$ and $S=K[[x_1, \dots, x_n]]/(x_1\cdots x_r)$ be as above. 
If the category $\widehat{MIC}((\Sp(R), M)/\Sp(K)[u])'$ is stable by kernel and cokernel of any morphism, then the category $\widehat{MIC}((\Sp(S), M)/\Sp(K)[u])'$ is stable by kernel and cokernel of any morphism.
\end{lemma}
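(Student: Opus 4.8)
The plan is to transcribe the proof of lemma \ref{RS}, with the rings $S$ and $R$ there replaced by the completed polynomial rings $S[u]^{\wedge}$ and $R[u]^{\wedge}$ of definition \ref{categoriau-formal}. First I would record the ring-theoretic facts needed. Writing $\mathfrak{m}_S, \mathfrak{m}_R$ for the maximal ideals of $S, R$ and $I := (x_{r+1}, \dots, x_n) \subset S$: the rings $S[u]^{\wedge}$ and $R[u]^{\wedge}$ are Noetherian, $S[u]^{\wedge}$ is $\mathfrak{m}_S S[u]^{\wedge}$-adically complete and separated (so every finitely generated module over it is), we have $I S[u]^{\wedge} \subseteq \mathfrak{m}_S S[u]^{\wedge}$ and $S[u]^{\wedge}/I S[u]^{\wedge} \cong R[u]^{\wedge}$, and $S[u]^{\wedge}$ is flat over $R[u]^{\wedge}$. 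The last two points come from the identification $S = R[[x_{r+1}, \dots, x_n]]$, which upgrades to $S[u]^{\wedge} \cong R[u]^{\wedge}[[x_{r+1}, \dots, x_n]]$, a formal power series ring over a Noetherian ring and hence flat over it.

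Next, for an object $(E, \nabla_E)$ of $\widehat{MIC}((\Sp(S), M)/\Sp(K)[u])'$, with $\partial_1, \dots, \partial_r, D_{r+1}, \dots, D_n$ the derivations of remark \ref{notationlocalderivations2}, I would set
$$ \overline{E} := \{ e \in E \,|\, \nabla_E(D_i)(e) = 0 \,(r+1 \leq i \leq n)\}, $$
which is stable under $\nabla_E(\partial_i)$ for $1 \leq i \leq r$ by integrability. Following \cite[proposition 8.9]{Kat70} as in lemma \ref{RS}, I would introduce the projector $P(e) := \sum_{{\bold k} \in \mathbb{N}^{n-r}} (-1)^{|{\bold k}|} x^{\bold k} D^{\bold k}(e)/{\bold k}!$ (notation as there), which converges because $x^{\bold k} \in \mathfrak{m}_S^{|{\bold k}|} S[u]^{\wedge}$ and $E$ is $\mathfrak{m}_S$-adically complete, and show that its image lies in $\overline{E}$, that $P|_{\overline{E}} = \mathrm{id}$, and that $P(e) \equiv e \pmod{IE}$, whence $P$ induces an isomorphism $E/IE \xrightarrow{\cong} \overline{E}$; in particular $\overline{E}$ is free over $R[u]^{\wedge}$. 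Then, exactly as in lemma \ref{RS}, I would check that the canonical map $\overline{E} \otimes_{R[u]^{\wedge}} S[u]^{\wedge} \to E$ is an isomorphism (surjectivity from $E/IE \cong \overline{E}$ and completeness; injectivity by applying suitable $D^{\bold k}$ and then $P$ to a hypothetical relation and invoking $R[u]^{\wedge}$-linear independence of a basis of $\overline{E}$). Since residues are taken at the geometric point over the closed point and $E/\mathfrak{m}_S E = \overline{E}/\mathfrak{m}_R\overline{E}$ compatibly with the residue map — the log directions $x_1, \dots, x_r$ being unchanged — condition (ii) of definition \ref{categoriauu-formal} for $(E, \nabla_E)$ is equivalent to that for $(\overline{E}, \nabla_{\overline{E}})$. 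This yields that
$$ \widehat{MIC}((\Sp(S), M)/\Sp(K)[u])' \to \widehat{MIC}((\Sp(R), M)/\Sp(K)[u])'; \qquad (E, \nabla_E) \mapsto (\overline{E}, \nabla_{\overline{E}}) $$
is an equivalence of categories, with quasi-inverse $(\overline{E}, \nabla_{\overline{E}}) \mapsto (\overline{E} \otimes_{R[u]^{\wedge}} S[u]^{\wedge}, \nabla_E)$, where $\nabla_E(\partial_i) := \mathrm{id} \otimes d(\partial_i) + \nabla_{\overline{E}}(\partial_i) \otimes \mathrm{id}$ for $1 \leq i \leq r$ and $\nabla_E(D_i) := \mathrm{id} \otimes d(D_i)$ for $r+1 \leq i \leq n$ (integrability of this connection and membership in the category $'$ being routine to verify).

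With the equivalence in hand, the stability assertion is formal, just as at the end of the proof of lemma \ref{RS}: a morphism $\varphi$ between objects of $\widehat{MIC}((\Sp(S), M)/\Sp(K)[u])'$ is identified with $\overline{\varphi} \otimes \mathrm{id}$ for the induced morphism $\overline{\varphi}: \overline{E} \to \overline{F}$ in $\widehat{MIC}((\Sp(R), M)/\Sp(K)[u])'$; by hypothesis $\mathrm{Ker}(\overline{\varphi})$ and $\mathrm{Coker}(\overline{\varphi})$ lie in $\widehat{MIC}((\Sp(R), M)/\Sp(K)[u])'$, in particular are free over $R[u]^{\wedge}$, and by flatness of $S[u]^{\wedge}$ over $R[u]^{\wedge}$ together with right exactness of $-\otimes_{R[u]^{\wedge}} S[u]^{\wedge}$ we obtain $\mathrm{Ker}(\varphi) = \mathrm{Ker}(\overline{\varphi}) \otimes_{R[u]^{\wedge}} S[u]^{\wedge}$ and $\mathrm{Coker}(\varphi) = \mathrm{Coker}(\overline{\varphi}) \otimes_{R[u]^{\wedge}} S[u]^{\wedge}$, which the equivalence places back in $\widehat{MIC}((\Sp(S), M)/\Sp(K)[u])'$. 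I expect the main obstacle to be the ring-theoretic bookkeeping around $S[u]^{\wedge}$ and $R[u]^{\wedge}$ — concretely, establishing $S[u]^{\wedge} \cong R[u]^{\wedge}[[x_{r+1}, \dots, x_n]]$ and deducing from it both $S[u]^{\wedge}/I S[u]^{\wedge} \cong R[u]^{\wedge}$ and flatness of $S[u]^{\wedge}$ over $R[u]^{\wedge}$ — and in confirming that the projector $P$ genuinely converges in $E$ and that its formal properties survive verbatim in this completed setting; granting these, the rest is a routine transcription of lemma \ref{RS}.
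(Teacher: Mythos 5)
Your proposal is correct and follows essentially the same route as the paper: the paper also defines $\overline{E}$ as the joint kernel of the $\nabla_E(D_i)$, asserts that $(E,\nabla_E)\mapsto\overline{E}$ is an equivalence with quasi-inverse $\overline{E}\mapsto\overline{E}\otimes_{R[u]^{\wedge}}S[u]^{\wedge}$, and justifies this by noting that $R[u]^{\wedge}=K[u][[x_1,\dots,x_r]]/(x_1\cdots x_r)$ and $S[u]^{\wedge}=K[u][[x_1,\dots,x_n]]/(x_1\cdots x_r)$ so that the Katz-projector argument of lemma \ref{RS} carries over verbatim. Your extra ring-theoretic bookkeeping (the identification $S[u]^{\wedge}\cong R[u]^{\wedge}[[x_{r+1},\dots,x_n]]$ and the convergence of $P$) just makes explicit what the paper leaves implicit.
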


\begin{proof}
Note that $R[u]^{\wedge}=K[u][[x_1, \dots, x_r]]/(x_1\cdots x_r)$ and 
$S[u]^{\wedge}=K[u][[x_1, \dots, x_n]]/(x_1\cdots x_r)$. 
For an object $(E,\nabla_E)$ in 
$\widehat{MIC}((\Sp(S), M)/\Sp(K)[u])'$, put 
$$ \overline{E} := \{ e \in E \,|\, \nabla_E(D_i)(e) = 0 \,(r+1 \leq i \leq n)\}. $$ 
Then $\overline{E}$ is stable by the action of $\nabla_E(\partial_i) \, (1 \leq i \leq r)$. 
It suffices to prove that $\overline{E}$ with the above action defines 
an object in $\widehat{MIC}((\Sp(R), M)/\Sp(K)[u])'$ and that 
the functor 
$$ \widehat{MIC}((\Sp(S), M)/\Sp(K)[u])' \to 
\widehat{MIC}((\Sp(R), M)/\Sp(K)[u])'; \quad (E,\nabla_E) \mapsto \overline{E} $$ 
is an equivalence of categories whose quasi-inverse is given by 
$(\overline{E},\nabla_{\overline{E}}) \mapsto (E := \overline{E} \otimes_{R[u]^{\wedge}} S[u]^{\wedge}, \nabla_E)$ 
with the action $\nabla_E(\partial_i) \, (1 \leq i \leq r), \nabla_E(D_i)\,(r+1 \leq i \leq n)$ defined by 
$\nabla_E(\partial_i) := {\rm id} \otimes d(\partial_i) + \nabla_{\overline{E}}(\partial_i) \otimes {\rm id}, 
\nabla_E(D_i) := {\rm id} \otimes d(D_i)$. (The reason is the same as that in lemma \ref{RS}.) 
We can prove the above claim in the same way as lemma \ref{RS}. So we are done. 
\end{proof}

\begin{prop} \label{(i)1dim}
Let $M$ be the log structure on $\Sp(K[[x,y]]/(xy))$ defined in remark \ref{notationlocalderivations2}. 
Then the category $\widehat{MIC}((\Sp(K[[x,y]]/(xy)),M)/\Sp(K)[u])'$ is abelian. 
\end{prop}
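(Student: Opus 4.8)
The plan is to adapt the proof of proposition \ref{lacrocesuN} to the present ``$[u]$'' setting, integrating out the derivation $\partial_1$ of remark \ref{notationlocalderivations2} in order to reduce to the base ring $K[u]$, for which lemma \ref{K[u]/kabeliana} does the work.

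Write $S=K[[x,y]]/(xy)$, so $S[u]^{\wedge}=K[u][[x,y]]/(xy)$, and let $\{\partial_1,\partial_2\}$ be the basis of $\widehat{\mathcal{D}er}((\Sp S,M)/\Sp(K))$ of remark \ref{notationlocalderivations2}, dual to $\{\mathrm{dlog}x,du\}$; thus $\partial_1$ acts on $S[u]^{\wedge}$ as $x\partial_x-y\partial_y$ and annihilates $u$, while $\partial_2$ acts as $y\partial_y$ on $S$ and as $d/du$ on $u$. Since $\widehat{MIC}((\Sp S,M)/\Sp(K)[u])'$ is a full subcategory of the abelian category $\widehat{MIC}((\Sp S,M)/\Sp(K)[u])$, it is enough to show that the kernel and cokernel of every morphism are again free over $S[u]^{\wedge}$ and satisfy condition (ii) of definition \ref{categoriauu-formal}. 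Given an object $(E,\nabla_E)$, put $\overline{E}:=\{e\in E\mid \nabla_E(\partial_1)^{N}(e)\to 0 \text{ as }N\to\infty\}$, the limit being taken in the $(x,y)$-adic topology. Because $\partial_1$ and $\partial_2$ commute, $\overline{E}$ is stable under $\nabla_E(\partial_2)$, and $\nabla_E(\partial_1)$ restricts to an operator $N:=\nabla_E(\partial_1)|_{\overline{E}}$ on it.

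The proposition reduces, as in lemma \ref{RS}, to showing that $(E,\nabla_E)\mapsto(\overline{E},\nabla_E(\partial_2)|_{\overline{E}},N)$ is an equivalence of $\widehat{MIC}((\Sp S,M)/\Sp(K)[u])'$ with the category of triples $((F,\nabla_F),N)$, where $(F,\nabla_F)\in MIC(\Sp(K)^{\times}/\Sp(K)[u])'$ and $N$ is an endomorphism of $(F,\nabla_F)$ which is locally nilpotent compatibly with the exhaustion of definition \ref{categoriauu-formal}(ii), the quasi-inverse being $((F,\nabla_F),N)\mapsto(F\otimes_{K[u]}S[u]^{\wedge},\nabla_E)$; and then to checking that this target category is abelian. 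For the equivalence I would first run the change-of-basis argument of \cite[lemma 3.2.8]{Ked07}, exactly as in proposition \ref{lacrocesuN}: writing $\nabla_E(\partial_1)=d(\partial_1)+H$ in a basis of $E$ over $S[u]^{\wedge}$ and letting $H_0\in M_s(K[u])$ be the reduction of $H$ modulo $(x,y)$, one finds an invertible $U=\mathrm{Id}+\sum_{i\geq 1}U_ix^i+\sum_{j\geq 1}U'_jy^j$ with $U_i,U'_j$ having entries in $K[u]$ conjugating $\nabla_E(\partial_1)$ into $d(\partial_1)+H_0$; call the resulting basis $e_1,\dots,e_s$. Here the residue condition (ii), applied to the functional on $\overline{M}^{\mathrm{gp}}_y=\mathbb{Z}^2$ dual to $\partial_1$, is precisely what forces $H_0$ to be nilpotent over $K[u]$, which in turn makes the maps $Y\mapsto H_0Y-YH_0+iY$ ($i\neq 0$) invertible over $K[u]$ so that the recursion solving for the $U_i,U'_j$ closes up. After this normalization $\nabla_E(\partial_1)$ preserves the grading of $S[u]^{\wedge}$ by powers of $x$ and $y$, acting on the $x^i$-part (resp.\ $y^j$-part) as $i\cdot\mathrm{Id}+H_0$ (resp.\ $-j\cdot\mathrm{Id}+H_0$), which is invertible over $K[u]$ for $i,j\neq 0$; hence any $e$ with $\nabla_E(\partial_1)^N(e)\to 0$ has no component in positive $(x,y)$-degree, and $\overline{E}=K[u]e_1+\dots+K[u]e_s$ is finite free over $K[u]$, with $\nabla_E(\partial_1)|_{\overline{E}}=H_0$ and $\nabla_E(\partial_2)|_{\overline{E}}$ a connection relative to $K[u]$. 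That $(\overline{E},\nabla_E(\partial_2)|_{\overline{E}})$ lies in $MIC(\Sp(K)^{\times}/\Sp(K)[u])'$ and that $N$ is locally nilpotent follow from condition (ii) for $(E,\nabla_E)$ via the identification $\overline{E}(y)\cong E(y)$ induced by $\overline{E}\hookrightarrow E$ (using that a difference of commuting nilpotent residues is again nilpotent). The quasi-inverse, and the fact that the correspondence is an equivalence, are checked as in lemma \ref{RS}. The target category is abelian because, for a morphism $\varphi$ of triples, $\mathrm{Ker}(\varphi)$ and $\mathrm{Coker}(\varphi)$ are free over $K[u]$ by the torsion-freeness argument of lemma \ref{K[u]/kabeliana}, while their residue conditions and the local nilpotence of the induced $N$ are inherited through the fibre maps $\mathrm{Ker}(\varphi)(y)\hookrightarrow F_1(y)$ and $F_2(y)\twoheadrightarrow\mathrm{Coker}(\varphi)(y)$, as in the proof of proposition \ref{globalfreenesstrivbasis}.

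The main difficulty is the one hidden in the phrase ``as in proposition \ref{lacrocesuN}'': one must carry out the Kedlaya-type normalization over the two-dimensional complete local ring $S[u]^{\wedge}$, keeping every coefficient matrix polynomial in $u$, and --- most importantly --- convert the residue condition (ii) of definition \ref{categoriauu-formal}, which is phrased in terms of an exhausting union of finite-dimensional subspaces of the infinite-dimensional fibre $E(y)=K[u]^s$, into the explicit facts it is used for (nilpotence of $H_0$ over $K[u]$; membership of $\overline{E}$ in the primed subcategory over $\Sp(K)^{\times}/\Sp(K)[u]$; local nilpotence of $N$ and its inheritance by kernels and cokernels). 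Keeping track of the two residue directions $\mathrm{dlog}x,\mathrm{dlog}y$ and of the fact that $du=\mathrm{dlog}x+\mathrm{dlog}y$ is not killed by the residue map is extra bookkeeping but introduces no genuinely new idea beyond proposition \ref{lacrocesuN}.
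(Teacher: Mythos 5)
Your proposal is correct and follows essentially the same route as the paper: the paper likewise integrates out $\partial_1$ via the Kedlaya-type change of basis (using $d(\partial_1)(u)=0$ and the residue condition to make $H_0$ a nilpotent matrix over $K[u]$), identifies $\overline{E}$ with $K[u]e_1+\cdots+K[u]e_s$, and reduces to an equivalence with pairs consisting of an object of $MIC(\Sp(K)^{\times}/\Sp(K)[u])'$ and a nilpotent endomorphism, whose abelianness rests on lemma \ref{K[u]/kabeliana}. The only deviations are cosmetic (you define $\overline{E}$ by topological rather than literal nilpotence of $\nabla_E(\partial_1)$, and you spell out a few points the paper leaves implicit), and they do not affect the argument.
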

\begin{proof} 
Let $\partial_1, \partial_2$ be as in the notation introduced in remark \ref{notationlocalderivations2}. 
For an object $(E,\nabla_E)$ in 
$\widehat{MIC}((\Sp(K[[x,y]] \allowbreak /(xy)), M)/\Sp(K)[u])'$, put 
$$\overline{E} := \{ e \in E \,|\, \exists N \in \mathbb{N}, \nabla_E^N(\partial_1)(e) = 0\}. $$ 
Then $\overline{E}$ is stable by the action of 
$\nabla_E(\partial_i) \, (i=1,2)$ and the action of $\nabla_E(\partial_1)$ is locally nilpotent. 
Then, it suffices to prove that the correspondence 
$(E,\nabla_E) \mapsto (\overline{E}, \nabla_E(\partial_1))$ defines the 
functor 
\begin{align*}
\widehat{MIC}((\Sp(K[[x,y]]/(xy)), & M)/\Sp(K)[u])' \\ & \to 
\left\{ 
((F,\nabla_F),N) \,\left|\, 
\begin{aligned} 
& \text{$(F,\nabla_F) \in MIC(\Sp(K)^{\times}/\Sp(K)[u])'$} \\
& \text{$N: (F,\nabla_F) \to (F,\nabla_F)$: a nilpotent endomorphism}
\end{aligned}
\right. 
\right\} 
\end{align*}
and that it 
is an equivalence whose quasi-inverse is given by 
$((F,\nabla_F),N) \mapsto (E := F \otimes_{K[u]} K[u][[x,y]]/(xy), \nabla_E)$ 
with the action $\nabla_E(\partial_i) \,(i=1,2)$ defined by 
$\nabla_E(\partial_1) :=  {\rm id} \otimes d(\partial_1) + N \otimes {\rm id}, 
\nabla_E(\partial_2) :=  {\rm id} \otimes d(\partial_2) + \nabla_F(\partial_2) \otimes {\rm id}
$. (The reason is the same as that in lemma \ref{RS}.) 

To prove the above claim, it suffices to construct a basis of $\overline{E}$ over 
$K[u]$ which is a basis of $E$ over $K[u][[x,y]]/(xy)$. 
To do so, first we prove that there exists a basis of $E$ on which $\nabla_E(\partial_1)$ 
acts as a strictly upper triangular matrix with entries in $K[u]$. If $E$ has rank $n,$ 
by hypothesis of nilpotent residues, we can write 
$$\nabla_E(\partial_1)= d(\partial_1) + H $$
with respect to some basis, where 
$H=(a_{i,j}(x,y))_{i,j}$ is an $n\times n$ matrix such that $H_0=(a_{i,j}(0,0))_{i,j}$ is a 
nilpotent matrix with entries in $K[u]$. Moreover, by changing the basis, 
we may assume that $H_0$ is strictly upper triangular. 
Using the fact $d(\partial_1)(u)=0$, the same argument as proposition \ref{lacrocesuN} works to deduce that there exists a basis $e_1,\dots ,e_s$ of $E$ with respect to which the action of $\nabla_E(\partial_1)$ is given by the matrix $H_0$.
Also, we can prove the equality $K[u]e_1+\cdots +K[u]e_s = \overline{E}$
in the same way as the proof of proposition \ref{lacrocesuN}. 
Thus we have the claim we want and so the proof is finished. 
\end{proof}

\begin{prop}\label{(i)ndim} 
If the category $\widehat{MIC}((\Sp(K[[x_1, ..., x_{n-1}]]/(x_1\cdots x_{n-1})),M)/\Sp(K)[u])'$ is abelian, 
the category $\widehat{MIC} \allowbreak ((\Sp(K[[x_1, ..., x_{n}]]/(x_1\cdots x_{n})),M)/\Sp(K)[u])'$ is also abelian. 
\end{prop}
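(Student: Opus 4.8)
The plan is to imitate the proof of proposition \ref{inductivepass} almost verbatim, replacing $K$ by $K[u]$ throughout. Write $B := K[[x_1,\dots,x_n]]/(x_1\cdots x_n)$ and $A := K[[x_1,\dots,x_{n-1}]]/(x_1\cdots x_{n-1})$, and let $h\colon A\to B$ be the map $x_i\mapsto x_i$ $(1\le i\le n-2)$, $x_{n-1}\mapsto x_{n-1}x_n$ of proposition \ref{inductivepass}; it induces a morphism of log schemes $(\Sp(B),M_B)\to(\Sp(A),M_A)$ (with $M_A$, $M_B$ the log structures of remark \ref{notationlocalderivations2}) and a continuous homomorphism $\hat h\colon A[u]^{\wedge}=K[u][[x_1,\dots,x_{n-1}]]/(x_1\cdots x_{n-1})\to B[u]^{\wedge}=K[u][[x_1,\dots,x_n]]/(x_1\cdots x_n)$ realizing $A[u]^{\wedge}$ as a direct summand of $B[u]^{\wedge}$ over $A[u]^{\wedge}$. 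With $\{\partial_1,\dots,\partial_{n-1},\partial_n\}$ the basis of $\widehat{\mathcal{D}er}((\Sp(B),M_B)/\Sp(K))$ of remark \ref{notationlocalderivations2} (the case $r=n$), I would put $\partial:=\partial_{n-1}$: it generates $\widehat{\mathcal{D}er}((\Sp(B),M_B)/(\Sp(A),M_A))$ inside $\widehat{\mathcal{D}er}((\Sp(B),M_B)/\Sp(K))$, the remaining $\partial_1,\dots,\partial_{n-2},\partial_n$ correspond to a basis of $\widehat{\mathcal{D}er}((\Sp(A),M_A)/\Sp(K))$, and crucially $d(\partial)(u)=0$, since $du$ is the image of $\mathrm{dlog}1=\sum_{i=1}^n\mathrm{dlog}x_i$ and $\partial(\mathrm{dlog}x_{n-1}+\mathrm{dlog}x_n)=0$.

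First I would, for $(E,\nabla_E)$ in $\widehat{MIC}((\Sp(B),M_B)/\Sp(K)[u])'$, set $\overline{E}:=\{e\in E \mid \nabla_E(\partial)^N(e)\to 0 \ (N\to\infty)\}$ for the $(x_1,\dots,x_n)$-adic topology on $E$; it is stable under $\nabla_E(\partial_i)$ for $i\in\{1,\dots,n-2,n\}$ and under the locally topologically nilpotent operator $\nabla_E(\partial)$. As in proposition \ref{inductivepass}, everything reduces to proving that $(E,\nabla_E)\mapsto((\overline{E},\nabla_{\overline{E}}),\nabla_E(\partial))$ — where $\nabla_{\overline{E}}$ is the connection given by the $\nabla_E(\partial_i)$, $i\in\{1,\dots,n-2,n\}$ — defines an equivalence from $\widehat{MIC}((\Sp(B),M_B)/\Sp(K)[u])'$ onto the category of pairs $((F,\nabla_F),N)$ with $(F,\nabla_F)\in\widehat{MIC}((\Sp(A),M_A)/\Sp(K)[u])'$ and $N$ a nilpotent endomorphism of $(F,\nabla_F)$, with quasi-inverse $((F,\nabla_F),N)\mapsto(F\otimes_{A[u]^{\wedge}}B[u]^{\wedge},\nabla_E)$ built exactly as in lemma \ref{RS}. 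Granting this, the proposition follows: kernels and cokernels in the pair category are taken componentwise, so it is abelian as soon as $\widehat{MIC}((\Sp(A),M_A)/\Sp(K)[u])'$ is, and for a morphism $\varphi\colon E\to F$ one obtains $\mathrm{Ker}(\varphi)=\mathrm{Ker}(\overline{\varphi})\otimes_{A[u]^{\wedge}}B[u]^{\wedge}$ and $\mathrm{Coker}(\varphi)=\mathrm{Coker}(\overline{\varphi})\otimes_{A[u]^{\wedge}}B[u]^{\wedge}$ (using that $B[u]^{\wedge}$ is flat, being free, over $A[u]^{\wedge}$), both of which lie in $\widehat{MIC}((\Sp(B),M_B)/\Sp(K)[u])'$ by the quasi-inverse construction.

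To prove the equivalence it suffices, as in proposition \ref{inductivepass}, to produce a $B[u]^{\wedge}$-basis $e_1,\dots,e_s$ of $E$ that is at the same time an $A[u]^{\wedge}$-basis of $\overline{E}$; this rests on the analogues of CLAIM 1 and CLAIM 2 there. For CLAIM 1 one writes $\nabla_E(\partial)=d(\partial)+H$ with $H=\sum_{\g{k}}H_{\g{k}}\g{x}^{\g{k}}$, $H_{\g{k}}$ having entries in $K[u]$; the point needing a new remark is that condition (ii) of definition \ref{categoriauu-formal}, together with freeness of $E$ over $B[u]^{\wedge}$, forces the constant term $H_0$ to be nilpotent over $K[u]$ — the $s$ standard basis vectors of $E(y)\cong K[u]^s$ all lie in a single member $E(y)_i$ of the exhausting filtration, which $H_0$ preserves and on which it is nilpotent — and that a nilpotent matrix over the PID $K[u]$ is $GL_s(K[u])$-conjugate to a strictly upper triangular one (induct, its kernel being a nonzero free direct summand of $K[u]^s$). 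Granting this, the recursive construction of the change-of-basis matrices $U_{\g{k}},X_{\g{k}}$ of proposition \ref{inductivepass} goes through word for word over $K[u]$, using $d(\partial)(u)=0$ and the invertibility over $K[u]$ of $Y\mapsto H_0Y-YH_0+(k_{n-1}-k_n)Y$ for $k_{n-1}\ne k_n$ (a sum of the nilpotent $\mathrm{ad}(H_0)$ and an invertible scalar). CLAIM 2, the equality $A[u]^{\wedge}e_1+\cdots+A[u]^{\wedge}e_s=\overline{E}$, is then proved exactly as in proposition \ref{inductivepass}: the inclusion $\subseteq$ from $d(\partial)$ annihilating $A[u]^{\wedge}$ and $\nabla_E(\partial)^{ms}$ raising $(x_1,\dots,x_n)$-valuation by at least $m$; the inclusion $\supseteq$ from the function $\delta\colon B[u]^{\wedge}\to\mathbb{N}\cup\{\infty\}$ with $\delta(a)=\infty$ if and only if $a\in A[u]^{\wedge}$, the fact that $d(\partial)^l(a)$ has $(x_1,\dots,x_n)$-valuation exactly $\delta(a)$ for $l\ge 1$, and the valuation estimate on the $e_j$-component of $\nabla_E(\partial)^N(e)$ contradicting $\nabla_E(\partial)^N(e)\to 0$. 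Finally $\overline{E}$ is free over $A[u]^{\wedge}$ by CLAIM 2, and its residue condition follows from that of $E$ via the canonical identification $\overline{E}\otimes_{A[u]^{\wedge}}K[u]=E\otimes_{B[u]^{\wedge}}K[u]$ (coming from $E=\overline{E}\otimes_{A[u]^{\wedge}}B[u]^{\wedge}$ and $\hat h^{-1}((x_1,\dots,x_n))=(x_1,\dots,x_{n-1})$) together with the compatibility of residues, the $A$-residue directions being pulled back along $\overline{M}_A^{\mathrm{gp}}\to\overline{M}_B^{\mathrm{gp}}$ at the closed point.

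The hard part I expect to be CLAIM 1 — specifically, extracting honest nilpotence, and strict upper triangularizability over $K[u]$, of the residue matrix $H_0$ from the a priori weaker ``exhaustion by finite-dimensional residue-stable subspaces'' in definition \ref{categoriauu-formal}(ii), and then keeping track of $K[u]$-coefficients through the recursion. Everything else is a faithful transcription of proposition \ref{inductivepass}; the single genuinely new geometric verification is that $\partial=\partial_{n-1}$ is a relative derivation annihilating $u$ (i.e.\ $d(\partial)(u)=0$), which is exactly what makes the descent along $\partial$ work.
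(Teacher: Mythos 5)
Your proposal is correct and follows essentially the same route as the paper's proof: the same subring $A[u]^{\wedge}\subset B[u]^{\wedge}$ via $x_{n-1}\mapsto x_{n-1}x_n$, the same $\overline{E}$ defined by topological nilpotence of $\nabla_E(\partial_{n-1})$, the same reduction to an equivalence with pairs $((F,\nabla_F),N)$, and the same CLAIM 1/CLAIM 2 transcription of proposition \ref{inductivepass} hinging on $d(\partial)(u)=0$. The only difference is that you spell out a detail the paper leaves implicit, namely that condition (ii) of definition \ref{categoriauu-formal} forces the residue matrix $H_0$ to be nilpotent over $K[u]$ and that such a matrix is $GL_s(K[u])$-conjugate to a strictly upper triangular one; your argument for this is sound.
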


\begin{proof}
The proof is analogous to the proof of proposition \ref{inductivepass}. We use the same notation as in that proposition. Thus we denote by $B$ the ring $K[[x_1,\dots, x_{n}]]/(x_1\cdots x_{n})$ and by $A$ the ring $K[[x_1,\dots, x_{n-1}]]/(x_1\cdots x_{n-1})$.  
The map $h: A\rightarrow B$ defined by $x_i\mapsto x_i \, (i=1, \dots, n-2), 
x_{n-1}\mapsto x_{n-1}x_n$ induces a map of log schemes 
$(\Sp(B), M_B)\rightarrow (\Sp(A), M_A)$, where $M_A, M_B$ are as in the proof of proposition \ref{inductivepass}. 
Moreover, $h$ naturally induces the map 
$A[u]^{\wedge} = K[u][[x_1,\dots, x_{n-1}]]/(x_1\cdots x_{n-1}) \to 
K[u][[x_1,\dots, x_{n}]]/(x_1\cdots x_{n}) = B[u]^{\wedge}$. 
Also, let $\partial_1, \dots, \partial_n$ be as in remark \ref{notationlocalderivations2} and put $\partial := \partial_{n-1}$. 

For an object $(E,\nabla_E)$ in 
$\widehat{MIC}((\Sp(B), M_B)/\Sp(K)[u])'$, put 
$$\overline{E} := \{ e \in E \,|\, \nabla_E^N(\partial)(e) \to 0 \, (N \to \infty)\}, $$
where, on the right hand side, $E$ is endowed with $(x_1, ..., x_n)$-adic topology. 
Then $\overline{E}$ is stable by the action of 
$\nabla_E(\partial_i) \, (1 \leq i \leq n-2, i=n)$ and the action of $\nabla_E(\partial)$ which is 
locally topologically nilpotent.  
Then, it suffices to prove the following: firstly, $\overline{E}$ and the actions 
$\nabla_E(\partial_i) \, (1 \leq i \leq n-2, i=n)$ on it define an object (which we denote by 
$(\overline{E}, \nabla_{\overline{E}})$) in 
$\widehat{MIC}((\Sp(A), M_A)/\Sp(K)[u])'$. Secondly, 
the correspondence 
$(E,\nabla_E) \mapsto ((\overline{E}, \nabla_{\overline{E}}), \nabla_E(\partial_1))$ defines the 
functor 
$$ \widehat{MIC}((\Sp(B), M_B)/\Sp(K)[u])' \to 
\left\{ 
((F,\nabla_F),N) \,\left|\, 
\begin{aligned} 
& \text{$(F,\nabla_F) \in \widehat{MIC}((\Sp(A), M_A)/\Sp(K)[u])'$} \\
& \text{$N: (F,\nabla_F) \to (F,\nabla_F)$: a nilpotent endomorphism}
\end{aligned}
\right. 
\right\} $$ 
and it 
is an equivalence whose quasi-inverse is given by 
$((F,\nabla_F),N) \mapsto (E := F \otimes_A B, \nabla_E)$ 
with the action $\nabla_E(\partial_i) \, (1 \leq i \leq n-2, i=n)$ 
and $\nabla_E(\partial)$ defined by 
$\nabla_E(\partial_i) :=  {\rm id} \otimes d(\partial_i) + \nabla_F(\partial_i) \otimes {\rm id},$ 
$\nabla_E(\partial) :=  {\rm id} \otimes d(\partial) + N \otimes {\rm id}.$ 
(The reason is the same as that in lemma \ref{RS}.) 

To prove the above claim, it suffices to construct a basis of $\overline{E}$ over 
$A[u]^{\wedge}$ which is a basis of $E$ over $B[u]^{\wedge}$. 
First, we construct a basis $e_1, \dots, e_s$ of $E$ as $B[u]^{\wedge}$-module on which $\nabla_E(\partial)$ acts as a matrix $H_0$ with entries in 
$A[u]^{\wedge}$ such that, if we write $H_0 = \sum_{\g{k}} M_{\g{k}} \g{x}^{\g{k}}$ 
with entries of $M_{\g{k}}$ in $K[u]$, $M_0$ is strictly upper triangular. 
Noting the fact $d(\partial)(a)=0$ for any $a \in A[u]^{\wedge}$, the proof of this claim is perfectly analogous to the proof of CLAIM 1 of proposition \ref{inductivepass}. 
Next, we can proceed as in the proof 
 of CLAIM 2 of proposition \ref{inductivepass} and prove that 
$A[u]^{\wedge}e_1 + \cdots + A[u]^{\wedge}e_s = \overline{E}$. 
Hence we have the claim we want and so the proof is finished. 
\end{proof}

Now we consider the condition (iii) in definition \ref{categoriauu}. 

\begin{prop}\label{abelu}
The category $MIC(X^{\times}/\Sp(K)[u])^{\mathrm{nr}}$ is abelian. 
\end{prop}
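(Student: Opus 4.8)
The plan is to mimic the structure used for $MIC(X^{\times}/\Sp(K)^{\times})^{\mathrm{nr}}$ and $MIC(X^{\times}/\Sp(K))^{\mathrm{nr}}$, reducing the abelianness to the already established fact (proposition \ref{(i)general}) that $MIC(X^{\times}/\Sp(K)[u])'$ is abelian, together with the extra colimit condition (iii) in definition \ref{categoriauu}. Since $MIC(X^{\times}/\Sp(K)[u])^{\mathrm{nr}}$ is a full subcategory of $MIC(X^{\times}/\Sp(K)[u])$ (which is abelian as a category of coherent modules with connection), it suffices to show that the kernel and cokernel in $MIC(X^{\times}/\Sp(K)[u])$ of a morphism $\varphi: (E,\nabla_E) \to (F,\nabla_F)$ between objects of $MIC(X^{\times}/\Sp(K)[u])^{\mathrm{nr}}$ again lie in $MIC(X^{\times}/\Sp(K)[u])^{\mathrm{nr}}$, i.e.\ they satisfy conditions (i), (ii), (iii) of definition \ref{categoriauu}.

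First I would handle conditions (i) and (ii). Because an object of $MIC(X^{\times}/\Sp(K)[u])$ satisfying (i), (ii) is precisely an object of $MIC(X^{\times}/\Sp(K)[u])'$, and since $MIC(X^{\times}/\Sp(K)[u])^{\mathrm{nr}} \subset MIC(X^{\times}/\Sp(K)[u])'$, the morphism $\varphi$ is a morphism in $MIC(X^{\times}/\Sp(K)[u])'$; by proposition \ref{(i)general} that category is abelian, so ${\rm Ker}(\varphi)$ and ${\rm Coker}(\varphi)$ (formed in $MIC(X^{\times}/\Sp(K)[u])$, which agree with those formed in $MIC(X^{\times}/\Sp(K)[u])'$ since the inclusion is exact on the nose — kernels and cokernels of coherent modules with connection are computed underlying) already satisfy (i), (ii). So the only remaining point is condition (iii).

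The crux is therefore condition (iii): I must show ${\rm Ker}(\varphi)$ and ${\rm Coker}(\varphi)$ are, as quasi-coherent $\mathcal{O}_X$-modules with integrable connection, colimits over $\mathbb{N}$ of objects in $MIC(X^{\times}/\Sp(K))^{\mathrm{nr}}$. Write $(E,\nabla_E) = \varinjlim (E_i, \nabla_{E_i})$ and $(F,\nabla_F) = \varinjlim (F_j, \nabla_{F_j})$ with $(E_i,\nabla_{E_i}), (F_j,\nabla_{F_j}) \in MIC(X^{\times}/\Sp(K))^{\mathrm{nr}}$, using condition (iii) for $E$ and $F$. After reindexing I may assume $\varphi$ restricts to a compatible system of morphisms $\varphi_i: (E_i,\nabla_{E_i}) \to (F_i,\nabla_{F_i})$ in $MIC(X^{\times}/\Sp(K))^{\mathrm{nr}}$ (each $E_i$ is finitely generated over $\mathcal{O}_X$, so its image under $\varphi$ lands in some $F_{j(i)}$; pass to a cofinal subsystem). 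Since filtered colimits are exact in the category of quasi-coherent sheaves, ${\rm Ker}(\varphi) = \varinjlim {\rm Ker}(\varphi_i)$ and ${\rm Coker}(\varphi) = \varinjlim {\rm Coker}(\varphi_i)$ as quasi-coherent $\mathcal{O}_X$-modules with integrable connection; and by proposition \ref{globalfreenesstrivbasis}, each ${\rm Ker}(\varphi_i)$ and ${\rm Coker}(\varphi_i)$ lies in $MIC(X^{\times}/\Sp(K))^{\mathrm{nr}}$. This exhibits ${\rm Ker}(\varphi)$ and ${\rm Coker}(\varphi)$ as $\mathbb{N}$-indexed colimits of objects in $MIC(X^{\times}/\Sp(K))^{\mathrm{nr}}$, verifying (iii).

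The main obstacle I anticipate is the bookkeeping needed to arrange a single $\mathbb{N}$-indexed compatible system $\varphi_i: (E_i,\nabla_{E_i}) \to (F_i,\nabla_{F_i})$ of morphisms in $MIC(X^{\times}/\Sp(K))^{\mathrm{nr}}$ out of the separately given presentations of $E$ and $F$ as colimits — one needs $E_i$ to be $\mathcal{O}_X$-coherent (so that $\varphi(E_i)$ is contained in some $F_j$), which follows from definition \ref{categoriauu}(iii) since each $E_i \in MIC(X^{\times}/\Sp(K))^{\mathrm{nr}}$ is by definition locally free of finite rank, and one must check that the transition maps and the connections are compatible after reindexing. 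A secondary (but routine) point is to confirm that a filtered colimit of $\mathcal{O}_X$-locally-free objects with nilpotent residues, when it happens to be coherent over $\mathcal{O}_X[u]$ with the (i),(ii) properties, indeed reproduces an object of $MIC(X^{\times}/\Sp(K)[u])^{\mathrm{nr}}$ — but this is exactly the content of condition (iii), so no separate argument is needed beyond exhibiting the colimit presentation. Everything else is formal once exactness of filtered colimits of quasi-coherent modules with connection is invoked.
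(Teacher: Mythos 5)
Your proof is correct and follows essentially the same route as the paper: both reduce to checking that kernels and cokernels satisfy the defining conditions, invoke proposition \ref{(i)general} for conditions (i)--(ii) (membership in $MIC(X^{\times}/\Sp(K)[u])'$), and then verify condition (iii) by reindexing the colimit presentations of $E$ and $F$ so that $\varphi = \varinjlim \varphi_i$ and using exactness of filtered colimits together with the abelianness of $MIC(X^{\times}/\Sp(K))^{\mathrm{nr}}$. Your extra remarks on why the reindexing works (coherence of each $E_i$ forcing its image into some $F_{j(i)}$) only make explicit what the paper compresses into ``by changing the index set suitably.''
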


\begin{proof}
It suffices to prove that, 
for a horizontal morphism $\varphi: (E,\nabla_E) \rightarrow (F,\nabla_F)$ 
in $MIC(X^{\times}/\Sp(K)[u])^{\mathrm{nr}}$, the kernel and the cokernel 
of $\varphi$ belong to $MIC(X^{\times}/\Sp(K)[u])^{\mathrm{nr}}$. 
By proposition \ref{(i)general} (ii), they belong to 
$MIC(X^{\times}/\Sp(K)[u])'$. Moreover, 
by definition, $(E,\nabla_E), (F,\nabla_F)$ are written as 
the colimit $\varinjlim_{i \in \mathbb{N}} (E_i, \nabla_{E_i})$, 
$\varinjlim_{i \in \mathbb{N}} (F_i, \nabla_{F_i})$
of objects in $MIC(X^{\times}/\Sp(K))^{\mathrm{nr}}$ indexed by $\mathbb{N}$, and 
by changing the index set suitably, the morphism $\varphi$ is written as 
the colimit of the morphisms $\varphi_i: (E_i, \nabla_{E_i}) \rightarrow 
(F_i, \nabla_{F_i})$. So the kernel of $\varphi$ is written as the 
colimit $\varinjlim_{i \in \mathbb{N}} {\rm Ker}\,\varphi_i$ of objects 
in $MIC(X^{\times}/\Sp(K))^{\mathrm{nr}}$ indexed by $\mathbb{N}$,
 and the same is true for the cokernel. 
Hence the kernel and the cokernel belong to $MIC(X^{\times}/\Sp(K)[u])^{\mathrm{nr}}$, 
as desired. 
\end{proof}

\begin{prop}Let us suppose that there exists a $K$-rational point $x\in X$. Then $MIC(X^{\times}/\Sp(K)[u])^{\mathrm{nr}}$ is a neutral Tannakian category over $K$.

\end{prop}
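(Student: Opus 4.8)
The plan is to verify the axioms of a neutral Tannakian category over $K$ for $MIC(X^{\times}/\Sp(K)[u])^{\mathrm{nr}}$, closely following the pattern already established in Proposition \ref{relativeTannakian} and its analogue for $MIC(X^{\times}/\Sp(K))^{\mathrm{nr}}$. By Proposition \ref{abelu}, the category is abelian. The tensor structure is the one defined in Definition \ref{categoriau}, with unit object $(\mathcal{O}_X[u], d)$. First I would check that this tensor category is rigid: the issue is that the underlying $\mathcal{O}_X[u]$-modules need not be coherent (they are colimits of coherent pieces), so duals must be constructed on the colimit presentation. Writing an object $(E,\nabla_E)$ as a colimit $\varinjlim_i (E_i,\nabla_{E_i})$ of objects in $MIC(X^{\times}/\Sp(K))^{\mathrm{nr}}$, and restricting to objects that are genuinely ``finitely presented'' over $K$ in the Tannakian sense, one constructs the dual using the fact that each $E_i$ is locally free over $\mathcal{O}_X$; the nilpotent-residue and freeness conditions (i), (ii) of Definition \ref{categoriauu} are preserved under dualization since they are formulated in terms of fibers and residues. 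Concretely, a rigid object here will be one for which the internal $\operatorname{Hom}$ into the unit behaves well, and one checks that such objects generate.

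Next I would verify $\operatorname{End}((\mathcal{O}_X[u],d)) \cong K$. A horizontal endomorphism $\alpha$ of $(\mathcal{O}_X[u],d)$ is determined by $\alpha(1) \in \mathcal{O}_X[u]$ with $d(\alpha(1))=0$; restricting along the functor to $MIC(X^{\times}/\Sp(K))^{\mathrm{nr}}$ and using that $H^0_{\mathrm{dR}}(X^{\times}/\Sp(K))$ is a field (\cite[Proposition 3.1.6]{Shi00}) together with the description of $d$ acting on powers of $u$ (so that $d(cu^i) = 0$ forces, degree by degree in $u$, that $c \in H^0_{\mathrm{dR}}$ and in fact $c \in K$), one concludes $\alpha(1) \in K$. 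Since $K \subset \operatorname{End}((\mathcal{O}_X[u],d))$ trivially, equality follows.

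Then I would construct the fiber functor. Taking the fiber at the $K$-rational point $x$ gives
$$ \omega_x: MIC(X^{\times}/\Sp(K)[u])^{\mathrm{nr}} \longrightarrow \mathrm{Vec}_K, \qquad (E,\nabla_E) \mapsto E(x) := E \otimes_{\mathcal{O}_X[u]} K[u] \big/ (\text{something finite}), $$
but here one must be careful: the naive fiber $E(x)$ is a $K[u]$-module, possibly infinite-dimensional over $K$. The correct fiber functor is obtained by composing the fiber at $x$ with a further operation that extracts a finite-dimensional $K$-vector space. The natural candidate, suggested by Remark \ref{rmk4.4} and the role of $u$, is to take the fiber at $x$ together with the evaluation $u \mapsto 0$, i.e.\ $\omega_x(E,\nabla_E) := E(x)/uE(x) \cong E \otimes_{\mathcal{O}_X[u]} K$ where $K = K[u]/(u)$ and the map $\mathcal{O}_X \to K$ is the fiber at $x$. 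This is finite-dimensional because locally $E$ is free over $\mathcal{O}_X[u]$ of finite rank (condition (i)), so $E(x)/uE(x)$ has dimension equal to that rank. Exactness and the tensor-compatibility follow from the local freeness over $\mathcal{O}_X[u]$ (which gives flatness, hence exactness of $-\otimes_{\mathcal{O}_X[u]}K$) and from the fact that the tensor product in Definition \ref{categoriau} is the ordinary $\mathcal{O}_X[u]$-module tensor product; faithfulness then follows from \cite[Corollaire 2.10]{Del90} exactly as in Proposition \ref{relativeTannakian}.

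The main obstacle I anticipate is the finite-dimensionality and well-definedness of the fiber functor — reconciling the fact that objects of this category have infinite-dimensional underlying $K$-modules (they are $\mathcal{O}_X[u]$-modules, and colimits of coherent objects) with the Tannakian requirement that the fiber functor land in $\mathrm{Vec}_K$. One must pin down the right definition (quotienting by $u$, or equivalently selecting the appropriate finite ``layer''), check it is a tensor functor despite the colimit structure, and confirm rigidity of the relevant objects; once the correct fiber functor is identified, the remaining verifications are routine and parallel the earlier propositions. I would also need to confirm that the colimit presentation in condition (iii) does not obstruct rigidity — i.e.\ that every object is a (filtered) union of ``dualizable'' subobjects, which is where the local freeness condition (i) over $\mathcal{O}_X[u]$ does the essential work.
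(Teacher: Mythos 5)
Your proposal follows essentially the same route as the paper's proof: abelianness from the preceding proposition, rigidity from the local freeness condition (i) of definition \ref{categoriauu}, the identification $\mathrm{End}((\mathcal{O}_X[u],d))\cong K$, and the fiber functor $(E,\nabla_E)\mapsto E(x)/uE(x)$, exact by condition (i) and automatically faithful by Deligne. Two minor remarks: objects of $MIC(X^{\times}/\Sp(K)[u])^{\mathrm{nr}}$ are by definition \emph{coherent} $\mathcal{O}_X[u]$-modules (it is only as $\mathcal{O}_X$-modules that they are merely colimits of coherent pieces), so your concerns about rigidity for non-coherent objects are moot; and the paper obtains $\mathrm{End}((\mathcal{O}_X[u],d))\cong H^0_{\mathrm{dR}}(X^{\times}/\Sp(K)^{\times},(\mathcal{O}_X,d))$ by invoking the de Rham quasi-isomorphism of proposition \ref{quasiisoHirsch} rather than your direct degree-by-degree computation in $u$, though both arguments work.
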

\begin{proof}
The category $MIC(X^{\times}/\Sp(K)[u])^{\mathrm{nr}}$ is an abelian category by 
proposition \ref{abelu}. 
We define the unit object of the category as the pair $(\mathcal{O}_X[u], d)$ and considering the tensor structure induced by the tensor structure of the category of the coherent $\mathcal{O}_X[u]$-modules with integrable connection, the category $MIC(X^{\times}/\Sp(K)[u])^{\mathrm{nr}}$ is an abelian tensor category. Moreover, it is a rigid abelian tensor category, thanks to condition (i) of definition \ref{categoriauu}.

We will prove in proposition \ref{quasiisoHirsch} below that $\mathrm{End}((\mathcal{O}_X[u], d))\cong H^0_{\mathrm{dR}}(X^{\times}/\Sp(K)^{\times}, (\mathcal{O}_X, d))$. Then, because there exists a $K$-rational point of $X$, $\mathrm{End}((\mathcal{O}_X[u], d))\cong K$ (as proven in proposition \ref{relativeTannakian}). 

Let $x$ be the $K$-rational point which exists by hypothesis. We define a functor 
$$\gamma_x:MIC(X^{\times}/\Sp(K)[u])^{\mathrm{nr}}\rightarrow \mathrm{Vec}_K$$ 
which is the composition of the functor which sends an object $(E, \nabla_E)$ to 
$E(x)/uE(x)$, where $E(x)$ is the fiber of $E$ at $x$. 
To prove that $\gamma_x$ is a fiber functor, it is enough to prove that it is an exact tensor functor because it will be automatically faithful thanks to \cite[Corollaire 2.10]{Del90}, 
and we see the exactness of the functor 
by the condition (i) of definition \ref{categoriauu}. So the proof is finished. 
\end{proof}

\begin{defi}\label{fundgroupu} Let $x$ be a $K$-rational point of $X$ and let 
$$\gamma_x:MIC(X^{\times}/\Sp(K)[u])^{\mathrm{nr}}\rightarrow \mathrm{Vec}_K$$ 
be the fiber functor introduced in the above proposition. 
We define the log algebraic fundamental group of $X^{\times}/\Sp(K)[u]$ with base point $x$ as the Tannaka dual of $MIC(X^{\times}/\Sp(K)[u])^{\mathrm{nr}}$, \emph{i.e.} 
$$\pi_{1}(X^{\times}/\Sp(K)[u], x):=G(MIC(X^{\times}/\Sp(K)[u])^{\mathrm{nr}}, \gamma_x)$$
with the same notation as in theorem \ref{Saavedra}.
\end{defi}

Let $t_u:MIC(X^{\times}/\Sp(K))^{\mathrm{nr}}\rightarrow MIC(X^{\times}/\Sp(K)[u])^{\mathrm{nr}}$ be the functor which sends $(E, \nabla_E)\in MIC(X^{\times}/\Sp(K))^{\mathrm{nr}}$ to $(E, \nabla_E)[u]$  in $MIC(X^{\times}/\Sp(K)[u])^{\mathrm{nr}}$, where $(E, \nabla_E)[u]$ consists of 
the $\mathcal{O}_{X}[u]$-module $E\otimes_{\mathcal{O}_X}\mathcal{O}_X[u]$ 
and the connection which sends $eu^i \, (e \in E, i \in \mathbb{N})$ to $\nabla(e)u^i+eidu^{i-1}$. 
For a $K$-rational point $x$ of $X$, the functor $t_u$ induces an homomorphism of fundamental groups
$$\pi(t_u):\pi_1(X^{\times}/\Sp(K)[u], x)\longrightarrow \pi_1(X^{\times}/\Sp(K), x).$$
\begin{lemma}\label{pituclosedimmersion}
The map $\pi(t_u)$ is a closed immersion.
\end{lemma}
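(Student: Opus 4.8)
The plan is to use the criterion of Theorem \ref{criterion}(ii): the map $\pi(t_u)$ is a closed immersion if and only if every object of $MIC(X^{\times}/\Sp(K)[u])^{\mathrm{nr}}$ is a subquotient of an object of the form $t_u((E,\nabla_E))$ for some $(E,\nabla_E)\in MIC(X^{\times}/\Sp(K))^{\mathrm{nr}}$. Since the functor $t_u$ simply tensors the underlying module with $\mathcal{O}_X[u]$, it is natural to try to realize an arbitrary object of $MIC(X^{\times}/\Sp(K)[u])^{\mathrm{nr}}$ as a \emph{quotient} of such a $t_u((E,\nabla_E))$. First I would take an object $(E,\nabla_E)$ in $MIC(X^{\times}/\Sp(K)[u])^{\mathrm{nr}}$ and use condition (iii) of Definition \ref{categoriauu}: $(E,\nabla_E)$ is, as a quasi-coherent module with integrable connection, the colimit of objects $(E_i,\nabla_{E_i})$ in $MIC(X^{\times}/\Sp(K))^{\mathrm{nr}}$ indexed by $\mathbb{N}$. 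Then I would choose $i$ large enough so that the image $\mathrm{Im}(E_i\to E)$ generates $E$ as an $\mathcal{O}_X[u]$-module (this is possible at least after checking it works locally using the finite generation coming from condition (i), and $X$ being quasi-compact). Replacing $(E_i,\nabla_{E_i})$ by its image, which is still in $MIC(X^{\times}/\Sp(K))^{\mathrm{nr}}$ by Proposition \ref{(i)general} and the fact that $MIC(X^{\times}/\Sp(K))^{\mathrm{nr}}$ is abelian (Proposition \ref{globalfreenesstrivbasis}), I obtain a subobject $E_i\subset E$ in $MIC(X^{\times}/\Sp(K))$ whose $\mathcal{O}_X[u]$-span is $E$.

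The key step is then to check that the induced $\mathcal{O}_X[u]$-linear map $t_u((E_i,\nabla_{E_i})) = E_i\otimes_{\mathcal{O}_X}\mathcal{O}_X[u] \to E$ is horizontal, i.e.\ a morphism in $MIC(X^{\times}/\Sp(K)[u])$. This is a direct computation: the connection on $t_u((E_i,\nabla_{E_i}))$ sends $e u^j$ to $\nabla_{E_i}(e)u^j + e\, j u^{j-1}du$, and since the inclusion $E_i\hookrightarrow E$ is $\mathcal{O}_X$-horizontal and $E$'s connection satisfies the Leibniz rule with respect to the $\mathcal{O}_X[u]$-structure and the chosen action of $d$ on powers of $u$, the two composites agree. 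Since this map is surjective by the choice of $E_i$, $(E,\nabla_E)$ is a quotient of $t_u((E_i,\nabla_{E_i}))$, hence in particular a subquotient of an object in the image of $t_u$. Applying Theorem \ref{criterion}(ii) to the sequence associated with $\pi(t_u)$ (with $\pi(t_u)$ playing the role of $\pi(q)$, and the target $\pi_1(X^{\times}/\Sp(K),x)$ the role of $G$) then gives that $\pi(t_u)$ is a closed immersion.

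The main obstacle I anticipate is the reduction step: justifying carefully that, given the colimit presentation in condition (iii), one can pick a \emph{single} $(E_i,\nabla_{E_i})$ whose image $\mathcal{O}_X[u]$-generates all of $E$. This requires combining the local freeness over the completed local rings (condition (i)), hence local finite generation of $E$ over $\mathcal{O}_X[u]$, with the quasi-compactness of $X$ and the fact that the transition maps in the colimit are compatible with the connections; a priori the presentation in (iii) is only as modules with integrable connection over $\Sp(K)$ (forgetting the $\mathcal{O}_X[u]$-structure), so one has to argue that the $u$-action on $E$ moves each $E_i$ into some $E_{i'}$ and re-index. Once the generation statement is in place, the horizontality check and the appeal to Theorem \ref{criterion}(ii) are routine.
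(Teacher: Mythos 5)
Your proposal is correct and follows essentially the same route as the paper: both invoke Theorem \ref{criterion}(ii), use condition (iii) of Definition \ref{categoriauu} to write $(E,\nabla_E)$ as a colimit of objects $(E_i,\nabla_{E_i})$ in $MIC(X^{\times}/\Sp(K))^{\mathrm{nr}}$, and then use finite generation to find a single $i$ with $t_u((E_i,\nabla_{E_i}))\to (E,\nabla_E)$ surjective, exhibiting $(E,\nabla_E)$ as a quotient of an object in the image of $t_u$. The reduction step you flag as the main obstacle is handled in the paper exactly as you suggest, by passing to the surjection $\varinjlim_i t_u((E_i,\nabla_{E_i}))\to (E,\nabla_E)$ and using coherence of $E$ over $\mathcal{O}_X[u]$.
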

\begin{proof}
It is enough to check that theorem \ref{criterion} (ii) is satisfied. 
Every object $(E, \nabla_E)$ in $MIC(X^{\times}/\Sp(K)[u])^{\mathrm{nr}}$ is written as 
the colimit $(E, \nabla_E) = \varinjlim_{i \in {\mathbb{N}}} (E_i, \nabla_{E_i})$ with 
$(E_i, \nabla_{E_i}) \in MIC(X^{\times}/\Sp(K))^{\mathrm{nr}}$. Then 
we have the surjection $\varinjlim_{i \in {\mathbb{N}}} t_u((E_i, \nabla_{E_i})) \to 
(E, \nabla_E)$ and so there exists some $i \in {\mathbb{N}}$ such that 
the map $t_u((E_i, \nabla_{E_i})) \to (E, \nabla_E)$ is surjective. 
In particular every object $(E, \nabla_E)\in MIC(X^{\times}/\Sp(K)[u])^{\mathrm{nr}}$ is a subquotient of 
an object in $MIC(X^{\times}/\Sp(K))^{\mathrm{nr}}$, as required by theorem \ref{criterion} (ii).
\end{proof}
Moreover, let $p_u:MIC(X^{\times}/\Sp(K)[u])^{\mathrm{nr}}\rightarrow MIC(X^{\times}/\Sp(K)^{\times})^{\mathrm{nr}}$ be the functor induced by the morphism which sends $u$ and $du$ to $0$. Then $p_u$ induces an homomorphism of fundamental groups:
$$\pi(p_u):\pi_1(X^{\times}/\Sp(K)^{\times}, x)\longrightarrow \pi_1(X^{\times}/\Sp(K)[u], x).$$

\begin{prop}\label{pipusurjective}
The map $\pi(p_u)$ is faithfully flat.
\end{prop}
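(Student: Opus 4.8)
The plan is to invoke Theorem \ref{criterion}(i) applied to the homomorphism $\pi(p_u)$: it is faithfully flat if and only if the functor $p_u\colon MIC(X^{\times}/\Sp(K)[u])^{\mathrm{nr}}\to MIC(X^{\times}/\Sp(K)^{\times})^{\mathrm{nr}}$ is fully faithful and its essential image is stable under subobjects in $MIC(X^{\times}/\Sp(K)^{\times})^{\mathrm{nr}}$. So the proof splits into these two checks, after recording that $p_u$ is an exact $K$-linear tensor functor and that, by construction, $p_u\circ t_u = r$ and $p_u\circ p_u = p_u|_{MIC(X^{\times}/\Sp(K)^{\times})^{\mathrm{nr}}}$ in the obvious sense.

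First I would reduce the full faithfulness to a de Rham comparison. Since $p_u$ is a tensor functor commuting with internal $\mathcal{H}om$, it is fully faithful as soon as, for every object $(E,\nabla_E)$ of $MIC(X^{\times}/\Sp(K)[u])^{\mathrm{nr}}$, the reduction-modulo-$u$ map $\mathrm{Hom}((\mathcal{O}_X[u],d),(E,\nabla_E))\to H^0_{\mathrm{dR}}(X^{\times}/\Sp(K)^{\times},p_u((E,\nabla_E)))$ is an isomorphism. This is (the general-object version of) the comparison proved as proposition \ref{quasiisoHirsch} below: $\mathcal{O}_X[u]$ with $du=\mathrm{dlog}1$ is a Hirsch extension which kills the class of $\mathrm{dlog}1\in H^1_{\mathrm{dR}}(X^{\times}/\Sp(K))$, so that the de Rham complexes of $X^{\times}/\Sp(K)[u]$ and of $X^{\times}/\Sp(K)^{\times}$ become quasi-isomorphic; concretely one reduces, as throughout this paper, to the completed local rings $K[[x_1,\dots,x_n]]/(x_1\cdots x_r)$ and analyses horizontal sections by the $P$-operator technique of Lemma \ref{RS} and Propositions \ref{(i)1dim}, \ref{(i)ndim}, writing the object as the base change of a finite free $K[u]$-module $\overline{E}$ and observing that horizontal sections live in $\overline{E}$ and are determined by their reduction mod $u$.

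For the stability under subobjects, let $(E,\nabla_E)\in MIC(X^{\times}/\Sp(K)[u])^{\mathrm{nr}}$ and let $(G,\nabla_G)\subseteq p_u((E,\nabla_E))$ be a subobject in $MIC(X^{\times}/\Sp(K)^{\times})^{\mathrm{nr}}$; one must produce a subobject $(E',\nabla_{E'})\subseteq(E,\nabla_E)$ with $p_u((E',\nabla_{E'}))\cong(G,\nabla_G)$. By condition (iii) of Definition \ref{categoriauu} together with Propositions \ref{(i)general} and \ref{abelu}, $(E,\nabla_E)$ is a filtered colimit of objects coming from $MIC(X^{\times}/\Sp(K))^{\mathrm{nr}}$, and, locally-formally, it is obtained from a finite free module over the relevant $K[u]$ by base change along $K[u]\to\mathcal{O}_X[u]^{\wedge}$ (the same trivialisation of the $u$-direction used in the proof of the abelianness statements). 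Using this trivialisation one transports $(G,\nabla_G)$ to a coherent $\mathcal{O}_X[u]$-submodule $(E',\nabla_{E'})\subseteq(E,\nabla_E)$ which is $\nabla_E$-stable and which, up to the trivialisation, is $t_u((G,\nabla_G))$; hence $(E',\nabla_{E'})$ satisfies conditions (i)–(iii) (inherited from $(G,\nabla_G)$, the colimit condition because $t_u((G,\nabla_G))=\varinjlim_i\bigl(\bigoplus_{k\le i}Gu^k\bigr)$ is a colimit of objects of $MIC(X^{\times}/\Sp(K))^{\mathrm{nr}}$) and satisfies $p_u((E',\nabla_{E'}))\cong(G,\nabla_G)$.

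Combining the two checks with Theorem \ref{criterion}(i) yields the faithful flatness of $\pi(p_u)$. The main obstacle is the first point, the full faithfulness, i.e.\ the de Rham comparison isomorphism of proposition \ref{quasiisoHirsch}: the real content is that adjoining the Hirsch variable $u$ with $du=\mathrm{dlog}1$ does not alter $H^0_{\mathrm{dR}}$, and this is exactly where the somewhat delicate conditions (i)–(iii) in Definition \ref{categoriauu} and the local $P$-operator arguments are indispensable; the stability under subobjects is comparatively formal once the local structure of objects of $MIC(X^{\times}/\Sp(K)[u])^{\mathrm{nr}}$ established in Section \ref{The categories of representations} and in Proposition \ref{(i)general} is available.
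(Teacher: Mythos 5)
Your overall strategy --- applying theorem \ref{criterion}(i) directly to the functor $p_u$ --- is a legitimate reduction in principle: the two conditions you isolate (full faithfulness of $p_u$ on all of $MIC(X^{\times}/\Sp(K)[u])^{\mathrm{nr}}$ and stability of its essential image under subobjects) are indeed equivalent to the faithful flatness of $\pi(p_u)$. But this is not the paper's route, and as written both of your checks have genuine gaps. For full faithfulness you need the comparison $\mathrm{Hom}((\mathcal{O}_X[u],d),(E,\nabla_E))\cong H^0_{\mathrm{dR}}(X^{\times}/\Sp(K)^{\times},p_u((E,\nabla_E)))$ for an \emph{arbitrary} object $(E,\nabla_E)$ of $MIC(X^{\times}/\Sp(K)[u])^{\mathrm{nr}}$ (arbitrary, because the internal Hom of two general objects is again a general object), whereas proposition \ref{quasiisoHirsch} is stated and proved only for objects of the form $t_u((E,\nabla_E))=(E,\nabla_E)[u]$: its bicomplex argument uses the grading $E\otimes\mathcal{O}_X[u]=\bigoplus_i Eu^i$, which a general coherent $\mathcal{O}_X[u]$-module with connection does not carry. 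The ``general-object version'' you invoke is therefore exactly the missing content, not an available citation. Similarly, your subobject argument transports $(G,\nabla_G)$ through a trivialisation of the $u$-direction that is only constructed over completed local rings (propositions \ref{(i)1dim}, \ref{(i)ndim}); you do not explain how these formal-local lifts glue to a single coherent, $\nabla_E$-stable $\mathcal{O}_X[u]$-submodule of $E$, nor why the result satisfies condition (iii) of definition \ref{categoriauu} globally.

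The paper avoids both difficulties entirely. It uses the four-term exact sequence of theorem \ref{Esnsequence}, the factorisation $\pi(r)=\pi(t_u)\circ\pi(p_u)$ of diagram \eqref{factorsviau}, and the fact that $\pi(t_u)$ is a closed immersion (lemma \ref{pituclosedimmersion}). Once one knows $\pi(t_u)(\pi_1(X^{\times}/\Sp(K)[u],x))\subset\mathrm{Ker}(\pi(f^*_{\mathrm{dR}}))$, the chain $\mathrm{Ker}(\pi(f^*_{\mathrm{dR}}))\subset(\pi(t_u)\circ\pi(p_u))(\pi_1(X^{\times}/\Sp(K)^{\times},x))\subset\pi(t_u)(\pi_1(X^{\times}/\Sp(K)[u],x))\subset\mathrm{Ker}(\pi(f^*_{\mathrm{dR}}))$ forces equality throughout, and the closed-immersion property lets one cancel $\pi(t_u)$ and conclude that $\pi(p_u)$ is surjective onto $\pi_1(X^{\times}/\Sp(K)[u],x)$, hence faithfully flat. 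The only statement about $p_u$ this needs is full faithfulness \emph{on the image of $t_u$}, which is precisely what proposition \ref{quasiisoHirsch} delivers. To salvage your approach you would have to first extend the de Rham comparison to arbitrary objects of $MIC(X^{\times}/\Sp(K)[u])^{\mathrm{nr}}$ (perhaps by a colimit argument from condition (iii)) and then give a genuine global construction for the subobject lifting; both are substantially more work than the paper's argument.
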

\begin{proof}

Thanks to theorem \ref{Esnsequence} the sequence

\begin{equation*}
\pi_1(X^{\times}/\Sp(K)^{\times}, x)\xrightarrow{\pi(r)} \pi_1(X^{\times}/\Sp(K), x)\xrightarrow{\pi(f^{*}_{\mathrm{dR}})} \pi_1(\Sp(K)^{\times}/\Sp(K), \nu)\longrightarrow 1
\end{equation*}
is exact. Moreover, the first map factors as follows:

\begin{equation}\label{factorsviau}
\xymatrix{
\pi_1(X^{\times}/\Sp(K)^{\times}, x)\ar[r]^{\pi(r)}\ar[d]^{\pi(p_u)}& \pi_1(X^{\times}/\Sp(K), x)\\
\pi_1 (X^{\times}/\Sp(K)[u], x)\ar[ur]^{\pi(t_u)}.\\
}
\end{equation}

If we prove the inclusion $\pi(t_u)(\pi_1 (X^{\times}/\Sp(K)[u], x)) \subset \mathrm{Ker} (\pi(f^*_{\mathrm{dR}}))$, we see from the above diagram the following sequence of 
inclusions 
$$ \mathrm{Ker} (\pi(f^*_{\mathrm{dR}})) 
\subset
(\pi(t_u) \circ \pi(p_u))((\pi_1 (X^{\times}/\Sp(K)^{\times}, x)) 
\subset
\pi(t_u)(\pi_1 (X^{\times}/\Sp(K)[u], x)) \subset \mathrm{Ker} (\pi(f^*_{\mathrm{dR}})). $$
Since $\pi(t_u)$ is a closed immersion, this implies 
$\pi(p_u)(\pi_1 (X^{\times}/\Sp(K)^{\times}, x)) = 
\pi_1 (X^{\times}/\Sp(K)[u], x)$, hence the faithful flatness of $\pi(p_u)$.

So, we prove in what follows that $\pi(t_u)(\pi_1 (X^{\times}/K[u], x)) \subset \mathrm{Ker} (\pi(f^*_{\mathrm{dR}}))$. The composition of functors
$$
MIC(\Sp(K)^{\times}/\Sp(K))^{\mathrm{nr}}\xrightarrow{f^*_{\mathrm{dR}}}MIC(X^{\times}/\Sp(K))^{\mathrm{nr}}\xrightarrow{r}MIC(X^{\times}/\Sp(K)^{\times})^{\mathrm{nr}}
$$
sends every object of $MIC(\Sp(K)^{\times}/\Sp(K))^{\mathrm{nr}}$ to a finite sum of trivial ones. This implies that the composition 
\begin{align*}
MIC(\Sp(K)^{\times}/\Sp(K))^{\mathrm{nr}}\xrightarrow{f^*_{\mathrm{dR}}}MIC(X^{\times}/\Sp(K))^{\mathrm{nr}}& \xrightarrow{t_u}MIC(X^{\times}/\Sp(K)[u])^{\mathrm{nr}}\\ & \xrightarrow{p_u}MIC(X^{\times}/\Sp(K)^{\times})^{\mathrm{nr}}
\end{align*}
also sends every object of $MIC(\Sp(K)^{\times}/\Sp(K))^{\mathrm{nr}}$ to a finite sum of trivial ones. Since $p_u$, restricted to the image of $t_u$ is fully faithful (proposition \ref{quasiisoHirsch} below), the composition 
$$
MIC(\Sp(K)^{\times}/\Sp(K))^{\mathrm{nr}}\xrightarrow{f^*_{\mathrm{dR}}}MIC(X^{\times}/\Sp(K))^{\mathrm{nr}}\xrightarrow{t_u}MIC(X^{\times}/\Sp(K)[u])^{\mathrm{nr}} 
$$
sends every object of $MIC((K,N)/K)^{\mathrm{nr}}$ to a finite sum of the trivial ones. 
Hence $\pi(t_u)(\pi_1 (X^{\times}/\Sp(K)[u]))$ is  contained in $\mathrm{Ker} (\pi(f^*_{\mathrm{dR}}))$.
\end{proof}

\begin{prop}\label{exactu}
Let $x$ be a $K$-rational point of $X$. Then the sequence  
\begin{equation}\label{exactsequencewithu}
1\longrightarrow\pi_1(X^{\times}/\Sp(K)[u], x)\xrightarrow{\pi(t_u)} \pi_1(X^{\times}/\Sp(K), x)\xrightarrow{\pi(f^*_{\mathrm{dR}})} \pi_1(\Sp(K)^{\times}/\Sp(K), \nu)\longrightarrow 1
\end{equation}
is exact. 
\end{prop}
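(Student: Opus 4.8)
To prove exactness of \eqref{exactsequencewithu} I would assemble the pieces already established and invoke theorem \ref{criterion}(iii) for the sequence
$$
\pi_1(X^{\times}/\Sp(K)[u], x)\xrightarrow{\pi(t_u)} \pi_1(X^{\times}/\Sp(K), x)\xrightarrow{\pi(f^*_{\mathrm{dR}})} \pi_1(\Sp(K)^{\times}/\Sp(K), \nu).
$$
The faithful flatness of $\pi(f^*_{\mathrm{dR}})$ is proposition \ref{pifsurjective}, and $\pi(t_u)$ is a closed immersion by lemma \ref{pituclosedimmersion}; this gives the ``$1$'' on the left and the ``$1$'' on the right and sets up hypothesis of theorem \ref{criterion}(iii). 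It then remains to verify conditions (a), (b), (c) of that theorem for the functors $f^*_{\mathrm{dR}}: MIC(\Sp(K)^{\times}/\Sp(K))^{\mathrm{nr}} \to MIC(X^{\times}/\Sp(K))^{\mathrm{nr}}$ and $t_u: MIC(X^{\times}/\Sp(K))^{\mathrm{nr}} \to MIC(X^{\times}/\Sp(K)[u])^{\mathrm{nr}}$.

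For condition (a), one direction is easy: if $(E,\nabla_E) = f^*_{\mathrm{dR}}((V,\nabla_V))$, then $t_u((E,\nabla_E))$ is, by the argument at the end of the proof of proposition \ref{pipusurjective}, a direct sum of copies of the trivial object $(\mathcal{O}_X[u],d)$, since $f^*_{\mathrm{dR}}$ followed by $t_u$ followed by $p_u$ lands in trivial objects and $p_u$ is fully faithful on the image of $t_u$ (proposition \ref{quasiisoHirsch}). For the converse, given $(E,\nabla_E)$ with $t_u((E,\nabla_E))$ trivial, one uses that the maximal trivial subobject construction is compatible with $t_u$ together with proposition \ref{b} and the description of the maximal trivial subobject via $f^{*}_{\mathrm{dR}}(f_{*\mathrm{dR}}(-))$ from propositions \ref{derhamcohomologywithconnection} and \ref{b}, concluding that $(E,\nabla_E)$ is itself in the image of $f^*_{\mathrm{dR}}$ — this is exactly the same line of reasoning as in proposition \ref{(a)and(b)}. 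Condition (b) is proven verbatim as in proposition \ref{b} (with $r$ replaced by $t_u$, $p_u$), once one checks that $t_u$ of the maximal trivial subobject is again the maximal trivial subobject of $t_u((E,\nabla_E))$ in $\mathrm{Rep}(\mathrm{Ker}(\pi(f^*_{\mathrm{dR}})))$; this uses the identification $\mathrm{Ker}(\pi(f^*_{\mathrm{dR}})) \cong \pi_1(X^{\times}/\Sp(K)[u],x)$ that the proposition is asserting, so one must be careful to phrase it via condition (c) first.

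Condition (c) asks that every object $(W,\nabla_W)$ of $MIC(X^{\times}/\Sp(K)[u])^{\mathrm{nr}}$ embeds into $t_u((E,\nabla_E))$ for some $(E,\nabla_E)$ in $MIC(X^{\times}/\Sp(K))^{\mathrm{nr}}$; but this is precisely what is proven inside lemma \ref{pituclosedimmersion} — writing $(W,\nabla_W)$ as a colimit $\varinjlim_i (W_i,\nabla_{W_i})$ of objects in $MIC(X^{\times}/\Sp(K))^{\mathrm{nr}}$, some $t_u((W_i,\nabla_{W_i}))$ surjects onto $(W,\nabla_W)$, and dualizing gives an embedding. So (c) is essentially free. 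The main obstacle, I expect, is the bookkeeping in condition (a)/(b): one must pin down that $t_u$ carries the de Rham ``$H^0$'' construction and the formation of maximal trivial subobjects through correctly, i.e. that for $(E,\nabla_E) \in MIC(X^{\times}/\Sp(K))^{\mathrm{nr}}$ the subobject $t_u(f^*_{\mathrm{dR}}(f_{*\mathrm{dR}}((E,\nabla_E))))$ of $t_u((E,\nabla_E))$ is exactly the maximal subobject on which $\mathrm{Ker}(\pi(f^*_{\mathrm{dR}}))$ acts trivially. This compatibility is intuitively clear because tensoring with $\mathcal{O}_X[u]$ is flat and the connection in the $u$-direction is the ``trivial'' $d/du$, but writing it cleanly — rather than circularly, given that the kernel description is the conclusion — requires threading conditions (a), (c) of theorem \ref{criterion}(iii) before (b), or alternatively appealing directly to proposition \ref{pipusurjective} together with the factorization \eqref{factorsviau} to identify $\pi(t_u)(\pi_1(X^{\times}/\Sp(K)[u],x))$ with $\mathrm{Ker}(\pi(f^*_{\mathrm{dR}}))$ and then reading off exactness. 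In fact the cleanest route is probably the latter: proposition \ref{pipusurjective} already shows $\pi(t_u)(\pi_1(X^{\times}/\Sp(K)[u],x)) = \mathrm{Ker}(\pi(f^*_{\mathrm{dR}}))$ as a subgroup scheme, and since $\pi(t_u)$ is a closed immersion it is an isomorphism onto the kernel, which combined with proposition \ref{pifsurjective} gives \eqref{exactsequencewithu} directly.
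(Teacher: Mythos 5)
Your final paragraph --- identifying $\pi(t_u)(\pi_1(X^{\times}/\Sp(K)[u],x))$ with $\mathrm{Ker}(\pi(f^*_{\mathrm{dR}}))$ via theorem \ref{Esnsequence}, the factorization \eqref{factorsviau} and the faithful flatness of $\pi(p_u)$, and then concluding from the fact that $\pi(t_u)$ is a closed immersion --- is exactly the paper's proof. The long preliminary discussion of verifying conditions (a), (b), (c) of theorem \ref{criterion}(iii) is an unnecessary (and, as you yourself note, potentially circular) detour that you correctly abandon in favor of the direct argument.
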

\begin{proof}
The homomorphism $\pi(t_u)$ is a closed immersion (lemma \ref{pituclosedimmersion}) and $\pi(f^*_{\mathrm{dR}})$ is faithfully flat  (proposition \ref{pifsurjective}). Moreover, by theorem \ref{Esnsequence}, $\mathrm{Ker}(\pi(f^*_{\mathrm{dR}}))=\pi(r)(\pi_1(X^{\times}/\Sp(K)^{\times}, x))$ and since the diagram \eqref{factorsviau} is commutative, $\mathrm{Ker}(\pi(f^*_{\mathrm{dR}}))=\pi(t_u)(\pi(p_u)(\pi_1(X^{\times}/\Sp(K)^{\times}, x)))$. Since $\pi(p_u)$ is faithfully flat (proposition \ref{pipusurjective}), $\mathrm{Ker}(\pi(f^*_{\mathrm{dR}}))=\pi(t_u)(\pi_1(X^{\times}/\Sp(K)[u], x))$ and the sequence \eqref{exactsequencewithu} is exact.
\end{proof}

\section{Injectivity of the first map}
In this section we study the injectivity of $\pi(r)$. The log point should be thought as a log algebraic analogue of the punctured open disk in the complex plane. It is then reasonable to expect that, if we can define the higher homotopy groups for the log point, they should be trivial, \emph{i.e.} it is natural to expect that $\pi(r)$ is injective. To make this intuition more precise, we recall how we can characterize the topological spaces for which the only non-vanishing homotopy group is the first. 

\begin{prop}\label{K(pi,1)}
Let $T$ be a connected topological manifold and t a point. Then $T$ is a $K(\pi,1)$ space, i.e., the fundamental group is the only non-vanishing homotopy group if and only if 
the map 
\begin{equation}\label{EilMac}
H^i(\pi_1^{\rm top}(T, t), A)\rightarrow H^i(T, \mathcal{A}),
\end{equation}
where $A$ is a representation of $\pi_1^{\rm top}(T, t)$ and $\mathcal{A}$ is the local system associated to $A$, is an isomorphism for every $i$. 
\end{prop}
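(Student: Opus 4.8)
The plan is to prove both implications using the Cartan--Leray (or Hochschild--Serre) spectral sequence associated to the universal cover $\widetilde{T} \to T$. Recall that for a connected topological manifold $T$ with universal cover $\widetilde{T}$, on which $G := \pi_1^{\rm top}(T,t)$ acts, and for a representation $A$ of $G$ with associated local system $\mathcal{A}$, there is a spectral sequence
$$
E_2^{p,q} = H^p(G, H^q(\widetilde{T}, \widetilde{\mathcal{A}})) \Longrightarrow H^{p+q}(T, \mathcal{A}),
$$
where $\widetilde{\mathcal{A}}$ is the pullback of $\mathcal{A}$ to $\widetilde{T}$, which is the constant sheaf with value $A$ since $\widetilde{T}$ is simply connected. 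Thus $H^q(\widetilde{T}, \widetilde{\mathcal{A}}) = H^q(\widetilde{T}, A)$, and the edge map $H^i(G, A) = E_2^{i,0} \to H^i(T,\mathcal{A})$ is precisely the map \eqref{EilMac} (up to the identification $H^0(\widetilde T, A) = A$, which uses connectedness of $\widetilde T$).

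First I would prove the ``only if'' direction. If $T$ is a $K(\pi,1)$, then $\widetilde{T}$ is weakly contractible (all its homotopy groups vanish, since $\pi_n(\widetilde T) = \pi_n(T)$ for $n \geq 2$ and $\pi_1(\widetilde T) = 0$), hence $H^q(\widetilde{T}, A) = 0$ for $q > 0$ and $= A$ for $q=0$. The spectral sequence therefore degenerates at $E_2$ with only the row $q=0$ surviving, and the edge map \eqref{EilMac} is an isomorphism for all $i$.

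For the ``if'' direction I would argue by contradiction, or rather directly by an induction on the first nonvanishing higher homotopy group. Suppose \eqref{EilMac} is an isomorphism for all $i$ and all representations $A$, but $T$ is not a $K(\pi,1)$; let $n \geq 2$ be least with $\pi_n(T) \neq 0$. Then $\widetilde T$ is $(n-1)$-connected, so by Hurewicz $H^q(\widetilde T, \mathbb{Z}) = 0$ for $1 \leq q \leq n-1$ and $H_n(\widetilde T,\mathbb{Z}) \cong \pi_n(T) =: \pi \neq 0$. Taking $A = \mathbb{Z}$ (or the relevant field coefficients — here is where one should be slightly careful about which coefficient category is intended, but the universal coefficient theorem lets one detect $\pi \neq 0$), the spectral sequence has $E_2^{p,q} = 0$ for $1 \leq q \leq n-1$, so the first potentially nonzero differential off the bottom row is the transgression $d_{n+1}\colon H^n(\widetilde T, A) {}^{\,G} \supseteq E_{n+1}^{0,n} \to E_{n+1}^{n+1,0} = H^{n+1}(G,A)$, and the five-term-type exact sequence gives that \eqref{EilMac} is an isomorphism in degrees $\leq n$ and fits into
$$
0 \to H^n(G,A) \xrightarrow{\ \sim\ } H^n(T,\mathcal{A}) \to H^0(G, H^n(\widetilde T, A)) \xrightarrow{d_{n+1}} H^{n+1}(G,A) \to H^{n+1}(T,\mathcal{A}).
$$
Since \eqref{EilMac} is an isomorphism in degree $n+1$ as well, $d_{n+1}$ is injective; but \eqref{EilMac} being an isomorphism in degree $n$ forces the map $H^n(T,\mathcal A) \to H^0(G, H^n(\widetilde T,A))$ to be zero, hence $H^0(G, H^n(\widetilde T, A))$ injects into... — and one derives that $H^n(\widetilde T, A)^G = 0$; playing this off against the nonvanishing of $H_n(\widetilde T) = \pi$ by choosing $A$ appropriately (e.g. taking $A$ related to the coinvariants, or passing to a suitable subgroup/cover) yields a contradiction. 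The cleanest route is to take $A = H^n(\widetilde T,\mathbb{Q})^\vee$-type coefficients or simply note that over a field $H^n(\widetilde T, k)^G \neq 0$ whenever $H_n(\widetilde T, k)\neq 0$ and $G$ acts — but this last point is exactly the delicate one.

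The main obstacle is precisely this last step: extracting a contradiction from the vanishing of the $G$-(co)invariants of $H^n(\widetilde T)$. A priori $H_n(\widetilde T, k)_G$ could vanish even if $H_n(\widetilde T, k) \neq 0$, when $G$ is infinite. The standard fix — which I would implement — is to \emph{not} work with $T$ directly but to test \eqref{EilMac} against \emph{all} local systems simultaneously, including those pulled back from finite or intermediate covers: if $\pi_n(T) \neq 0$ one can find a $G$-representation $A$ (for instance built from $\pi_n(T)$ itself via the $G$-action, or its dual) for which $H^0(G, H^n(\widetilde T, A)) \neq 0$, e.g. by taking $A$ to be a representation admitting a nonzero $G$-equivariant map to $H^n(\widetilde T, \mathbb Q) \cong \mathrm{Hom}(\pi_n(T), \mathbb Q)$ — concretely $A = \mathrm{Hom}(\pi_n(T),\mathbb Q)$ with its natural $G$-action has $\mathrm{id} \in H^0(G, H^n(\widetilde T, A))$ nonzero. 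Feeding this $A$ into the five-term sequence above produces the desired contradiction with the assumed isomorphy of \eqref{EilMac} in degrees $n$ and $n+1$. Thus the key steps, in order, are: (1) set up the Cartan--Leray spectral sequence and identify \eqref{EilMac} as the edge map; (2) deduce the ``only if'' direction from contractibility of $\widetilde T$; (3) for the converse, let $n$ be minimal with $\pi_n(T)\neq 0$, use Hurewicz to locate the first nonzero row, and write down the resulting five-term exact sequence; (4) choose the representation $A = \mathrm{Hom}(\pi_n(T),\mathbb{Q})$ and derive a contradiction.
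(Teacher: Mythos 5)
The paper itself states this proposition without proof --- it is quoted as a classical topological fact to motivate the log-geometric analogue --- so there is no argument of the authors' to compare yours against; I assess your proposal on its own terms.

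Your overall strategy (the Cartan--Leray spectral sequence $E_2^{p,q}=H^p(G,H^q(\widetilde T,\widetilde{\mathcal A}))\Rightarrow H^{p+q}(T,\mathcal A)$ for the universal cover, degeneration in the ``only if'' direction, and the generalized five-term exact sequence at the first non-vanishing row in the ``if'' direction) is the standard and correct one, and your deduction that $H^0(G,H^n(\widetilde T,A))=0$ for every $A$ --- injectivity of $d_{n+1}$ from the isomorphism in degree $n$, vanishing of $d_{n+1}$ from injectivity in degree $n+1$ --- is sound. The one step that does not work as written is the final choice of coefficients. First, a small slip: for $A=\mathrm{Hom}(\pi_n(T),\mathbb Q)$ one has $H^n(\widetilde T,A)\cong\mathrm{Hom}(\pi_n(T),\mathrm{Hom}(\pi_n(T),\mathbb Q))$, which contains no canonical element ``$\mathrm{id}$''; what you want is $A=\pi_n(T)\otimes\mathbb Q$, so that the canonical map $\pi_n(T)\to A$ gives a $G$-invariant class. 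More seriously, any $\mathbb Q$-linear choice of coefficients is blind to torsion: if $\pi_n(T)$ is a torsion group (which can happen for the minimal $n$), then $\pi_n(T)\otimes\mathbb Q=\mathrm{Hom}(\pi_n(T),\mathbb Q)=0$ and you obtain no contradiction at all. The clean fix, which you half-mention and should commit to, is to take $A=\pi_n(T)$ itself as a $\mathbb Z[\pi_1]$-module (for the ``if'' direction the statement must be read with $A$ ranging over all $\pi_1$-modules, not just finite-dimensional vector spaces). Since $\widetilde T$ is $(n-1)$-connected, the universal coefficient sequence gives a natural, hence $G$-equivariant, isomorphism $H^n(\widetilde T,A)\cong\mathrm{Hom}(H_n(\widetilde T,\mathbb Z),A)\cong\mathrm{End}(\pi_n(T))$, in which $\mathrm{id}$ is a non-zero $G$-invariant element; this contradicts $H^0(G,H^n(\widetilde T,A))=0$ and forces $\pi_n(T)=0$, so no minimal $n\ge 2$ exists. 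With that replacement your proof is complete.
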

We would like to translate the notion of $K(\pi,1)$ space in our context and prove that the log point $\Sp(K)^{\times}/\Sp(K)$ is a kind of $K(\pi,1)$ space in log algebraic geometry. We proceed as in \cite[section 2]{EsnHai06}. Let $\mathcal{V}$ be a representation of $\pi_1(\Sp(K)^{\times}/\Sp(K), \nu)$ 
(which can be regarded also as an object in $MIC(\Sp(K)^{\times}/\Sp(K))^{\mathrm{nr}}$) 
and let $\mathcal{K}$ be the trivial representation (which can be regarded also as 
the trivial object in $MIC(\Sp(K)^{\times}/\Sp(K))^{\mathrm{nr}}$); then we consider 
$$H^i(\pi_1(\Sp(K)^{\times}/\Sp(K), \nu), \mathcal{V}):=\mathrm{Ext}^i_{\mathrm{Rep}(\pi_1(\Sp(K)^{\times}/\Sp(K), \nu)}(\mathcal{K}, \mathcal{V}). $$
Let $\epsilon$ be an $i$-extension of $\mathcal{K}$ by $\mathcal{V}$ in 
${\mathrm{Rep}}(\pi_1(\Sp(K)^{\times}/\Sp(K), \nu)$. Then it induces an 
$i$-extension of the trivial object by $\mathcal{V}$ in 
$MIC(\Sp(K)^{\times}/\Sp(K))^{\mathrm{nr}}$, and so we have the 
connecting homomorphism 
$$\delta_{\epsilon}:H^0_{\mathrm{dR}}(\Sp(K)^{\times}/\Sp(K), \mathcal{K})\cong K\longrightarrow H^i_{\mathrm{dR}}(\Sp(K)^{\times}/\Sp(K), \mathcal{V}).$$ 
This induces the map of $K$-vector spaces
$$\delta^i:H^i(\pi_1(\Sp(K)^{\times}/\Sp(K), \nu), \mathcal{V})\longrightarrow  H^i_{\mathrm{dR}}(\Sp(K)^{\times}/\Sp(K), \mathcal{V})$$
defined by $\epsilon \mapsto \delta_{\epsilon}(1)$. The map $\delta^i$ is the analogue of \eqref{EilMac}, and the next proposition says that the log point is a kind of $K(\pi, 1)$ space in log algebraic geometry.
\begin{lemma} Let $\mathcal{V}$ be a representation of $\pi_1(\Sp(K)^{\times}/\Sp(K), \nu)$. 
The map $$\delta^i:H^i(\pi_1(\Sp(K)^{\times}/\Sp(K), \nu), \mathcal{V})\longrightarrow  H^i_{\mathrm{dR}}(\Sp(K)^{\times}/\Sp(K), \mathcal{V})$$
defined as above is an isomorphism for every $i$.
\end{lemma}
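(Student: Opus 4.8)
The plan is to compute both sides explicitly and check that $\delta^i$ is an isomorphism. Since $\pi_1(\Sp(K)^{\times}/\Sp(K),\nu) \cong \mathbb{G}_a$ and $\mathrm{Rep}_K(\mathbb{G}_a)$ is equivalent to the category of finite-dimensional $K$-vector spaces equipped with a nilpotent endomorphism (as recalled after Definition \ref{pi1KN/K}), which in turn is $MIC(\Sp(K)^{\times}/\Sp(K))^{\mathrm{nr}}$, both the group cohomology $H^i(\pi_1(\Sp(K)^{\times}/\Sp(K),\nu),\mathcal{V})$ and the de Rham cohomology $H^i_{\mathrm{dR}}(\Sp(K)^{\times}/\Sp(K),\mathcal{V})$ can be made completely concrete. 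First I would observe that it suffices to treat the case $\mathcal{V} = (V,N)$ with $(V,N)$ a single vector space with nilpotent endomorphism; by d\'evissage along short exact sequences and the five lemma (both sides are $\delta$-functors and $\delta^i$ is a morphism of $\delta$-functors by construction), one may even reduce to $\mathcal{V}$ indecomposable, i.e. a single Jordan block, but this reduction is not strictly necessary.

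Next I would compute the de Rham side. Since $\omega^1_{\Sp(K)^{\times}/\Sp(K)}$ is the $1$-dimensional $K$-vector space generated by $\mathrm{dlog}1$ and $\omega^2_{\Sp(K)^{\times}/\Sp(K)} = 0$, the de Rham complex of $(V,N)$ is simply the two-term complex $V \xrightarrow{N} V\cdot\mathrm{dlog}1$. Hence $H^0_{\mathrm{dR}} = \ker N$, $H^1_{\mathrm{dR}} = \mathrm{coker}\,N$, and $H^i_{\mathrm{dR}} = 0$ for $i \geq 2$. For the group cohomology side, $\mathrm{Ext}^i_{\mathrm{Rep}(\mathbb{G}_a)}(\mathcal{K},\mathcal{V})$ is computed via the Hochschild complex of the Hopf algebra $K[\mathbb{G}_a] = K[s]$, or equivalently, since $\mathbb{G}_a$ is unipotent of dimension $1$, via the Chevalley--Eilenberg complex of its Lie algebra $\mathrm{Lie}(\mathbb{G}_a) = K$ acting on $V$ through $N$ (rational cohomology of a unipotent group over a field of characteristic zero agrees with Lie algebra cohomology, see \cite[example VIII 2.1, example VIII 6.4]{Mil12} and standard references). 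The Chevalley--Eilenberg complex of the $1$-dimensional Lie algebra acting by $N$ is again the two-term complex $V \xrightarrow{N} V$, giving $H^0 = \ker N$, $H^1 = \mathrm{coker}\,N$, $H^i = 0$ for $i\geq 2$. So both cohomologies are abstractly isomorphic, and dimensions match for every $i$.

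The remaining, and genuinely essential, point is that the particular map $\delta^i$ constructed in the statement is the isomorphism — matching dimensions is not enough. For $i=0$ both sides are canonically $\ker N$ and $\delta^0$ is the identity. For $i=1$, one checks directly that $\delta^1$ sends the class of an extension $0 \to \mathcal{V} \to \mathcal{E} \to \mathcal{K} \to 0$ in $MIC(\Sp(K)^{\times}/\Sp(K))^{\mathrm{nr}}$ to the image in $\mathrm{coker}\,N$ of $\nabla_{\mathcal{E}}(\tilde 1)$, where $\tilde 1 \in \mathcal{E}$ lifts $1 \in \mathcal{K}$; since both the Yoneda $\mathrm{Ext}^1$ in $\mathrm{Rep}(\mathbb{G}_a)$ and $\mathrm{coker}\,N$ classify such extensions by exactly this recipe, $\delta^1$ is an isomorphism. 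For $i \geq 2$ both groups vanish, so there is nothing to prove. I expect the main obstacle to be purely bookkeeping: carefully identifying the connecting homomorphism $\delta_\epsilon$ arising from an $i$-fold Yoneda extension with the explicit cocycle description of de Rham cohomology, and verifying that the comparison of the group-cohomological $\delta$-functor with the de Rham $\delta$-functor is compatible with the boundary maps in degrees $0$ and $1$ (degree $\geq 2$ being trivial). Once $\delta^0$ and $\delta^1$ are identified as above and all higher groups are shown to vanish, the proof is complete.
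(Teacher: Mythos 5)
Your argument is correct and follows the same skeleton as the paper's proof: identify $\delta^i$ explicitly in degrees $0$ and $1$, and show that both sides vanish in degrees $\geq 2$. The differences lie in how the sub-steps are discharged. For $i=0,1$ the paper simply invokes \cite[proposition 2.2]{EsnHai06} (``the same proof works''), whereas you carry out the extension-class bookkeeping explicitly --- your description of $\delta^1$ as sending the class of $0\to\mathcal{V}\to\mathcal{E}\to\mathcal{K}\to 0$ to the image of $\nabla_{\mathcal{E}}(\tilde 1)$ in $\mathrm{coker}\,N$ is exactly the computation that reference performs in the smooth setting, so this part is fine and slightly more self-contained than the paper. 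For $i\geq 2$, the paper deduces $H^i(\mathbb{G}_a,\mathcal{V})=0$ from Jantzen's computation $H^i(\mathbb{G}_a,\mathcal{K})=0$ \cite[remark 2, p.~71]{Jan87} together with a d\'evissage along the successive extensions of $\mathcal{K}$ constituting $\mathcal{V}$; you instead appeal to the identification of rational cohomology of a unipotent group in characteristic zero with the Chevalley--Eilenberg cohomology of its Lie algebra. That comparison is a correct standard fact and yields the same vanishing, but it is a heavier input than what is actually needed (and the citation of \cite{Mil12} you attach to it only covers the classification of representations of $\mathbb{G}_a$, not the cohomology comparison, so you would need to supply a genuine reference or proof for that step); the paper's route via the trivial-coefficient case plus the long exact sequence is more elementary. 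Either way the argument closes, and the two proofs buy essentially the same thing.
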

\begin{proof}
For $i=0,1$ the same proof as in \cite[proposition 2.2]{EsnHai06} works, so that $\delta^i$ is an isomorphism.
For $i\geq2$ $H^i_{\mathrm{dR}}(\Sp(K)^{\times}/\Sp(K), \mathcal{V})=0$ because $\omega^1_{\Sp(K)^{\times}/\Sp(K)}$ is of dimension $1$. On the other hand, $H^i(\mathbb{G}_a, \mathcal{K})=0$ for $i\geq 2$, thanks to \cite[remark 2 in page 71]{Jan87}. Since every $\mathcal{V}\in \mathrm{Rep}(\mathbb{G}_a)$ is a successive extension of $\mathcal{K}$, we can conclude that $H^i(\mathbb{G}_a, \mathcal{V})=0$ for every $i\geq2$ and for every $\mathcal{V}$ by long exact sequence of cohomologies.
\end{proof}
In the previous section, we described the kernel of the second map in \eqref{exactsequencewithoutu} as $\pi_1(X^{\times}/\Sp(K)[u], x)$. In this section we study the relation of $\pi_1(X^{\times}/\Sp(K)[u], x)$ with $\pi_1(X^{\times}/\Sp(K)^{\times}, x)$. In particular we prove that $\pi(p_u)$ induces an isomorphism 

$$\pi(p_u)^{\mathrm{tri}}:\pi_1(X^{\times}/\Sp(K)^{\times}, x)^{\mathrm{tri}} \xrightarrow{\cong} \pi_1(X^{\times}/\Sp(K)[u], x)^{\mathrm{tri}}, $$
where $\pi_1(X^{\times}/\Sp(K)^{\times}, x)^{\mathrm{tri}}$ and $\pi_1(X^{\times}/\Sp(K)[u], x)^{\mathrm{tri}}$ are the maximal geometrically protrigonalizable quotients of $\pi_1(X^{\times}/\Sp(K)^{\times}, x)$ and $\pi_1(X^{\times}/\Sp(K)[u], x)$ respectively, 
which we define now. 

\begin{defi}\label{trig}
(i)(\cite[definition 17.1]{Mil15})\, An algebraic group $G$ over $K$ is called trigonalizable if 
every nonzero representation of $G$ over $K$ is an iterated extension of 
$1$-dimensional representations. \\
(ii) \, An algebraic group $G$ over $K$ is called geometrically trigonalizable 
if $G \otimes_K \overline{K}$ (where $\overline{K}$ is an algebraic closure of $K$) 
is trigonalizable. \\ 
(iii) \, For an affine group scheme $G$ over $K$, we define 
$G^{\rm tri}$ by $G^{\rm tri} := \varprojlim_N G/N$, where 
$N$ runs through normal subgroups of $G$ such that $G/N$ is 
geometrically trigonalizable, and call it the maximal geometrically 
protrigonalizable quotient of $G$. \\
(iv) \, An affine group scheme $G$ over $K$ is called geometrically 
protrigonalizable if $G$ itself is the maximal geometrically 
protrigonalizable quotient of $G$. 
\end{defi}
In definition \ref{trig}(iii), the naturally induced morphism $G\rightarrow G^{\mathrm{tri}}$ is indeed surjective, 
because the Hopf algebra of $G^{\mathrm{tri}}$, 
which is the filtered direct limit of the Hopf algebras of $G/N$'s, injects into 
the Hopf algebra of $G$ by \cite[theorem 14.1]{Wat79} and 
\cite[theorem 13.3]{Wat79}. So it is reasonable to call $G^{\rm tri}$ as 
the `quotient'. 

For every affine group scheme $\Gamma$ over $K$ which is the inverse limit of geometrically trigonalizable 
algebraic groups and every morphism $\gamma: G\rightarrow \Gamma$ of group schemes over $K$, $\gamma$ uniquely factors through $G^{\rm tri}$, because 
the trigonalizability is stable under subquotients \cite[corollary 17.3]{Mil15}. 
Also, every map of affine group schemes $\gamma:G\rightarrow H$ naturally induces a map between their maximal geometrically protrigonalizable quotients  $\gamma^{\mathrm{tri}}:G^{\mathrm{tri}}\rightarrow H^{\mathrm{tri}}$, because $G\xrightarrow{\gamma} H\rightarrow H^{\mathrm{tri}}$ factors uniquely through $G^{\mathrm{tri}}$.

We prove the following proposition which we need later. 

\begin{prop}\label{gtri}
Let $K \subset L$ be an extension of fields of characteristic zero. 
Then, for an affine group scheme $G$ over $K$, $G$ is geometrically protrigonalizable if and only if 
$G \otimes_K L$ is geometrically protrigonalizable. 
\end{prop}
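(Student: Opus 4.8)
The plan is to reduce the statement to a question about the Hopf algebras of $G$ and $G \otimes_K L$, and then to a question about the category of representations, using the Tannakian formalism together with the characterization of (geometric) protrigonalizability in terms of representations being iterated extensions of one-dimensional ones. First I would observe that the "only if" direction is the easy one: if $G$ is geometrically protrigonalizable, then writing $G = \varprojlim_i G_i$ with each $G_i$ geometrically trigonalizable, we have $G \otimes_K L = \varprojlim_i (G_i \otimes_K L)$, and each $G_i \otimes_K L$ is geometrically trigonalizable because $(G_i \otimes_K L) \otimes_L \overline{L} = G_i \otimes_K \overline{L} = (G_i \otimes_K \overline{K}) \otimes_{\overline{K}} \overline{L}$, and trigonalizability of an algebraic group over an algebraically closed field is preserved under base field extension between algebraically closed fields (a representation that is an iterated extension of characters remains so after extending scalars, and every representation of $G_i \otimes_K \overline{L}$ is defined over a finitely generated, hence over $\overline{K}$-embeddable, subextension — or more directly, one uses that the unipotent radical and the diagonalizable part behave well under base change). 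So it remains to prove the "if" direction.

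For the converse, suppose $G \otimes_K L$ is geometrically protrigonalizable; I want to show every finite-dimensional representation $V$ of $G$ over $\overline{K}$ is an iterated extension of one-dimensional representations, equivalently that $G \otimes_K \overline{K}$ is protrigonalizable. It suffices to treat $G$ over $K$ replaced by $G \otimes_K \overline{K}$ over $\overline{K}$, and correspondingly $L$ may be replaced by a suitable algebraically closed extension of $\overline{K}$; so without loss of generality I would assume $K$ is algebraically closed and $L$ is algebraically closed, and I must show: if $G \otimes_K L$ is protrigonalizable then $G$ is protrigonalizable. Take a finite-dimensional representation $V$ of $G$ over $K$. Then $V \otimes_K L$ is a representation of $G \otimes_K L$, hence admits a full flag $0 = W_0 \subset W_1 \subset \cdots \subset W_n = V \otimes_K L$ of $G \otimes_K L$-subrepresentations with one-dimensional quotients. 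The key step is a Galois-descent / spreading-out argument: the flag is defined by finitely many equations over $L$, and the scheme parametrizing $G$-stable full flags in $V$ (a closed subscheme of the flag variety $\mathrm{Fl}(V)$, cut out by the condition of $G$-invariance) has an $L$-point, hence is non-empty; being a scheme of finite type over the algebraically closed field $K$, it then has a $K$-point. This produces a full flag of $G$-subrepresentations of $V$ over $K$, proving $G$ is trigonalizable on $V$, and since $V$ was arbitrary, $G$ is protrigonalizable.

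I expect the main obstacle to be making the "spreading-out" step precise: one must check that the functor on $K$-algebras sending $R$ to the set of full flags in $V \otimes_K R$ stable under the $G$-action is representable by a scheme $\mathcal{F}$ of finite type over $K$ (it is a closed subfunctor of the flag scheme $\mathrm{Fl}(V)$, the closedness coming from the $G$-stability condition being a closed condition — for an affine group scheme $G = \mathrm{Spec}\,A$ this says the coaction map restricted to each $W_i$ lands in $W_i \otimes A$, which is a Zariski-closed condition on the point of $\mathrm{Fl}(V)$), and then that $\mathcal{F}(L) \neq \emptyset$ forces $\mathcal{F}(K) \neq \emptyset$ since $K$ is algebraically closed and $\mathcal{F}$ is of finite type over $K$. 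One subtlety is that $G$ is only pro-algebraic, i.e. $A$ is a filtered union of finitely generated subcoalgebras; but the action on the fixed finite-dimensional $V$ factors through an algebraic quotient $G \twoheadrightarrow \overline{G}$ with $\overline{G}$ of finite type, and one works with $\overline{G}$ instead, so finiteness is not an issue. Assembling these observations gives the proposition; I would also remark that the argument shows more generally that geometric protrigonalizability descends along any extension of algebraically closed fields, which combined with the trivial base-change behavior of $(-)\otimes_K \overline{K}$ yields the stated equivalence for arbitrary $K \subset L$ of characteristic zero.
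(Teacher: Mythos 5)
Your proof is correct and follows essentially the same route as the paper: both directions reduce to algebraically closed base fields and to algebraic quotients, and the converse is settled by exhibiting the locus of $G$-stable subspaces as a finite-type closed subscheme of a projective variety over $K$, so that having an $L$-point forces it to be nonempty and hence to have a $K$-point. The only real difference is the realization of that closed subscheme: you cut out the $G$-stable full flags in the flag variety directly by the coaction condition, whereas the paper parametrizes $G$-stable lines in $\mathbb{P}(V)$ via the graph of the action and upper semicontinuity of fibre dimension (using smoothness of $G$ in characteristic zero) and this is, if anything, slightly less clean than your version.
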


\begin{proof}
By using the fact that the trigonalizability is stable under subquotients \cite[corollary 17.3]{Mil15}, 
we see that it suffices to prove the following: for an algebraic group $G$ over $K$, 
$G$ is geometrically trigonalizable if and only if so is $G \otimes_K L$. 
Moreover, we may replace $K, L$ by their algebraic closures and it suffices to prove that 
$G$ is trigonalizable if and only if so is $G \otimes_K L$. 
 
When $G$ is trigonalizable, so is $G \otimes_K L$ by \cite[corollary 17.4]{Mil15}. 
We prove the converse. So assume that $G \otimes_K L$ is trigonalizable and 
take a representation $V$ of $G$. Then the action of $G$ on the set of $1$-dimensional 
subspaces of $V$ defines the action $G \times \mathbb{P}(V) \rightarrow \mathbb{P}(V)$. 
Let $R \subset G \times \mathbb{P}(V) \times \mathbb{P}(V)$ be the graph of this action 
and let $\pi: R \rightarrow \mathbb{P}(V) \times \mathbb{P}(V)$ be the natural projection. 
Let $R' \subset G \times \mathbb{P}(V)$ be $\pi^{-1}(\Delta(\mathbb{P}(V))$ 
(where $\Delta: \mathbb{P}(V) \hookrightarrow \mathbb{P}(V) \times \mathbb{P}(V)$ is 
the diagonal map) and let $\pi': R' \rightarrow \mathbb{P}(V)$ be the natural projection. 
Let $G = \coprod_{\alpha} G_{\alpha}$ be the decomposition into connected components of $G$ and 
let $\pi'_{\alpha}: R'_{\alpha} := R' \cap (G_{\alpha} \times \mathbb{P}(V)) \rightarrow \mathbb{P}(V)$ 
be the restriction of $\pi'$. Then the set 
$C_{\alpha} := \{x \in \mathbb{P}(V) \,|\, \dim {\pi'}_{\alpha}^{-1}(x) = \dim G_{\alpha} \}$ 
is closed in $\mathbb{P}(V)$ and so $C = \bigcap_{\alpha} C_{\alpha}$ (with reduced subscheme structure) 
is a closed subscheme of $\mathbb{P}(V)$. By construction, we have the equalities 
\begin{align*}
C(K) & = \{ x \in \mathbb{P}(V)(K) \,|\, \forall \alpha, \dim {\pi'}_{\alpha}^{-1}(x) = \dim G_{\alpha}\} \\ 
& =  \{ x \in \mathbb{P}(V)(K) \,|\, {\pi'}^{-1}(x) = G \} \\ 
& = \left\{ x \in \mathbb{P}(V)(K) \,\left|\, 
\begin{aligned}
& \text{the action $G \times \mathbb{P}(V) \to 
\mathbb{P}(V)$} \\ 
& \text{induces the map $G \times x \to x$} 
\end{aligned}
\right. \right\} \\ 
& = \left\{x \in \mathbb{P}(V)(K) \,\left|\, 
\begin{aligned}
& \text{the $1$-dimensional subspace of $V$} \\  
& \text{corresponding to $x$ is stable by $G$}
\end{aligned}
\right. 
\right\}, 
\end{align*}
where the second equality follows from the smoothness of $G$ over $K$, 
which is always true because $K$ is of characteristic zero. 
By the same  argument, we see the equality 
\begin{align*}
C(L) & = 
\left\{x \in \mathbb{P}(V)(L) \,\left|\, 
\begin{aligned}
& \text{the $1$-dimensional subspace of $V \otimes_K L$} \\ 
& \text{corresponding to $x$ is stable by $G \otimes_K L$}
\end{aligned}
\right. \right\}.  
\end{align*}
Hence $G$ (resp. $G \otimes_K L$) 
is trigonalizable if and only if $C(K)$ is nonempty (resp. only if $C(L)$ is nonempty) 
for any $V$. Also, if $C(L)$ is nonempty, $C$ is nonempty as a scheme and 
so $C(K)$ is nonempty because $C$ is of finite type over an algebraically closed field $K$. 
So the proof is finished. 
\end{proof}

\begin{rmk}\label{cattri}
For an affine group scheme $G$ over $K$, the full subcategory $\mathrm{Rep}(G)^{\rm tri}$ of $\mathrm{Rep}(G)$ 
consisting of representations $V$ such that $V \otimes_K \overline{K}$ is an 
iterated extension of $1$-dimensional representations of $G \otimes_K \overline{K}$ 
is a Tannakian subcategory of $\mathrm{Rep}(G)$. By definition, for every representation 
$(V, \rho)$ which factors through $\rho^{\rm tri}: G^{\rm tri} \rightarrow GL(V)$, 
$V \otimes_K \overline{K}$ is an 
iterated extension of $1$-dimensional representations of $G \otimes_K \overline{K}$, and 
so $V$ belongs to $\mathrm{Rep}(G)^{\rm tri}$. 
Conversely, if $(V,\rho)$ is a representation of $G$ which belongs to 
$\mathrm{Rep}(G)^{\rm tri}$, $\rho(G) \otimes_K \overline{K}$ is trigonalizable by \cite[proposition 17.2]{Mil15}. 
Hence $\rho(G)$ is geometrically trigonalizable and so $\rho$ factors through $G^{\rm tri}$. 
Therefore, we have an equivalence $\mathrm{Rep}(G^{\rm tri}) \cong 
\mathrm{Rep}(G)^{\rm tri}$, namely, $G^{\rm tri}$ is the Tannaka dual of 
$\mathrm{Rep}(G)^{\rm tri}$. 

By describing the above fact in terms of neutral Tannakian categories, 
we obtain the following: 
For a neutral Tannakian category $\mathcal{C}$ over $K$, the full subcategory 
$\mathcal{C}^{\rm tri}$ of $\mathcal{C}$ 
consisting of objects $V$ such that $V_{\overline{K}} \in \mathcal{C}_{\overline{K}}$ 
(where ${}_{\overline{K}}$ denotes the scalar extension to $\overline{K}$
 \cite[\S 4]{Del89}) 
is an iterated extension of rank $1$ objects of $\mathcal{C}_{\overline{K}}$ 
is a Tannakian subcategory of $\mathcal{C}$, and the Tannaka dual of $\mathcal{C}^{\rm tri}$ is equal to
the maximal geometrically protrigonalizable quotient of the Tannaka dual of $\mathcal{C}$. 
\end{rmk}

We consider as before the functor $p_u:MIC(X^{\times}/\Sp(K)[u])^{\mathrm{nr}}\rightarrow MIC(X^{\times}/\Sp(K)^{\times})^{\mathrm{nr}}$ induced by the morphism which sends $u$ and $du$ to $0$. Then $p_u$ induces a faithfully flat (proposition \ref{pipusurjective}) homomorphism of fundamental groups 
$$\pi(p_u):\pi_1(X^{\times}/\Sp(K)^{\times}, x)\longrightarrow \pi_1(X^{\times}/\Sp(K)[u], x)$$
which induces a faithfully flat homomorphism between the maximal geometrically protrigonalizable quotients:
$$\pi(p_u)^{\mathrm{tri}}:\pi_1(X^{\times}/\Sp(K)^{\times}, x)^{\mathrm{tri}}\longrightarrow \pi_1(X^{\times}/\Sp(K)[u], x)^{\mathrm{tri}}. $$

To see that $\pi(p_u)^{\mathrm{tri}}$ is an isomorphism, it is enough to prove that it is a closed immersion: according to \ref{criterion} (ii) we need to prove that every $(E, \nabla_E)\in MIC(X^{\times}/\Sp(K)^{\times})^{\rm nr,tri}$ (where ${}^{\rm tri}$ is as in remark \ref{cattri}) is isomorphic to a subquotient of an object of the form $p_u(F, \nabla_F)$ with $(F, \nabla_F) \in MIC(X^{\times}/\Sp(K)[u])^{\mathrm{nr, tri}}$. We will show more: we will show that every $(E, \nabla_E)\in MIC(X^{\times}/\Sp(K)^{\times})^{\mathrm{nr, tri}}$ is isomorphic to $p_u(F, \nabla_F)$ with $(F, \nabla_F) \in MIC(X^{\times}/\Sp(K)[u])^{\mathrm{nr, tri}}$. We look first at the case of $(E, \nabla_E)\in MIC(X^{\times}/\Sp(K)^{\times})^{\mathrm{nr}}$ of rank $1$. 

\begin{teo} \label{lifting} 
Let $(E, \nabla_E)$ be an object of $MIC(X^{\times}/\Sp(K)^{\times})^{\mathrm{nr}}$ of rank $1$. Then, there exists a unique object $(E,\widetilde{\nabla_{E}})$ in $MIC(X^{\times}/\Sp(K))^{\mathrm{nr}}$ up to canonical isomorphism such that the restriction functor  $r:MIC(X^{\times}/\Sp(K))^{\mathrm{nr}}\rightarrow MIC(X^{\times}/\Sp(K)^{\times})^{\mathrm{nr}}$ sends $(E,\widetilde{\nabla_{E}})$ to $(E, \nabla_E).$ 

\end{teo}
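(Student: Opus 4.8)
The plan is to treat uniqueness and existence separately: uniqueness is a short global argument, and existence reduces to an explicit local construction which is then glued using uniqueness.

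For uniqueness, suppose $(E,\widetilde{\nabla}_1)$ and $(E,\widetilde{\nabla}_2)$ are two objects of $MIC(X^{\times}/\Sp(K))^{\mathrm{nr}}$ with $r$ sending both to $(E,\nabla_E)$. Since both connections satisfy the same Leibniz rule over $\mathcal{O}_X$, their difference $\widetilde{\nabla}_1-\widetilde{\nabla}_2$ is $\mathcal{O}_X$-linear, and since they induce the same map after projecting $\omega^1_{X^{\times}/\Sp(K)}\to\omega^1_{X^{\times}/\Sp(K)^{\times}}$, it takes values in $E\otimes f^{*}(\omega^1_{\Sp(K)^{\times}/\Sp(K)})$; as $\mathrm{rk}\,E=1$, so that $\mathcal{E}nd(E)=\mathcal{O}_X$, it is multiplication by a global function $\phi$ tensored with $f^{*}(\mathrm{dlog}1)$. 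Expressing the integrability of $\widetilde{\nabla}_i$ via the mutually commuting operators $\nabla_E(\partial_i),\nabla_E(D_j)$ together with the extra operator $\widetilde{\nabla}_i(\partial_r)$ (notation as in remark \ref{notationlocalderivations2}), and using $\widetilde{\nabla}_1(\partial_r)=\widetilde{\nabla}_2(\partial_r)+\phi$, one gets $\partial_i(\phi)=D_j(\phi)=0$, so $\phi$ is horizontal and hence $\phi\in H^0_{\mathrm{dR}}(X^{\times}/\Sp(K)^{\times})=K$ (by the field property used in proposition \ref{relativeTannakian}). Finally, along any (log) branch $X_i$ of $X$ the residue of $\widetilde{\nabla}_1$ minus that of $\widetilde{\nabla}_2$ equals $\phi$, and since for rank $1$ nilpotent residues means vanishing residues, $\phi=0$. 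Thus $\widetilde{\nabla}_1=\widetilde{\nabla}_2$; this literal equality also supplies the ``canonical isomorphism'' in the statement.

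For existence, one first produces a lift locally. Using the chart \eqref{chartoff} we may work \'etale-locally, and — as in the proofs of propositions \ref{relativeabelian}, \ref{RS} and \ref{inductivepass} — it suffices to construct compatible lifts over the completed local rings $\hat{\mathcal{O}}_{X,x}=K[[x_1,\dots,x_n]]/(x_1\cdots x_r)$, where $E$ is free, say $E=\mathcal{O}e$ with $\nabla_E(e)=e\otimes\alpha$ for a closed $\alpha$; a lift is then a closed $\widetilde\alpha\in\hat\omega^1_{(\Sp S,M)/\Sp(K)}$ mapping to $\alpha$, and ``nilpotent residues'' of the lift means all residues of $\widetilde\alpha$ vanish. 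Reducing to $r=n$ by restricting along the smooth variables $x_{r+1},\dots,x_n$ (a lemma analogous to \ref{RS}), then inducting on $n$ as in propositions \ref{lacrocesuN} and \ref{inductivepass}, one is left with $S=K[[x,y]]/(xy)$, where $\alpha=a\,\overline{\mathrm{dlog}x}$ with $a(0)=0$ by the nilpotence hypothesis; writing $a=a_1(x)+a_2(y)$ with $a_i(0)=0$ and setting $\widetilde\alpha=a_1(x)\mathrm{dlog}x-a_2(y)\mathrm{dlog}y$ yields a closed lift with vanishing residues. These local lifts are the unique nilpotent-residue lifts over their (connected) domains by the local form of the uniqueness argument, hence are compatible, and glue to an object $(E,\widetilde{\nabla_E})$ of $MIC(X^{\times}/\Sp(K))^{\mathrm{nr}}$ with $r((E,\widetilde{\nabla_E}))=(E,\nabla_E)$.

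The main obstacle is the local existence step: one must see that the nilpotent-residue hypothesis on $(E,\nabla_E)$ over $\Sp(K)^{\times}$ — which only constrains the \emph{differences} of the residues along the branches — is exactly what is needed to obtain a lift over $\Sp(K)$ whose (absolute) residues all vanish. The point is that the space of closed forms in the $f^{*}(\omega^1_{\Sp(K)^{\times}/\Sp(K)})$-direction is one-dimensional over $K$, so that, the residues of any lift being forced to be equal (a consequence of the hypothesis), a single scalar adjustment kills them all; and then one must push the inductive local computation through the non-strict change of variables $x_{n-1}\mapsto x_{n-1}x_n$ as in proposition \ref{inductivepass}, and check that the glued object is genuinely locally free with integrable connection and nilpotent residues.
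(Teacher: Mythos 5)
Your uniqueness argument is essentially the paper's: the difference of two lifts is an $\mathcal{O}_X$-linear map into $E\otimes f^*(\omega^1_{\Sp(K)^{\times}/\Sp(K)})$, integrability forces the corresponding function to be relatively horizontal, hence constant, and the nilpotent-residue condition kills it. That part is fine. Your local computation at $K[[x,y]]/(xy)$ (writing $a=a_1(x)+a_2(y)$ with $a_i(0)=0$ and taking $\widetilde\alpha=a_1\,\mathrm{dlog}x-a_2\,\mathrm{dlog}y$) is also correct as a statement about that one completed local ring.

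The genuine gap is in the existence step: you reduce to constructing lifts over the completed local rings $\hat{\mathcal{O}}_{X,x}$ at closed points and then assert that uniqueness makes these ``compatible'' so that they ``glue''. But there is no gluing mechanism from formal-local data at individual closed points back to an étale neighbourhood, let alone to $X$: the family $\{\Sp(\hat{\mathcal{O}}_{X,x})\}_x$ is not a covering for which you have set up descent, and uniqueness over each $\hat{\mathcal{O}}_{X,x}$ says nothing about a descent datum on the relevant ``overlaps''. The analogy with propositions \ref{relativeabelian}, \ref{RS} and \ref{inductivepass} does not apply, because there one only \emph{verifies a property} (freeness, nilpotence of residues) of an already-existing coherent object, which can be checked after completion by faithful flatness; here you must \emph{construct} an object, which cannot be done pointwise. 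This gluing problem is exactly what the paper's proof is organized around: it runs a stratified induction on the depth $k$ of the crossings, covering $X^{(k)}_l$ fpqc by the open set $X^{(k)}_{l-1}$ together with the formal completion $Y^0_{I_l}$ along the \emph{entire} new stratum $X^0_{I_l}$ (a Beauville--Laszlo-type situation where descent is legitimate), proves via lemma \ref{descentrk1} that over $Y^0_{I_l}$ the rank-$1$ object descends to the smooth base $\Sp(\bar A[x_i^{-1}])$ — an argument that crucially uses that this base is a reduced Jacobson \emph{ring}, not a field — and then checks triviality of lifts of $(\mathcal{O},d)$ on the punctured formal neighbourhoods $X^J\times_{X^{(k)}_l}Y^0_{I_l}$ to match the pieces. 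None of this structure is present in your sketch, and your closing paragraph (``a single scalar adjustment kills them all'', ``push the inductive computation through $x_{n-1}\mapsto x_{n-1}x_n$'') acknowledges but does not supply it.
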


\begin{proof}
We say that $(E,\widetilde{\nabla_{E}})$ is a lift of $(E, \nabla_E)$ or that $(E,\widetilde{\nabla_{E}})$ lifts $(E, \nabla_E)$ if $(E,\widetilde{\nabla_{E}})$ is an object of $MIC(X^{\times}/\Sp(K))^{\mathrm{nr}}$ such that the functor $r:MIC(X^{\times}/\Sp(K))^{\mathrm{nr}}\rightarrow MIC(X^{\times}/\Sp(K)^{\times})^{\mathrm{nr}}$ sends $(E,\widetilde{\nabla_{E}})$ to $(E, \nabla_E).$ 
 
We start proving the uniqueness of the lift. Let $(E, \nabla_E)$ be an object of $MIC(X^{\times}/\Sp(K)^{\times})^{\mathrm{nr}}$ of rank $1$ and let $(E, \widetilde{\nabla_{i}})\in MIC(X^{\times}/\Sp(K))^{\mathrm{nr}}$ for $i=1,2$ be two different lifts of $(E, \nabla_E)$. Then $(\mathcal{O}_X, \widetilde{\nabla}):=(E, \widetilde{\nabla_1})^{\vee}\otimes (E, \widetilde{\nabla_2})$ is a lift of 
the unit object $(\mathcal{O}_X, d)$ in $MIC(X^{\times}/\Sp(K)^{\times})^{\mathrm{nr}}$. To prove the uniqueness, it is enough to prove that $\widetilde{\nabla}=\widetilde{d}$, where $\widetilde{d}$ is the exterior derivative 
$\mathcal{O}_X \rightarrow \omega^1_{X^{\times}/K}$: indeed, if $\widetilde{\nabla}=\tilde{d}$, then 
\begin{align*}
\mathrm{Hom}((E, \widetilde{\nabla_1}),(E, \widetilde{\nabla_2})) & = H^0_{\mathrm{dR}}(X^{\times}/\Sp(K),(\mathcal{O}_X, \widetilde{\nabla}))
= K \\ & = H^0_{\mathrm{dR}}(X^{\times}/\Sp(K)^{\times}, (\mathcal{O}_X, d)) =  \mathrm{Hom}((E, \nabla_1),(E, \nabla_2))\end{align*} 
and so there exists a unique isomorphism $\varphi:(E, \widetilde{\nabla_1})\rightarrow(E, \widetilde{\nabla_2})$ such that $r(\varphi)$ is the identity morphism on $(E,\nabla_E)$.  

To prove that $\widetilde{\nabla}=\widetilde{d}$, it is sufficient to prove that $\widetilde{\nabla}(1)=0$ in $\hat{\mathcal{O}}_{X, P}$ for any 
geometric point $P$ over a closed point of $X$. 
Thus we can work on $\hat{\mathcal{O}}_{X, P}=K[[x_1, \dots, x_n]]/(x_1 \cdots x_r)$ for some 
$r \geq 1$, where $K$ here is an algebraic closure of our original field $K$.  

Since $(\mathcal{O}_X, \widetilde{\nabla})$ is a lift of $(\mathcal{O}_X, d)$, we can write $\widetilde{\nabla}(1)=\alpha\sum_{i=1}^r\mathrm{dlog}x_i$ with $\alpha\in K[[x_1, \dots, x_n]]/(x_1 \cdots x_r)$. Since $\widetilde{\nabla}$ is integrable, 
\begin{equation}\label{integrabilitycondition}
0 = \widetilde{\nabla}\circ\widetilde{\nabla}(1) = \left(\sum_{i=1}^rx_i\frac{\partial \alpha}{\partial x_i}\mathrm{dlog}x_i+\sum_{i=r+1}^n\frac{\partial \alpha}{\partial x_i}dx_i\right)\wedge \left(\sum_{i=1}^r\mathrm{dlog}x_i\right).
\end{equation}
The equation \eqref{integrabilitycondition} implies that 
\begin{equation}\label{per1}
x_i\frac{\partial \alpha}{\partial x_i}=x_j\frac{\partial \alpha}{\partial x_j} \quad 
(1\leq i < j\leq r), \qquad \frac{\partial \alpha}{\partial x_i}=0 \quad (i>r). 
\end{equation}
The former equalities in \eqref{per1} imply that 
$$ x_i\frac{\partial \alpha}{\partial x_i} \in 
\bigcap_{i=1}^r x_i  K[[x_1, \dots, x_n]]/(x_1 \cdots x_r) = 0 \quad (1 \leq i \leq r), $$
thus $\alpha$ is constant with respect to the variables $x_i \, (1 \leq i \leq r)$. 
Also, the latter equalities in \eqref{per1} imply
that $\alpha$ is constant with respect to the variables $x_i \, (i > r)$. 
Hence $\alpha \in K$.  Then, since $\widetilde{\nabla}$ has nilpotent residues, 
$\alpha=0.$ 

Since we proved the uniqueness of the lift, we can work \'etale locally to prove the existence of the lift. So we suppose that $X=\Sp(A)$ and we fix an \'etale morphism $X=\Sp(A)\rightarrow \Sp (K[x_1, \dots, x_n]/(x_1\cdots x_r))$ 
such that the log structure on $X$ is induced by the monoid homomorphism 
$\mathbb{N}^r \to A$ which sends $e_i$ to the image of $x_i$ in $A$ for $i=1,\dots,r$. 
Also, we may assume that $E$ is free of rank $1$ on $X$. 

We introduce some notation which are in force along the proof. We put $[r]:=\{1, \dots, r\}$, $[n]:=\{1, \dots, n\}$. For a subset $I$ of $[r]$, we denote by $|I|$ the number of elements contained in $I$. For nonempty subsets $I,J$ of $[r]$, we will use the following subsets of $X$:
$$X_I:=\{P\in X |\, x_i(P)=0 \,\,\forall i \in I\},\quad X^{J}:=\{P\in X |\, x_j(P)\neq 0 \,\,\forall j\in J\}, \quad X_I^J:= X_I\cap X^J,$$
$$X_I^0:=X_I^{[r]\setminus I}=\{P\in X|\, x_i(P)=0 \,\,\forall i\in I, \,\,x_i(P)\neq 0 \,\,\forall i \in [r]\setminus I\}.$$

The $X_I$'s are closed in $X$ and the $X^J$'s are open. The $X_I^0$'s are smooth, locally closed in $X$ and set-theoretically $X=\bigsqcup_I X_I^0.$ For any fixed non empty $I\subseteq [r],$ we have that $\bigsqcup_{\emptyset \neq I'\subseteq I}X^0_{I'}=X^{[r]\setminus I}$.
We now construct a special cover of $X$: for $1\leq k\leq r,$ we define the open subscheme $X^{(k)}$ of $X$ by $X^{(k)}:=\bigsqcup_{1\leq |I|\leq k}X^0_I=\bigcup_{|I|=k}X^{[r]\setminus I}$. Then $X^{(0)}=\emptyset$ and $X^{(r)}=X$. 
We prove the existence of a lift of $(E, \nabla_E)$ on $X^{(k)}$ by induction on $k$. 

First we prove the case $k=1$. Since $f^*(N)_x\cong M_x$ for any 
geometric point $x$ of $X^{(1)}$, 
the composition 
$\omega^1_{X/\Sp(K)|X^{(1)}} 
\rightarrow \omega^1_{X^{\times}/\Sp(K)|X^{(1)}} 
\rightarrow \omega^1_{X^{\times}/\Sp(K)^{\times}|X^{(1)}}
$ is an isomorphism. Hence a lift of 
$\nabla_E:E_{|X^{(1)}}\rightarrow E_{|X^{(1)}}\otimes \omega^1_{X^{\times}/\Sp(K)^{\times}|X^{(1)}}$ is given by $\widetilde{\nabla}_E:E_{|X^{(1)}}\rightarrow E_{|X^{(1)}}\otimes \omega^1_{X^{\times}/\Sp(K)^{\times}|X^{(1)}} \cong 
E_{|X^{(1)}}\otimes \omega^1_{X/\Sp(K)|X^{(1)}} 
\rightarrow E_{|X^{(1)}}\otimes \omega^1_{X^{\times}/\Sp(K)|X^{(1)}}$.

Then we proceed to prove that, if $(E, \nabla_E)$ is liftable on $X^{(k-1)}$, then it is liftable on $X^{(k)}.$
To prove this claim we decompose $X^{(k)}$ further. We order the subsets of $[r]$ of cardinality $k$ as $\{I\subseteq [r]| |I|=k\}=\{I_1, \dots, I_m\}$, and we define $X^{(k)}_l:=X^{(k-1)}\sqcup \bigsqcup_{i=1}^l X^0_{I_i}=X^{(k-1)}\cup \bigcup_{i=1}^{l}X^{[r]\setminus I_i}$ 
for $0 \leq l \leq m$. The set $X^{(k)}_l$ is open in $X$ and $X^{(k)}_0=X^{(k-1)}$, $X^{(k)}_m=X^{(k)}$. We proceed again by induction, proving that if $(E, \nabla_E)$ is liftable on $X^{(k)}_{l-1}$, then it is liftable on $X_l^{(k)}.$ 

\begin{lemma}
The set $X^0_{I_l}$ is closed in $X^{(k)}_l$.
\end{lemma}
\begin{proof}
The set $X^{(k)}_l$ can be covered by $\bigcup_J X^J$ where $J$ runs through the sets of non empty $J\subseteq [r]$ with $|J|\geq r-k+1$ or $J=[r]\setminus I_i$ for $1\leq i\leq l$.

If $|J|\geq r-k+1$ or $J=[r]\setminus I_i$ for $1\leq i\leq l-1$, then $J\nsubseteq  [r]\setminus I_l$ which implies that $J\cap I_l\neq \emptyset$.
Hence $X^0_{I_l}\cap X^J=\emptyset $ which is closed in $X^J.$ If $J=[r]\setminus I_l$, then $X^0_{I_l}\cap X^J=X^J_{I_l}$ which is closed in $X^J$.
\end{proof}

Since $X_{I_l}^{0}$ is locally closed in $X$ and closed in $X_l^{(k)}$, which is open in $X$, the completion $\widehat{X_{|X^0_{I_l}}}$ is isomorphic to $\widehat{X^{(k)}_{l|X^0_{I_l}}}$. 
We denote by $Y^{0}_{I_{l}}$ the scheme 
$$Y^0_{I_l}:=\Sp(\Gamma(\widehat{X_{|X^0_{I_l}}}, \mathcal{O}))\cong \Sp \left(\frac{\bar{A}[x_i^{-1}]_{i\in [r]\setminus I_l}[[x_i]]_{i\in I_l}}{\prod_{i=1}^rx_i}\right)$$
where $\bar{A}:=A/(x_i)_{i\in I_l}$.
There exists a canonical map $Y^0_{I_l}\rightarrow X_l^{(k)}$, so that we have an fpqc covering of $X^{(k)}_{l}$ given by $X^{(k)}_{l-1}\coprod Y^0_{I_l}=\bigcup_{J}X^J\coprod Y^{0}_{I_l}$ where $J$ runs through the sets of non empty $J\subseteq [r]$ with $|J|\geq r-k+1$ or $J=[r]\setminus I_i$ for $1\leq i\leq l-1.$

By induction hypothesis for each $J\subseteq [r]$ with $|J|\geq r-k+1$ or $J=[r]\setminus I_i$ for $1\leq i\leq l-1$, $(E, \nabla_E)$ is liftable on $X_J$ and the lifts are compatible. 
On $Y^0_{I_l}\cong \Sp \left(\frac{\bar{A}[x_i^{-1}]_{i\in [r]\setminus I_l}[[x_i]]_{i\in I_l}}{\prod_{i\in I_l}x_i}\right) $ with log structure defined as the pullback of that on $X$, 
$(E, \nabla_E)$ is the pullback of a module with integrable connection on $\Sp \left(\bar{A}[x_i^{-1}]_{i\in [r]\setminus I_l}\right)/\Sp(K)$ (with trivial log structures) by lemma \ref{descentrk1} below, and so $(E, \nabla_E)$ is liftable on $Y^0_{I_l}$.

Hence it suffices to prove that the lifts are canonically isomorphic on $X^J\times_{X_l^{(k)}}Y^0_{I_l}$ (with log structure defined as the pullback of that on $X$). As in the proof of the uniqueness, it suffices to prove that any lift $(\mathcal{O}, \widetilde{\nabla})$ of $(\mathcal{O}, d)$ is trivial. Note that 
\begin{equation}\label{embedded1}
\Gamma(X^J\times_{X_l^{(k)}}Y^0_{I_l}, \mathcal{O})=\left(\frac{\bar{A}[x_i^{-1}]_{i\in [r]\setminus I_l}[[x_i]]_{i\in I_l}}{\prod_{i=1}^rx_i}\right)[x_i^{-1}]_{i\in J} 
= \left(\frac{\bar{A}[x_i^{-1}]_{i\in [r]\setminus I_l}[[x_i]]_{i\in I_l}}{\prod_{i \in I_l}x_i}\right)[x_i^{-1}]_{i\in I_l \cap J}.  
\end{equation}
Let us note that $Z=\Sp(\bar{A}[x_i^{-1}]_{i\in [r]\setminus I_l})$ is \'etale over $\Sp(K[x_i]_{i\in [n]\setminus I_l}[x_i^{-1}]_{i\in [r]\setminus I_l})$. Let $P\in Z$ be a closed point. Then the completion $\hat{\mathcal{O}}_{Z,P}$ has the form $K(P)[[y_{P,i}]]_{i\in[n]\setminus I_l},$ where $K(P)$ is the residue field of $Z$ at $P$, 
$y_{P,i}$'s \,$(1 \leq i \leq n, i \notin I_l)$ are local parameters with $dy_{P,i}=u_i\mathrm{dlog}x_i$ for $1\leq i\leq r$, $i\notin I_l$ with some $u_i\in \hat{\mathcal{O}}_{Z, P}^{\times}$ and $dy_{P,i}=dx_i$ for $r+1\leq i\leq n$. 

Using these notations, the ring in (\ref{embedded1}) can be embedded in 
\begin{equation}\label{embedded2}
\left( \prod_{P\in Z}\frac{K(P)[[y_{P,i}]]_{i\in [n]\setminus I_l}[[x_i]]_{i\in I_l} 
}{\prod_{i \in I_l} x_i} \right) [x_i^{-1}]_{i \in I_l \cap J} 
\hookrightarrow 
\prod_{P\in Z}\frac{K(P)[[y_{P,i}]]_{i\in [n]\setminus I_l}[[x_i]]_{i\in I_l} 
[x_i^{-1}]_{i \in I_l \cap J}}{\prod_{i \in I_l \setminus J} x_i}.   
\end{equation}

We will prove that $\widetilde{\nabla}(1)=0$ in $\frac{K(P)[[y_{P,i}]]_{i\in [n]\setminus I_l}[[x_i]]_{i\in I_l }[x_i^{-1}]_{i\in I_l \cap J}}{\prod_{i\in I_l \setminus J}x_i}$.
To simplify the notation, we put $y_i := y_{P,i}$ in the following. 

Let us suppose that $\widetilde{\nabla}(1)=\alpha\sum_{i=1}^r\mathrm{dlog}x_i;$ the  integrability $\widetilde{\nabla}\circ \widetilde{\nabla}(1)=0$ implies that $d\alpha\wedge (\sum_{i=1}^r\mathrm{dlog}x_i)=0.$ We calculate $d\alpha$ in terms of basis of $1$-differentials:
\begin{align*}
d\alpha& =\sum_{i \in I_l} x_i \frac{\partial \alpha}{\partial x_i} \mathrm{dlog}x_i+
\sum_{i \in [r] \setminus I_l} u_i \frac{\partial \alpha}{\partial y_i} u_i^{-1}dy_i + 
\sum_{i=r+1}^n \frac{\partial \alpha}{\partial y_i} dy_i \\ & = 
\sum_{i \in I_l} x_i \frac{\partial \alpha}{\partial x_i} \mathrm{dlog}x_i+
\sum_{i \in [r] \setminus I_l} u_i \frac{\partial \alpha}{\partial y_i} \mathrm{dlog}x_i + 
\sum_{i=r+1}^n \frac{\partial \alpha}{\partial y_i} dx_i. 
\end{align*}
Since $d\alpha\wedge (\sum_{i=1}^r\mathrm{dlog}x_i)=0,$ we see that the 
elements 
$ x_i \frac{\partial \alpha}{\partial x_i} \, (i \in I_l),  
 u_i \frac{\partial \alpha}{\partial y_i} \, (i \in [r] \setminus I_l) $
are the same and $\frac{\partial \alpha}{\partial y_i} = 0$ for $i > r$. 
From the latter assertion, we see that $\alpha$ is constant with respect to 
the variables $y_i \, (i > r)$. From the former assertion for $i \in I_l$, we see that 
$$ x_i \frac{\partial \alpha}{\partial x_i}  \in 
\bigcap_{j \in I_l} x_j
\frac{K(P)[[y_i]]_{i\in [n]\setminus I_l}[[x_i]]_{i\in I_l }[x_i^{-1}]_{i\in I_l \cap J}}{\prod_{i\in I_l \setminus J}x_i} = 0 \quad (i \in I_l), $$
and so $\alpha$ is constant with respect to the variables $x_i \,(i \in I_l)$. 
Thus $\alpha \in K(P)[[y_i]]_{i \in [r] \setminus I_l}$ and  
$u_i \frac{\partial \alpha}{\partial y_i} = 0 \, (i \in [r] \setminus I_l)$, hence 
$\alpha \in K(P)$. 
Then, thanks to the nilpotent residues condition we conclude that $\alpha=0$. 
So the proof is finished (modulo lemma \ref{descentrk1} below). 

\end{proof}

\begin{lemma}\label{descentrk1}
Let $C= \frac{\bar{A}[x_i^{-1}]_{i\in [r]\setminus I_l}[[x_i]]_{i\in I_l}}{\prod_{i\in I_l}x_i}$, $B=\bar{A}[x_i^{-1}]_{i\in [r]\setminus I_l}$ be as above and let $M$ be the log structure on $\Sp(C)$ associated to $\mathbb{N}^r \rightarrow C; \, e_i \mapsto x_i$. 
Let $(E, \nabla_E)$ be an object in $\widehat{MIC}((\Sp(C), M)/\Sp(K)^{\times})^{\mathrm{nr}}$ 
such that $E$ is free of rank $1$. Then there exists $(E_1, \nabla_{E_1})$ in $MIC(\Sp(B)/\Sp(K))$ 
with $E_1$ free of rank $1$ 
such that $(h, g)_{\mathrm{dR}}^*(E_1, \nabla_{E_1})\cong (E, \nabla_E)$, where $(h, g)_{\mathrm{dR}}^*$ is the pullback with respect to 
the morphisms $h,g$ in the diagram below: 
$$
\xymatrix{
(\Sp(C), M)\ar[r]^-h\ar[d]&\Sp(B)\ar[d]\\
\Sp(K)^{\times}\ar[r]^-g&\Sp(K). 
}
$$
In particular, $(E, \nabla_E)$ is liftable to an object in 
$\widehat{MIC}((\Sp(C), M)/\Sp(K))^{\mathrm{nr}}$. 
\end{lemma}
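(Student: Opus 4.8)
The plan is to prove the lemma by a direct construction of $(E_1,\nabla_{E_1})$, exploiting the fact that the connection on a rank $1$ module is recorded by a single differential form and that $\hat\omega^1_{(\Sp(C),M)/\Sp(K)^\times}$ is the quotient of $\hat\omega^1_{(\Sp(C),M)/\Sp(K)}$ by the line spanned by $\sum_{i=1}^r \mathrm{dlog}x_i$. First I would fix a generator $e$ of $E$ and write $\nabla_E(e)=e\otimes\theta$ with $\theta\in\hat\omega^1_{(\Sp(C),M)/\Sp(K)^\times}$; integrability is automatic in rank $1$. The nilpotent residue hypothesis forces the residue of $\theta$ along each branch $x_i=0$ $(i\in I_l)$ to vanish, so after writing $\theta$ in the basis coming from remark \ref{notationlocalderivations} one sees $\theta$ has no $\mathrm{dlog}x_i$-pole, i.e. $\theta$ lifts canonically to a form $\tilde\theta$ on $(\Sp(C),M)/\Sp(K)$ with vanishing residues and, crucially, with coefficients that (after the branch analysis) depend only on the variables $x_i^{\pm1}$ $(i\in[r]\setminus I_l)$ and $x_i$ $(i>r)$, not on the $x_i$ $(i\in I_l)$. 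This last point is exactly the kind of computation already carried out in the proof of theorem \ref{lifting}: the integrability equation $d\alpha\wedge(\sum\mathrm{dlog}x_i)=0$ together with the fact that $\bigcap_{i\in I_l}x_i C=0$ kills dependence on the $x_i$ with $i\in I_l$.

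Having arrived at a form $\tilde\theta$ with coefficients in $B=\bar A[x_i^{-1}]_{i\in[r]\setminus I_l}$, I would then observe that $B$ is étale over a localization of a polynomial ring, hence smooth over $K$, and that $\tilde\theta$ naturally defines an element of $\omega^1_{\Sp(B)/\Sp(K)}$: on $\Sp(B)$ the log structure pulled back from $X$ is trivial since all the $x_i$ $(i\in[r]\setminus I_l)$ are invertible, so $\mathrm{dlog}x_i=d x_i/x_i$ is an honest differential form there. Then I set $E_1:=B\cdot e_1$ free of rank $1$ with $\nabla_{E_1}(e_1):=e_1\otimes\tilde\theta$; integrability is again automatic. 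By construction $(h,g)^*_{\mathrm{dR}}(E_1,\nabla_{E_1})$ has underlying module $C\otimes_B E_1$, which is free of rank $1$ over $C$, and connection form the image of $\tilde\theta$ under $\omega^1_{\Sp(B)/\Sp(K)}\to\hat\omega^1_{(\Sp(C),M)/\Sp(K)}\to\hat\omega^1_{(\Sp(C),M)/\Sp(K)^\times}$, which is $\theta$. This gives the required isomorphism with $(E,\nabla_E)$. The final sentence of the lemma is then immediate: $(E_1,\nabla_{E_1})$ viewed through $(h,g)^*$ but only to $\Sp(K)^\times$ (not all the way to $\Sp(K)^\times/\Sp(K)$) — that is, the pair $(C\otimes_B E_1,\tilde\theta)$ regarded as an object of $\widehat{MIC}((\Sp(C),M)/\Sp(K))^{\mathrm{nr}}$ — is a lift of $(E,\nabla_E)$, and its residues vanish because those of $\tilde\theta$ do.

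The main obstacle I anticipate is the bookkeeping in the branch analysis: one must be careful that $\Sp(C)$ has several formal branches (one for each $i\in I_l$), that the claim "$\theta$ has no $\mathrm{dlog}x_i$-pole for $i\in I_l$'' is extracted correctly from the residue-nilpotence condition along each such branch, and that the resulting coefficient functions genuinely lie in $B$ rather than merely in $C$ — the vanishing $\bigcap_{i\in I_l} x_i C=0$ is what makes this work, exactly as in theorem \ref{lifting}, so I would simply invoke that computation rather than redo it. A secondary point to check is the compatibility of the two descriptions of the connection form under the maps $\hat\omega^1_{(\Sp(C),M)/\Sp(K)}\to\hat\omega^1_{(\Sp(C),M)/\Sp(K)^\times}$ and $\omega^1_{\Sp(B)/\Sp(K)}\to\hat\omega^1_{(\Sp(C),M)/\Sp(K)}$, but this is a formal diagram chase with the explicit bases already in hand. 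Since the construction is canonical (the form $\tilde\theta$ is uniquely determined by $\theta$ plus the vanishing-residue normalization), functoriality and the ``in particular'' clause follow without extra work.
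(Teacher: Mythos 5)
There is a genuine gap at the heart of your construction: you assert that, for a \emph{fixed arbitrary} generator $e$ of $E$, the connection form $\theta$ (with $\nabla_E(e)=e\otimes\theta$) automatically has, after the ``branch analysis,'' coefficients lying in $B$, i.e.\ independent of the variables $x_i$ with $i\in I_l$. This is false. Take the simplest case $I_l=[r]=\{1,2\}$, $\bar A=K$, so $C=K[[x_1,x_2]]/(x_1x_2)$ and $B=K$: the form $\theta=x_1\,\mathrm{dlog}x_1=dx_1$ is closed, has nilpotent (indeed zero) residues, and defines a perfectly good rank $1$ object, yet its coefficient $x_1$ does not lie in $B$. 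The object \emph{is} isomorphic to a pullback from $B$, but only after the change of generator $e\mapsto \exp(-x_1)e$. The computation from theorem \ref{lifting} that you invoke does not apply here: there the form was a priori a multiple $\alpha\sum_{i=1}^r\mathrm{dlog}x_i$ of the generator of $\ker\bigl(\hat\omega^1_{(\Sp(C),M)/\Sp(K)}\to\hat\omega^1_{(\Sp(C),M)/\Sp(K)^{\times}}\bigr)$ (because it measured the difference of two lifts of the \emph{trivial} relative connection), and it is that very special shape which turns integrability into the system $x_i\partial\alpha/\partial x_i=x_j\partial\alpha/\partial x_j$ and kills $\alpha$. For a general closed $\theta\in\hat\omega^1_{(\Sp(C),M)/\Sp(K)^{\times}}$ no such conclusion holds. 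The missing ingredient is precisely the gauge transformation: the paper carries it out by induction on $|I_l|$ through the intermediate rings $C_i=B[[x_1,\dots,x_i]]/(x_1\cdots x_i)$, using the change-of-basis argument of CLAIM 1 of proposition \ref{inductivepass} to push the coefficient of $\nabla_E(\partial_{k-1})$ down into $C_{k-1}$, the characterization of $\overline E$ from CLAIM 2 to see that the descended module is again an object over $C_{k-1}$, and an auxiliary induction statement (that endomorphisms are multiplications by elements of $B$) to finally conclude the coefficient vanishes. None of this is one-shot, and your proposal does not engage with it.

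Two secondary points. First, ``integrability is automatic in rank $1$'' is incorrect: for $\nabla(e)=e\otimes\theta$ integrability is equivalent to $d\theta=0$, which is a genuine condition whenever the relevant $\omega^1$ has rank $\geq 2$ (as it does here once $\bar A$ has extra variables); you are given it for $(E,\nabla_E)$, but you must still verify it for your constructed $(E_1,\nabla_{E_1})$, which requires knowing that $\omega^2_{\Sp(B)/\Sp(K)}\to\hat\omega^2_{(\Sp(C),M)/\Sp(K)^{\times}}$ detects $d\tilde\theta$. Second, for the vanishing of the constant term of the coefficients you need, as the paper does, that $B$ is reduced and Jacobson, so that nilpotence of the residue at every closed point forces the relevant element of $B$ to be zero; ``the residue along each branch vanishes'' is not by itself a statement about elements of $B$.
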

\begin{proof} We can suppose that $I_l=\{1, \dots, k\}$. For every $i\in I_l$, 
we denote by $C_i$ the ring $B[[x_1, \dots, x_i]]/(x_1 \cdots x_i)$ and 
by $M_i$ the log structure on $\Sp(C_i)$ induced by 
$\mathbb{N}^i \to C_i; e_j \mapsto x_j \, (1 \leq j \leq i)$. 
(Then $(\Sp(C_k),M_k) = (\Sp(C),M)$.) 
We regard $C_{i}$ as a $C_{i-1}$-algebra via the map $h_i: C_{i-1}\rightarrow C_i$ defined by $x_j\mapsto x_j \, (j=1, \dots, i-2)$,  $x_{i-1}\mapsto x_{i-1}x_i$. This induces a morphism of log schemes 
$(\Sp(C_i),M_i) \to (\Sp(C_{i-1}),M_{i-1})$. 

Note that $\Omega^1_{B/K}$ is a free $B$-module of finite rank. So we take an isomorphism  $\Omega^1_{B/K}\cong \bigoplus_{t=1}^s B b_t$ by fixing a basis $\{b_t\}_{t=1,\dots,s}$ of 
$\Omega^1_{B/K}$. 
Then $\hat{\omega}^1_{(\Sp(C_i), M_i)/\Sp(K)^{\times}}$ is 
isomorphic to the $C_i$-module $$\bigoplus_{t=1}^s C_ib_t\oplus \frac{C_i\mathrm{dlog}x_1\oplus\dots\oplus C_i\mathrm{dlog}x_i}{C_i \cdot \sum_{j=1}^i\mathrm{dlog}x_j}.$$
We consider the derivations $\{\partial_i\}_{i=1, \dots, k-1}$ of $(C_k, M_k)/\Sp(K)^{\times}$ as in remark \ref{notationlocalderivations}, \emph{i.e.} $\partial_i$ is an element of $\mathrm{Hom}(\hat{\omega}^1_{(\Sp(C_k), M_k)/\Sp(K)^{\times}}, C_k)$ which sends $\mathrm{dlog}x_i$ to $1$,  $\mathrm{dlog}x_{i+1}$ to $-1$, 
$\mathrm{dlog}{x_j}$ to $0$ for every $j\neq i, i+1$ and $b_t$ to $0$ for every $t=1, \dots, s$.  The derivation $\partial_{i-1}$ can be seen as a generator of the rank $1$ $C_i$-module $\mathrm{Hom}(\widehat{\omega}^1_{(\Sp(C_i),M_i)/(\Sp(C_{i-1}),M_{i-1})}, C_i)$.

To prove the lemma, it suffices to prove that, 
for an object $(E, \nabla_E)$ in $\widehat{MIC}((C_k, M_k)/\Sp(K)^{\times})^{\mathrm{nr}}$
such that $E$ is free of rank $1$,  the following two assertions hold by induction on $k$: 
\medskip 

\noindent
(1) \, There exists $(E_1, \nabla_{E_1})$ in $MIC(\Sp(B)/\Sp(K))$ 
with $E_1$ free of rank $1$ such that 
$(E, \nabla_E)$ is the pullback of $(E_1, \nabla_{E_1})$ to $(C_k, M_k)/\Sp(K)^{\times}$. \\
(2) \, Any endomorphism $\varphi: (E, \nabla_E) \to (E, \nabla_E)$ is the multiplication 
by some element in $B$. 
\medskip 

(In fact, the assertion (1) is the same as the statement of the lemma. 
However, we need to prove also the assertion (2) in order that 
the induction works.) 
In the case $k=1$, the assertions are true because $B = C_1$ as rings and 
$MIC((\Sp(C_1),M_1)/\Sp(K)^{\times}) \cong MIC(\Sp(B)/\Sp(K))$. 
So we prove the assertions in general case. Put $\partial := \partial_{k-1}$.  
Take $H \in C_k$ with $\nabla_E(\partial) = d(\partial) + H$ and write 
$H = \sum_{\g{k} \in \Gamma} H_{\g{k}} x^{\g{k}} \, (H_{\g{k}} \in B)$ with 
$\Gamma$ the index set as in the proof of proposition \ref{inductivepass}. 
Then, by the assumption that $(E,\nabla_E)$ has nilpotent residues, 
$H_{0}$ is sent to $0$ by any homomorphism $B \to \overline{K}$ 
(where $\overline{K}$ is the algebraic closure of $K$) over $K$. 
Because $B$ is reduced and Jacobson, we conclude that 
$H_0 = 0$. Then, by the same argument as the proof of CLAIM 1 in 
proposition \ref{inductivepass}, 
we can prove the existence of a basis $e$ of $E$ as $C_k$-module such that 
$\nabla_E(\partial)$ acts on it by the multiplication of an element 
$M = \sum_{\g{k} \in \Gamma'} M_{\g{k}} x^{\g{k}} \in C_{k-1} \, (M_{\g{k}} \in B)$
with $M_0 = 0$. Also, by the same argument as the proof of CLAIM 2 in 
proposition \ref{inductivepass}, we see that
\begin{equation*}\label{eq:cn-1}
\overline{E} := C_{n-1}e = \{ e' \in E \,|\, \nabla_E^N(\partial)(e') \to 0 \, (N \to \infty)\}. 
\end{equation*}
As in the proof of proposition \ref{inductivepass}, 
$\overline{E}$ is stable by the action of $\nabla_E(\partial_i) \, (1 \leq i \leq k-2)$ 
and with this action, $\overline{E}$ defines an object $(\overline{E}, \nabla_{\overline{E}})$ 
in $MIC((\Sp(C_{k-1}), M_{k-1})/\Sp(K)^{\times})^{\rm nr}$. Moreover, 
$\nabla_E(\partial)$ induces an endomorphism of $(\overline{E}, \nabla_{\overline{E}})$ 
which is the multiplication by $M$. 
By induction hypothesis (the assertion (2) for $(\overline{E}, \nabla_{\overline{E}})$), 
$M$ belongs to $B$. Since $M_0 = 0$, we conclude that $M=0$. 
Thus $(E,\nabla_E)$ is the pullback of $(\overline{E}, \nabla_{\overline{E}})$ to 
$(\Sp(C_{k}), M_{k})/\Sp(K)^{\times}$. Since $(\overline{E}, \nabla_{\overline{E}})$ is 
the pullback of an object $(E_1, \nabla_{E_1})$ in $MIC(\Sp(B)/\Sp(K))$ 
with $E_1$ free of rank $1$ to $(C_{k-1}, M_{k-1})/\Sp(K)^{\times}$ by induction hypothesis, 
we conclude that $(E,\nabla_E)$ is 
the pullback of $(E_1, \nabla_{E_1})$ to $(C_{k}, M_{k})/\Sp(K)^{\times}$. 
So the proof of the assertion (1) is finished. 

We prove the assertion (2). By definition of $(\overline{E}, \nabla_{\overline{E}})$, 
we see that any endomorphism $\varphi: (E, \nabla_E) \to (E, \nabla_E)$
induces an endomorphism on $(\overline{E}, \nabla_{\overline{E}})$, 
and it is the multiplication of some element in $B$ by induction hypothesis. 
Hence so is $\varphi$. So the proof of the assertion (2) is also finished. 
\end{proof}

To prove the property (ii) of theorem \ref{criterion} for iterated extensions of rank $1$ objects, 
 we need the following two results: 
first we prove that the category $MIC(X^{\times}/\Sp(K)[u])^{\mathrm{nr}}$ is closed under extensions (theorem \ref{extensionsoflattices}) and then prove that we can compare extensions of objects in $MIC(X^{\times}/\Sp(K)[u])^{\mathrm{nr}}$ with those in $MIC(X^{\times}/\Sp(K)^{\times})^{\mathrm{nr}}$ in certain case (proposition \ref{quasiisoHirsch}). 
\begin{teo}\label{extensionsoflattices}
The category $MIC(X^{\times}/\Sp(K)[u])^{\mathrm{nr}}$ is closed under extensions 
in the category of coherent $\mathcal{O}_X[u]$-modules with integrable log connections. 
\end{teo}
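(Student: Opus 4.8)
The plan is to reduce the statement to the corresponding local and formal statement at each geometric point over a closed point of $X$, and then to verify the three defining conditions (i), (ii), (iii) of definition \ref{categoriauu} for an extension. Suppose we are given a short exact sequence
\begin{equation*}
0 \longrightarrow (E', \nabla_{E'}) \longrightarrow (E, \nabla_E) \longrightarrow (E'', \nabla_{E''}) \longrightarrow 0
\end{equation*}
of coherent $\mathcal{O}_X[u]$-modules with integrable log connections, with $(E',\nabla_{E'})$ and $(E'',\nabla_{E''})$ in $MIC(X^{\times}/\Sp(K)[u])^{\mathrm{nr}}$. Condition (iii) is the one that is essentially formal: since $(E',\nabla_{E'})$ and $(E'',\nabla_{E''})$ are colimits of objects $(E'_i,\nabla_{E'_i})$, $(E''_i,\nabla_{E''_i})$ in $MIC(X^{\times}/\Sp(K))^{\mathrm{nr}}$ indexed by $\mathbb{N}$, one extracts the extension $(E,\nabla_E)$ as a colimit of extensions $(E_i,\nabla_{E_i})$ of $(E''_i,\nabla_{E''_i})$ by $(E'_i,\nabla_{E'_i})$ in the category $MIC(X^{\times}/\Sp(K))$; one then needs to know each $(E_i,\nabla_{E_i})$ lies in $MIC(X^{\times}/\Sp(K))^{\mathrm{nr}}$, i.e.\ that $MIC(X^{\times}/\Sp(K))^{\mathrm{nr}}$ is itself closed under extensions. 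That last fact follows because the underlying module of an extension of locally free modules is locally free, and the residue of an extension is built from the residues of the sub and the quotient, hence nilpotent; more precisely one checks at a geometric point $y$ over a closed point that $\rho_y$ for $E_i$ is block upper triangular with diagonal blocks the residues of $E'_i$, $E''_i$, so any $(\mathrm{id}\otimes t_y)\circ\rho_y$ is block upper triangular with nilpotent diagonal blocks, hence nilpotent.

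For conditions (i) and (ii), which are pointwise at geometric points $y$ over closed points of $X$, I would pass to the completed local ring and work in $\widehat{MIC}((\Sp(\hat{\mathcal{O}}_{X,y}),M)/\Sp(K)[u])$ as in the proof of proposition \ref{(i)general}. Condition (ii) transports essentially for free: writing $E(y) = \bigcup_i E(y)_i$ with the filtrations coming from the colimit presentation (remark \ref{rmk4.4}), the $E(y)_i$ can be chosen compatibly with the sub–quotient decomposition, and on each finite-dimensional piece the residue is block upper triangular with diagonal blocks satisfying the nilpotence from (ii-2) for $E'$ and $E''$; then one invokes the same block-triangular argument as above. The real content is condition (i): one must show that the extension $\hat{E}$ is \emph{free} over $\hat{\mathcal{O}}_{X,y}[u]^{\wedge}$, given that $\hat{E}'$ and $\hat{E}''$ are free. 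This is where I expect the main obstacle. One cannot simply say ``extension of free is free'' — over the ring $\hat{\mathcal{O}}_{X,y}[u]^{\wedge} \cong K[u][[x_1,\dots,x_n]]/(x_1\cdots x_r)$ this is not automatic without controlling finiteness and using the connection. The strategy I would follow is the one already established in the inductive machinery of sections 2 and 4: reduce by lemma \ref{RS2} to the case $r=n$, then induct on $n$ using the operators $\nabla_E(\partial_i)$ to produce, as in CLAIM 1 and CLAIM 2 of proposition \ref{inductivepass} and in proposition \ref{(i)ndim}, a $B$-basis (here $B$ the appropriate base ring) adapted to the connection; the point is that on the ``horizontal part'' $\overline{E}$ one is in a situation over $\Sp(K)^{\times}/\Sp(K)[u]$ or a lower-dimensional analogue, where one can use lemma \ref{K[u]/kabeliana} (every finitely generated $K[u]$-module with integrable connection is free, as $K[u]$ is a PID and the connection forces torsion-freeness) to get freeness, and then freeness propagates back up by the base change $\overline{E}\otimes_{A[u]^{\wedge}}B[u]^{\wedge}\cong E$.

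Concretely, the key steps in order are: (1) recall/observe that $MIC(X^{\times}/\Sp(K))^{\mathrm{nr}}$ is closed under extensions, by the block-triangular residue argument; (2) present $(E',\nabla_{E'})$, $(E'',\nabla_{E''})$ as $\mathbb{N}$-indexed colimits, reindex so the extension class is represented compatibly, and realize $(E,\nabla_E)$ as the colimit of extensions $(E_i,\nabla_{E_i})$ in $MIC(X^{\times}/\Sp(K))^{\mathrm{nr}}$ — this gives condition (iii) directly and gives the finiteness needed to make sense of the filtrations in (ii); (3) for condition (i), localize and complete at each geometric point over a closed point, reduce to $r=n$ by lemma \ref{RS2}, and induct on $n$: the base case uses lemma \ref{K[u]/kabeliana}, and the inductive step runs the $\overline{E}$-construction of proposition \ref{(i)ndim} verbatim, the only new input being that an extension of free modules adapted to the connection is again adapted, which is exactly where the above PID/torsion-freeness argument is applied on the horizontal part; (4) for condition (ii), run the block-triangular residue argument on the finite-dimensional pieces $E(y)_i$ supplied by step (2). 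The main obstacle, as said, is step (3), condition (i): making the freeness of the extension survive the completion and the induction, which forces one to reprove the adapted-basis construction of section 4 in the presence of an extension rather than a single object; I expect this to be handled, as elsewhere in the paper, by the observation that the horizontal submodule $\overline{E}$ of an extension is an extension of horizontal submodules, together with lemma \ref{K[u]/kabeliana}.
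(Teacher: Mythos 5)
Your overall skeleton (verify conditions (i), (ii), (iii) of definition \ref{categoriauu} separately, with (iii) handled by exhibiting the extension as a colimit of finite-level extensions) matches the paper's, but you have misdiagnosed where the difficulty lies and left the actual content unproved. Condition (i) is not ``the main obstacle'': since $\mathcal{O}_{X,y}[u]\to\mathcal{O}_{X,y}[u]^{\wedge}$ is flat, completion preserves the exact sequence, and an extension of the free (hence projective) module $\hat{E''_y}$ by the free module $\hat{E'_y}$ splits as modules, so $\hat{E_y}\cong \hat{E'_y}\oplus\hat{E''_y}$ is free. Condition (i) concerns only the underlying module, not the connection, so no reduction via lemma \ref{RS2}, no induction on $n$, no adapted bases and no appeal to lemma \ref{K[u]/kabeliana} are needed; the elaborate machinery you plan for this step is superfluous. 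Your block-triangularity argument for condition (ii), and your observation that one needs $MIC(X^{\times}/\Sp(K))^{\mathrm{nr}}$ to be closed under extensions, are both correct and are (implicitly) used in the paper.

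The genuine gap is in condition (iii), which you dismiss as ``essentially formal.'' You assert that one ``extracts the extension $(E,\nabla_E)$ as a colimit of extensions $(E_i,\nabla_{E_i})$ of $(E''_i,\nabla_{E''_i})$ by $(E'_i,\nabla_{E'_i})$,'' but you give no mechanism for producing the $(E_i,\nabla_{E_i})$, and this is the entire content of the theorem. The naive candidates fail: the preimage of $E''_j$ in $E$ is an extension of $E''_j$ by all of $E'$, which is not coherent over $\mathcal{O}_X$, and there is no given map from $E$ to an extension by some $E'_i$. The paper supplies exactly the missing step: it pulls the sequence back along $(E''_j,\nabla_{E''_j})\to(E'',\nabla_{E''})$ to obtain a class in $\mathrm{Ext}^1((E''_j,\nabla_{E''_j}),(E',\nabla_{E'}))\cong H^1_{\mathrm{dR}}(X^{\times}/\Sp(K),(E''_j,\nabla_{E''_j})^{\vee}\otimes(E',\nabla_{E'}))$, and then uses that this de Rham $H^1$ commutes with the filtered colimit $\varinjlim_i(E'_i,\nabla_{E'_i})$ (immediate on affines, glued by \v{C}ech cohomology) to descend the class to $\mathrm{Ext}^1((E''_j,\nabla_{E''_j}),(E'_{i(j)},\nabla_{E'_{i(j)}}))$ for suitable $i(j)$, after which a compatible reindexing yields the desired inductive system. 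Without this commutation-with-colimits argument (or an equivalent local lifting-of-bases-plus-gluing argument), your proof of (iii) does not go through.
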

\begin{proof}
Let $(F, \nabla_{F})$ and $(G, \nabla_G)$ be objects in $MIC(X^{\times}/\Sp(K)[u])^{\mathrm{nr}}$ and let us suppose that there exists an exact sequence of $\mathcal{O}_X[u]$-modules with integrable connection:
\begin{equation}\label{eq:exseqext}
0\longrightarrow (F, \nabla_F) \longrightarrow (E, \nabla_E) \longrightarrow (G, \nabla_{G}) \longrightarrow 0. 
\end{equation}
It is easy to see that $(E,\nabla_E)$ satisfies the conditions (i), (ii) of 
definition \ref{categoriauu}. Hence, it suffices to see that $(E,\nabla_E)$ is written as a colimit 
of objects in $MIC(X^{\times}/\Sp(K))^{\rm nr}$ indexed by $\mathbb{N}$. Let us write 
$(F,\nabla_F), (G,\nabla_G)$ as colimits of objects in 
 $MIC(X^{\times}/\Sp(K))^{\rm nr}$ indexed by $\mathbb{N}$: 
$$ 
(F,\nabla_F) = \varinjlim_{i \in \mathbb{N}} (F_i, \nabla_{F_i}), \quad 
(G,\nabla_G) = \varinjlim_{j\in \mathbb{N}} (G_j, \nabla_{G_j}). $$  
Also, let 
\begin{equation}\label{eq:exseqext2}
0\longrightarrow (F, \nabla_F) \longrightarrow (E_j, \nabla_{E_j}) \longrightarrow 
(G_j, \nabla_{G_j}) \longrightarrow 0. 
\end{equation}
be the pullback of \eqref{eq:exseqext} by $(G_j,\nabla_{G_j}) \to (G,\nabla_G)$. 
The exact sequence \eqref{eq:exseqext2} defines the extension class 
$[(E_j,\nabla_{E_j})]$ in 
\begin{align*}
& {\rm Ext}^1((G_j,\nabla_{G_j}), (F,\nabla_F)) = 
H^1_{\rm dR}(X^{\times}/\Sp(K), (G_j,\nabla_{G_j})^{\vee} \otimes (F,\nabla_F)) \\ 
=\, & \varinjlim_i 
H^1_{\rm dR}(X^{\times}/\Sp(K), (G_j,\nabla_{G_j})^{\vee} \otimes (F_i,\nabla_{F_i})) 
=  \varinjlim_i {\rm Ext}^1((G_j,\nabla_{G_j}), (F_i,\nabla_{F_i})). 
\end{align*}
Here, the first and the third equalities follow from the local freeness of $G_j$. 
Also, the second equality is straightforward in the case $X$ is affine and in general 
it is reduced to the affine case by using \v{C}ech cohomology for an open affine cover. 
Hence there exists a system of extension classes $[(E_{j,i}, \nabla_{E_{j,i}})] \in 
{\rm Ext}^1((G_j,\nabla_{G_j}), (F_i,\nabla_{F_i}))$ for some $i = i(j) \in \mathbb{N}$ 
which induces $[(E_j,\nabla_{E_j})]$. Also, we can take the indices $i = i(j) \in \mathbb{N}$ 
so that the map $j \mapsto i(j)$ is strictly increasing and that 
the extension classes 
\begin{align*}
& [(E_{j,i(j)}, \nabla_{E_{j,i(j)}})] \in 
{\rm Ext}^1((G_j,\nabla_{G_j}), (F_{i(j)},\nabla_{F_{i(j)}})), \\ 
& [(E_{j+1,i(j+1)}, \nabla_{E_{j+1,i(j+1)}})] \in 
{\rm Ext}^1((G_{j+1},\nabla_{G_{j+1}}), (F_{i(j+1)},\nabla_{F_{i(j+1)}}))
\end{align*} are compatible 
in the sense that they have the same image in 
${\rm Ext}^1((G_j,\nabla_{G_j}), (F_{i(j+1)},\nabla_{F_{i(j+1)}}))$. 
Then we see that $\{(E_{j,i(j)}, \nabla_{E_{j,i(j)}})\}_{j \in \mathbb{N}}$ forms an inductive system in 
$MIC(X^{\times}/\Sp(K))^{\rm nr}$ whose colimit is $(E,\nabla_{E})$. Hence we are done. 
\end{proof} 

\begin{prop}\label{quasiisoHirsch}
Let $(E, \nabla_E)$ be an object of $MIC(X^{\times}/\Sp(K))^{\mathrm{nr}}$ and let $(E, \nabla_E)[u]=t_u((E, \nabla_E))$ as defined in section 4. Then the functor $p_u:MIC(X^{\times}/\Sp(K)[u])^{\mathrm{nr}}\rightarrow MIC(X^{\times}/\Sp(K)^{\times})^{\mathrm{nr}}$ defined in section 4 induces a map from the de Rham complex of $(E, \nabla_E)[u]$, denoted by $\mathrm{DR}((E, \nabla_E)[u])$, to the de Rham complex of $p_u((E, \nabla_E)[u])$, denoted by $\mathrm{DR}(p_u((E, \nabla_E)[u]))$, and it is a quasi-isomorphism. 
\end{prop}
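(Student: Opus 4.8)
The plan is to compute both de Rham complexes explicitly in the variable $u$ and to show that the morphism $p_u$ is, up to a well-understood homotopy, the quotient by the $u$-adic filtration, which is an acyclic operation in characteristic zero. First I would fix the description of the two complexes. The complex $\mathrm{DR}((E,\nabla_E)[u])$ computes the cohomology of $E[u] := E \otimes_{\mathcal{O}_X} \mathcal{O}_X[u]$ with its connection relative to $\Sp(K)$, and the key point is that the de Rham complex over $\Sp(K)$ factors through $\omega^1_{X^\times/\Sp(K)}$, which sits in the exact sequence coming from \eqref{exactder} (dualized): there is a short exact sequence of complexes obtained from the decomposition $\omega^\bullet_{X^\times/\Sp(K)} \cong \omega^\bullet_{X^\times/\Sp(K)^\times} \otimes (\mathcal{O}_X \oplus \mathcal{O}_X \cdot \mathrm{dlog}1)$ locally, or more intrinsically from the fact that $\omega^1_{X^\times/\Sp(K)^\times}$ is the cokernel of $f^*\omega^1_{\Sp(K)^\times/\Sp(K)} \to \omega^1_{X^\times/\Sp(K)}$ and the latter map is injective (locally split) because $\omega^1_{\Sp(K)^\times/\Sp(K)}$ is free of rank $1$ generated by $\mathrm{dlog}1$ and $du = \mathrm{dlog}1$ in $\omega^1_{X^\times/\Sp(K)}$. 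Concretely I would write the total complex $\mathrm{DR}((E,\nabla_E)[u])$ as the simple complex associated to a double complex whose horizontal differential is $\nabla_E$ (in the $X^\times/\Sp(K)^\times$ directions) acting on $E[u]$ and whose vertical differential is $\partial_u := d/du$ (the operator contracting against $\mathrm{dlog}1 = du$), i.e. $\mathrm{DR}((E,\nabla_E)[u]) \cong \mathrm{Tot}\left( \mathrm{DR}(r(E,\nabla_E))[u] \xrightarrow{\;\partial_u\;} \mathrm{DR}(r(E,\nabla_E))[u] \right)$, where $[u]$ means $\otimes_K K[u]$ termwise and $\mathrm{DR}(r(E,\nabla_E))$ is the de Rham complex over $\Sp(K)^\times$. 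Under this identification $p_u$ becomes the map that sends $u \mapsto 0$ and projects the total complex onto its first column $\mathrm{DR}(r(E,\nabla_E))$, i.e. onto $K[u]/(u) = K$ in the $u$-variable.

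The second step is then purely formal: I must show that the two-term complex of $K$-vector spaces $K[u] \xrightarrow{\partial_u} K[u]$ is quasi-isomorphic to $K$ placed in degree $0$, via the augmentation $u \mapsto 0$. This is exactly the statement that $\partial_u: K[u] \to K[u]$ is surjective with kernel $K$ (the constants), which holds precisely because $K$ has characteristic zero (so $\int$ is defined: $u^n \mapsto u^{n+1}/(n+1)$). Tensoring this acyclicity with the complex $\mathrm{DR}(r(E,\nabla_E))$ of $K$-vector spaces — or more precisely using the spectral sequence of the double complex, taking cohomology first in the $\partial_u$-direction — shows that $H^*(\mathrm{Tot})$ equals $H^*(\mathrm{DR}(r(E,\nabla_E)))$, computed by the $u=0$ part, which is $H^*(\mathrm{DR}(p_u((E,\nabla_E)[u])))$. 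To make this honest at the level of a genuine quasi-isomorphism (not just an isomorphism on cohomology) I would exhibit an explicit $K[u]$-linear contracting homotopy $s$ for the augmented complex $0 \to K \to K[u] \xrightarrow{\partial_u} K[u] \to 0$, namely the integration operator, and note that $s$ commutes with the other differential $\nabla_E$ (since $\nabla_E$ is $K[u]$-linear in the relevant sense — it only differentiates the $X$-variables and acts $K$-linearly, hence $K[u]$-"coefficient-wise" up to the $\mathrm{dlog}1$-component which is handled by $\partial_u$ itself); therefore $s$ induces a contracting homotopy on the cone of $p_u$, proving $p_u$ is a quasi-isomorphism.

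The one point requiring care — and the main obstacle — is the bookkeeping of how $\nabla_E$ on $E[u]$ decomposes relative to the two sub-connections, in particular verifying that the operator ``$\partial_u$ applied to the polynomial coefficients'' genuinely coincides with contraction of $\nabla_E$ against $du$, with no cross terms linking it to the $X^\times/\Sp(K)^\times$ part. This follows from the definition of the connection on $t_u((E,\nabla_E))$ given just before Lemma \ref{pituclosedimmersion}: it sends $eu^i \mapsto \nabla_E(e)u^i + e\, i u^{i-1} du$, so the $du$-component is exactly $\partial_u$ on the $u$-grading and the other components are $\nabla_E$ applied coefficient-wise, and these two operators commute because $d(u) = \mathrm{dlog}1$ is pulled back from $\Sp(K)^\times$ and hence $d(du)=0$ and $\mathrm{dlog}1$ wedges trivially with itself. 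Once this decomposition is pinned down, the double complex structure is immediate and the rest is the characteristic-zero Poincaré lemma for $K[u]$ as above. Since $p_u$ is clearly a morphism of complexes (it is induced by the morphism of log schemes / sheaves of differentials sending $u, du$ to $0$), it only remains to assemble these observations; I would also remark that the affine-versus-general issue does not arise here since the statement is about complexes of sheaves and quasi-isomorphism can be checked termwise, or after passing to hypercohomology via a \v{C}ech resolution exactly as in the proof of Theorem \ref{extensionsoflattices}.
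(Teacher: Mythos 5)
Your overall plan (organize $\mathrm{DR}((E,\nabla_E)[u])$ as a bicomplex, kill the $u$-direction by a characteristic-zero acyclicity statement, and land on the relative de Rham complex) is the same as the paper's, but the specific decomposition you choose is incorrect, and the error is exactly at the point you flag as ``the one point requiring care''. You split $\omega^{\bullet}_{X^{\times}/\Sp(K)}$ as $\omega^{\bullet}_{X^{\times}/\Sp(K)^{\times}}\oplus\bigl(\omega^{\bullet-1}_{X^{\times}/\Sp(K)^{\times}}\wedge du\bigr)$ and assert that the resulting connecting map of the two-column complex is $\partial_u$ acting on the polynomial coefficients, with no cross terms. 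This is false: the differential of $t_u((E,\nabla_E))$ sends $eu^i$ to $\nabla_E(e)u^i+e\,iu^{i-1}du$, and the first summand $\nabla_E(e)u^i$ has a nonzero $du$-component in general, because $\nabla_E$ is the \emph{absolute} connection of an object of $MIC(X^{\times}/\Sp(K))^{\mathrm{nr}}$. Concretely, if $D_0$ is the derivation dual to $du$ in a local basis of $\omega^1_{X^{\times}/\Sp(K)}$ containing $du$, that component is $\nabla_E(D_0)(e)\,u^i\otimes du$, where $\nabla_E(D_0)$ is locally of the form $x_r\partial/\partial x_r$ plus a matrix; a similar cross term arises from the exterior derivative of a relative form lifted through your splitting. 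So the true connecting map is $\partial_u+\Phi$ with $\Phi$ a first-order differential operator acting $u$-coefficient-wise, and your argument breaks: the integration operator $s(u^n)=u^{n+1}/(n+1)$ is a contracting homotopy for $\partial_u$ but not for $\partial_u+\Phi$, and $\partial_u+\Phi$ need not even be surjective on polynomials in $u$ (solving $(i+1)e_{i+1}+\Phi(e_i)=f_i$ forces $e_{m+k}=\pm\frac{m!}{(m+k)!}\Phi^k(e_m)$ beyond the degree of $f$, which terminates only if $\Phi$ is locally nilpotent, and $x_r\partial/\partial x_r$ is not). A quick sanity check that your identification is too strong: it would exhibit $\mathrm{DR}((E,\nabla_E)[u])$, with its given underlying graded sheaf, as a complex depending only on $r((E,\nabla_E))$, whereas two different lifts of the same relative connection give visibly different absolute differentials.

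The repair is to peel off only the $\mathcal{O}_X$-linear part of the $u$-direction, which is what the paper does: take as vertical differential the map $\alpha u^i\mapsto\alpha\wedge iu^{i-1}du$ and keep the \emph{full absolute} de Rham differential, applied $u$-coefficient-wise, as the horizontal one. Then no splitting of $\omega^1_{X^{\times}/\Sp(K)}\twoheadrightarrow\omega^1_{X^{\times}/\Sp(K)^{\times}}$ is needed and no cross terms appear. The columns become Koszul-type complexes for wedging with $du$, exact away from the $u^0$-end because $du$ is locally part of a basis of $\omega^1_{X^{\times}/\Sp(K)}$ (here characteristic zero enters only through the invertibility of the integer factors $i$), and the cokernel at the $u^0$-end is $E\otimes\omega^{n}_{X^{\times}/\Sp(K)}/\bigl(E\otimes\omega^{n-1}_{X^{\times}/\Sp(K)}\wedge du\bigr)\cong E\otimes\omega^{n}_{X^{\times}/\Sp(K)^{\times}}$ with the induced relative differential, i.e.\ $\mathrm{DR}(p_u((E,\nabla_E)[u]))$. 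The spectral sequence of the bicomplex (which has finitely many nonzero terms in each total degree) then gives the quasi-isomorphism. Your closing remark that the quasi-isomorphism may be checked locally is correct and is indeed needed for this step.
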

\begin{proof} The proof is inspired by \cite[lemma 6]{KimHai04}. 
We can regard the de Rham complex $\mathrm{DR}((E, \nabla_E)[u])$ as the total complex of the bicomplex 
\begin{equation}\label{bicomplexforE}
\xymatrix
{\vdots\ar[d]&\vdots\ar[d]&\vdots\ar[d]\\ 
0\ar[d]\ar[r]&0\ar[d]\ar[r]&E u^2\ar[d]\ar[r]&\cdots\\
0\ar[d]\ar[r]&Eu\ar[r]\ar[d]                                    &E\otimes\omega^1_{X^{\times}/\Sp(K)}u\ar[d]\ar[r]&\cdots\\
E\ar[r]       & E\otimes \omega^1_{X^{\times}/\Sp(K)}\ar[r]&E\otimes \omega^2_{X^{\times}/\Sp(K)}\ar[r]&\cdots, 
}
\end{equation}
where the vertical maps are given locally by $\alpha u^i\mapsto \alpha \wedge iu^{i-1}du$ and the horizontal maps are induced by the differentials of the de Rham complex of $(E, \nabla_E)$.
The cohomology of the total complex of (\ref{bicomplexforE}) gives the cohomology of $\mathrm{DR}((E, \nabla_E)[u])$. We first prove that all the columns of (\ref{bicomplexforE}) are exact except at the last term. Let us consider the column
$$
E\otimes \omega^{j-1}_{X^{\times}/\Sp(K)}u^{i+1}\xrightarrow{\gamma_{j-1}} E\otimes \omega^j_{X^{\times}/\Sp(K)}u^i\xrightarrow{\gamma_j} E\otimes \omega^{j+1}_{X^{\times}/\Sp(K)}u^{i-1}.
$$
It is enough to study the exactness of the maps 
$$
\omega^{j-1}_{X^{\times}/\Sp(K)}u^{i+1}\xrightarrow{\delta_{j-1}} \omega^j_{X^{\times}/\Sp(K)}u^i\xrightarrow{\delta_j} \omega^{j+1}_{X^{\times}/\Sp(K)}u^{i-1},
$$
because $\gamma_j=\mathrm{id}\otimes \delta_j $ for every $j$ and $E$ is locally free. 
An element $\beta u^i$ of $\omega^j_{X^{\times}/K}u^i$ is in the kernel of $\delta_j$ if and only if $\beta \wedge iu^{i-1}du=0.$ 
If we work locally and choose $\mathrm{dlog}x_{1},\dots, \mathrm{dlog}x_{n-1}, du $ as a basis for $\omega^1_{X^{\times}/\Sp(K)},$ $\beta$ can be written uniquely as an $\mathcal{O}_X$-linear combination of the $j$-th exterior powers of the elements of the basis of $\omega^1_{X^{\times}/\Sp(K)}$ we chose. Hence if $\beta \wedge iu^{i-1}du =0,$ then $\beta$ is an $\mathcal{O}_X$-linear combination of $j$-th exterior powers which contain $du.$ But this happens if and only if $\beta u^i$ is in the image of $\delta_{j-1}$, \emph{i.e} the columns are exact.

Hence, to calculate the cohomology of the total complex of (\ref{bicomplexforE}), we only need to calculate the cohomology of the complex given by the cokernels of the last vertical maps. It is given by
$$E\longrightarrow E\otimes \omega^1_{X^{\times}/\Sp(K)^{\times}}\longrightarrow E\otimes \omega^2_{X^{\times}/\Sp(K)^{\times}} \longrightarrow\cdots, $$
\emph{i.e.} the complex $\mathrm{DR}(p_u((E, \nabla_E)[u]))$. Hence the proof is finished. 
\end{proof}

We are ready to compare $\pi_1(X^{\times}/\Sp(K)^{\times}, x)^{\mathrm{tri}}$ and $\pi_1(X^{\times}/\Sp(K)[u], x)^{\mathrm{tri}}$. First we treat the case where $K$ is algebraically closed. 

\begin{prop}\label{pi(p_u)iso}
Suppose that $K$ is algebraically closed. Then the map $\pi(p_u)^{\mathrm{tri}}$ is an isomorphism:
$$\pi(p_u)^{\mathrm{tri}}:\pi_1(X^{\times}/\Sp(K)^{\times}, x)^{\mathrm{tri}} \xrightarrow{\cong} \pi_1(X^{\times}/\Sp(K)[u], x)^{\mathrm{tri}}. $$

\end{prop}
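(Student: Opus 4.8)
The plan is to prove that $\pi(p_u)^{\mathrm{tri}}$ is an isomorphism by showing it is a closed immersion, since it is already known to be faithfully flat by proposition \ref{pipusurjective}. By theorem \ref{criterion}(ii), it suffices to show that every object of $MIC(X^{\times}/\Sp(K)^{\times})^{\mathrm{nr},\mathrm{tri}}$ is a subquotient of an object of the form $p_u(F,\nabla_F)$ with $(F,\nabla_F) \in MIC(X^{\times}/\Sp(K)[u])^{\mathrm{nr},\mathrm{tri}}$; in fact, as announced just before the statement, I will show the stronger fact that every such object is actually \emph{isomorphic} to some $p_u(F,\nabla_F)$.

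First I would establish the rank $1$ case. By theorem \ref{lifting}, every rank $1$ object $(E,\nabla_E)$ of $MIC(X^{\times}/\Sp(K)^{\times})^{\mathrm{nr}}$ lifts uniquely to an object $(E,\widetilde{\nabla_E})$ in $MIC(X^{\times}/\Sp(K))^{\mathrm{nr}}$. Applying $t_u$ gives $(E,\widetilde{\nabla_E})[u] \in MIC(X^{\times}/\Sp(K)[u])^{\mathrm{nr}}$, and by construction $p_u(t_u((E,\widetilde{\nabla_E}))) \cong r((E,\widetilde{\nabla_E})) \cong (E,\nabla_E)$ (both functors $p_u \circ t_u$ and $r$ are induced by killing the log part, so agree). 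Since $(E,\widetilde{\nabla_E})[u]$ has rank $1$ over $\mathcal{O}_X[u]$, it lies in $MIC(X^{\times}/\Sp(K)[u])^{\mathrm{nr},\mathrm{tri}}$ (rank $1$ objects are trivially geometrically protrigonalizable). So the rank $1$ case is done.

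For the general case, let $(E,\nabla_E) \in MIC(X^{\times}/\Sp(K)^{\times})^{\mathrm{nr},\mathrm{tri}}$. By definition of ${}^{\mathrm{tri}}$ (remark \ref{cattri}) and since $K$ is algebraically closed, $(E,\nabla_E)$ is an iterated extension of rank $1$ objects $(L_1,\nabla_1), \dots, (L_m,\nabla_m)$ in $MIC(X^{\times}/\Sp(K)^{\times})^{\mathrm{nr}}$. I would induct on $m$. For $m=1$ this is the previous paragraph. For the inductive step, write $(E,\nabla_E)$ as an extension
\begin{equation*}
0 \longrightarrow (E',\nabla_{E'}) \longrightarrow (E,\nabla_E) \longrightarrow (L_m,\nabla_m) \longrightarrow 0,
\end{equation*}
where $(E',\nabla_{E'})$ is an iterated extension of $(L_1,\nabla_1),\dots,(L_{m-1},\nabla_{m-1})$. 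By induction, $(E',\nabla_{E'}) \cong p_u(F',\nabla_{F'})$ for some $(F',\nabla_{F'}) \in MIC(X^{\times}/\Sp(K)[u])^{\mathrm{nr},\mathrm{tri}}$, and $(L_m,\nabla_m) \cong p_u(G,\nabla_G)$ for some rank $1$ object $(G,\nabla_G) \in MIC(X^{\times}/\Sp(K)[u])^{\mathrm{nr},\mathrm{tri}}$. The extension $(E,\nabla_E)$ corresponds to a class in $\mathrm{Ext}^1_{MIC(X^{\times}/\Sp(K)^{\times})^{\mathrm{nr}}}((L_m,\nabla_m),(E',\nabla_{E'}))$, which by dualizing $(L_m,\nabla_m)$ is computed by $H^1_{\mathrm{dR}}(X^{\times}/\Sp(K)^{\times}, (L_m,\nabla_m)^{\vee}\otimes(E',\nabla_{E'}))$. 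The heart of the argument is to lift this class: by proposition \ref{quasiisoHirsch}, the functor $p_u$ induces a quasi-isomorphism $\mathrm{DR}((G,\nabla_G)^{\vee}\otimes(F',\nabla_{F'})[u]\text{-type object}) \to \mathrm{DR}(p_u(\cdots))$, which gives an isomorphism on $H^1$; hence the extension class in $\mathrm{Ext}^1_{MIC(X^{\times}/\Sp(K)^{\times})^{\mathrm{nr}}}$ lifts to a class in $\mathrm{Ext}^1_{MIC(X^{\times}/\Sp(K)[u])^{\mathrm{nr}}}((G,\nabla_G),(F',\nabla_{F'}))$. Here I need that $MIC(X^{\times}/\Sp(K)[u])^{\mathrm{nr}}$ is closed under extensions, which is theorem \ref{extensionsoflattices}. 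The resulting extension $(F,\nabla_F)$ of $(G,\nabla_G)$ by $(F',\nabla_{F'})$ lies in $MIC(X^{\times}/\Sp(K)[u])^{\mathrm{nr},\mathrm{tri}}$ (it is an iterated extension of rank $1$ objects, being built from such over $\mathcal{O}_X[u]$), and $p_u(F,\nabla_F) \cong (E,\nabla_E)$ by functoriality of $p_u$ on extension classes. This completes the induction, hence the proof.

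The main obstacle is the comparison of $\mathrm{Ext}^1$ groups, i.e. making precise that proposition \ref{quasiisoHirsch} yields a bijection between extension classes upstairs and downstairs that is compatible with the $p_u$ functor. One must check that the quasi-isomorphism of de Rham complexes there is genuinely functorial in the coefficient object and sends the class of the sequence \eqref{eq:exseqext}-type extension in $\mathrm{DR}$-cohomology to the class of its $p_u$-image; and one must verify that the lifted extension, a priori only a coherent $\mathcal{O}_X[u]$-module with integrable connection, actually lands in $MIC(X^{\times}/\Sp(K)[u])^{\mathrm{nr}}$ — which is exactly where theorem \ref{extensionsoflattices} is invoked — and that it is geometrically protrigonalizable. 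The protrigonalizability of the lift and the compatibility of $p_u$ with $\mathrm{Ext}^1$ are the two points that require care; everything else is formal once theorems \ref{lifting}, \ref{extensionsoflattices} and proposition \ref{quasiisoHirsch} are in hand.
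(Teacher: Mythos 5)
Your overall strategy --- reduce to showing $\pi(p_u)^{\mathrm{tri}}$ is a closed immersion, prove essential surjectivity of $p_u$ on the protrigonalizable subcategories by induction on rank, with theorem \ref{lifting} handling rank $1$ and an $\mathrm{Ext}^1$ comparison handling the inductive step --- is exactly the paper's. But there is a genuine gap at the point you yourself flag as ``the heart of the argument''. Proposition \ref{quasiisoHirsch} is stated only for objects of the form $t_u((E,\nabla_E))=(E,\nabla_E)[u]$ with $(E,\nabla_E)\in MIC(X^{\times}/\Sp(K))^{\mathrm{nr}}$, i.e.\ for objects in the image of $t_u$. In your inductive step the coefficient object $(G,\nabla_G)^{\vee}\otimes(F',\nabla_{F'})$ is a general object of $MIC(X^{\times}/\Sp(K)[u])^{\mathrm{nr},\mathrm{tri}}$: already when $(F',\nabla_{F'})$ is a nonsplit extension of two rank-$1$ $[u]$-type objects it need not itself be of the form $(E',\widetilde{\nabla})[u]$, so proposition \ref{quasiisoHirsch} does not apply to it and the asserted isomorphism on $H^1_{\mathrm{dR}}$ does not follow. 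Your hedge ``$(F',\nabla_{F'})[u]$-type object'' papers over precisely this difficulty rather than resolving it.

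The paper closes the gap by strengthening the induction hypothesis: one proves simultaneously, by induction on the rank of $(E,\nabla_E)$, (a) the existence of a lift $(F,\nabla_F)$ with $p_u(F,\nabla_F)\cong(E,\nabla_E)$, and (b) that the maps $H^i_{\mathrm{dR}}(X^{\times}/\Sp(K),(F,\nabla_F))\to H^i_{\mathrm{dR}}(X^{\times}/\Sp(K)^{\times},(E,\nabla_E))$ induced by $p_u$ are isomorphisms for \emph{all} $i$. In the inductive step the object $(E'',\nabla_{E''})^{\vee}\otimes(E',\nabla_{E'})$ has rank equal to that of $E'$, hence strictly smaller than that of $E$, so part (b) of the induction hypothesis supplies the required isomorphism of $H^1$'s (and hence of $\mathrm{Ext}^1$ groups, using theorem \ref{extensionsoflattices} to identify $\mathrm{Ext}^1$ in $MIC(X^{\times}/\Sp(K)[u])^{\mathrm{nr}}$ with the de Rham $H^1$); part (b) for $E$ itself then follows from the long exact sequences in cohomology and the five lemma. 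Proposition \ref{quasiisoHirsch} is only ever invoked in the rank-$1$ base case, where the lift really is of the form $(E,\widetilde{\nabla}_E)[u]$. The remainder of your argument (the rank-$1$ case via theorem \ref{lifting}, closedness under extensions via theorem \ref{extensionsoflattices}, protrigonalizability of the lift) matches the paper and is fine.
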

\begin{proof}

We will show that, for every $(E, \nabla_E)\in MIC(X^{\times}/\Sp(K)^{\times})^{\rm nr,tri}$, there exists an
object $(F, \nabla_F)$ in $MIC(X^{\times}/\Sp(K)[u])^{\rm nr,tri}$ with $p_u(F, \nabla_F) = (E,\nabla_E)$
(where ${}^{\rm tri}$ is as in remark \ref{cattri}), and that the morphisms 
of de Rham cohomologies 
\begin{equation}\label{drpu}
H^i_{\rm dR}(X^{\times}/\Sp(K),  (F, \nabla_{F})) 
\rightarrow  H^i_{\mathrm{dR}}(X^{\times}/\Sp(K)^{\times}, (E, \nabla_{E})) \,\,\,\, (i \in \mathbb{N}) 
\end{equation}
induced by $p_u$ are isomorphisms. 

We prove the above claim by induction on the rank of $(E, \nabla_E)$. When 
$(E, \nabla_E)$ is of rank $1$, thanks to theorem \ref{lifting}, we know that there exists $(E, \widetilde{\nabla}_E)\in MIC(X^{\times}/\Sp(K))^{\mathrm{nr}}$ 
such that $p_u((E, \widetilde{\nabla}_E)[u])=(E, \nabla_E)$. Moreover, 
the morphisms 
$$
H^i_{\rm dR}(X^{\times}/\Sp(K),  (E, \widetilde{\nabla}_{E})[u]) 
\rightarrow  H^i_{\mathrm{dR}}(X^{\times}/\Sp(K)^{\times}, (E, \nabla_{E})) \,\,\,\, (i \in \mathbb{N}) 
$$ 
are isomorphisms by proposition \ref{quasiisoHirsch}. So we are done in rank $1$ case. 

In general case, we have an exact sequence 
\begin{equation}\label{e-diag}
0\rightarrow (E', \nabla_{E'})\rightarrow (E, \nabla_E)\rightarrow (E'', \nabla_{E''})\rightarrow 0 
\end{equation}
in $MIC(X^{\times}/\Sp(K)^{\times})^{\rm nr,tri}$ with $E''$ of rank $1$. Also, there exist 
$(F',\nabla_{F'}), (F'',\nabla_{F''})$ in $MIC(X^{\times}/\Sp(K)[u])^{\rm nr,tri}$ with 
$p_u(F', \nabla_{F'}) = (E',\nabla_{E'}), p_u(F'', \nabla_{F''}) = (E'',\nabla_{E''})$, by induction hypothesis. 
The above exact sequence defines an element in 
$$\mathrm{Ext}^1( (E'',\nabla_{E''}), (E', \nabla_{E'}))\cong H^1_{\mathrm{dR}}(X^{\times}/\Sp(K)^{\times}, 
(E'',\nabla_{E''})^{\vee}\otimes (E', \nabla_{E'})). $$ On the other hand, the group of extension classes 
$$ \mathrm{Ext}^1_{MIC(X^{\times}/\Sp(K)[u])^{\mathrm{nr}}}((F'',\nabla_{F''}), (F', \nabla_{F'}))$$ 
is isomorphic to $H^1_{\rm dR}(X^{\times}/\Sp(K),  (F'',\nabla_{F''})^{\vee} \otimes (F', \nabla_{F'}))$ by theorem \ref{extensionsoflattices}. Because the map 
$$ 
H^1_{\rm dR}(X^{\times}/\Sp(K),  (F'',\nabla_{F''})^{\vee} \otimes (F', \nabla_{F'})) 
\rightarrow  H^1_{\mathrm{dR}}(X^{\times}/\Sp(K)^{\times}, 
(E'',\nabla_{E''})^{\vee}\otimes (E', \nabla_{E'}))
$$ 
induced by $p_u$ is an isomorphism by induction hypothesis, we obtain an exact sequence 
$$ 0\rightarrow (F', \nabla_{F'})\rightarrow (F, \nabla_F)\rightarrow (F'', \nabla_{F''})\rightarrow 0 $$
in $MIC(X^{\times}/\Sp(K)[u])^{\rm nr,tri}$ which gives rise to the exact sequence \eqref{e-diag} when we apply $p_u$ to it. 
Moreover, we see that the morphisms \eqref{drpu} 
induced by $p_u$ are isomorphisms by induction hypothesis and five lemma. So we are done. 
\end{proof}

Next we compare $\pi_1(X^{\times}/\Sp(K)^{\times}, x)^{\mathrm{tri}}$ and $\pi_1(X^{\times}/\Sp(K)[u], x)^{\mathrm{tri}}$ in the general case by reducing to the 
previous case. 
In the following, let $\overline{K}$ be an algebraic closure of $K$. 
Also, for a field $L$ containing $K$, 
let $X_{L}$ (resp. $X_{L}^{\times}$) 
be $X\otimes_K L$ (resp.  $X^{\times} \otimes_K L$), 
let $\Sp(L)^{\times}$ be $\Sp(K)^{\times} \otimes_K L$ and 
let $\alpha_L: X_{L}^{\times} \rightarrow X^{\times}$ be the 
natural projection. 

\begin{cor}\label{pi(p_u)isocor}
The map $\pi(p_u)^{\mathrm{tri}}$ is an isomorphism:
$$\pi(p_u)^{\mathrm{tri}}:\pi_1(X^{\times}/\Sp(K)^{\times}, x)^{\mathrm{tri}}\xrightarrow{\cong} \pi_1(X^{\times}/\Sp(K)[u], x)^{\mathrm{tri}}. $$
\end{cor}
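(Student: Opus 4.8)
The plan is to deduce Corollary~\ref{pi(p_u)isocor} from Proposition~\ref{pi(p_u)iso} by descent along the field extension $K \subset \overline{K}$, using Proposition~\ref{gtri} to control how the maximal geometrically protrigonalizable quotients behave under scalar extension. First I would recall the base change behaviour of the relevant Tannakian categories: for $L = \overline{K}$, pulling back along $\alpha_L: X_L^{\times} \to X^{\times}$ induces functors $MIC(X^{\times}/\Sp(K)^{\times})^{\mathrm{nr}} \to MIC(X_L^{\times}/\Sp(L)^{\times})^{\mathrm{nr}}$, $MIC(X^{\times}/\Sp(K)[u])^{\mathrm{nr}} \to MIC(X_L^{\times}/\Sp(L)[u])^{\mathrm{nr}}$, and similarly for $MIC(X^{\times}/\Sp(K))^{\mathrm{nr}}$, all compatible with $r$, $t_u$, $p_u$; on Tannaka duals these correspond to the base change morphisms $\pi_1(\,\cdot\,/\,\cdot\,, x_L) \to \pi_1(\,\cdot\,/\,\cdot\,, x)\otimes_K \overline{K}$, which are isomorphisms because the category of representations of $G\otimes_K \overline{K}$ is the scalar extension (in the sense of \cite{Del89}, \S4) of $\mathrm{Rep}_K(G)$. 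Thus $\pi_1(X^{\times}/\Sp(K)^{\times}, x)\otimes_K \overline{K} \cong \pi_1(X_{\overline{K}}^{\times}/\Sp(\overline{K})^{\times}, x_{\overline{K}})$ and likewise for the $[u]$-version, and these identifications are compatible with $\pi(p_u)$.

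Next I would combine this with Proposition~\ref{gtri}, which tells us that taking the maximal geometrically protrigonalizable quotient commutes with scalar extension from $K$ to $\overline{K}$: for an affine group scheme $G$ over $K$ one has $(G\otimes_K \overline{K})^{\mathrm{tri}} \cong G^{\mathrm{tri}}\otimes_K \overline{K}$. Indeed, by Remark~\ref{cattri}, $G^{\mathrm{tri}}$ is the Tannaka dual of $\mathrm{Rep}_K(G)^{\mathrm{tri}}$, whose objects are exactly those $V$ with $V\otimes_K\overline{K}$ an iterated extension of rank one objects of $\mathrm{Rep}_{\overline{K}}(G\otimes_K\overline{K})$; this condition is visibly stable under $-\otimes_K\overline{K}$ and, using Proposition~\ref{gtri} applied to the images $\rho(G)$, the scalar extension of $\mathrm{Rep}_K(G)^{\mathrm{tri}}$ is $\mathrm{Rep}_{\overline{K}}(G\otimes_K\overline{K})^{\mathrm{tri}}$, so taking Tannaka duals gives the desired identification. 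Applying this to $G = \pi_1(X^{\times}/\Sp(K)^{\times}, x)$ and $G = \pi_1(X^{\times}/\Sp(K)[u], x)$ and combining with the base change isomorphisms of the previous paragraph, we get a commutative square
\begin{equation*}
\xymatrix{
\pi_1(X^{\times}/\Sp(K)^{\times}, x)^{\mathrm{tri}}\otimes_K \overline{K}\ar[r]^-{\pi(p_u)^{\mathrm{tri}}\otimes\overline{K}}\ar[d]_-{\cong}& \pi_1(X^{\times}/\Sp(K)[u], x)^{\mathrm{tri}}\otimes_K \overline{K}\ar[d]^-{\cong}\\
\pi_1(X_{\overline{K}}^{\times}/\Sp(\overline{K})^{\times}, x_{\overline{K}})^{\mathrm{tri}}\ar[r]^-{\pi(p_u)^{\mathrm{tri}}}& \pi_1(X_{\overline{K}}^{\times}/\Sp(\overline{K})[u], x_{\overline{K}})^{\mathrm{tri}}
}
\end{equation*}
in which the bottom arrow is an isomorphism by Proposition~\ref{pi(p_u)iso} (note that $X_{\overline{K}}$ carries a $\overline{K}$-rational point and is a normal crossing log variety over $\overline{K}$, so the hypotheses are met). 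Hence $\pi(p_u)^{\mathrm{tri}}\otimes_K\overline{K}$ is an isomorphism, and since $\overline{K}$ is faithfully flat over $K$, the original map $\pi(p_u)^{\mathrm{tri}}$ is an isomorphism as well.

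The main obstacle I anticipate is checking the compatibility assertions carefully rather than any deep new input: namely that the pullback functors along $\alpha_{\overline{K}}$ genuinely land in the "$\mathrm{nr}$" (and, for the $[u]$-category, conditions (i)--(iii)) subcategories and are compatible with $p_u$, $t_u$, $r$ up to natural isomorphism, and that these pullbacks realise the scalar extension of Tannakian categories in the precise sense needed so that the Tannaka dual of the base-changed category is the base change of the Tannaka dual. Checking that $X_{\overline{K}}^{\times}$ is again a normal crossing log variety over $\Sp(\overline{K})$ with the induced $f$ log smooth, so that all the categories and fundamental groups of the earlier sections are defined over $\overline{K}$, is routine but must be said. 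None of this requires new ideas beyond Proposition~\ref{gtri}, Remark~\ref{cattri}, Proposition~\ref{pi(p_u)iso}, and standard faithfully flat descent for affine group schemes.
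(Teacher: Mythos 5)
Your route is genuinely different from the paper's. The paper does not base-change the group homomorphism at all: it proves that $\pi(p_u)^{\mathrm{tri}}$ is a closed immersion by establishing essential surjectivity of $p_u$ on the tri-subcategories, descending a single object $(F_{\overline{K}},\nabla_{F_{\overline{K}}})$ produced by proposition \ref{pi(p_u)iso} first to a finite Galois subextension $L/K$ and then to $K$ via Galois descent together with the full faithfulness of $p_u$ (which follows from proposition \ref{pipusurjective}); it then checks that the descended object lies in the tri-subcategory using the faithful flatness of $\pi(p_u)\otimes\mathrm{id}$. Combined with the faithful flatness of $\pi(p_u)^{\mathrm{tri}}$ already in hand, closed immersion gives the isomorphism. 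Your argument instead extends scalars on the whole diagram to $\overline{K}$ and descends the property of being an isomorphism. Your step identifying $(G\otimes_K\overline{K})^{\mathrm{tri}}$ with $G^{\mathrm{tri}}\otimes_K\overline{K}$ is correct and can be justified along the lines you sketch (every representation of $G_{\overline{K}}$ is a direct summand of $V\otimes_K\overline{K}$ for $V$ the restriction of scalars of a model over a finite subextension, and the conjugates of an iterated extension of rank one objects are again such), and the final faithfully flat descent step is unproblematic.

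The substantive issue is the ingredient you dismiss as routine compatibility: the base change isomorphisms $\pi_1(X^{\times}_{\overline{K}}/\Sp(\overline{K})^{\times},x)\cong\pi_1(X^{\times}/\Sp(K)^{\times},x)\otimes_K\overline{K}$ and, above all, $\pi_1(X^{\times}_{\overline{K}}/\Sp(\overline{K})[u],x)\cong\pi_1(X^{\times}/\Sp(K)[u],x)\otimes_K\overline{K}$. The paper's only base change statement, lemma \ref{basechangeprop-corrected}, covers finite extensions and only the category $MIC(X^{\times}/\Sp(K))^{\mathrm{nr}}$; nothing is proved for the relative category, for the $[u]$-category, or for the infinite extension $\overline{K}$. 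To make your square commute with isomorphisms on the vertical arrows you must show that the geometric category over $\overline{K}$ realizes Deligne's scalar extension of the category over $K$, which amounts to showing that every object (and morphism) over $\overline{K}$ is defined over a finite subextension. For $MIC(X^{\times}_{\overline{K}}/\Sp(\overline{K})[u])^{\mathrm{nr}}$ this is not immediate: an object is a coherent $\mathcal{O}_{X_{\overline{K}}}[u]$-module together with a presentation as an $\mathbb{N}$-indexed colimit of objects of $MIC(X^{\times}_{\overline{K}}/\Sp(\overline{K}))^{\mathrm{nr}}$ (condition (iii) of definition \ref{categoriauu}), and one must check that such a presentation can be arranged over a single finite subextension. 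This is exactly the work the paper's proof is structured to avoid, by descending one object at a time through a finite Galois extension and never forming the fundamental group over $\overline{K}$. So your outline can be completed, but the base change isomorphisms for the Tannaka duals over $\overline{K}$ need an actual proof rather than a remark, and supplying it is comparable in length to the paper's argument.
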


\begin{proof}
We prove that $\pi(p_u)^{\mathrm{tri}}$ is a closed immersion. To do so, it suffices 
to prove the essential surjectivity of $p_u: MIC(X^{\times}/\Sp(K)[u])^{\rm nr,tri} \to 
MIC(X^{\times}/\Sp(K)^{\times})^{\rm nr,tri}$. 
Let $(E, \nabla_E)\in MIC(X^{\times}/\Sp(K)^{\times})^{\rm nr,tri}$ and let 
$(V,\rho:\pi_1(X^{\times}/\Sp(K),x) \rightarrow GL(V))$ be the corresponding representation. 
Then the base change $\rho_{\overline{K}}: \pi_1(X^{\times}/\Sp(K),x) \otimes_K \overline{K} \rightarrow GL(V\otimes_K \overline{K})$ 
of $\rho$ to $\overline{K}$ is an iterated extension of rank $1$ representations. 
The morphism 
$\alpha_{\overline{K}}: X_{\overline{K}} \rightarrow X$ induces the 
morphism $\pi(\alpha_{\overline{K}}): 
\pi_1(X^{\times}_{\overline{K}}/\Sp(\overline{K}),x) \rightarrow 
\pi_1(X^{\times}/\Sp(K),x) \otimes_K \overline{K}$ and the pullback 
$\alpha_{\overline{K}}^*(E,\nabla_E) \in MIC(X^{\times}_{\overline{K}}/\Sp(\overline{K})^{\times})^{\rm nr}$ 
of $(E,\nabla_E)$ by $\alpha_{\overline{K}}$ corresponds to the representation 
$\rho_{\overline{K}} \circ \pi(\alpha_{\overline{K}})$. Hence it is an iterated extension of rank $1$ representations and so 
$\alpha_{\overline{K}}^*(E,\nabla_E)$ is an object in  $MIC(X^{\times}_{\overline{K}}/\Sp(\overline{K})^{\times})^{\rm nr,tri}$. 
Hence, by proposition \ref{pi(p_u)iso}, there exists an object 
$(F_{\overline{K}}, \nabla_{F_{\overline{K}}}) \in MIC(X_{\overline{K}}^{\times}/\Sp(\overline{K})[u])^{\rm nr}$ such that $p_u(F_{\overline{K}}, \nabla_{F_{\overline{K}}}) \cong  \alpha_{\overline{K}}^*(E,\nabla_E)$. 
By standard argument, we see that there exists a finite Galois subextension $L$ of $K$ in $\overline{K}$ 
such that $(F_{\overline{K}}, \nabla_{F_{\overline{K}}})$ is the pullback of some 
object $(F_L,\nabla_{F_L})$ in $MIC(X_{L}^{\times}/\Sp(L)[u])^{\rm nr}$ and that 
$p_u(F_L,\nabla_{F_L})$ 
is isomorphic to $\alpha_{L}^*(E,\nabla_E)$. By full faithfulness of 
$p_u: MIC(X^{\times}/\Sp(K)[u])^{\rm nr} \to 
MIC(X^{\times}/\Sp(K)^{\times})^{\rm nr}$, 
which follows from 
proposition \ref{pipusurjective} and 
Galois descent, 
 $(F_L,\nabla_{F_L})$ descends to an object 
$(F,\nabla_F)$ in $MIC(X^{\times}/\Sp(K)[u])^{\rm nr}$ and we have an isomorphism 
$p_u(F,\nabla_F) \cong (E,\nabla_E)$. 

Thus it suffices to prove that $(F,\nabla_F)$ belongs to 
 $MIC(X^{\times}/\Sp(K)[u])^{\rm nr,tri}$. Let $$(W,\tau: \pi_1(X^{\times}/\Sp(K)[u],x) \rightarrow GL(W))$$ 
be the representation corresponding to $(F,\nabla_F)$. Then the composition 
$$ 
\pi_1(X^{\times}/\Sp(K)^{\times},x) \otimes_K \overline{K} 
\overset{\pi(p_u) \otimes {\rm id}}{\rightarrow} 
\pi_1(X^{\times}/\Sp(K)[u],x) \otimes_K \overline{K} 
\overset{\tau_{\overline{K}}}{\rightarrow} 
GL(W \otimes_K \overline{K}) 
$$ 
(where $\tau_{\overline{K}}$ is the base change of $\tau$ to $\overline{K}$)
is isomorphic to 
$\rho_{\overline{K}}$ 
by construction. Since the latter representation is an iterated extension of rank $1$ representations and the map $\pi(p_u) \otimes {\rm id}$ is faithfully flat, we see that 
$\tau_{\overline{K}}$ is also an iterated extension of rank $1$ representations. 
Hence $(F,\nabla_F)$ belongs to 
 $MIC(X^{\times}/\Sp(K)[u])^{\rm nr,tri}$ and the proof is finished. 
\end{proof}

As a lemma for the proof of the main theorem, 
we prove a base change property of our algebraic fundamental groups. 

\begin{lemma}\label{basechangeprop-corrected}
Let $L$ be a finite extension of $K$. Then 
we have an isomorphism 
\begin{equation}\label{basechange}
\pi_{1}(X_L^{\times}/\Sp(L), x) \cong 
\pi_{1}(X^{\times}/\Sp(K), x)\otimes_K L. 
\end{equation}
\end{lemma}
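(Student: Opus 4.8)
The plan is to identify $MIC(X_L^{\times}/\Sp(L))^{\mathrm{nr}}$, as a neutral Tannakian category over $L$ together with its fibre functor, with the scalar extension (in the sense of \cite[\S 4]{Del89}) of $MIC(X^{\times}/\Sp(K))^{\mathrm{nr}}$ by $L$, and then to invoke the general fact that the Tannaka dual of the scalar extension of a neutral Tannakian category $\mathcal{C}$ over $K$ with fibre functor $\omega$ is $G(\mathcal{C},\omega)\otimes_K L$ (equivalently, $\mathrm{Rep}_L(G\otimes_K L)\cong \mathrm{Rep}_K(G)_L$ for any affine group scheme $G$ over $K$). Since $x$ induces the $L$-rational point $x_L:=x\otimes_K L$ of $X_L$, and since under this identification the fibre functor $\eta_{x_L}$ will correspond to the scalar extension $(\eta_x)_L$ of $\eta_x$, the isomorphism \eqref{basechange} then follows from theorem \ref{Saavedra} and the fact stated after it.

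To construct the identification, recall that the projection $\alpha_L:X_L^{\times}\to X^{\times}$ is finite étale on the underlying schemes (as $L/K$ is separable in characteristic $0$), that its log structure is pulled back from $X^{\times}$, and that log differentials base change, so that in fact $E'\otimes_{\mathcal{O}_{X_L}}\omega^1_{X_L^{\times}/\Sp(L)}=E'\otimes_{\mathcal{O}_X}\omega^1_{X^{\times}/\Sp(K)}$ for every $\mathcal{O}_{X_L}$-module $E'$. Since $\alpha_L$ is affine, pushforward $\alpha_{L*}$ is an equivalence from $\mathcal{O}_{X_L}$-modules to $\mathcal{O}_X\otimes_K L$-modules, i.e.\ to $\mathcal{O}_X$-modules equipped with an $L$-action; it is exact, and the projection formula turns an integrable connection relative to $\Sp(L)$ into an integrable connection relative to $\Sp(K)$ (with the $L$-action acting horizontally), and conversely, because $d$ kills $L$. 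I would then check that this equivalence restricts to an equivalence between $MIC(X_L^{\times}/\Sp(L))^{\mathrm{nr}}$ and the category of objects of $MIC(X^{\times}/\Sp(K))^{\mathrm{nr}}$ equipped with an $L$-action, i.e.\ $(MIC(X^{\times}/\Sp(K))^{\mathrm{nr}})_L$. This requires two verifications: first, that local freeness matches in both directions — here one uses that $\mathcal{O}_{X_L}=\mathcal{O}_X\otimes_K L$ is finite étale over $\mathcal{O}_X$, hence a coherent $\mathcal{O}_{X_L}$-module is locally free over $\mathcal{O}_{X_L}$ if and only if it is locally free over $\mathcal{O}_X$ (a finitely generated module over a finite étale $A$-algebra $B$ is $A$-flat iff $B$-flat, since $B$ is a direct summand of $B\otimes_A B$ as a $B$-module); and second, that nilpotence of residues matches — here one uses that closed points of $X_L$ surject onto closed points of $X$, that over a closed point $y$ of $X$ the strict localisation of $\mathcal{O}_{X_L}$ is a product of copies of that of $\mathcal{O}_{X,y}$, and that the characteristic monoids $\overline{M}^{\mathrm{gp}}$ on $X_L$ and $X$ agree, so that the residue of $\alpha_{L*}E'$ at $y$ is the product of the residues of $E'$ at the geometric points of $X_L$ over $y$, and nilpotence of one is equivalent to nilpotence of the other.

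Finally one matches fibre functors: for an object $E'$ of $MIC(X_L^{\times}/\Sp(L))^{\mathrm{nr}}$ one has $\eta_{x_L}(E')=E'(x_L)=(\alpha_{L*}E')(x)$ with its natural $L$-structure — using that $x$ is $K$-rational, so $\alpha_L^{-1}(x)=\Sp(L)$ and the fibre at $x_L$ is computed from the fibre at $x$ — so that $\eta_{x_L}$ corresponds to $(\eta_x)_L$. Hence
\begin{align*}
\pi_1(X_L^{\times}/\Sp(L),x) &= G(MIC(X_L^{\times}/\Sp(L))^{\mathrm{nr}},\eta_{x_L}) \\
&\cong G(MIC(X^{\times}/\Sp(K))^{\mathrm{nr}},\eta_x)\otimes_K L = \pi_1(X^{\times}/\Sp(K),x)\otimes_K L,
\end{align*}
as desired. (Alternatively, one can bypass the general Tannakian base change altogether by remarking that the chain of equivalences $\mathrm{Rep}_L(\pi_1(X^{\times}/\Sp(K),x)\otimes_K L)\cong (MIC(X^{\times}/\Sp(K))^{\mathrm{nr}})_L\cong MIC(X_L^{\times}/\Sp(L))^{\mathrm{nr}}=\mathrm{Rep}_L(\pi_1(X_L^{\times}/\Sp(L),x))$ is compatible with the underlying $L$-linear fibre functors, and then concluding by Tannakian reconstruction.) The only genuinely non-formal points are the two compatibility checks above — that local freeness and nilpotence of residues transport correctly through $\alpha_{L*}$ and $\alpha_L^{*}$ — and the identification of $\eta_{x_L}$ with $(\eta_x)_L$; everything else is bookkeeping, and this is where the precise definitions of the categories $MIC(-)^{\mathrm{nr}}$ must be used with care.
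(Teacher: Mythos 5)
Your proposal is correct and follows essentially the same route as the paper: both identify $MIC(X_L^{\times}/\Sp(L))^{\mathrm{nr}}$ via $\alpha_{L,*}$ with the category of objects of $MIC(X^{\times}/\Sp(K))^{\mathrm{nr}}$ equipped with an $L$-module structure, i.e.\ the scalar extension in the sense of \cite[\S 4]{Del89}, and then conclude by Tannakian base change. The paper compresses your two compatibility checks into the single observation that any object of $MIC(X_L^{\times}/\Sp(L))^{\mathrm{nr}}$ is a direct summand of $\alpha_L^*\alpha_{L,*}$ of itself, but the substance is the same.
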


\begin{proof}
It is easy to check that the push-forward by $\alpha_L$ induces an equivalence 
\begin{equation*}\alpha_{L,*}:MIC(X_{L}^{\times}/\Sp(L))^{\rm nr} \overset{\cong}{\rightarrow} 
\{(E, \nabla_E)\in MIC(X^{\times}/\Sp(K))^{\rm nr} \textrm{ with a $K$-linear structure of $L$-module}\},  \end{equation*}
by the fact that any object $(E,\nabla)$ in $MIC(X_{L}^{\times}/\Sp(L))^{\rm nr}$ is a direct summand of 
$\alpha_{L}^*\alpha_{L,*}(E,\nabla)$. The claim follows from this equivalence (see \cite[\S 4]{Del89}). 
\end{proof}

Now we have all the ingredients to prove the main theorem.
\begin{teo} 
The following sequence is exact:
\begin{equation}\label{exseqsol}
1\longrightarrow \pi_{1}(X^{\times}/\Sp(K)^{\times}, x)^{\mathrm{tri}}\longrightarrow \pi_{1}(X^{\times}/\Sp(K), x)^{\mathrm{tri}}\longrightarrow \pi_{1}(\Sp(K)^{\times}/\Sp(K), \nu)^{\mathrm{tri}}\longrightarrow 1. 
\end{equation}
\end{teo}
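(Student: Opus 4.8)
The plan is to take the exact sequence \eqref{exactsequencewithu}
$$1\rightarrow\pi_1(X^{\times}/\Sp(K)[u], x)\xrightarrow{\pi(t_u)} \pi_1(X^{\times}/\Sp(K), x)\xrightarrow{\pi(f^*_{\mathrm{dR}})} \pi_1(\Sp(K)^{\times}/\Sp(K), \nu)\rightarrow 1$$
from proposition \ref{exactu} and apply the maximal geometrically protrigonalizable quotient functor ${}^{\rm tri}$ to it. The key point is that, since $\pi_1(\Sp(K)^{\times}/\Sp(K),\nu)=\mathbb{G}_a$ is already geometrically protrigonalizable (it is prounipotent, hence trigonalizable), we have $\pi_1(\Sp(K)^{\times}/\Sp(K),\nu)^{\rm tri}=\pi_1(\Sp(K)^{\times}/\Sp(K),\nu)$. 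Combining this with corollary \ref{pi(p_u)isocor}, which gives $\pi_1(X^{\times}/\Sp(K)^{\times}, x)^{\rm tri}\cong\pi_1(X^{\times}/\Sp(K)[u], x)^{\rm tri}$ via $\pi(p_u)^{\rm tri}$, the sequence \eqref{exseqsol} is identified with the ${}^{\rm tri}$-version of \eqref{exactsequencewithu}. So the whole problem reduces to showing that the operation ${}^{\rm tri}$ applied to \eqref{exactsequencewithu} preserves exactness.

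First I would record the general fact that ${}^{\rm tri}$ preserves surjectivity (faithful flatness): if $G\twoheadrightarrow H$ then $G^{\rm tri}\twoheadrightarrow H^{\rm tri}$, because $G\to H\to H^{\rm tri}$ factors through $G^{\rm tri}$ and its image is a geometrically protrigonalizable quotient of $G$ containing (the image of) $H^{\rm tri}$; one checks surjectivity onto $H^{\rm tri}$ directly from the universal property. This handles the surjectivity of $\pi_1(X^{\times}/\Sp(K), x)^{\rm tri}\to\pi_1(\Sp(K)^{\times}/\Sp(K),\nu)$. Next, for the injectivity of the first map and exactness in the middle, I would work Tannakianly using remark \ref{cattri}: $\mathrm{Rep}(G^{\rm tri})$ is the full subcategory $\mathrm{Rep}(G)^{\rm tri}$ of representations which become iterated extensions of rank $1$ objects over $\overline K$. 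Concretely, I need: (a) every object of $MIC(X^{\times}/\Sp(K)[u])^{\rm nr,tri}$ is a subquotient of $t_u(E)$ for some $E\in MIC(X^{\times}/\Sp(K))^{\rm nr,tri}$ — this follows from lemma \ref{pituclosedimmersion}'s proof (writing the object as a colimit and taking $E_i$), noting that $E_i$ is automatically in the ${}^{\rm tri}$-subcategory since it is a subobject of the given ${}^{\rm tri}$-object after base change; and (b) the analogues of conditions (a), (b), (c) of theorem \ref{criterion}(iii) for the ${}^{\rm tri}$-sequence. For (a)-type condition, an object $E$ in $\mathrm{Rep}(\pi_1(X^{\times}/\Sp(K),x))^{\rm tri}$ has trivial restriction to $\pi_1(X^{\times}/\Sp(K)[u],x)^{\rm tri}$ iff it does as a full representation, and then proposition \ref{(a)and(b)} applies. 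Conditions (b), (c) similarly follow by combining propositions \ref{b}, \ref{(c)} with the observation that the objects $f^*_{\mathrm{dR}}$-pulled-back or the Deligne-type construction in theorem \ref{deligne} stay within the ${}^{\rm tri}$-subcategory (pullbacks of $\mathbb{G}_a$-representations are unipotent, hence trigonalizable; the construction in theorem \ref{deligne} produces iterated extensions of the given rank $1$ object tensored with fixed pieces, so trigonalizability is preserved — and the only genuinely new input needed is that the relevant $\mathrm{Ext}$ groups agree, which is theorem \ref{extensionsoflattices} plus proposition \ref{quasiisoHirsch}).

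A cleaner route, which I would actually prefer to write up, bypasses re-running all of theorem \ref{criterion}: apply ${}^{\rm tri}$ directly to \eqref{exactsequencewithu} and use that for a short exact sequence $1\to N\to G\to Q\to 1$ of affine group schemes with $Q$ geometrically protrigonalizable, one has a short exact sequence $1\to N^{\rm tri}\to G^{\rm tri}\to Q\to 1$ provided $N$ is normal in $G^{\rm tri}$ in a compatible way; the subtle point is identifying $\ker(G^{\rm tri}\to Q)$ with $N^{\rm tri}$ rather than merely with some quotient of $N^{\rm tri}$. Since $Q=\mathbb{G}_a$ is prounipotent, the extension $G\to Q$ splits after base change when $x$ comes from a section (as discussed in the introduction), but in general I would argue: $\ker(G^{\rm tri}\to Q)$ is a geometrically protrigonalizable group receiving a faithfully flat map from $N$, hence from $N^{\rm tri}$, so it is a quotient of $N^{\rm tri}$; conversely the composite $N\to G^{\rm tri}$ has image in $\ker(G^{\rm tri}\to Q)$ and one shows via the representation-category description (every ${}^{\rm tri}$-object of $\mathrm{Rep}(N)$ lifts to a ${}^{\rm tri}$-object of $\mathrm{Rep}(G)$, using theorems \ref{deligne}, \ref{extensionsoflattices} and \ref{lifting}) that $N^{\rm tri}\to\ker(G^{\rm tri}\to Q)$ is a closed immersion, hence an isomorphism.

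The main obstacle is precisely this last point: showing that the map $\pi_1(X^{\times}/\Sp(K)[u], x)^{\rm tri}\to\pi_1(X^{\times}/\Sp(K), x)^{\rm tri}$ is injective, equivalently that ${}^{\rm tri}$ does not collapse the kernel. All the ingredients have been arranged in the excerpt (corollary \ref{pi(p_u)isocor} identifies the source with $\pi_1(X^{\times}/\Sp(K)^{\times},x)^{\rm tri}$, lemma \ref{pituclosedimmersion} gives the full-group injectivity, theorems \ref{extensionsoflattices}, \ref{lifting} and proposition \ref{quasiisoHirsch} control extensions and liftings of trigonalizable objects, and lemma \ref{basechangeprop-corrected} handles descent to reduce to $K=\overline K$), so the proof is essentially a careful bookkeeping of the ${}^{\rm tri}$-subcategories through theorem \ref{criterion}(ii): one verifies that every object of $\mathrm{Rep}(\pi_1(X^{\times}/\Sp(K)[u],x))^{\rm tri}$ is a subquotient of $t_u$ of an object of $\mathrm{Rep}(\pi_1(X^{\times}/\Sp(K),x))^{\rm tri}$, so that $\pi(t_u)^{\rm tri}$ is a closed immersion, and then exactness in the middle of \eqref{exseqsol} follows from exactness of \eqref{exactsequencewithu} together with the faithful flatness and closed immersion facts just established.
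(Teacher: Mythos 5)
Your reduction is set up correctly: surjectivity of the last map is formal, corollary \ref{pi(p_u)isocor} identifies $\pi_1(X^{\times}/\Sp(K)^{\times},x)^{\mathrm{tri}}$ with $\pi_1(X^{\times}/\Sp(K)[u],x)^{\mathrm{tri}}$, and you rightly isolate the whole difficulty in showing that $\pi_1(X^{\times}/\Sp(K)[u],x)^{\mathrm{tri}}\to\pi_1(X^{\times}/\Sp(K),x)^{\mathrm{tri}}$ is a closed immersion. But the mechanism you propose for that step does not work. To verify theorem \ref{criterion}(ii) for the ${}^{\mathrm{tri}}$-categories you must exhibit, for each $W$ in $MIC(X^{\times}/\Sp(K)[u])^{\mathrm{nr,tri}}$, an ambient object $V$ of $MIC(X^{\times}/\Sp(K))^{\mathrm{nr}}$ with $W$ a subquotient of $t_u(V)$ \emph{and with $V$ itself in the ${}^{\mathrm{tri}}$-subcategory of $\mathrm{Rep}(\pi_1(X^{\times}/\Sp(K),x))$}. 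Your justification ("$E_i$ is automatically in the ${}^{\mathrm{tri}}$-subcategory since it is a subobject of the given ${}^{\mathrm{tri}}$-object") confuses the two groups: $E_i$ lives in $\mathrm{Rep}(\pi_1(X^{\times}/\Sp(K),x))$, and the hypothesis only says that $W$ becomes an iterated extension of rank $1$ objects as a representation of the \emph{subgroup} $\pi_1(X^{\times}/\Sp(K)[u],x)$ after base change. Trigonalizability over a closed subgroup says nothing about trigonalizability of a covering object over the larger group, and the same objection applies to the ambient $(E,\nabla_E)$ in proposition \ref{descriptionH} and theorem \ref{deligne}. So the "careful bookkeeping" you defer to is precisely the missing argument, not a routine check.

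The paper closes this gap by a different route that your sketch never reaches. One first observes that $N:=\mathrm{Ker}\bigl(\pi_1(X^{\times}/\Sp(K)[u],x)\to\pi_1(X^{\times}/\Sp(K)[u],x)^{\mathrm{tri}}\bigr)$ is stable under all automorphisms, hence normal in $G:=\pi_1(X^{\times}/\Sp(K),x)$; setting $G':=G/N$ yields an exact sequence $1\to\pi_1(X^{\times}/\Sp(K)^{\times},x)^{\mathrm{tri}}\to G'\to\mathbb{G}_a\to 1$, and it remains to prove that $G'$ is itself geometrically protrigonalizable (so that $G'=G^{\mathrm{tri}}$). This is done by reducing, via proposition \ref{gtri}, to a field extension $\widetilde{L}$ over which the map to $\mathbb{G}_a$ splits — the splitting comes from a section $y$ of $f_L$ over a finite extension $L$, the base-change lemma \ref{basechangeprop-corrected}, and a comparison of the fiber functors $\omega_x$ and $\omega_y$ (using that $\mathbb{G}_a$ is abelian, so the induced inner automorphism is the identity) — and then invoking the purely group-theoretic fact that a \emph{split} extension of $\mathbb{G}_a$ by a geometrically trigonalizable group is geometrically trigonalizable (proved with the maximal normal unipotent subgroup and the triviality of $\mathbb{G}_a$-actions on groups of multiplicative type). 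You mention the splitting only as available "when $x$ comes from a section" and then discard it; in fact it is available in general after base change, and without it (or some substitute) your argument does not establish the injectivity of the first map.
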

\begin{proof} 
First note that, for an affine group scheme $G$ over $K$, 
$N := \mathrm{Ker}(G \rightarrow G^{\rm tri})$ is stable under any automorphism 
$\sigma$ on $G$. Indeed, for any surjection $\rho: G \rightarrow H$ 
with $H$ geometrically trigonalizable, 
$\rho \circ \sigma$ has the same property 
and so $\rho(\sigma(N))$ is trivial. Hence $\sigma(N)$ is contained in $N$. 
Then, by applying this argument also to $\sigma^{-1}$, we see that $\sigma(N) = N$. 

Now consider the exact sequence 
\begin{equation*}
1\longrightarrow\pi_1(X^{\times}/\Sp(K)[u], x)\xrightarrow{\pi(t_u)} \pi_1(X^{\times}/\Sp(K), x)\xrightarrow{\pi(f^*_{\mathrm{dR}})} \pi_1(\Sp(K)^{\times}/\Sp(K), \nu)\longrightarrow 1
\end{equation*}
proven in proposition \ref{exactu}. By the remark in the previous paragraph, 
$N := \mathrm{Ker}(\pi_1(X^{\times}/\Sp(K)[u], x) \rightarrow \pi_1(X^{\times}/\Sp(K)[u], x)^{\rm tri})$ 
is a normal subgroup of $\pi_1(X^{\times}/\Sp(K), x)$. 
If we put $\pi_1(X^{\times}/\Sp(K), x)' := \pi_1(X^{\times}/\Sp(K), x)/N$, 
we obtain the exact sequence 
\begin{align*}
1\longrightarrow \pi_1(X^{\times}/\Sp(K)[u], x)^{\rm tri} 
\cong \pi_1(X^{\times}/\Sp(K)^{\times}, x)^{\rm tri}  
& \longrightarrow \pi_1(X^{\times}/\Sp(K), x)' \\ & \xrightarrow{\pi(f^*_{\mathrm{dR}})'} 
\pi_1(\Sp(K)^{\times}/\Sp(K), \nu)\longrightarrow 1 
\end{align*}
and there exists a natural surjection $\pi: \pi_1(X^{\times}/\Sp(K), x)' \rightarrow 
\pi_1(X^{\times}/\Sp(K), x)^{\rm tri}$. Hence it suffices to prove that 
$\pi$ is an isomorphism, namely, $\pi_1(X^{\times}/\Sp(K), x)'$ is geometrically protrigonalizable. 
This is reduced to proving the same property for $\pi_1(X^{\times}/\Sp(K), x)' \otimes_K \widetilde{L}$ for 
some field extension $\widetilde{L}$ over $K$, by proposition \ref{gtri}. 
For some finite extension $L$ of $K$, $f_L: X^{\times}_L \rightarrow \Sp(L)^{\times}$ admits a section 
$y$ and so the map $\pi(f_{L, {\rm dR}}^*): \pi_1(X^{\times}_L/\Sp(L), y) \rightarrow \pi_1(\Sp(L)^{\times}/\Sp(L), \nu)$ 
induced by $f_L$ admits a section. On the other hand, we have a map 
\begin{align*}
\pi_1(X^{\times}_L/\Sp(L), x) \cong & 
\pi_1(X^{\times}/\Sp(K), x) \otimes_K L  \\ 
\xrightarrow{\pi(f^*_{\mathrm{dR}}) \otimes {\rm id}} & 
\pi_1(\Sp(K)^{\times}/\Sp(K), \nu) \otimes_K L \cong \pi_1(\Sp(L)^{\times}/\Sp(L), \nu). 
\end{align*}
We would like to identify these two maps after some field extension of $L$. 
Take a tensor isomorphism $\gamma$ between fiber functors 
\begin{align*}
& MIC(X_L^{\times}/\Sp(L))^{\rm nr} \xrightarrow{\omega_x} {\rm Vec}_L 
\to {\rm Vec}_{\widetilde{L}}, \\ 
& MIC(X_L^{\times}/\Sp(L))^{\rm nr} \xrightarrow{\omega_y} {\rm Vec}_L 
\to {\rm Vec}_{\widetilde{L}} 
\end{align*}
associated to $x$ and $y$, which exists for some field extension $\widetilde{L}$ of $L$. 
The tensor isomorphism $\gamma$ induces an isomorphism 
$$i_{\gamma}: \pi_1(X^{\times}_L/\Sp(L), x) \otimes_L \widetilde{L} \xrightarrow{\cong}  
\pi_1(X^{\times}_L/\Sp(L), y) \otimes_L \widetilde{L} $$ 
of `conjugation by $\gamma$'. 
The isomorphism 
$i_{\gamma}$ induces an inner automorphism 
$$\overline{i}_{\gamma}: \pi_1(\Sp(L)^{\times}_L/\Sp(L), \nu) \otimes_L \widetilde{L} \xrightarrow{\cong} \pi_1(\Sp(L)^{\times}_L/\Sp(L), \nu) \otimes_L \widetilde{L}, $$ 
but this is necessarily the identity because 
$\pi_1(\Sp(L)^{\times}_L/\Sp(L), \nu) \otimes_L \widetilde{L} \cong \mathbb{G}_a$ 
is abelian. Hence the isomorphism $i_{\gamma}$ is compatible with maps 
$$ \pi(f_{L, {\rm dR}}^*) \otimes {\rm id}: \pi_1(X^{\times}_L/\Sp(L), y) \otimes_L \widetilde{L}  \rightarrow \pi_1(\Sp(L)^{\times}/\Sp(L), \nu) \otimes_L \widetilde{L}, $$
\begin{align*}
\pi(f^*_{\mathrm{dR}}) \otimes {\rm id}: & 
\pi_1(X^{\times}_L/\Sp(L), x) \otimes_{L} \widetilde{L} =  
\pi_1(X^{\times}/\Sp(K), x) \otimes_K \widetilde{L} \\ & \rightarrow  
\pi_1(\Sp(K)^{\times}/\Sp(K), \nu) \otimes_K \widetilde{L} = 
\pi_1(\Sp(L)^{\times}/\Sp(L), \nu) \otimes_L \widetilde{L}, 
\end{align*}
as required. 
%
Hence 
$$\pi(f^*_{\mathrm{dR}}) \otimes {\rm id}: \pi_1(X^{\times}/\Sp(K), x) \otimes_K \widetilde{L}  
\rightarrow
\pi_1(\Sp(K)^{\times}/\Sp(K), \nu) \otimes_K \widetilde{L} 
$$
admits a section, and so does the map  
$$\pi(f^*_{\mathrm{dR}})' \otimes {\rm id}: \pi_1(X^{\times}/\Sp(K), x)' \otimes_K \widetilde{L}  
\rightarrow
\pi_1(\Sp(K)^{\times}/\Sp(K), \nu) \otimes_K \widetilde{L}. 
$$
Therefore, the proof of the theorem is reduced to the following claim: 
For a split exact sequence 
$$ 1 \rightarrow H \rightarrow G \rightarrow \mathbb{G}_a \rightarrow 1 $$
of algebraic groups over a field $K$ of characteristic zero with  
$H$ geometrically trigonalizable, $G$ is also geometrically trigonalizable. 

We prove the above claim. By \cite[proposition 17.5]{Mil15}, $H$ has a unique (maximal) unipotent normal subgroup $U$ such that $M := H/U$ is of multiplicative type. By uniqueness, $U$ is normal in $G$. 
Let $P$ be $G/U$. Then we have a split exact sequence 
$$ 1 \rightarrow M \rightarrow P \rightarrow \mathbb{G}_a \rightarrow 1. $$
Since the action of $\mathbb{G}_a$ on $M$ induced by the splitting is trivial 
by \cite[theorem 14.29]{Mil15}, $P \cong M \times \mathbb{G}_a$. If we denote 
the kernel of $G \rightarrow P \cong M \times \mathbb{G}_a \rightarrow M$ by $U'$, we have 
exact sequences 
$$ 1 \rightarrow U' \rightarrow G \rightarrow M \rightarrow 1, \qquad 
 1 \rightarrow U \rightarrow U' \rightarrow \mathbb{G}_a \rightarrow 1. $$ 
From the second exact sequence above, $U'$ is unipotent. 
Then, by the first exact sequence above and \cite[proposition 17.2]{Mil15}, 
$G$ is geometrically trigonalizable. So the proof is finished. 
\end{proof}

\bibliographystyle{amsalpha}

\end{document}